\DeclareFontFamily{U}{wncy}{}
\DeclareFontShape{U}{wncy}{m}{n}{<->wncyr10}{}
\DeclareSymbolFont{mcy}{U}{wncy}{m}{n}
\DeclareMathSymbol{\Sh}{\mathord}{mcy}{"58} 
\newcommand\restr[2]{{
  #1 
  |_{#2} 
  }}
\newtheorem{thm}{Theorem}[section]
\newtheorem{cor}[thm]{Corollary}
\newtheorem{prop}[thm]{Proposition}
\newtheorem{lem}[thm]{Lemma}
\newtheorem{quest}[thm]{Question}
\newtheorem{assump}[thm]{Assumptions}
\newtheorem{theorem}{Theorem}
\theoremstyle{definition}
\newtheorem{defn}[thm]{Definition}
\newtheorem{notn}[thm]{Notation}
\theoremstyle{remark}
\newtheorem{rem}[thm]{Remark}
\DeclareMathOperator{\GL}{GL}
\DeclareMathOperator{\SL}{SL}
\DeclareMathOperator{\Gal}{Gal}
\DeclareMathOperator{\der}{der}
\DeclareMathOperator{\ad}{ad}
\DeclareMathOperator{\Ad}{Ad}
\DeclareMathOperator{\Lift}{Lift}
\DeclareMathOperator{\Hom}{Hom}
\DeclareMathOperator{\im}{im}
\DeclareMathOperator{\coker}{coker}
\DeclareMathOperator{\dR}{dR}
\DeclareMathOperator{\WD}{WD}
\newcommand\sqr{\scaleobj{0.65}{\square}}
\let\c@equation\c@thm
\numberwithin{equation}{section}
\title{Lifting Residual Galois Representations with the same Semi-simplification}
\author{Stefan Nikoloski}
\address{Department of Mathematics, The Ohio State University, Columbus, OH 43210, USA}
\email{nikoloski.1@buckeyemail.osu.edu}
\thanks{I would like to thank Najmuddin Fakhruddin, Toby Gee, and Alice Pozzi for their useful feedback on an earlier draft of this paper. I would like to thank Stefan Patrikis for bringing this project to my attention and for helpful conversations about it. This work was suppored by NSF grant DMS-2120325.}
\begin{document}

\begin{abstract}

    If a $p$-adic Galois representation $\rho_{f,\nu}:\Gamma_{\mathbb Q} \to \GL_2(E_{f,\nu})$ attached to some eigenform $f$ is residually reducible it will have 2 non-isomorphic reductions, which have the same semi-simplification. In this paper, we answer a version of the inverse question, first brought up by Toby Gee and Alice Pozzi. Starting with two modulo $p$ non-semi-simple representations $\overline{\rho_1},\overline{\rho_2}:\Gamma_{\mathbb Q} \to \GL_2(k)$, which have the same semi-simplification we show that under some mild conditions that they are reductions of representations attached to newforms of the same weight $r \ge 2$, the same level $N \ge 1$, and the same Neben character $\varepsilon$.
    
\end{abstract}

\maketitle
\tableofcontents

\section{Introduction}

The connection between Galois representations and modular forms has been one of the main points of interest of number theory in the last half-century. If $f \in S_r(N)$ is an eigenform of weight $r$ and level $N$ we can attach a $p$-adic Galois representation $\rho_{f,\nu}:\Gamma_{\mathbb Q} \to \GL_2(E_{f,\nu})$, where $E_f$ is a finite extension of $\mathbb Q$ and $\nu$ is a prime of $E_f$ lying over some integral prime $p$. This Galois representation is uniquely characterized by the property: for primes $\ell \nmid pN$, the representation $\rho_{f,\nu}$ is unramified at $\ell$, and $\mathrm{Tr}(\rho_{f,\nu}(\mathrm{Frob}_\ell)) = a_\ell(f)$, where $a_\ell(f)$ is the eigenvalue of $f$ under the action of the Hecke operator $T_\ell$. The construction of $\rho_{f,\nu}$ is given by Shimura (\cite{Shi71}) when $r=2$, by Deligne (\cite{Del71}) when $r > 2$, and by Deligne and Serre (\cite{DS74}) when $r=1$. Answering when a Galois representation is modular, i.e. isomorphic to one attached to an eigenform has been a fruitful area of research. This question is an integral part of some major conjectures, for instance the Taniyama-Shimura conjecture, which was proven by Breuil, Conrad, Diamond, and Taylor in \cite{CDT99} and \cite{BCDT01}; and Serre's modularity conjecture, which was proven by Khare and Wintenberger in \cite{Kha06}, \cite{KW09a}, \cite{KW09b}, and \cite{KW09c}.

Let $\rho_{f,\nu}:\Gamma_{\mathbb Q} \to \GL_2(E_{f,\nu})$ be a $p$-adic representation attached to some eigenform $f \in S_r(N)$. It is a standard fact about Galois representation that $E_{f,\nu}^2$ has a stable $\Gamma_\mathbb Q$-lattice under the action via $\rho_{f,\nu}$. Therefore, $\rho_{f,\nu}$ is conjugate to an integral representation, which we still label by $\rho_{f,\nu}$, of the form $\Gamma_{\mathbb Q} \to \GL_2(\mathcal O_\nu)$, where $\mathcal O_\nu$ is the ring of integers of $E_{f,\nu}$ with residue field $k_\nu$. We then consider its reduction $\overline{\rho_{f,\nu}}:\Gamma_{\mathbb Q} \to \GL_2(k_\nu)$. In general, we say that a residual Galois representation $\overline{\rho}:\Gamma_{\mathbb Q} \to \GL_2(\overline{\mathbb F_p})$ is \textit{modular} if $\overline{\rho}$ is isomorphic to some $\overline{\rho_{f,\nu}}$, a reduction of a representation attached to an eigenform $f$. The choice of the $\Gamma_{\mathbb Q}$-stable lattice of $E_{f,\nu}^2$ plays an important role in the definition of the reduction $\overline{\rho_{f,\nu}}$. By results of Ribet (\cite{Rib77}) when $r \ge 2$, and Deligne and Serre (\cite{DS74}) when $r = 1$ we know that the integral representation $\rho_{f,\nu}:\Gamma_{\mathbb Q} \to \GL_2(\mathcal O_\nu)$ will be irreducible, and therefore uniquely determined up to conjugacy over $E_{f,\nu}$. However, it is entirely possible that the reduction $\overline{\rho_{f,\nu}}$ will be reducible. As any two such reductions come from integral representations which are conjugates over $E_{f,\nu}$, by Brauer-Nesbitt theorem their semi-simplifications are isomorphic. In particular, if $\overline{\rho_{f,\nu}}$ is reducible, up to a twist, we can write $\overline{\rho_{f,\nu}}^{\mathrm{ss}} = \overline{\chi} \oplus 1$ for some residual character $\overline{\chi}:\Gamma_{\mathbb Q} \to k_\nu^\times$. However, $\rho_{f,\nu}$ could have multiple non-isomorphic reductions. In fact, by \cite[Proposition $2.1$]{Rib76} if $\overline{\rho_{f,\nu}}^{\mathrm{ss}} = \overline{\chi} \oplus 1$, then there are $\Gamma_{\mathbb Q}$-stable lattices of $E_{f,\nu}^2$ that will give us non-semi-simple reductions of the shape $\begin{pmatrix} \overline{\chi} & * \\ 0 & 1\end{pmatrix}$ and $\begin{pmatrix} 1 & *' \\ 0 & \overline{\chi}\end{pmatrix}$. One is then naturally interested in studying the relations between the extension classes $*$ and $*'$, if any exist. We have the following question which was first asked by Toby Gee and Alice Pozzi:

\begin{quest}
\label{MainQuestion}

Let $\overline{\chi}:\Gamma_\mathbb Q \to \overline{\mathbb F_p}^\times$ be a finitely ramified odd character. Let $\overline{\rho_1} = \begin{pmatrix} \overline{\chi} & * \\ 0 & 1\end{pmatrix}$ and $\overline{\rho_2} = \begin{pmatrix} 1 & *' \\ 0 & \overline{\chi}\end{pmatrix}$ be two finitely ramified residual representations. Under what conditions does there exist an eigenform $f$ and a prime $\nu \mid p$ of $E_f$ such that for a choice of a $\Gamma_{\mathbb Q}$-stable lattice of $E_{f,\nu}^2$ the representation $\rho_{f,\nu}$ has reductions isomorphic to $\overline{\rho_1}$ and $\overline{\rho_2}$?
    
\end{quest}

Here, an odd character means that $\overline{\chi}(c) = -1$, where $c \in \Gamma_{\mathbb Q}$ is the automorphism of $\overline{\mathbb Q}$ induced by complex conjugation. Unless, $\overline{\chi} = \overline{\kappa}^{\pm 1}$, where $\overline{\kappa}$ is the modulo $p$ cyclotomic character, by \cite[Theorem $7.4$]{FKP22} we know that both representation $\overline{\rho_1}$ and $\overline{\rho_2}$ are modular. However, we only know the existence of seemingly unrelated eigenforms for $\overline{\rho_1}$ and $\overline{\rho_2}$. In fact, it is expected that answering Question \ref{MainQuestion} is very hard. Instead, we can try and answer a weaker version of the question, which seems to be more accessible by current lifting methods

\begin{quest}
\label{WeakerQuestion}

    Keeping the notation of Question \ref{MainQuestion}, when are the residual representations $\overline{\rho_1}$ and $\overline{\rho_2}$ the reductions of representations attached to newforms of the same level, the same weight, and the same Neben character?
    
\end{quest}

In order to answer this question we simultaneously produce lifts of $\overline{\rho_1}$ and $\overline{\rho_2}$ by using the lifting method in \cite{FKP22}, making the necessary modifications along the way. This lifting method relies on the vanishing of relative Selmer groups (see Definition \ref{ModsRelSelmer}) associated to lifts of the residual representations. The vanishing is achieved by allowing extra ramification at some additional set of primes. When a $p$-adic lift is modular the primes at which it is ramified will divide the level of the eigenform attached to it (cf. \cite{Car89}). Therefore, in order to answer Question \ref{WeakerQuestion} positively it is a paramount that we allow ramification at the same set of additional primes for both representations. More precisely, this means that we need to annihilate relative Selmer group associated to intermediate lifts of $\overline{\rho_1}$ and $\overline{\rho_2}$ by allowing extra ramification at the same set of primes. The production of primes at which we allow extra ramification uses non-zero classes in the relative Selmer groups. This becomes a big issue in the case where the relative Selmer group for one of the representations is trivial, while for other one it is not. In that case, after allowing ramification at a new prime we can't control the size of the relative Selmer group which was previously trivial. Therefore, it is possible that we enter an endless loop where allowing ramification at one prime will annihilate the relative Selmer group attached to one of the representations, but will make the relative Selmer group attached to the other representation non-zero and so on. This is the biggest technical obstacle in our goal of producing $p$-adic lifts of $\overline{\rho_1}$ and $\overline{\rho_2}$ by allowing extra ramification at the same set of primes. To overcome this challenge we work with a family of relative Selmer groups, instead of just focusing on a single relative Selmer group. By exploiting the relations between the various Selmer groups we can find a finite set of primes such that when we allow extra ramification at that set of primes we will lower the dimension of the non-zero relative Selmer group, while the other relative Selmer group will remain being trivial. This idea allows us to produce $p$-adic lifts $\rho_1$ and $\rho_2$ of $\overline{\rho_1}$ and $\overline{\rho_2}$, respectively, by allowing ramification at the same set of additional primes. We detail this procedure in \S $3$ and \S $4$, where we prove this is possible in a much more general setting. More precisely, we show that we can do this for residual representations of $\Gamma_F$, for a totally real field $F$, which are valued in possibly distinct reductive groups.

In addition to this we need to make sure that the $p$-adic lifts $\rho_1$ and $\rho_2$ exhibit the same local behavior at the primes where the residual representations $\overline{\rho_1}$ and $\overline{\rho_2}$ are already ramified. We remark that it is possible that the extension classes $*$ and $*'$ are ramified at distinct sets of primes. Then, the same would be true for $\overline{\rho_1}$ and $\overline{\rho_2}$, adding an another level of complexity. In the lifting method of Fakhruddin, Khare, and Patrikis, we assume the existence of local $p$-adic lifts of $\restr{\overline{\rho_1}}{\Gamma_{\mathbb Q_\ell}}$ and $\restr{\overline{\rho_2}}{\Gamma_{\mathbb Q_\ell}}$ at such primes $\ell$. We use these local lifts to model the local behavior of the global lift at each of these primes. Therefore, to prescribe the local behavior of $\rho_1$ and $\rho_2$ it is enough to produce local $p$-adic lifts at these primes that have the desired local behavior. By the local-global compatibility the power of a prime $\ell$ that will divide the level of a newform $f$ is given by the Artin conductor of $\WD(\restr{\rho_{f,\nu}}{\Gamma_{\mathbb Q_\ell}})$, the Weil-Deligne representation associated to the local representation $\restr{\rho_{f,\nu}}{\Gamma_{\mathbb Q_\ell}}$. Therefore, we want to produce $p$-adic lifts of $\restr{\overline{\rho_1}}{\Gamma_{\mathbb Q_\ell}}$ and $\restr{\overline{\rho_2}}{\Gamma_{\mathbb Q_\ell}}$, whose associated Weil-Deligne representations will have the same Artin conductor. We show in \S $5.2$ that it is always possible to find such $p$-adic local lifts. For primes $\ell \nmid p$ we do this by studying the local universal lifting rings of both $\restr{\overline{\rho_1}}{\Gamma_{\mathbb Q_\ell}}$ and $\restr{\overline{\rho_2}}{\Gamma_{\mathbb Q_\ell}}$, which are explicitly described in \cite[\S 5]{Sho16}. In fact, working on a case-by-base basis we show something stronger: we can find $p$-adic lifts of $\restr{\overline{\rho_1}}{\Gamma_{\mathbb Q_\ell}}$ and $\restr{\overline{\rho_2}}{\Gamma_{\mathbb Q_\ell}}$ which have the same strong inertial type (see Definition \ref{StrongInertialType}). On the other side, we construct by hand $p$-adic lifts of $\restr{\overline{\rho_1}}{\Gamma_{\mathbb Q_p}}$ and $\restr{\overline{\rho_1}}{\Gamma_{\mathbb Q_p}}$ whose Weil-Deligne representations will have the same Artin conductor. These local lifts will also need to satisfy some additional conditions coming from $p$-adic Hodge-Tate theory, in particular we need those lifts to be de Rham. The local lifts we construct will be potentially crystalline and Hodge-Tate regular with Hodge-Tate weights $\{0,r-1\}$ for some integer $r \ge 2$. By the local-global compatibility at the prime $p$ this integer $r$ will be exactly the weight of the newforms associated to the global lifts $\rho_1$ and $\rho_2$. 

The lifting method allows us to prescribe the determinant of the lifts $\rho_1$ and $\rho_2$. In particular, as $\det \overline{\rho_1} = \det \overline{\rho_2} = \overline{\chi}$ we can ask for the $p$-adic lifts $\rho_1$ and $\rho_2$ to have the same determinant. In the language of modular forms this translates to the newforms associated to $\rho_1$ and $\rho_2$ having the same Neben character. 

We then prove the following result

\begin{theorem}[See Theorem \ref{MainResult}]
\label{Theorem A}

    Assume $p \ge 5$. Let $\overline{\rho_1},\overline{\rho_2}:\Gamma_{\mathbb Q,S} \to \GL_2(k)$ be two non-semi-simple residual representations such that

    $$\overline{\rho_1} = \begin{pmatrix} \overline{\chi} & * \\ 0 & 1\end{pmatrix} \quad \quad \mathrm{and} \quad \quad \overline{\rho_2} = \begin{pmatrix} 1 & *' \\ 0 & \overline{\chi}\end{pmatrix}$$
    
    \vspace{2 mm}
    
    \noindent for some odd character $\overline{\chi}$, which moreover is different than $\overline{\kappa}^{\pm 1}$. Then $\overline{\rho_1}$ and $\overline{\rho_2}$ are modular of the same weight $r \ge 2$, the same level $N \ge 1$, and the same Neben character $[\overline{\chi}\overline{\kappa}^{1-r}]$.

\end{theorem}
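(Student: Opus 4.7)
The strategy is to construct, simultaneously, characteristic-zero lifts $\rho_1,\rho_2:\Gamma_{\mathbb Q}\to\GL_2(\mathcal O)$ of $\overline{\rho_1},\overline{\rho_2}$ satisfying three compatibility conditions: they are ramified on a common finite set of primes, they have the same determinant, and at every ramified prime $\ell$ the associated Weil-Deligne representations $\WD(\rho_i|_{\Gamma_{\mathbb Q_\ell}})$ have the same Artin conductor. At $p$ the lifts will moreover be potentially crystalline with Hodge-Tate weights $\{0,r-1\}$ for a common integer $r\ge 2$. Given such lifts, modularity lifting applied to the already-modular $\overline{\rho_i}$ (\cite[Theorem~7.4]{FKP22}) produces eigenforms $f_1,f_2$ with $\rho_i\cong\rho_{f_i,\nu}$; Carayol's local-global compatibility at primes $\ell\neq p$ together with $p$-adic Hodge-theoretic local-global compatibility at $p$ then translates the three conditions directly into equality of level, weight, and Neben character $[\overline{\chi}\,\overline{\kappa}^{1-r}]$.

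For the local input, at each prime $\ell\neq p$ where $\overline{\rho_1}$ or $\overline{\rho_2}$ is ramified I would exploit the explicit description of local universal lifting rings in \cite[\S 5]{Sho16} to produce, on a case-by-case basis, lifts of $\overline{\rho_1}|_{\Gamma_{\mathbb Q_\ell}}$ and $\overline{\rho_2}|_{\Gamma_{\mathbb Q_\ell}}$ of matching strong inertial type (hence matching Artin conductor). At $\ell=p$ I would construct potentially crystalline, Hodge-Tate regular lifts by hand, with Hodge-Tate weights $\{0,r-1\}$ and matching Artin conductors; the hypotheses $p\ge 5$ and $\overline{\chi}\neq\overline{\kappa}^{\pm1}$ are precisely what provides enough flexibility in $p$-adic Hodge theory to build such lifts without local obstruction.

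The main obstacle, and the centerpiece of the paper, is the global simultaneous lifting carried out in \S 3--\S 4. The FKP22 machine lifts $\overline{\rho_i}$ inductively modulo $p^n$ and annihilates obstructions by allowing ramification at auxiliary Ramakrishna primes selected using nonzero classes in a relative Selmer group. The new phenomenon is that one must use a \emph{common} set of auxiliary primes for both $\overline{\rho_1}$ and $\overline{\rho_2}$, with the attendant danger of a ping-pong: adding a prime that kills the relative Selmer group attached to one representation may create a nonzero class in that of the other, so naive iteration need not terminate. My plan, executed in the greater generality of representations of $\Gamma_F$ for a totally real field $F$ valued in possibly distinct reductive groups, is to work with the entire family of relative Selmer groups for $\overline{\rho_1},\overline{\rho_2}$ and their Cartier duals simultaneously, and to use Chebotarev together with a linear-algebra argument inside the product of these groups to locate a finite set of primes whose augmentation strictly decreases the dimension of any nonzero member of the family while leaving every already-trivial member trivial. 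Iterating this bounded-descent step terminates and produces the desired common ramification set, completing the construction of $\rho_1$ and $\rho_2$ and, via the modularity dictionary above, the theorem.
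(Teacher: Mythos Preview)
Your strategy matches the paper's almost exactly: verify the FKP-type assumptions, build local lifts at primes in $S$ via \cite{Sho16} (for $\ell\neq p$) and by hand (for $\ell=p$), run the simultaneous doubling/relative-deformation machine of \S 3--\S 4 to get global lifts $\rho_1,\rho_2$ with a common ramification set and common determinant, then invoke modularity and local-global compatibility.

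There is, however, one genuine lacuna. At the auxiliary primes $\ell\in Q$ introduced during the relative deformation step, the local condition $\Lift^{\mu,\alpha_{i,\ell}}_{\restr{\overline{\rho_i}}{\Gamma_{\mathbb Q_\ell}}}(\mathcal O)$ contains both ramified (Steinberg-type) and unramified lifts, so a priori $\rho_1$ could be ramified at such an $\ell$ while $\rho_2$ is not, giving mismatched Artin conductors ($1$ versus $0$). The paper closes this by invoking Ramanujan's conjecture (via Deligne's proof of the Weil conjectures): once one knows $\rho_i\cong\rho_{f_i,\nu}$, the local component $\pi_{f_i,\ell}$ is tempered, so if $\rho_i$ were unramified at $\ell$ the Frobenius eigenvalues would have equal complex absolute value; but by construction these eigenvalues have ratio $\ell$. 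Hence both $\rho_i$ are genuinely ramified at every $\ell\in Q$, and the conductors there agree. This step is why the paper ultimately restricts to $F=\mathbb Q$, and your sketch should acknowledge it.

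Two smaller points. First, for modularity of the characteristic-zero lifts in the residually reducible case the paper appeals to Skinner--Wiles \cite{SW99} and Pan \cite{Pan18} directly; your citation of \cite[Theorem~7.4]{FKP22} gives only residual modularity, and you still need to name the lifting theorem that applies in the reducible setting. Second, your description of the Selmer-annihilation step (``strictly decreases the dimension of any nonzero member while leaving every already-trivial member trivial'') is slightly stronger than what the paper actually proves: in the $1$--$0$ situation (Proposition~\ref{1-0Annihilation}) adding a prime may temporarily push the trivial relative Selmer group up to dimension $1$, and one then uses a second auxiliary prime together with the structure of the $(r,s)$-relative Selmer groups for varying $r$ (between $m$ and $4m$) to kill both simultaneously.
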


\section{Notation}

Let $F$ be a totally real number field. We fix an algebraic closure $\overline{F}$ of $F$ and we write $\Gamma_F$ for the absolute Galois group $\Gal(\overline{F}/F)$. For a finite set of primes $S$ of $F$ we let $F(S)$ be the maximal extension of $F$ into $\overline{F}$ that is unramified at primes outside of $S$. We then denote the Galois group $\Gal(F(S)/F)$ by $\Gamma_{F,S}$. For each prime $\nu \in S$ we fix an embedding $\overline{F} \hookrightarrow \overline{F_\nu}$ which gives us an embedding $\Gamma_{F_\nu} \hookrightarrow \Gamma_F$ of local absolute Galois groups into the global absolute Galois group of $F$. In what follows we will often consider tamely ramified lifts, so for an integral prime $p$ and a primes $\nu \nmid p$ of $F$ we will work with $\Gal(F_\nu^{\mathrm{tame,p}}/F_\nu)$, which is the maximal tamely ramified with $p$-power ramification quotient of $\Gamma_{F_\nu}$. It is well-known that this group is generated by a lift of the Frobenius $\sigma_\nu$ and a generator of the $p$-part of the tame inertia group $\tau_\nu$, whose interaction is given by the fundamental relation $\sigma_\nu\tau_\nu\sigma_\nu^{-1} = \tau_\nu^{N(\nu)}$. The choice of generator $\tau_\nu$ will factor in some of the computations later on. We choose $\tau_\nu$ as explained in \cite[\S 3]{FKP21}.

We fix an integral primes $p \ge 3$. Let $\mathcal O$ be the ring of integers of a finite extension $E$ of $\mathbb Q_p$ with a uniformizer $\varpi$ and maximal ideal $\mathfrak m = (\varpi)$. We write $k = \mathcal O/\mathfrak m$ for its residue field and we let $e$ be the ramification index of $\mathcal O/\mathbb Z_p$. We let $G$ be a smooth group scheme over $\mathcal O$ such that $G^0$ is split connected reductive and the quotient $G/G^0$, which we label by $\pi_0(G)$, is finite \'etale of order prime to $p$. We write $\mathfrak g$ for its Lie algebra We let $G^{\der}$ be the derived group of $G^0$ and write $\mathfrak g^{\der}$ for its Lie algebra. We will often abuse the notation and write $\mathfrak g^{\der}$ for $\mathfrak g^{\der} \otimes_\mathcal O k$ when there is not possibility of confusion. We also let $Z_{G^0}$ and $\mathfrak z_G$ be the center of $G^0$ and its Lie algebra, respectively. We will work with maximal tori $T$ inside $G^0$ and we label their Lie algebra with $\mathfrak t$. For a complete local $\mathcal O$-algebra $\mathcal O'$ with residue field $k'$ we write $\widehat{G}(\mathcal O')$ for the kernel of the reduction $G(\mathcal O') \to G(k')$. For a split maximal torus $T$ of $G^0$ (over $\mathcal O$) and a root $\alpha \in \Phi(G^0,T)$ we let $U_\alpha \subset G^0$ denote the root subgroup that is the image of the root homomorphism (``exponential mapping") $u_\alpha: \mathfrak g_\alpha \to G$. The homomorphism $u_\alpha$ is a $T$-equivariant isomorphism $\mathfrak g_\alpha \to U_\alpha$. See \cite[Theorem $4.1.4$]{Con14}.

In what follows we will often use the exponential isomorphism of groups (cf. \cite[\S $3.5$]{Til96})

$$\exp: \mathfrak g \otimes_{\mathcal O} I \longrightarrow \ker(G(R) \to G(R/I))$$

\vspace{2 mm}

\noindent where $R$ is a Noetherian local $\mathcal O$-algebra with residue field $k$, $I$ is a square-zero ideal in $R$, and we view $\mathfrak g \otimes_{\mathcal O} I$ lying inside $\mathfrak g \otimes_{\mathcal O} R$. Moreover, this isomorphism is $G(R/I)$-invariant, where the action on the source is the adjoint action after identifying $\mathfrak g \otimes_{\mathcal O} I$ with $\mathfrak g_{R/I} \otimes_{R/I} I$, while the action on the target is given by lifting to $G(R)$ and conjugating. We will often use the generic notation $\exp$, leaving the ideal $I$ implicit. 

We assume that $p \ge 3$ is very good for $G^{\der}$ (\cite[$\S 1.14$]{Car85}) We also assume that the canonical central isogeny $G^{\der} \times Z_{G^0} \to G^0$ has kernel of order coprime to $p$. Then, in particular we have a $G$-equivariant direct sum decomposition $\mathfrak g = \mathfrak g^{\der} \oplus \mathfrak z_G$. Also, by \cite[$\S 1.16$]{Car85} we have a non-degenerate $G$-invariant trace form $\mathfrak g^{\der} \times \mathfrak g^{\der} \to k$.

Given a group homomorphism $\rho:\Gamma \to H$ and an $H$-module $V$ we write $\rho(V)$ for the associated $\Gamma$-module. More specifically, we will use this whenever we have a continuous representation $\rho_s:\Gamma \to G(\mathcal O/\varpi^s)$ of some topological group $\Gamma$. We will then write $\rho_s(\mathfrak g^{\der})$ for the $\mathcal O$-module $\mathfrak g^{\der} \otimes_{\mathcal O} \mathcal O/\varpi^s$ with the $\Gamma$-action given by $\Ad \circ \rho_s$. For $1 \le r \le s$ our choice of a uniformizer of $\mathcal O$ allows us to view $\mathcal O/\varpi^r$ as a submodule of $\mathcal O/\varpi^s$ by the multiplication by $\varpi^{s-r}$ map. More generally, for $\rho_r \coloneqq \rho_s \mod{\varpi^r}$ we get an inclusion of $\rho_r(\mathfrak g^{\der})$ into $\rho_s(\mathfrak g^{\der})$. Moreover, for an element $X \in \rho_s(\mathfrak g^{\der})$ we write $\mathbb F_p[\rho_s] \cdot X$ for the $\mathbb F_p[\Gamma]$-submodule of $\rho_s(\mathfrak g^{\der})$ generated by $X$.

We write $\kappa$ for the $p$-adic cyclotomic character. We write $\mathbb Q_p/\mathbb Z_p(1)$ for the abelian group $\mathbb Q_p/\mathbb Z_p$ equipped with a Galois module structure via $\kappa$. We fix an isomoprhism between $\mathbb Q_p/\mathbb Z_p(1)$ and $\mu_{p^\infty}(\overline{F})$, which amounts to choosing a compatible collection of $p$-power roots of unity. Using this, for a $\Gamma_F$-module $V$ we can identify the Tate dual $V^* = \Hom(V,\mu_{p^\infty}(\overline{F}))$ with $\Hom(V,\mathbb Q_p/\mathbb Z_p(1))$. Moreover, for a number field $K$ we denote by $K_\infty$ the extension $K(\mu_{p^{\infty}})$.

For a $k[\Gamma_F]$-module $V$ we write $\langle \cdot,\cdot\rangle: V \times V^* \to k(1)$ for the $\Gamma_F$-equivariant, $k$-linear evaluation pairing. For a finite prime $\nu$ of $F$ we have the $k$-linear local Tate pairing $\langle \cdot, \cdot \rangle_\nu : H^1(\Gamma_{F_\nu},M) \times H^1(\Gamma_{F_\nu},M^*) \to k$.

For a finite-dimensional $\mathbb F_p[\Gamma_F]$-module $M$ we write $h^i(\Gamma_F,M)$ for the $\mathbb F_p$-dimension of $H^i(\Gamma_F,M)$.

\section{Doubling Method}

Although our main goal is to prove a claim about $\GL_2$-representations, the result of the next two sections can be generalized to $G$-valued representations. Therefore, we will state the result in their full generality. Let $\overline{\rho_1}:\Gamma_{F,S} \to G_1(k)$ and $\overline{\rho_2}:\Gamma_{F,S} \to G_2(k)$ be two residual representations, where $S$ is a finite set of primes in $F$ containing all the primes above $p$. Here, $G_1$ and $G_2$ are group schemes that satisfy the conditions mentioned in \S 2. We remark that in some of the results we will prove later the root data of $G_1$ and $G_2$ will impose a lower bound on the size of the prime $p$, which we assume it satisfies. In the general case we leave this bound implicit, but we will compute it explicitly in \S $5$ for the special case of $\GL_2$-representations. 

We assume that $\overline{\rho_i}$ surjects onto $\pi_0(G_i)$. If not, we can replace $G_i$ with the preimage in $G_i$ of the image of $\overline{\rho_i}$ in $\pi_0(G_i)$, which will not affect the deformation theory of $\overline{\rho_i}$. Let $H \unlhd \Gamma_F$ be the subgroup consisting of elements $\sigma$ such that $\overline{\rho_i}(\sigma)$ lands inside $G_i^0(k)$ for both $i=1,2$. Let $\widetilde{F}$ be the fixed field of $H$. Then $\overline{\rho_1} \times \overline{\rho_2}$ induces an isomorphism $\Gal(\widetilde{F}/F) \simeq \pi_0(G_1) \times \pi_0(G_2)$. We fix a lift $\mu_i:\Gamma_{F,S} \to G_i/G_i^{\der}(\mathcal O)$ of $\overline{\mu_i} \coloneqq \overline{\rho_i} \pmod{G_i^{\der}}$, which we will moreover assume is de Rham at primes above $p$. We note that as $G_i/G_i^{\der}$ has order prime to $p$ such a lift exists. In particular, we can take the Teichm\"uller lift, which will be finitely ramified and hence de Rham at primes over $p$. Set $K_1 = F(\overline{\rho_1},\mu_p)$, $K_2 = F(\overline{\rho_2},\mu_p)$ and $K = K_1K_2$. We suppose that $\overline{\rho_1}$ and $\overline{\rho_2}$ satisfy the following assumptions:

\begin{assump}
\label{GeneralAssumptions}

\text{ }

\begin{itemize}
    \item $H^1(\Gal(K/F),\overline{\rho_i}(\mathfrak g_i^{\der})^*) = 0$ for $i=1,2$.
    \item $\overline{\rho_i}(\mathfrak g_i^{\der})$ and $\overline{\rho_j}(\mathfrak g^{\der})^*$ have no common $\mathbb F_p[\Gamma_F]$-subquotient for $i,j=1,2$.

    \item For $i=1,2$ neither $\overline{\rho_i}(\mathfrak g_i^{\der})$ nor $\overline{\rho_i}(\mathfrak g_i^{\der})^*$ contains the trivial representation.
    
    \item For all $\nu \in S$, there is a lift $\rho_{i,\nu}:\Gamma_{F_\nu} \to G_i(\mathcal O)$ with multiplier type $\mu_i$, of $\restr{\overline{\rho_i}}{\Gamma_{F_\nu}}$ for $i=1,2$; and if $\nu \mid p$ this lift can be chosen to be de Rham and Hodge-Tate regular.
    \item $\overline{\rho_i}$ is odd for $i=1,2$, i.e. for every infinite place $\nu$ of $F$, $\dim H^0(\Gamma_{F_\nu},\overline{\rho_i}(\mathfrak g_i^{\der})) = \dim(\mathrm{Flag}_{G_i^{\der}})$.
\end{itemize}
    
\end{assump}

As seen in \cite[Theorem $5.2$]{FKP22}, by allowing extra ramification at a finite set of primes we can produce $p$-adic lifts of $\overline{\rho_1}$, and likewise of $\overline{\rho_2}$. We will show that we can produce such lifts by allowing extra ramification at the same set of primes for both representations. To do this we will go step-by-step through the lifting method of \cite{FKP22}. By making some necessary modifications along the way we will show that this is indeed possible.The lifting method consists of two major parts, which are the doubling method and the relative deformation method. In both of these methods, starting with a modulo $\varpi^n$ lift of $\overline{\rho_i}$ we produce a modulo $\varpi^{n+1}$ lift of $\overline{\rho_i}$ with the desired local properties, by possibly allowing ramification at some extra primes. We will show that we can choose the same set of primes for lifts of both $\overline{\rho_1}$ and $\overline{\rho_2}$ in each of these individual steps.

Most of the results in the next two sections are based on results in \cite{FKP21} and \cite{FKP22}. In order to orient the reader and make it easier to notice the modifications we have made in each of them, we directly reference the analogous results in the original lifting result of Fakhruddin, Khare and Patrikis. Some of our results are direct generalizations of their results to the case of two representations. As it might not be immediately obvious how to generalize some of the technical parts of the proofs when working with multiple representations we prove the results in full details. 

\subsection{Set-up for lifting modulo \texorpdfstring{$\varpi^2$}{ϖ\textasciicircum2}}

In the doubling method we will produce lifts $\rho_{i,n}:\Gamma_{F,S_n} \to G_i(\mathcal O/\varpi^n)$ of $\overline{\rho_i}$ for $i=1,2$ and $n$ large enough. To do this we will need to enlarge the set $S$. In the case of a single representation this is done by allowing ramification at carefully chosen primes which are \textit{trivial} (cf. \cite[Definition $3.2$]{FKP22}) with respect to $\overline{\rho_i}$. These are precisely the primes in $F$ that split in $K_i$. As we want to allow ramification at the same set of primes for lifts of both $\overline{\rho_1}$ and $\overline{\rho_2}$ we choose primes which are trivial with respect to both of these residual representations. In particular, we will choose primes of $F$ which split in $K$

We recall that for each prime $\nu$ in $S$ we have fixed an embedding $\overline{F} \hookrightarrow \overline{F_\nu}$. During the lifting process we will enlarge $S$ by primes which are trivial for both $\overline{\rho_1}$ and $\overline{\rho_2}$. At each such prime $\nu$ we will also fix an embedding $\overline{F} \hookrightarrow \overline{F_\nu}$ and hence make sense of $\sigma_\nu$ and $\tau_\nu$ within $\Gamma_F$. We will do this by specifying a decomposition group at $\nu$ inside a finite Galois extension $L/F$ which plays a role in the choice of the prime $\nu$. For a prescribed element $\sigma \in \Gal(L/F)$ using Chebotarev density theorem we can find a positive density set of primes $\nu$ of $F$ for which there exists a prime $\nu_L$ of $L$ that lies over $\nu$ and $\mathrm{Frob}_{\nu_L} = \sigma$. The choice of this prime $\nu_L$ will sufficiently specify a decomposition group at $\nu$.

\begin{notn}

\label{Cocyleimage}

Let $M$ be any of the Galois modules $\overline{\rho_i}(\mathfrak g_i^{\der})$ or $\overline{\rho_i}(\mathfrak g_i^{\der})^*$, and $\phi$ a cohomology class in $H^1(\Gamma_{F,S'},M)$, where $S \subseteq S'$. For a trivial prime $\nu$ with respect to both $\overline{\rho_1}$ and $\overline{\rho_2}$ we can make sense of $\phi(\sigma_\nu)$ and $\phi(\tau_\nu)$ as elements of $M$. As $K$ trivializes the action on $M$ we have that $\restr{\phi}{\Gamma_K}$ is a homomorphism and it doesn't depend on the choice of a cocycle representative of the cohomology class. We let $K_\phi$ be the fixed field of $\restr{\phi}{\Gamma_K}$ over $K$. We note that $K_\phi/F$ is a finite Galois extension. For this we need to show that $\Gamma_{K_\phi}$ is normal within $\Gamma_F$. For $\alpha \in \Gamma_{K_\phi}$ and $\beta \in \Gamma_F$ we have

$$\phi(\beta\alpha\beta^{-1}) = \phi(\beta) + \beta \cdot \phi(\alpha\beta^{-1}) = \phi(\beta) + \beta \cdot (\phi(\alpha) + \alpha \cdot \phi(\beta^{-1})) = \phi(\beta) + \beta \cdot \phi(\beta^{-1}) = \phi(\beta\beta^{-1}) = \phi(1) = 0$$ 

\vspace{2 mm}

\noindent where we used that $\phi(\alpha) = 0$ and that $\alpha$ acts trivially on $M$ since $\alpha \in \Gamma_K$. Hence $\beta\alpha\beta^{-1} \in \ker \phi$. As $\Gamma_K \unlhd \Gamma_F$ we have that $\beta\alpha\beta^{-1} \in \Gamma_K$ and thus it is an element of $\ker \restr{\phi}{\Gamma_K} = \Gamma_{K_\phi}$, which proves the claim. A choice of a decomposition group at $\nu$ inside $\im \restr{\phi}{\Gamma_K} \simeq \Gal(K_\phi/K) \subseteq \Gal(K_\phi/F)$ will determine the image of $\sigma_\nu$ and $\tau_\nu$ under the cocycle $\phi$. Therefore, when we refer to $\phi(\sigma_\nu)$ and $\phi(\tau_\nu)$ we mean that there is a choice of a decomposition group at $\nu$ which gives us the appointed values of $\sigma_\nu$ and $\tau_\nu$ under $\phi$. Moreover, we note that these do not depend on the choice of a cocycle representative of the cohomology class. Indeed, as $\nu$ splits in $K$ we have that $\Gamma_{F_\nu} \subseteq \Gamma_K$. Therefore, $B^1(\Gamma_{F_\nu},M) = 0$ and $\restr{\phi}{\Gamma_{F_\nu}}$ doesn't depend on a cocycle representative.

\end{notn}

\begin{lem}
\label{AnnihilationofSh}

    The set $S$ can be enlarged to a finite set $T$ by adding trivial primes with respect to both $\overline{\rho_1}$ and $\overline{\rho_2}$ such that $\Sh_T^1(\Gamma_{F,T},\overline{\rho_i}(\mathfrak g_i^{\der})^*) = 0$ for $i=1,2$.
    
\end{lem}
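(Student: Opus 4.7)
The plan is to start with $T = S$ and iteratively adjoin trivial primes (for both $\overline{\rho_1}$ and $\overline{\rho_2}$) to $T$, showing that each step strictly decreases the nonnegative integer $\dim_{\mathbb F_p}\Sh^1_T(\Gamma_{F,T},M_1)+\dim_{\mathbb F_p}\Sh^1_T(\Gamma_{F,T},M_2)$ where $M_i=\overline{\rho_i}(\mathfrak g_i^{\der})^*$. Since Tate--Shafarevich groups for finite Galois modules are finite, this process must terminate with both groups equal to zero.

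The first key step is a monotonicity statement: for any trivial prime $\nu\notin T$ we have $\Sh^1_{T\cup\{\nu\}}(\Gamma_{F,T\cup\{\nu\}},M_i)=\ker\bigl(\Sh^1_T(\Gamma_{F,T},M_i)\xrightarrow{\mathrm{res}_\nu} H^1(\Gamma_{F_\nu},M_i)\bigr)$. Indeed, if a class $\psi\in H^1(\Gamma_{F,T\cup\{\nu\}},M_i)$ vanishes on $\Gamma_{F_\nu}$, then in particular $\psi|_{I_\nu}=0$ as a cocycle (since $M_i$ is unramified at $\nu$, so $I_\nu$ acts trivially); the same computation applies to all $\Gamma_F$-conjugates of $I_\nu$, so $\psi$ is trivial on the closed normal subgroup $\Gal(F(T\cup\{\nu\})/F(T))$ generated by these inertia groups. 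Inflation-restriction then places $\psi$ in $H^1(\Gamma_{F,T},M_i)$, and the vanishing at the other primes in $T$ puts it in $\Sh^1_T(M_i)$. In particular $\dim\Sh^1_{T\cup\{\nu\}}(M_j)\le\dim\Sh^1_T(M_j)$ for both $j$.

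The second key step is producing a trivial prime that strictly lowers one of the dimensions. Assume WLOG that $\Sh^1_T(M_1)\ne 0$ and pick a nonzero $\phi\in\Sh^1_T(M_1)$. Since $K\supseteq K_1$ the group $\Gamma_K$ acts trivially on $M_1$, and the first bullet of Assumptions \ref{GeneralAssumptions} gives $H^1(\Gal(K/F),M_1)=0$; inflation-restriction therefore forces $\phi|_{\Gamma_K}$ to be a nonzero $\Gal(K/F)$-equivariant homomorphism $\Gamma_K\to M_1$. Let $K_\phi$ be its fixed field as in Notation \ref{Cocyleimage}; then $\Gal(K_\phi/K)\hookrightarrow M_1$ is nontrivial and normal in $\Gal(K_\phi/F)$. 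By Chebotarev applied to $K_\phi/F$, the conjugacy class of any nonzero element of $\Gal(K_\phi/K)$ is realized as $\mathrm{Frob}_\nu$ for a positive density set of primes $\nu$ of $F$; any such $\nu$ splits completely in $K$ (so is trivial for both $\overline{\rho_i}$) and satisfies $\phi(\mathrm{Frob}_\nu)\ne 0$, hence $\phi|_{\Gamma_{F_\nu}}\ne 0$. Adjoining this $\nu$ to $T$ removes $\phi$ from $\Sh^1_{T\cup\{\nu\}}(M_1)$ while by the monotonicity of the previous paragraph $\Sh^1_{T\cup\{\nu\}}(M_2)\subseteq\Sh^1_T(M_2)$; the total dimension strictly drops.

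The principal subtlety is the descent step in the monotonicity argument, which has to handle the distinction between being \emph{unramified at $\nu$} and \emph{locally trivial at $\nu$}, and requires knowing that the conjugate inertia subgroups all act trivially on $M_i$. Once this, together with the Chebotarev construction of the prime $\nu$, is established, the induction on the total Shafarevich dimension produces the desired finite set $T$.
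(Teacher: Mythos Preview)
Your proof is correct and follows essentially the same approach as the paper's: both use the first bullet of Assumptions~\ref{GeneralAssumptions} together with inflation--restriction to show a nonzero class in $\Sh^1$ restricts nontrivially to $\Gamma_K$, then invoke Chebotarev in $K_\phi/F$ to produce a trivial prime detecting it, and both verify that adjoining trivial primes introduces no new $\Sh^1$ elements (your equality $\Sh^1_{T\cup\{\nu\}}=\ker(\Sh^1_T\to H^1(\Gamma_{F_\nu},M_i))$ is exactly the paper's observation that a class ramified at a new trivial prime cannot be locally trivial there). The only cosmetic difference is that you proceed by induction on the total $\Sh^1$ dimension while the paper adds one prime per nonzero class of $\Sh^1_S$ in a single batch.
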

\begin{proof}
    We recall that $\Sh_T^1(\Gamma_{F,T},\overline{\rho_i}(\mathfrak g_i^{\der})^*)$ is defined as $\bigcap_{\nu \in T} \ker (H^1(\Gamma_{F,T},\overline{\rho_i}(\mathfrak g_i^{\der})^*) \to H^1(\Gamma_{F_\nu},\overline{\rho_i}(\mathfrak g_i^{\der})^*))$. Therefore, it is enough for $\psi \neq 0 \in \Sh_S^1(\Gamma_{F,S},\overline{\rho_i}(\mathfrak g_i^{\der})^*)$ to find a trivial prime $\nu$ for both $\overline{\rho_1}$ and $\overline{\rho_2}$ such that $\restr{\psi}{\Gamma_{F_\nu}} \neq 0$. Enlarging $S$ by all such trivial primes we get that no element of $H^1(\Gamma_{F,S},\overline{\rho_i}(\mathfrak g_i^{\der})^*)$ will be an element of $\Sh_T^1(\Gamma_{F,T},\overline{\rho_i}(\mathfrak g_i^{\der})^*)$. Additionally, we note that enlarging the set $S$ to $T$ will not introduce any new elements in the Tate-Shafarevich group. Indeed, any element of $H^1(\Gamma_{F,T},\overline{\rho_i}(\mathfrak g_i^{\der})^*)$ which is not in $H^1(\Gamma_{F,S},\overline{\rho_i}(\mathfrak g_i^{\der})^*)$ has to be ramified at some prime $\nu \in T \setminus S$. Then, its restriction to $\Gamma_{F_\nu}$ is non-trivial since $B^1(\Gamma_{F_\nu},\overline{\rho_i}(\mathfrak g_i^{\der})^*)$ contains only unramified $1$-cocycles as $\overline{\rho_i}$ doesn't ramify at $\nu$. We also note that as $H^1(\Gamma_{F,S},\overline{\rho_i}(\mathfrak g_i^{\der})^*)$ is finite for both $i=1,2$ the set $T$ will be a finite enlargement of $S$.

    Now, let $\psi \neq 0 \in \Sh_S^1(\Gamma_{F,S},\overline{\rho_i}(\mathfrak g_i^{\der})^*)$. As $\Gamma_{F,S}$ is a quotient of $\Gamma_{F}$ we can identify $H^1(\Gamma_{F,S},\overline{\rho_i}(\mathfrak g_i^{\der})^*)$ with the subgroup of $H^1(\Gamma_F,\overline{\rho_i}(\mathfrak g_i^{\der})^*)$ that is trivial on inertia groups outside of $S$. Now from the Inflation-Restriction sequence we have

    $$0 \longrightarrow H^1(\Gal(K/F),\overline{\rho_i}(\mathfrak g_i^{\der})^*) \longrightarrow H^1(\Gamma_F,\overline{\rho_i}(\mathfrak g_i^{\der})^*) \longrightarrow H^1(\Gamma_F,\overline{\rho_i}(\mathfrak g_i^{\der})^*)$$

    \vspace{2 mm}

    By the first bullet point of Assumptions \ref{GeneralAssumptions} the first term of the exact sequence is $0$, so by the exactness $\restr{\psi}{\Gamma_K} \neq 0$. As above we let $K_\psi$ be the fixed field of $\restr{\psi}{\Gamma_K}$ over $K$, which will be a non-trivial finite Galois extension of $K$. Moreover, as already explained $K_\psi/F$ is also a finite Galois extension. We choose a non-trivial element $\sigma$ of $\im \restr{\psi}{\Gamma_K}  \simeq \Gal(K_\psi/K) \subseteq \Gal(K_\phi/F)$ and by Chebotarev density theorem we can find a prime $\nu$ in $F$, disjoint from $S$, and a decomposition group at $\nu$ so that $\restr{\sigma_\nu}{\Gamma_{K_\psi}} = \sigma$. By our choice of $\sigma$ we have $\restr{\sigma_\nu}{\Gamma_K} = 1$ and therefore $\nu$ is a trivial prime with respect to both $\overline{\rho_1}$ and $\overline{\rho_2}$. Moreover, as $\restr{\sigma_\nu}{\Gamma_{K_\psi}} \neq 1$ we have that $\psi(\sigma_\nu) \neq 0$, which is exactly what we wanted.

\end{proof}

\begin{rem}

\label{VanishingofSh}

By global duality for our choice of $T$ coming from Lemma \ref{AnnihilationofSh} we have that $\Sh_T^2(\Gamma_{F,T},\overline{\rho_i}(\mathfrak g_i^{\der})) = 0$ for $i=1,2$

\end{rem}

We suppose that $T$ is a finite enlargement of $S$ as in Lemma \ref{AnnihilationofSh}

\begin{lem}[{\cite[Lemma $3.4$]{FKP22}}]

    \label{M_ZEnlargement}

    \text{ }
    \begin{enumerate}
        \item There is a finite enlargement by trivial primes for both $\overline{\rho_1}$ and $\overline{\rho_2}$ of $T$ (still labeled by $T$) such that for all cyclic submodule $M_Z \coloneqq \mathbb F_p[\overline{\rho_i}] \cdot Z \subseteq \overline{\rho_i}(\mathfrak g_i^{\der})$ (for $Z \in \overline{\rho_i}(\mathfrak g_i^{\der})$), $\dim_{\mathbb F_p} \Sh^1_T(\Gamma_{F,T},M_Z^*)$ is minimal among such enlargements, i.e. it is equal to $\dim \Sh^1_{T_{\max}}(\Gamma_{F,T},M_Z^*)$, where $T_{\max}$ is the union of $S$ and the set of primes in $F$ which are trivial for both $\overline{\rho_1}$ and $\overline{\rho_2}$.
        
        \item Let $T$ be the enlargement produced in part $(1)$. For any trivial prime $w \notin T$ with respect to both $\overline{\rho_1}$ and $\overline{\rho_2}$, let $L_{w,Z} \subseteq H^1(\Gamma_{F_w},M_Z)$ be the subspace of $1$-cocycles $\phi$ such that $\phi(\tau_w) \in \mathbb F_p \cdot Z$. Then there is an exact sequence:

        $$0 \longrightarrow H^1(\Gamma_{F,T},M_Z) \longrightarrow H^1_{L_{w,Z}}(\Gamma_{F,T \cup w},M_Z) \xrightarrow{\mathrm{ev}_{\tau_w}} \mathbb F_p \cdot Z \longrightarrow 0$$

        \vspace{2 mm}

        \noindent where $\mathrm{ev}_{\tau_w}$ is the evaluation map $\phi \to \phi(\tau_w)$.

        \item There is a further enlargement of $T$ produced in part $(1)$ that satisfies the analogous statement in parts $(1)$ and $(2)$ with respect to the cyclic submodules $M_\lambda \coloneqq \mathbb F_p[\overline{\rho_i}] \cdot \lambda \subseteq \overline{\rho_i}(\mathfrak g_i^{\der})^*$ for all $\lambda \in \overline{\rho_i}(\mathfrak g_i^{\der})^*$.
    \end{enumerate}
\end{lem}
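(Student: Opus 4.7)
The plan is to adapt \cite[Lemma 3.4]{FKP22} to the two-representation setting, using that trivial primes for both $\overline{\rho_1}$ and $\overline{\rho_2}$ (i.e.\ those splitting in $K = K_1 K_2$) have positive Chebotarev density, and that the collections $\{M_Z\}_{Z \in \overline{\rho_i}(\mathfrak g_i^{\der})}$ and $\{M_\lambda\}_{\lambda \in \overline{\rho_i}(\mathfrak g_i^{\der})^*}$ are finite since both ambient modules are finite $\mathbb F_p$-vector spaces.

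For part $(1)$, I first observe that enlarging $T$ by a trivial prime $w$ can only weakly decrease $\dim_{\mathbb F_p} \Sh^1_T(\Gamma_{F,T},M_Z^*)$: any class $\psi \in \Sh^1_{T \cup w}(M_Z^*)$ is trivial on $\Gamma_{F_w}$, hence unramified at $w$, so it already lies in $H^1(\Gamma_{F,T},M_Z^*)$ while still satisfying the triviality conditions at the primes in $T$; consequently $\Sh^1_{T \cup w}(M_Z^*) \subseteq \Sh^1_T(M_Z^*)$. As long as $\Sh^1_T(M_Z^*) \supsetneq \Sh^1_{T_{\max}}(M_Z^*)$, pick $\psi$ in the difference and a prime $w \in T_{\max} \setminus T$ witnessing $\psi|_{\Gamma_{F_w}} \neq 0$; adjoining $w$ strictly lowers $\dim \Sh^1_T(M_Z^*)$. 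Since the dimensions are bounded nonnegative integers, the procedure terminates after finitely many steps for each $M_Z$, and taking the finite union over the finitely many cyclic submodules of the enlargements thus obtained yields a single finite $T$ simultaneously achieving $\dim \Sh^1_T(M_Z^*) = \dim \Sh^1_{T_{\max}}(M_Z^*)$ for every $M_Z$.

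For part $(2)$, exactness at the first two spots is immediate: the inflation $H^1(\Gamma_{F,T},M_Z) \hookrightarrow H^1(\Gamma_{F,T \cup w},M_Z)$ is injective with image the classes unramified at $w$, which coincide with the kernel of $\mathrm{ev}_{\tau_w}$ on $H^1_{L_{w,Z}}$. The content is the surjectivity of $\mathrm{ev}_{\tau_w}$. Applying Wiles's formula to the Selmer systems over $\Gamma_{F,T \cup w}$ with $L_v = H^1(\Gamma_{F_v},M_Z)$ at each $v \in T$ and with $L_w$ equal to either $H^1(\Gamma_{F_w},M_Z)$ or $H^1_{\unr}(\Gamma_{F_w},M_Z)$ yields
\[
\dim H^1(\Gamma_{F,T \cup w},M_Z) - \dim H^1(\Gamma_{F,T},M_Z) = \dim M_Z + \dim \Sh^1_{T \cup w}(M_Z^*) - \dim \Sh^1_T(M_Z^*),
\]
and by the minimality from part $(1)$ the correction term vanishes. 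The natural injection $H^1(\Gamma_{F,T \cup w},M_Z)/H^1(\Gamma_{F,T},M_Z) \hookrightarrow H^1(\Gamma_{F_w},M_Z)/H^1_{\unr}(\Gamma_{F_w},M_Z) \cong M_Z$ is therefore a dimension equality and hence an isomorphism, forcing $\mathrm{ev}_{\tau_w}: H^1(\Gamma_{F,T \cup w},M_Z) \twoheadrightarrow M_Z$ to be surjective. Restricting to the preimage of $\mathbb F_p \cdot Z$ recovers $H^1_{L_{w,Z}}(\Gamma_{F,T \cup w},M_Z)$ and gives the claimed surjection onto $\mathbb F_p \cdot Z$.

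Part $(3)$ is entirely symmetric: replace $M_Z \subseteq \overline{\rho_i}(\mathfrak g_i^{\der})$ by $M_\lambda \subseteq \overline{\rho_i}(\mathfrak g_i^{\der})^*$ in the arguments above; the set of cyclic $\mathbb F_p[\overline{\rho_i}]$-submodules of the dual is also finite, and all structural inputs are symmetric in $M$ and $M^*$. The main technical hurdle is the surjectivity step in part $(2)$: it requires the clean dimension count via Wiles's formula together with the minimality from part $(1)$, and crucially this minimality must be achieved uniformly across all finitely many cyclic submodules (and for both $\overline{\rho_1}$ and $\overline{\rho_2}$) before the dimension argument applies.
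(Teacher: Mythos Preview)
Your proof is correct and follows the same overall strategy as the paper's. Parts (1) and (3) are argued identically. In part (2), the paper applies Greenberg--Wiles directly with the local condition $L_{w,Z}$ at $w$, obtains the inequality $h^1_{L_{w,Z}}(\Gamma_{F,T\cup w},M_Z) - h^1(\Gamma_{F,T},M_Z) \ge 1$ from the containment $\Sh^1_T(M_Z^*) = \Sh^1_{T\cup w}(M_Z^*) \subseteq H^1_{L_{w,Z}^\perp}$, and concludes surjectivity since the target $\mathbb F_p\cdot Z$ is one-dimensional; you instead run Greenberg--Wiles with the full local condition at $w$, use minimality as an \emph{equality} to get $\dim H^1(\Gamma_{F,T\cup w},M_Z) - \dim H^1(\Gamma_{F,T},M_Z) = \dim M_Z$, deduce that $\mathrm{ev}_{\tau_w}$ surjects onto all of $M_Z$, and then restrict to $\mathbb F_p\cdot Z$. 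Your route yields the slightly stronger intermediate statement (surjectivity onto $M_Z$) and avoids explicitly exhibiting the cocycle $\tau_w\mapsto Z$, $\sigma_w\mapsto 0$, while the paper's route stays closer to the Selmer condition actually used downstream; both are short and rest on the same inputs.
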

\begin{proof}

    \text{ }

    \begin{enumerate}
        \item For any enlargements $T' \subseteq T''$ of $T$ we have

        $$\Sh_{T''}^1(\Gamma_{F,T''},M_Z^*) = \Sh_{T''}^1(\Gamma_{F,T'},M_Z^*) \subseteq \Sh_{T'}^1(\Gamma_{F,T'},M_Z^*)$$

        \vspace{2 mm}

        The inclusion follows trivially since we are imposing conditions at a smaller set $T'$ in the right-most Tate-Shafarevich group. On the other hand, for the equality we note that if $\psi$ is ramified at some prime $\nu \in T'' \,\setminus\, T'$ then its restriction to $\Gamma_{F_\nu}$ is non-zero as $\nu$ is a trivial prime with respect to $\overline{\rho_i}$, and hence $\psi \notin \Sh_{T''}(\Gamma_{F,T''},M_Z^*)$. In particular, for any finite enlargement $T'$ of $T$ we have $\Sh^1_{T_{\max}}(\Gamma_{F,T'},M_Z^*) \subseteq \Sh^1_{T'}(\Gamma_{F,T'},M_Z^*)$.

        Now, for any $\psi \in \Sh^1_T(\Gamma_{F,T},M_Z^*)$, but not in $\Sh_{T_{\max}}(\Gamma_{F,T},M_Z^*)$ we can find a prime $\nu \in T_{\max} \,\setminus\, T$ such that $\restr{\psi}{\Gamma_{F_\nu}} \neq 0$. We enlarge $T$ to $T'$ by adding one such prime for all $\psi$ as mentioned before. As $T$ is a finite set and $M_Z^*$ is a finite module $\Sh_T^1(\Gamma_{F,T},M_Z^*)$ is finite. Therefore, $T'$ is a finite enlargement of $T$. From the construction and the first equality we have

        $$\Sh_{T'}^1(\Gamma_{F,T'},M_Z^*) = \Sh_{T'}^1(\Gamma_{F,T},M_Z^*) = \Sh_{T_{\max}}(\Gamma_{F,T},M_Z^*)$$

        \vspace{2 mm}

        From this we deduce that $T'$ is minimal among all such enlargements in the sense explained in the statement of the lemma. Moreover, the equalities tell us that further enlarging $T$ will not affect the dimension of the Tate-Shafarevich group. Hence, we can run this argument for any $M_Z$, of which there are finitely many as $\overline{\rho_i}(\mathfrak g_i^{\der})$ is finite, to get the desired finite enlargement.

        \item We have an exact sequence

        $$0 \longrightarrow H^1(\Gamma_{F,T},M_Z) \longrightarrow H^1_{L_{w,Z}}(\Gamma_{F,T \cup w},M_Z) \xrightarrow{\mathrm{ev}_{\tau_w}} \mathbb F_p \cdot Z$$

        \vspace{2 mm}

        Since $H^1(\Gamma_{F,T},M_Z)$ and $H_{L_{w,Z}}^1(\Gamma_{F,T \cup w}, M_Z)$ differ only at the local behavior at $w$ and $L_w^{\mathrm{unr}} \subseteq L_{w,Z}$, we impose less strict local conditions at $w$ in the latter giving us the exactness at the first term. As $H^1(\Gamma_{F,T},M_Z)$ will contain exactly the classes of $H^1_{L_{w,Z}}(\Gamma_{F,T \cup w},M_Z)$ which are unramified at $w$, i.e. lie in the kernel of $\mathrm{ev}_{\tau_w}$ we get the exactness at the second term. It remains to show that the map $\mathrm{ev}_{\tau_w}$ is surjective. As the target $\mathbb F_p \cdot Z$ has $\mathbb F_p$-dimension $1$ it suffices to show that $H^1_{L_{w,Z}}(\Gamma_{F,T \cup w},M_Z)$ has a strictly greater $\mathbb F_p$-dimension that $H^1(\Gamma_{F,T},M_Z)$. Applying the Greenberg-Wiles formula once with unramified conditions away from $T$, and once with unramified conditions away from $T \cup w$ and conditions $L_{w,Z}$ at $w$, and subtracting them we get:

        $$h^1_{L_{w,Z}}(\Gamma_{F,T \cup w},M_Z) - h^1(\Gamma_{F,T},M_Z) - h^1_{L_{w,Z}^\perp}(\Gamma_{F,T \cup w},M_Z^*) + \dim_{\mathbb F_p} \Sh_T^1(\Gamma_{F,T},M_Z^*) = \dim L_{w,Z} - \dim L_w^{\mathrm{unr}}$$

        \vspace{2 mm}

        \noindent where we used the that the corresponding dual Selmer group to $H^1(\Gamma_{F,T},M_Z)$ is $\Sh_T^1(\Gamma_{F,T},M_Z^*)$. Now, we consider the map $\mathrm{ev}_{\tau_w}$, viewing it as a map from $L_{w,Z}$ to $\mathbb F_p \cdot Z$. We'll show that this is a surjective map. As $w$ is a trivial prime for $\overline{\rho_i}$, $\Gamma_{F_w}$ acts trivially on $M_Z$ and hence $H^1(\Gamma_{F_w},M_Z) = \Hom (\Gamma_{F_w},M_Z)$. Therefore, it is enough to enough to give a map $\Gamma_{F_w} \to M_Z$ that will send $\tau_w$ to a non-trivial element of $\mathbb F_p \cdot Z$. We know that $\Gal(F^{\mathrm{tame},p}_w/F_w) \simeq \mathbb Z_p \rtimes \widehat{\mathbb Z}$, where the first factor is generated by $\tau_w$, the second one by $\sigma_w$, and the semi-direct action is given by the cyclotomic character. We can then define a homomorphism from this Galois group to $\mathbb F_p \cdot Z$ by sending $\sigma_w$ to $0$ and $\tau_w$ to $Z$. This is a well-defined homomorphism because it respects the fundamental relation between $\sigma_w$ and $\tau_w$ since $\kappa(\sigma_w) \equiv 1 \pmod{p}$, as $w$ is a trivial prime for $\overline{\rho_i}$. Therefore, as the $\ker (\mathrm{ev}_{\tau_w}) = L_w^{\mathrm{unr}}$ we have that $\dim L_{w,Z} - \dim L_w^{\mathrm{unr}} = 1$. On the other hand

        $$\Sh_T^1(\Gamma_{F,T},M_Z^*) = \Sh_{T \cup w}(\Gamma_{F,T \cup w},M_Z^*) \subseteq H^1_{\mathcal L_{w,Z}^\perp}(\Gamma_{F,T \cup w},M_Z^*)$$

        \vspace{2 mm}

        \noindent where the equality follows from the minimality of $T$ by part $(1)$, while the inclusion follows because the two Selmer group differ only by the local conditions at $w$, which are $0$ and $L_{w,Z}^\perp$, respectively. Plugging all this in the equation above we get

        $$h^1_{L_{w,Z}}(\Gamma_{F,T \cup w},M_Z) - h^1(\Gamma_{F,T},M_Z) \ge 1$$

        \vspace{2 mm}

        \noindent proving the surjectivity of $\mathrm{ev}_{\tau_w}$.

        \item This follows from the analogous proofs of parts $(1)$ and $(2)$ with $M_\lambda$ in place of $M_Z$.

    \end{enumerate}
    
\end{proof}

\begin{rem}

    \label{Enlargement}

    From the proof above we get that the vanishing of the Tate-Shafarevich group and the conclusions of the lemma will still hold for any further enlargements of $T$. We will use this without explicitly mentioning it.
    
\end{rem}

We now fix $k$-basis $\{e_{1,b}^*\}_{b \in B_1}$ of $\overline{\rho_1}(\mathfrak g_1^{\der})^*$ and $\{e_{2,b}^*\}_{b \in B_2}$ of $\overline{\rho_2}(\mathfrak g_2^{\der})^*$. Then for each $b \in B_1 \sqcup B_2$ we fix a prime $t_b$, which is trivial with respect to both $\overline{\rho_1}$ and $\overline{\rho_2}$ and we add it to $T$. We also include one trivial prime $t_0$ in $T$. Finally, we fix two more trivial primes $t_1$ and $t_2$, but we do not include them in $T$. We will use these primes to ensure that the fixed fields of certain cocycles are linearly disjoint over $K$. We now write $T' \coloneqq T \,\setminus \, (t_0 \cup \{t_{b}\}_{b\in B_1 \sqcup B_2})$. Since $T$ was arranged to satisfy part $(1)$ of Lemma \ref{M_ZEnlargement} before the addition of these primes we can apply the lemma to $T'$ in place of $T$ and $t_b$ in place of $w$. For a fixed $b \in B_i$ we find a cohomology class

$$\theta_{i,b} \in H^1(\Gamma_{F,T' \cup t_b},M_{e_{i,b}^*})$$

\vspace{2 mm}

\noindent such that $\theta_{1,b}(\tau_{t_b}) = e_{1,b}^*$. Similarly, for any trivial prime $\nu \in T$ we can find a cohomology class

$$\theta_{i,b}^{(\nu)} \in H^1(\Gamma_{F,T' \cup \nu},M_{e_{i,b}^*})$$

\vspace{2 mm}

\noindent so that $\theta_{1,b}^{(\nu)}(\tau_\nu) = e_{1,b}^*$. We then set

$$\eta_{i,b}^{(\nu)} \coloneqq \theta_{i,b} + \theta_{i,b}^{(\nu)} \in H^1(\Gamma_{F,T' \cup \{t_b,\nu\}},M_{e_{i,b}^*})$$

\vspace{2 mm}

such that $\eta_{i,b}^{(\nu)}(\tau_\nu) = \eta_{i,b}^{(\nu)}(\tau_{t_b}) = e_{1,b}^*$. We emphasize that from the way these classes were constructed none of them is ramified at $t_0$. We now have the following key technical result:

\begin{prop}[cf. {\cite[Proposition $3.6$]{FKP22}}]
    \label{cokernelgenerators}

    Let $r$ be the dimension (over $\mathbb F_p$) of the cokernel of the restriction map

    $$\Psi_{1,T} : H^1(\Gamma_{F,T},\overline{\rho_1}(\mathfrak g_1^{\der})) \to \bigoplus_{\nu \in T} H^1(\Gamma_{F_\nu},\overline{\rho_1}(\mathfrak g_1^{\der}))$$

    \vspace{2 mm}

    Fix an integer $c \ge D+1$ and a Galois extension $L/F$ containing $K$, unramified outside $T$, and linearly disjoint over $K$ from the composite of $K_\infty$ and the fixed fields of $K_\psi$ of any collection of classes $\psi \in H^1(\Gamma_{F,T},\overline{\rho_1}(\mathfrak g_1^{\der})^*)$. Then there is

    \begin{itemize}
        \item a collection $\{Y_i\}_{i=1}^r$ of elements of $\bigoplus_{\nu \in T} H^1(\Gamma_{F_\nu},\overline{\rho_1}(\mathfrak g_1^{\der}))$ with image equal to an $\mathbb F_p$-basis of $\coker(\Psi_{1,T})$; and for each $i$
        \item a class $q_i \in \ker((\mathbb Z/p^c)^\times \to (\mathbb Z/p^D)^\times)$ that is non-trivial modulo $p^{D+1}$;
        \item a split maximal torus $T_i$, a root $\alpha_i \in \Phi(G^0,T_i)$, and a root vector $X_{\alpha_i}$; and at this point choosing a tuple $g_1,g_2,\dots,g_r$ of elements of $\Gal(L/K)$
        \item a Chebotarev set $\mathcal C_i$ of trivial primes $\nu \notin T$ and a positive upper-density subset $\mathfrak l_i \subseteq \mathcal C_i$;
        \item for each $\nu \in \mathcal C_i$ a choice of a decomposition group at $\nu$;
        \item for each $\nu \in \mathfrak l_i$ classes $h_1^{(\nu)} \in H^1(\Gamma_{F,T \cup \nu},\overline{\rho_1}(\mathfrak g_1^{\der}))$ and $h_2^{(\nu,t_2)} \in H^1(\Gamma_{F,T \cup \{\nu,t_2\}},\overline{\rho_2}(\mathfrak g_2^{\der}))$;
    \end{itemize}

    such that

    \begin{itemize}
        \item For all $\nu \in \mathcal C_i$, $N(\nu) \equiv q_i \pmod{p^c}$ and the restriction of $\sigma_\nu$ in $\Gal(L/K)$ is $g_i$.
        \item For all $\nu \in \mathfrak l_i$

        \begin{itemize}
            \item $\restr{h_1^{(\nu)}}{T} = Y_i$.
            \item $h_1^{(\nu)}(\tau_\nu) = h_1^{(\nu)}(\tau_{t_0}) = X_{\alpha_i}$.
            \item $h_1^{(\nu)}$ lies in the image of $H^1(\Gamma_{F,T \cup \nu},\mathbb F_p[\overline{\rho_1}] \cdot X_{\alpha_i}) \to H^1(\Gamma_{F,T \cup \nu},\overline{\rho_1}(\mathfrak g_1^{\der}))$.
            \item $\restr{h_2^{(\nu,t_2)}}{T}$ is independent of $\nu \in \mathfrak l_i$.
            \item $h_2^{(\nu,t_2)}(\tau_\nu) = h_2^{(\nu,t_2)}(\tau_{t_0}) = h_2^{(\nu,t_2)}(\tau_{t_2}) = X$ for some non-zero root vector $X \in \mathfrak g_2^{\der}$, which is independent of $\nu \in \mathfrak l_i$.
        \end{itemize}
    \end{itemize}

    Similarly, for $c \ge D+1$, $L$ as above, and a non-zero element $Z \in \mathfrak g_1^{\der}$ there is a class $q_Z \in \ker((\mathbb Z/p^c)^\times \to (\mathbb Z/p^D)^\times)$ that is non-trivial modulo $p^{D+1}$, and for any choice $g_Z \in \Gal(L/K)$, a Chebotarev set $\mathcal C_Z$ of trivial primes (and a choice of decomposition group at each such prime) containing a positive upper-density subset $\mathfrak l_Z$, and for each $\nu \in \mathfrak l_Z$ classes $h_1^{(\nu)} \in H^1(\Gamma_{F,T \cup \nu},\overline{\rho_1}(\mathfrak g_1^{\der}))$ and $h_2^{(\nu,t_2)} \in H^1(\Gamma_{F,T \cup \{\nu,t_2\}},\overline{\rho_2}(\mathfrak g_2^{\der}))$ such that

    \begin{itemize}
        \item For all $\nu \in \mathcal C_Z$, $N(\nu) \equiv q_Z \pmod{p^c}$ and the restriction of $\sigma_\nu$ in $\Gal(L/K)$ is $g_Z$.
        \item For all $\nu \in \mathfrak l_Z$

        \begin{itemize}
            \item $\restr{h_1^{(\nu)}}{T}$ is independent of $\nu \in \mathfrak l_Z$.
            \item $h_1^{(\nu)}(\tau_\nu) = h_1^{(\nu)}(\tau_{t_0}) = Z$.
            \item $h_1^{(\nu)}$ lies in the image of $H^1(\Gamma_{F,T \cup \nu},\mathbb F_p[\overline{\rho_1}] \cdot Z) \to H^1(\Gamma_{F,T \cup \nu},\overline{\rho_1}(\mathfrak g_1^{\der}))$.
            \item $\restr{h_2^{(\nu,t_2)}}{T}$ is independent of $\nu \in \mathfrak l_Z$.
            \item $h_2^{(\nu,t_2)}(\tau_\nu) = h_2^{(\nu,t_2)}(\tau_{t_0}) = h_2^{(\nu,t_2)}(\tau_{t_2}) = X$ for some non-zero root vector $X \in \mathfrak g_2^{\der}$, which is independent of $\nu \in \mathfrak l_i$.
        \end{itemize}
    \end{itemize}

\end{prop}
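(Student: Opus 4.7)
The plan is to extend the proof of \cite[Proposition $3.6$]{FKP22}, which handles only the $\overline{\rho_1}$-side, by simultaneously constructing the $\overline{\rho_2}$-cocycle $h_2^{(\nu,t_2)}$ at each prime $\nu$ in the Chebotarev set produced for $h_1^{(\nu)}$. The auxiliary primes $t_0, t_2$ and the basis-indexed primes $t_b$ are included for exactly this purpose: they provide enough ramified cocycle classes $\theta_{i,b}, \theta_{i,b}^{(\nu)}, \eta_{i,b}^{(\nu)}$ to decouple the $T$-restriction from the $\tau_\nu$-behavior on each side.

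First I would run the argument of \cite[Proposition 3.6]{FKP22} verbatim on the $\overline{\rho_1}$-side: fix an $\mathbb F_p$-basis $\{Y_i\}$ of $\coker(\Psi_{1,T})$, lift each $Y_i$ to a class $\widetilde Y_i$ ramified at one additional trivial prime, and use Chebotarev (in the compositum of $L$, $F(\mu_{p^c})$, and the fixed fields of the cocycles involved) to produce positive-density subsets $\mathfrak l_i \subseteq \mathcal C_i$ of trivial primes on which $\widetilde Y_i(\tau_\nu)$ falls in the orbit of a chosen root vector $X_{\alpha_i}$. The prescribed value $h_1^{(\nu)}(\tau_{t_0})=X_{\alpha_i}$ and the image in $\mathbb F_p[\overline{\rho_1}]\cdot X_{\alpha_i}$ are enforced by correcting $\widetilde Y_i$ with linear combinations of the classes $\eta_{1,b}^{(t_0)}$ and $\theta_{1,b}$, whose $T$-restrictions are understood by construction. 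To simultaneously produce $h_2^{(\nu,t_2)}$, I fix once and for all a non-zero root vector $X\in \mathfrak g_2^{\der}$ and build a base class $h_2^{\mathrm{base}}\in H^1(\Gamma_{F,T\cup t_2},\overline{\rho_2}(\mathfrak g_2^{\der}))$, independent of $\nu$, satisfying $h_2^{\mathrm{base}}(\tau_{t_0})=h_2^{\mathrm{base}}(\tau_{t_2})=X$, using the classes $\theta_{2,b},\eta_{2,b}^{(t_2)}$. Then I add a $\nu$-correction $h_2^{\mathrm{ram},(\nu)}$, assembled from $\theta_{2,b}^{(\nu)}$-type classes so as to have vanishing restriction on $T$ and $\tau_\nu$-value equal to $X$; such a class exists by the part $(3)$ analogue of Lemma \ref{M_ZEnlargement}$(2)$ applied to the cyclic submodules $M_{e_{2,b}^*}$. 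The sum $h_2^{(\nu,t_2)}\coloneqq h_2^{\mathrm{base}}+h_2^{\mathrm{ram},(\nu)}$ then has $\nu$-independent $T$-restriction and the required common value $X$ at $\tau_\nu, \tau_{t_0}, \tau_{t_2}$. The second half of the proposition, with $Z$ in place of a root vector, is identical except that on the $h_1$-side we work directly with the cyclic module $\mathbb F_p[\overline{\rho_1}]\cdot Z$ instead of projecting onto a root-vector submodule.

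The main obstacle is ensuring that after imposing all of the Chebotarev conditions coming from the $\overline{\rho_2}$-side (namely, that $\nu$ split appropriately in the fixed fields of the $\overline{\rho_2}(\mathfrak g_2^{\der})$-cocycles used to build $h_2^{\mathrm{ram},(\nu)}$), the density of the set $\mathfrak l_i$ remains positive. This is precisely where Assumptions \ref{GeneralAssumptions} intervene: the vanishing $H^1(\Gal(K/F),\overline{\rho_i}(\mathfrak g_i^{\der})^*)=0$ gives us, via inflation-restriction, genuine Galois extensions attached to each cocycle, and the no-common-subquotient hypothesis between $\overline{\rho_1}(\mathfrak g_1^{\der})$ (or its dual) and $\overline{\rho_2}(\mathfrak g_2^{\der})$ (or its dual) forces the fixed fields of $\overline{\rho_1}$-cocycles to be linearly disjoint over $K$ from those of the $\overline{\rho_2}$-cocycles. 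Combined with the linear disjointness hypothesis on $L$ and $K_\infty$ stated in the proposition, this lets us prescribe all the Frobenius conditions independently inside one large compositum, and Chebotarev then delivers the positive upper-density subset $\mathfrak l_i$ with all asserted properties.
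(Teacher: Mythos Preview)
Your overall strategy of taking the $\overline{\rho_1}$-side from \cite[Proposition~3.6]{FKP22} and then layering on the $\overline{\rho_2}$-class is exactly what the paper does, but your execution of the $\overline{\rho_2}$-side has a genuine gap and is more complicated than necessary.

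First, a module confusion: the classes $\theta_{2,b}$, $\theta_{2,b}^{(\nu)}$, $\eta_{2,b}^{(\nu)}$ built in the setup before this proposition are valued in the cyclic submodules $M_{e_{2,b}^*}\subseteq \overline{\rho_2}(\mathfrak g_2^{\der})^*$ of the \emph{dual} module, not in $\overline{\rho_2}(\mathfrak g_2^{\der})$. You cannot assemble $h_2^{(\nu,t_2)}\in H^1(\Gamma_{F,T\cup\{\nu,t_2\}},\overline{\rho_2}(\mathfrak g_2^{\der}))$ from them. What you actually need is Lemma~\ref{M_ZEnlargement}(2) applied directly to the cyclic submodule $M_X=\mathbb F_p[\overline{\rho_2}]\cdot X\subseteq \overline{\rho_2}(\mathfrak g_2^{\der})$, not part~(3).

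Second, and more seriously, the claim that you can arrange $h_2^{\mathrm{ram},(\nu)}$ to have \emph{vanishing} restriction to $T$ is not supported by Lemma~\ref{M_ZEnlargement}. The exact sequence there only tells you that a class $\theta^{(\nu)}$ with prescribed value $X$ at $\tau_\nu$ is well-defined up to an element of $H^1(\Gamma_{F,T},M_X)$; its $T$-restriction is therefore only controlled modulo the image of that group, which need not surject onto $\bigoplus_{w\in T}H^1(\Gamma_{F_w},M_X)$. No Chebotarev condition on $\nu$ fixes this, because the lemma produces the class for \emph{every} trivial $\nu$, not just those satisfying some splitting condition.

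The paper sidesteps both issues with a much lighter argument. For each $\nu\in\mathfrak l_i$ (already produced on the $\overline{\rho_1}$-side), it applies Lemma~\ref{M_ZEnlargement}(2) three times with $M_Z=\mathbb F_p[\overline{\rho_2}]\cdot X$ to obtain classes $\theta_2^{(\nu)}$, $\theta_2^{(t_0)}$, $\theta_2^{(t_2)}$, each ramified at exactly one of $\nu,t_0,t_2$ with $\tau$-value $X$ there, and sets $h_2^{(\nu,t_2)}=\theta_2^{(\nu)}+\theta_2^{(t_0)}+\theta_2^{(t_2)}$. No attempt is made to control $\restr{h_2^{(\nu,t_2)}}{T}$; instead, since $\bigoplus_{w\in T}H^1(\Gamma_{F_w},\overline{\rho_2}(\mathfrak g_2^{\der}))$ is finite, one partitions $\mathfrak l_i$ by this restriction and invokes Lemma~\ref{DensityLemma} to pass to a positive upper-density subset on which it is constant. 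In particular, \emph{no additional Chebotarev conditions from the $\overline{\rho_2}$-side are imposed}, so your final paragraph about linear disjointness of $\overline{\rho_1}$- and $\overline{\rho_2}$-fixed fields (which, incidentally, is not what the no-common-subquotient hypothesis in Assumptions~\ref{GeneralAssumptions} gives you) is solving a problem that does not arise.
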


\begin{rem}

    \label{1and2switch}

    The statement of the theorem holds with the roles of $\overline{\rho_1}$ and $\overline{\rho_2}$, and $t_1$ and $t_2$ reversed. 
    
\end{rem}

\begin{proof}

   Most of the proposition follows directly from \cite[Proposition $3.6$]{FKP22}. The only thing we need to show is the existence of the class $h_2^{(\nu,t_2)} \in H^1(\Gamma_{F,T} \cup \{\nu,t_2\},\overline{\rho_2}(\mathfrak g_2^{\der}))$ that will have the desired properties.

   Fix a non-zero root vector $X \in \mathfrak g_2^{\der}$ and let $\nu$ be a trivial prime in one of the positive upper-density sets $\mathfrak l_i$. Then, $T \,\setminus\,t_0$ still satisfies part $(1)$ of Lemma \ref{M_ZEnlargement}, so using the lemma we can find $\theta_2^{(\nu)} \in H^1(\Gamma_{F,(T \setminus t_0) \cup \nu},\mathbb F_p[\overline{\rho_2}] \cdot X)$ such that $\theta_2^{(\nu)}(\tau_\nu) = X$. Similarly, we can find $\theta_2^{(t_0)} \in H^1(\Gamma_{F,T},\mathbb F_p[\overline{\rho_2}] \cdot X)$ such that $\theta_2^{(t_0)}(\tau_{t_0}) = X$ and $\theta_2^{(t_2)} \in H^1(\Gamma_{F,(T\setminus t_0)\cup t_2},\mathbb F_p[\overline{\rho_2}]\cdot X)$ such that $\theta_2^{(t_2)}(\tau_{t_2}) = X$. Set $h_2^{(\nu,t_2)} \in H^1(\Gamma_{F,T\cup\{\nu,t_2\}},\overline{\rho_2}(\mathfrak g_2^{\der}))$ to be the image of $\theta_2^{(\nu)} + \theta_2^{(t_0)} + \theta_2^{(t_2)} \in H^1(\Gamma_{F,T \cup \{\nu,t_2\}},\mathbb F_p[\overline{\rho_2}]\cdot X)$. As $\theta_2^{(\nu)}$ is the only one of the three that is ramified at $\nu$ we get $h_2^{(\nu,t_2)}(\tau_\nu) = \theta_2^{(\nu)}(\tau_\nu) = X$. Similarly, $h_2^{(\nu,t_2)}(\tau_{t_0}) = h_2^{(\nu,t_2)}(\tau_{t_2}) = X$. Finally, we group the primes in $\mathfrak l_i$ based on the image $\restr{h_2^{(\nu,t_2)}}{T}$. Since $\bigoplus_{w \in T} H^1(\Gamma_{F_w},\overline{\rho_2}(\mathfrak g_2^{\der}))$ is a finite space using Lemma \ref{DensityLemma} we can find a positive upper-density subset of $\mathfrak l_i$, which we still label by $\mathfrak l_i$, such that $\restr{h_2^{(\nu,t_2)}}{T}$ is independent of $\nu \in \mathfrak l_i$.

   We use the same argument to produce the class $h_2^{(\nu,t_2)}$ in the second part of the lemma with $\mathfrak l_Z$ in place of $\mathfrak l_i$.
    
\end{proof}

The following result will allow us to establish the necessary linear disjointness that will be crucial part in some of the later results

\begin{lem}
\label{LinearDisjointnessLemma}

For $1 \le j \le n$ let $h_j \in H^1(\Gamma_F,\mathbb F_p[\overline{\rho_{i_j}}] \cdot X_j)$, where $i_j = 1$ or $2$, and $X_j \in \mathfrak g^{\der}_{i_j}$. Suppose that for each pair $j \neq k$ there exists a trivial prime $\nu$ with respect to both $\overline{\rho_1}$ and $\overline{\rho_2}$ such that $h_j(\tau_\nu) \neq 0\in \mathbb F_p \cdot X_j$ and $h_k(\tau_\nu) = 0$. Then, their fixed fields $K_{h_1},\dots,K_{h_n}$ are strongly linearly disjoint over $K$. In particular, if $K_h$ is the composite of them all we have

$$\Gal(K_h/K) \simeq \prod_{j=1}^n \Gal(K_{h_j}/K)$$

\vspace{2 mm}

Moreover, $K_h$ is linearly disjoint from $K_\infty$ over $K$.

\end{lem}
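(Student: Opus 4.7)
The plan is to identify $\Gal(K_h/K)$ with the image $W$ of the joint homomorphism $\Phi := (h_1,\dots,h_n)\colon \Gamma_K \to V := \bigoplus_{j=1}^n V_j$, where $V_j := \mathbb{F}_p[\overline{\rho_{i_j}}]\cdot X_j$, and to show that the hypothesis forces $W=V$; the product isomorphism and disjointness statements then follow by elementary Galois-theoretic arguments.

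First I would observe that each $\restr{h_j}{\Gamma_K}$ is a group homomorphism, since $\Gamma_K$ acts trivially on $V_j$. The cocycle identity $h_j(\beta\alpha\beta^{-1}) = \beta\cdot h_j(\alpha)$ for $\alpha \in \Gamma_K$ and $\beta \in \Gamma_F$, the same computation performed in Notation \ref{Cocyleimage}, shows that $\restr{h_j}{\Gamma_K}$ is $\Gal(K/F)$-equivariant, so its image is an $\mathbb{F}_p[\Gamma_F]$-submodule of $V_j$. Since $V_j$ is cyclic generated by $X_j$, any prime $\nu$ supplied by the hypothesis yields $h_j(\tau_\nu) \in \mathbb{F}_p^\times\cdot X_j$, forcing this image to be all of $V_j$. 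By Galois theory $\Gal(K_{h_j}/K) \cong V_j$, and the comparison map $\Gal(K_h/K)\to \prod_j \Gal(K_{h_j}/K)$ is the inclusion $W\hookrightarrow V$.

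For the core step $W = V$, I would fix $j$ and invoke the hypothesis to produce a trivial prime $\nu$ (split in $K$, so $\tau_\nu\in\Gamma_K$) with $h_j(\tau_\nu)\in \mathbb{F}_p^\times X_j$ and $h_k(\tau_\nu)=0$ for every $k\neq j$. Then $\Phi(\tau_\nu)\in W$ lies in the $j$-th coordinate subspace $0\oplus\cdots\oplus \mathbb{F}_p X_j\oplus\cdots\oplus 0$ and is nonzero there. As $W$ is $\Gamma_F$-stable, the $\mathbb{F}_p[\Gamma_F]$-submodule it generates is precisely $V_j$ embedded in the $j$-th position. Summing over $j$ gives $V\subseteq W$, proving the product isomorphism. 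For linear disjointness from $K_\infty$, I would note that $\Gal(K_\infty/K)$ carries the trivial $\Gamma_F$-action (the cyclotomic character factors through an abelian group, so $\Gamma_F$-conjugation fixes its image), so $K_h\cap K_\infty$ would yield a nonzero trivial $\mathbb{F}_p[\Gamma_F]$-quotient of some $V_j$; the third bullet of Assumption \ref{GeneralAssumptions}, combined with the self-duality of $\overline{\rho_{i_j}}(\mathfrak g^{\der}_{i_j})$ furnished by the $G$-invariant trace form on $\mathfrak g^{\der}$ from \S 2, rules this out.

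The main obstacle is establishing $W = V$. The pairwise-trivialization hypothesis, read as supplying for each $j$ a single prime whose Frobenius simultaneously kills every $h_k$ with $k\neq j$ while keeping $h_j$ nonzero, is exactly what precludes proper ``diagonal'' submodules $W\subsetneq V$ that project surjectively onto each factor (for instance, $\{(a,b,c):a+b+c=0\}$ when the $V_j$'s are isomorphic). Only with such a single-coordinate-supported element in $W$ can the $\Gamma_F$-stability argument extract a full copy of $V_j$ inside $W$; the weaker pairwise condition would yield only surjectivity of each projection $\pi_j$, which is strictly insufficient.
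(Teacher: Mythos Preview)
Your argument rests on a misreading of the hypothesis. The lemma only assumes a \emph{pairwise} separation: for each ordered pair $(j,k)$ with $j\neq k$ there is a prime $\nu=\nu_{j,k}$ with $h_j(\tau_\nu)\neq 0$ and $h_k(\tau_\nu)=0$. You instead use, for each fixed $j$, a single prime at which $h_j$ is nonzero and \emph{every} other $h_k$ vanishes simultaneously. You recognise this distinction yourself in the last paragraph and concede that under the genuine pairwise reading your direct image-in-the-product argument would fail---and indeed it does, for exactly the reason you describe: a diagonal submodule such as $\{(a,b,c):a+b+c=0\}\subset V^3$ satisfies every pairwise condition while surjecting onto each factor, so no single-coordinate-supported element of $W$ is ever forced.

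The paper does not attempt a direct product argument. It proceeds by induction on $n$: assuming $K_{h_1},\dots,K_{h_{n-1}}$ are already strongly linearly disjoint, it must show $L:=K_{(\neq n)}\cap K_{h_n}=K$. If not, the surjection $\prod_{k<n}\Gal(K_{h_k}/K)\twoheadrightarrow\Gal(L/K)$ is nonzero on some $j$-th factor, and the hypothesis is then invoked only for the single pair $(j,n)$ to produce $\tau_\nu$ with $h_j(\tau_\nu)\neq 0$ and $h_n(\tau_\nu)=0$; since $h_j(\tau_\nu)$ generates $\Gal(K_{h_j}/K)$ as an $\mathbb F_p[\Gamma_F]$-module and the map to $\Gal(L/K)$ is equivariant and nonzero, one obtains a nontrivial element of $\Gal(L/K)$, which is then played off against $\tau_\nu|_{K_{h_n}}=\mathrm{id}$. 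The inductive scaffolding is the point: it supplies the full product decomposition of $\Gal(K_{(\neq n)}/K)$ already, so that a single pair suffices at each step. Your global approach forfeits this leverage. For the disjointness from $K_\infty$ the paper's argument is also different and more elementary than yours: any nontrivial intersection $K_h\cap K_\infty$ would be ramified at some trivial prime (inherited from ramification of $K_h$), contradicting that $K_\infty/K$ ramifies only above $p$; no appeal to Assumption~\ref{GeneralAssumptions} or to self-duality of $\mathfrak g^{\der}$ is required.
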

\begin{proof}

    This lemma is a slight generalization of \cite[Lemma $3.7$]{FKP22}. We will prove this claim by induction on $m$. When $n=1$ there is nothing to prove. Hence, we assume that $m \ge 2$ and the claim holds for all smaller values of $n$. Let $K_{(\neq n)}$ be the composite of $K_{h_1},\dots,K_{h_{n-1}}$. We will first show that $K_{(\neq n)}$ and $K_{h_n}$ are linearly disjoint over $K$. As both extensions are Galois over $K$ it is enough to show that their intersection is equal to $K$. Let $L \coloneqq K_{h_n} \cap K_{(\neq n)}$ and suppose that $L \neq K$. As $L$ is a Galois extension of $K$ we get that $\Gal(L/K)$ is a non-zero quotient of $\Gal(K_{(\neq n)}/K)$. As $K_{(\neq n)}$ is the composite of $n-1$ fields using the inductive hypothesis we get a surjection:

    $$\prod_{j=1}^{n-1} \Gal(K_{h_j}/K) \simeq \Gal(K_{(\neq n)}/K) \twoheadrightarrow \Gal(L/K)$$

    \vspace{2 mm}

    From this we can find $j \le n-1$ such that the map $\Gal(K_{h_j}/K) \to \Gal(L/K)$ is non-zero. Explicitly this map is given by sending $\sigma \in \Gal(K_{h_j}/K)$ to $\restr{\widetilde{\sigma}}{L}$, where $\tilde{\sigma}$ is the element of $\Gal(K_{(\neq n)}/K)$ that acts as $\sigma$ on $K_{h_j}$ and as the identity on $K_{h_k}$ for $k \neq j$. Moreover, as all the fixed fields are Galois extensions of $F$ this map will be a $\mathbb F_p[\Gamma_F]$-module morphism where the action of $\Gamma_F$ is given by conjugation on both Galois groups.

    Now, by the hypothesis there exists a trivial prime $\nu$ such that $h_j(\tau_\nu) \neq 0\in \mathbb F_p \cdot X_j$ and $h_n(\tau_\nu) = 0$. This means that the restriction of $\tau_\nu$ to $K_{h_j}$ generates $\Gal(K_{h_j}/K)$ as a $\mathbb F_p[\Gamma_F]$-module. Indeed, as $\mathbb F_p[\Gamma_F]$-modules we have $\Gal(K_{h_j}/K) \simeq \im h_j \subseteq \mathbb F_p[\overline{\rho_{i_j}}] \cdot X_j$. As $\im h_j$ contains a non-zero multiple of $X_j$ it has to equal $\mathbb F_p[\overline{\rho_{i_j}}] \cdot X_j$. In particular, $\restr{\tau_\nu}L$ will generate the image of the $\mathbb F_p[\Gamma_F]$-morphism $\Gal(K_{h_j}/K) \to \Gal(L/K)$. As this map is non-zero we have $\restr{\tau_\nu}L \neq \mathrm{id}$. On the other side, by our choice of $\nu$ we have that $h_n(\tau_\nu) =0$, hence $\restr{\tau_\nu}{K_{h_n}} = \mathrm{id}$. As $L \subseteq K_{h_m}$ this means that $\restr{\tau_\nu}L = \mathrm{id}$, which is a contradiction. Therefore, $L = K$ and so $K_{(\neq n)}$ and $K_{h_n}$ are linearly disjoint over $K$. Using the inductive hypothesis 

    $$\Gal(K_h/K) \simeq \Gal(K_{(\neq n)}/K) \times \Gal(K_{h_n}/K) \simeq \prod_{j=1}^{n-1} \Gal(K_{h_j}/K) \times \Gal(K_{h_n}/K)$$

    \vspace{2 mm}

    \noindent which tells us that all the fixed fields are strongly linearly disjoint over $K$.

    The second part of the lemma follows in a similar manner: Let $L = K_h \cap K_\infty$. If $L \neq K$, as above $L$ will be ramified at some trivial prime $\nu$. This is a contradiction since $L$ could only ramify at primes above $p$ by the virtue of being a subfield of $K_\infty$. 
    
\end{proof}

\subsection{Lifting modulo \texorpdfstring{$\varpi^2$}{ϖ\textasciicircum2}}

For each $\nu$ in $T$ and $i=1,2$ we fix local target lifts $\lambda_{i,\nu}:\Gamma_{F_\nu} \to G_i(\mathcal O/\varpi^2)$ with multiplier $\mu_i$ of $\restr{\overline{\rho_i}}{\Gamma_{F_\nu}}$. For the time being we let $\lambda_{i,\nu}$ be just any multiplier $\mu_i$ lifts of $\restr{\overline{\rho_i}}{\Gamma_{F_\nu}}$. We will use these lifts to control the local behavior of lifts of $\overline{\rho_i}$ at each prime in $T$. We will specify them more precisely in Theorem \ref{LiftingModn}. By Remark \ref{VanishingofSh} we know that $\Sh_T^2(\Gamma_{F,T},\overline{\rho_i}(\mathfrak g_i^{\der})) = 0$. Using the existence of local lifts of multiplier $\mu_i$ at each $\nu \in T$ we can produce a lift $\rho'_{i,2}:\Gamma_{F,T} \to G_i(\mathcal O/\varpi^2)$ with multiplier $\mu_i$. The local restriction $\restr{\rho'_{i,2}}{\Gamma_{F_\nu}}$ differs from the target lift $\lambda_{i,\nu} \pmod{\varpi^2}$ by a class $z_{i,\nu} \in H^1(\Gamma_{F_\nu},\overline{\rho_i}(\mathfrak g_i^{\der}))$. More precisely, $\restr{\exp(\varpi z_{i,\nu})\rho_{i,2}'}{\Gamma_{F_\nu}}$ will be strongly equivalent to $\lambda_{i,\nu} \pmod{\varpi^2}$. Therefore, our goal is to find global cohomology classes $h_i$ whose restriction to places in $T$ will be equal to $z_{i,T} \coloneqq (z_{i,\nu})_{\nu \in T} \in \bigoplus_{\nu \in T} H^1(\Gamma_{F_\nu},\overline{\rho_i}(\mathfrak g_i^{\der}))$.

In general, in order to find such cohomology classes we will need to allow extra ramification. We would like the modified lifts $\rho_{i,2} \coloneqq \exp(\varpi h_i)\rho_{i,2}'$ to both be ramified at the same extra primes. In particular, if $\rho_{1,2}$ is ramified at some extra trivial primes the same should be true for $\rho_{2,2}$ and vice versa. Moreover, we like for the ramification in both cases to be "Steinberg-like". We make that more precise in the next proposition. 

\begin{prop}[cf. {\cite[Proposition $3.8$]{FKP22}}]
\label{DoublingMethod}

There is a finite indexing $N$, and there is for each $n \in N$ a positive upper-density set $\mathfrak l_n$ of trivial primes in $F$ with respect to both $\overline{\rho_1}$ and $\overline{\rho_2}$ with the following properties: Fix two $2|N|$-tuples $(A_{1,n},A_{1,n}')_{n \in N}$ and $(A_{2,n},A_{2,n}')_{n \in N}$ of elements in $\widehat{G_1^{\der}}(\mathcal O/\varpi^2)$ and $\widehat{G_2^{\der}}(\mathcal O/\varpi^2)$, respectively. Then there is a $2|N|$-tuple of trivial primes $Q = (\nu_n,\nu_n')_{n \in N}$ disjoint from $T$ such that $\{\nu_n,\nu_n'\} \subseteq \mathfrak l_n$ for all $n \in N$, and for $i=1,2$ a class $h_i \in H^1(\Gamma_{F,T \cup Q},\overline{\rho_i}(\mathfrak g_i^{\der}))$ such that

\begin{itemize}
    \item $\restr{h_i}{T} = z_{i,T}$ for $i=1,2$.
    \item For all $n \in N$ and $i=1,2$ there is a pair $(T_{i,n},\alpha_{i,n})$ of a split maximal torus $T_{i,n}$ of $G_i^0$ and a root $\alpha_{i,n} \in \Phi(G^0_i,T_{i,n})$ such that $\rho_{i,2}(\tau_{\nu_n}) = u_{\alpha_n}(X_{i,n})$ for some $\alpha_{i,n}$-root vector $X_{i,n}$ and likewise $\rho_{i,2}(\tau_{\nu'_n}) = u_{\alpha_n}(X_{i,n})$; and such that 

    $$\rho_{i,2}(\sigma_{\nu_n}) = A_{i,n} \cdot z_{i,n} \quad \quad \rho_{i,2}(\sigma_{\nu_n'}) = A_{i,n}' \cdot z_{i,n}'$$

    \vspace{2 mm}

    \noindent for $i=1,2$, where $z_{i,n}, z_{i,n}n' \in Z_{G_i^0}(\mathcal O/\varpi^2) \cap \widehat{G_i}(\mathcal O/\varpi^2)$ and are determined by $\mu_i(\sigma_{\nu_n})$ and $\mu_i(\sigma_{\nu_n'})$, respectively.

\end{itemize}

\end{prop}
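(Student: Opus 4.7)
I would follow the doubling strategy of \cite[Proposition 3.8]{FKP22}, carrying both $\overline{\rho_1}$ and $\overline{\rho_2}$ through in parallel by exploiting the auxiliary classes for the second representation that are already packaged into Proposition~\ref{cokernelgenerators}.

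First I would set $N = N_1 \sqcup N_2$, with $N_1$ indexing an $\mathbb F_p$-basis $\{Y_j\}_{j \in N_1}$ of $\coker(\Psi_{1,T})$ and $N_2$ indexing a basis of the analogously defined $\coker(\Psi_{2,T})$. For each $j \in N_1$, I would apply Proposition~\ref{cokernelgenerators} to obtain a Chebotarev set $\mathcal C_j$, a positive upper-density subset $\mathfrak l_j \subseteq \mathcal C_j$, and for every $\nu \in \mathfrak l_j$ the paired classes $h_1^{(\nu)} \in H^1(\Gamma_{F, T \cup \nu}, \overline{\rho_1}(\mathfrak g_1^{\der}))$ and $h_2^{(\nu, t_2)} \in H^1(\Gamma_{F, T \cup \{\nu, t_2\}}, \overline{\rho_2}(\mathfrak g_2^{\der}))$. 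For $m \in N_2$ I would invoke the symmetric version (Remark~\ref{1and2switch}) to produce $\mathfrak l_m$ together with $h_2^{(\nu)}$ and the auxiliary $h_1^{(\nu, t_1)}$. Inside each $\mathfrak l_n$ I would select a pair of distinct trivial primes $(\nu_n, \nu_n')$, using Lemma~\ref{LinearDisjointnessLemma} to ensure that the fixed fields of all the relevant cocycles are strongly linearly disjoint over $K$ and from $K_\infty$. This linear disjointness is what makes it possible, via Chebotarev, to jointly prescribe $\sigma_{\nu_n}$ and $\sigma_{\nu_n'}$ in the extension $L$ (which I take to include the fields cutting out the target Frobenius data encoded in $A_{i,n}, A_{i,n}'$).

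Next I would build $h_1$. Decompose $z_{1,T} = z_{1,T}^{\mathrm{im}} + z_{1,T}^{\mathrm{cok}}$ with $z_{1,T}^{\mathrm{im}} \in \im(\Psi_{1,T})$, hit by a global class $h_1^{(0)} \in H^1(\Gamma_{F,T}, \overline{\rho_1}(\mathfrak g_1^{\der}))$; and write $z_{1,T}^{\mathrm{cok}} = \sum_{j \in N_1} a_j Y_j$. Then define
\[
h_1 \;=\; h_1^{(0)} \;+\; \sum_{j \in N_1} a_j\, h_1^{(\nu_j)} \;+\; (\text{unramified perturbations at } \nu_j') \;+\; \sum_{m \in N_2} \bigl(h_1^{(\nu_m, t_1)} - h_1^{(\nu_m', t_1)}\bigr).
\]
Since the $T$-restriction and the $\tau_{t_1}$-evaluation of $h_1^{(\nu, t_1)}$ are independent of $\nu \in \mathfrak l_m$ (by Proposition~\ref{cokernelgenerators}), the difference terms vanish on $T$ and at $t_1$, so $h_1$ is unramified outside $T \cup Q$ with $Q = \{\nu_n, \nu_n'\}_{n \in N}$, and $\restr{h_1}{T} = z_{1,T}$. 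The unramified perturbations at $\nu_j'$ adjust $h_1(\sigma_{\nu_j})$ (they lie in $L_{\nu_j'}^{\mathrm{unr}}$, so they do not affect $\tau$-values) to arrange $\rho_{1,2}(\sigma_{\nu_j}) = A_{1,j} z_{1,j}$, and analogously for $\nu_j'$; at each $\nu_n$ the value $h_1(\tau_{\nu_n})$ is precisely the root vector dictated by the construction of $h_1^{(\nu_n)}$ (or of the auxiliary class, for $n \in N_2$), giving $\rho_{1,2}(\tau_{\nu_n}) = u_{\alpha_{1,n}}(X_{1,n})$.

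I would construct $h_2$ in the completely symmetric fashion, using $h_2^{(\nu_m)}$ for $m \in N_2$ to hit $\coker(\Psi_{2,T})$ and the auxiliary differences $h_2^{(\nu_j, t_2)} - h_2^{(\nu_j', t_2)}$ for $j \in N_1$ to provide the required Steinberg-like ramification at $\nu_j, \nu_j'$ without polluting the restriction at $T$ or introducing ramification at $t_2$.

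\textbf{Main obstacle.} The central technical point is keeping the ramification of $h_2$ (respectively $h_1$) confined to $T \cup Q$ despite the fact that the auxiliary cocycles from Proposition~\ref{cokernelgenerators} are themselves ramified at $t_2$ (respectively $t_1$): this is precisely what the $\nu$-independence of $\restr{h_2^{(\nu, t_2)}}{T}$ and of $h_2^{(\nu, t_2)}(\tau_{t_2})$ is designed to enable, so that the pair differences kill both contributions at once. A secondary but nontrivial issue is the joint Chebotarev selection of all pairs $(\nu_n, \nu_n')$ together with the prescription of Frobenius-values matching $A_{i,n}, A_{i,n}'$; the strong linear disjointness furnished by Lemma~\ref{LinearDisjointnessLemma} is what reduces this to an independent choice problem, which positive upper density of each $\mathfrak l_n$ then solves.
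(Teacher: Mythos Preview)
Your proposal misses the core mechanism of the doubling method and, as written, does not produce classes meeting the conclusion.

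\textbf{The formula for $h_1$ is not ramified at the primed primes.} In your expression
\[
h_1 = h_1^{(0)} + \sum_{j \in N_1} a_j\, h_1^{(\nu_j)} + (\text{unramified perturbations at } \nu_j') + \sum_{m \in N_2} \bigl(h_1^{(\nu_m, t_1)} - h_1^{(\nu_m', t_1)}\bigr),
\]
nothing is ramified at $\nu_j'$ for $j\in N_1$, so $\rho_{1,2}(\tau_{\nu_j'})=1$, contradicting the required root-vector shape. The paper instead uses the genuine ``doubling'' combination $2\sum h_1^{(\nu_{1,n})} - \sum h_1^{(\nu_{1,n}')}$: both primes carry ramification, and the coefficients are chosen so that $\restr{h_1}{T}$ still equals $z_{1,T}$ (since $\restr{h_1^{(\nu)}}{T}$ is independent of $\nu\in\mathfrak l_n$).

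\textbf{``Unramified perturbations'' cannot prescribe Frobenius values.} A class whose restriction at $\nu_j'$ lies in $L_{\nu_j'}^{\mathrm{unr}}$ is, globally, just an element of $H^1(\Gamma_{F,T},\overline{\rho_1}(\mathfrak g_1^{\der}))$; you have no control over its value on any particular Frobenius. The paper's mechanism is entirely different: the value $h_1(\sigma_{\nu_m})$ is controlled \emph{by the choice of the primed primes} $\nu_n'$, via global duality against the auxiliary classes $\eta_{1,b}^{(\nu_m)}$. Concretely, reciprocity converts $\langle \eta_{1,b}^{(\nu_m)}(\tau_{\nu_m}), h_1^{(\nu_n')}(\sigma_{\nu_m})\rangle$ into $\langle \eta_{1,b}^{(\nu_m)}(\sigma_{\nu_n'}), h_1^{(\nu_n')}(\tau_{\nu_n'})\rangle$ plus terms independent of $\nu_n'$, and the latter is a Chebotarev condition on $\nu_n'$ in $K_{\eta_{1,b}^{(\nu_m)}}$. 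Symmetrically, $h_1(\sigma_{\nu_m'})$ is controlled by the choice of the unprimed primes. This interlocking is the entire point of taking \emph{pairs} $(\nu_n,\nu_n')$, and it requires the $\eta$-classes and the eight duality relations, none of which appear in your outline.

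\textbf{You are also missing $N_{\mathrm{span}}$.} Indexing $N_1$ only by a basis of $\coker(\Psi_{1,T})$ is not enough: in order for the Chebotarev conditions on $\nu_n'$ to realize an \emph{arbitrary} target in $\mathfrak g_1^{\der}$ for $h_1(\sigma_{\nu_m'})$, one needs $\sum_{n\in N_1}\mathbb F_p[\overline{\rho_1}]\cdot X_{1,\alpha_n}=\mathfrak g_1^{\der}$. The paper therefore enlarges $N_1$ to contain a spanning set $N_{1,\mathrm{span}}$ (and likewise $N_2\supseteq N_{2,\mathrm{span}}$) before running the cokernel argument.

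Finally, the Chebotarev conditions on $(\nu_n')$ are imposed in the fixed fields $K_{h_1^{(\nu_n)}}$, $K_{\eta_{i,b}^{(\nu_m)}}$, which \emph{depend on the already chosen tuple} $(\nu_n)$. This circularity is not resolved by linear disjointness alone; the paper handles it by a pigeonhole-plus-density argument, first passing to a positive upper-density set $\mathfrak l$ on which many auxiliary quantities are constant, and then showing by a limiting estimate that $\mathfrak l\cap\mathcal C_{(\underline\nu_1,\underline\nu_2)}\neq\emptyset$ for some tuple in $\mathfrak l$. Your sketch does not address this step.
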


\begin{proof}

    As in \cite[Lemma $5.11$]{FKP21} we can find a finite index set $N_{1,\mathrm{span}}$ indexing root vectors $X_{1,\alpha_n}$ with respect to tori $T_{1,n}$ and roots $\alpha_{1,n} \in \Phi(G_1^0,T_{1,n})$ such that

    $$\sum_{n \in N_{1,\mathrm{span}}} \mathbb F_p[\overline{\rho_1}] \cdot X_{1,\alpha_n} = \mathfrak{g}_1^{\der}$$

    \vspace{2 mm}

    Then, by the second part of Proposition \ref{cokernelgenerators} for each $n \in N_{1,\mathrm{span}}$ by taking $L=K$ and $c=D+1$ we can find a positive upper-density set $\mathfrak l_{1,n}$ of trivial primes, a non-trivial $q_{1,n} \in (\mathbb Z/p^{D+1})^\times$ that is trivial modulo $p^D$ with all $\nu \in \mathfrak l_{1,n}$ satisfying $N(\nu) \equiv q_{1,n} \pmod{p^{D+1}}$, and for each $\nu \in \mathfrak l_{1,n}$ a class $h_1^{(\nu)} \in H^1(\Gamma_{F,T \cup \nu},\overline{\rho_1}(\mathfrak g_1^{\der}))$ such that the restriction $\restr{h_1^{(\nu)}}{T} = Y_{1,n}$ is independent of $\nu$, $h_1^{(\nu)}(\tau_\nu) = h_1^{(\nu)}(\tau_{t_0}) = X_{1,\alpha_n}$, and $h_1^{(\nu)}$ is the image of an $\mathbb F_p[\overline{\rho_1}] \cdot X_{1,\alpha_n}$-valued cocycle; and also a class $h_2^{(\nu,t_2)} \in H^1(\Gamma_{F,T \cup \{\nu,t_2\}},\overline{\rho_2}(\mathfrak g_2^{\der}))$ such that the restriction $\restr{h_2^{(\nu,t_2)}}{T} = Y_{2,n}$ is independent of $\nu$, $h_2^{(\nu,t_2)}(\tau_\nu) = h_2^{(\nu,t_2)}(\tau_{t_0}) = h_2^{(\nu,t_2)}(\tau_{t_2}) = X$, for some non-zero root vector $X \in \mathfrak g_2^{\der}$, which is independent of the choice of a prime $\nu \in \mathfrak l_n$.

    On the other hand, by the first part of Proposition \ref{cokernelgenerators} with $L=K$ and $c=D+1$ we can produce a finite set $\{Y_{1,n}\}_{n \in N_{1,\coker}} \in \bigoplus_{w \in T} H^1(\Gamma_{F_w},\overline{\rho_1}(\mathfrak g_1^{\der}))$ whose images form an $\mathbb F_p$-basis of $\coker(\Psi_{1,T})$, and for each $n \in N_{1,\coker}$ a root vector $X_{1,\alpha_n}$ with respect to a split maximal torus $T_{1,n}$ and root $\alpha_{1,n} \in \Phi(G^0_1,T_{1,n})$, a non-trivial $q_{1,n} \in (\mathbb Z/p^{D+1})^\times$ that is trivial modulo $p^D$, a positive upper-density set $\mathfrak l_{1,n}$ of trivial primes with all $\nu \in \mathfrak l_{1,n}$ satisfying $N(\nu) \equiv q_{1,n} \pmod{p^{D+1}}$, and for each $\nu \in \mathfrak l_{1,n}$ a class $h_1^{(\nu)} \in H^1(\Gamma_{F,T \cup \nu},\overline{\rho_1}(\mathfrak g_1^{\der}))$ such that $\restr{h_1^{(\nu)}}{T} = Y_{1,n}$, $h_1^{(\nu)}(\tau_\nu) = h_1^{(\nu)}(\tau_{t_0}) = X_{1,\alpha_n}$, and $h_1^{(\nu)}$ is the image of an $\mathbb F_p[\overline{\rho_1}] \cdot X_{1,\alpha_n}$ valued cocycle; and also a class $h_2^{(\nu,t_2)} \in H^1(\Gamma_{F,T \cup \{\nu,t_2\}},\overline{\rho_2}(\mathfrak g^{\der}))$ such that the restriction $\restr{h_2^{(\nu,t_2)}}{T} = Y_{2,n}$ is independent of $\nu$, $h_2^{(\nu,t_2)}(\tau_{\nu}) = h_2^{(\nu,t_2)}(\tau_{t_0}) = h_2^{(\nu,t_2)}(\tau_{t_2}) = X$, for some non-zero root vector $X \in \mathfrak g_2^{\der}$, which is independent of the choice of a prime $\nu \in \mathfrak l_n$.

    \vspace{2 mm}

    Now, we consider the class $z_{1,T}' \coloneqq z_{1,T} - \sum_{n \in N_{1,\mathrm{span}}} Y_{1,n}$. Since $\{Y_{1,n}\}_{n \in N_{1,\coker}}$ form an $\mathbb F_p$-basis of $\coker(\Psi_{1,T})$ we can find $c_{1,n} \in \mathbb F_p$ and $h_1^{\mathrm{old}} \in H^1(\Gamma_{F,T},\overline{\rho_1}(\mathfrak g_1^{\der}))$ such that $z_{1,T}' = \restr{h_1^{\mathrm{old}}}{T} + \sum_{n \in N_{1,\coker}} c_{1,n}Y_{1,n}$. In particular

    $$z_{1,T} = \restr{h_1^{\mathrm{old}}}{T} + \sum_{n \in N_{1,\mathrm{span}}} Y_{1,n} + \sum_{n \in N_{1,\coker}} c_{1,n}Y_{1,n}$$

    \vspace{2 mm}

    We now set $N_1 \subseteq N_{1,\mathrm{span}} \sqcup N_{1,\coker}$, which is obtained by discarding the indices $n \in N_{1,\coker}$ for which $c_{1,n} = 0$. By construction, for each $n \in N_1$ and $\nu_{1,n} \in \mathfrak l_{1,n}$ we have $\restr{h_1^{(\nu_{1,n})}}{T} = Y_{1,n}$. Thus, if we rescale $h_1^{(\nu_{1,n})}$ for each $n \in N_1 \cap N_{1,\coker}$ by $c_{1,n}$ we get

    $$z_{1,T} = \restr{h_1^{\mathrm{old}}}{T} + \sum_{n \in N_1} \restr{h_1^{(\nu_{1,n})}}{T}$$

    \vspace{2 mm}

    \noindent for all tuples $\underline{\nu}_1 \in \prod_{n \in N_1} \mathfrak l_{1,n}$. Running the same construction with the roles of $\overline{\rho_1}$ and $\overline{\rho_2}$ reversed we find an indexing set $N_2 \subseteq N_{2,\mathrm{span}} \sqcup N_{2,\coker}$, containing $N_{2,\mathrm{span}}$ such that

    $$z_{2,T} = \restr{h_2^{\mathrm{old}}}{T} + \sum_{n \in N_2} \restr{h_1^{(\nu_{2,n})}}{T}$$

    \vspace{2 mm}

    \noindent for all tuples $\underline{\nu}_2 \in \prod_{n \in N_2} \mathfrak l_{2,n}$. Then for any $\underline{\nu}_1,\underline{\nu}'_1 \in \prod_{n \in N_1} \mathfrak l_{1,n}$ and $\underline{\nu}_2,\underline{\nu}'_2 \in \prod_{n \in N_2} \mathfrak l_{2,n}$ we consider classes

    $$h_1 = h_1^{\mathrm{old}} + 2 \sum_{n \in N_1} h_1^{(\nu_{1,n})} - \sum_{n \in N_1} h_1^{(\nu'_{1,n})} + \sum_{n \in N_2} h_1^{(\nu_{2,n},t_1)} - \sum_{n \in N_2} h_1^{(\nu_{2,n}',t_1)} \in H^1(\Gamma_{F,T \cup \{\underline{\nu}_1,\underline{\nu}'_1,\underline{\nu}_2,\underline{\nu}'_2,t_1\}},\overline{\rho_1}(\mathfrak g_1^{\der}))$$

    \vspace{2 mm}

    $$h_2 = h_2^{\mathrm{old}} + 2 \sum_{n \in N_2} h_2^{(\nu_{2,n})} - \sum_{n \in N_2} h_2^{(\nu'_{2,n})} + \sum_{n \in N_1} h_2^{(\nu_{1,n},t_2)} - \sum_{n \in N_1} h_2^{(\nu_{1,n}',t_2)} \in H^1(\Gamma_{F,T \cup \{\underline{\nu}_1,\underline{\nu}'_1,\underline{\nu}_2,\underline{\nu}'_2,t_2\}},\overline{\rho_2}(\mathfrak g_2^{\der}))$$

    \vspace{2 mm}

    We remark that we will consider tuples $\underline{\nu}_1,\underline{\nu}_1',\underline{\nu}_2,\underline{\nu}_2'$ that do not share any prime and also each prime appears at most once. This isn't a big restriction since we will need to exclude at most finitely many primes from each upper-density set.

    By construction each cohomology class and its corresponding primed version have the same restriction to primes in $T$. Therefore

    $$\restr{h_1}{T} = \restr{h_1^{\mathrm{old}}}{T} + \sum_{n \in N_1} \restr{h_1^{(\nu_{1,n})}}{T} = z_{1,T} \quad \quad \text{and} \quad \quad \restr{h_2}{T} = \restr{h_2^{\mathrm{old}}}{T} + \sum_{n \in N_2} \restr{h_2^{(\nu_{2,n})}}{T} = z_{2,T}$$

    \vspace{2 mm}

    In particular, both cohomology classes have the desired restriction to primes in $T$. We also note that for each $n \in N_2$ by construction $h_1^{(\nu_{2,n},t_1)}(\tau_{t_1}) = h_1^{(\nu_{2,n}',t_1)}(\tau_{t_1})$. Therefore, the image of $\tau_{t_1}$ under the terms in the last two sums in the definition of $h_1$ are the same and cancel out. As no other term in the definition is ramified at $t_1$ we actually get that $h_1(\tau_{t_1}) = 0$ and in particular $h_1 \in H^1(\Gamma_{F,T \cup \{\underline{\nu}_1,\underline{\nu}'_1,\underline{\nu}_2,\underline{\nu}'_2\}},\overline{\rho_1}(\mathfrak g_1^{\der}))$. Similarly, $h_2 \in H^1(\Gamma_{F,T \cup \{\underline{\nu}_1,\underline{\nu}'_1,\underline{\nu}_2,\underline{\nu}'_2\}},\overline{\rho_2}(\mathfrak g)_2^{\der}))$.

    On the other hand, for $\nu_{i,n}$ in one of the tuples we have $h_1(\tau_{\nu_{i,n}})$ will be a non-zero value in the corresponding root space of $\mathfrak g_1^{\der}$, as exactly one term in the sums is ramified at $\nu_{1,n}$ in that root space. The analogous result for $h_2$ holds, as well. This gives us that both cohomology classes are ramified at primes in $\underline{\nu}_1,\underline{\nu}_1',\underline{\nu}_2,\underline{\nu}_2'$ and moreover in the prescribed root space. This means that $\rho_{1,2}$ and $\rho_{2,2}$ have the desired inertial behavior at the new primes, since the initial lifts $\rho'_{1,2}$ and $\rho_{2,2}'$ are not ramified at any of them. It remains to show that the image under $\rho_{i,2}$ of the Frobenius of each of the new primes will be as desired. We do this by prescribing the values of $h_1$ at the Frobenii. We note that in this manner we can prescribe the value of $\rho_{i,2}$ at the Frobenii, as long as we respect the modulo $\varpi$ reduction, which for all Frobenii is trivial. In particular, we can make the image of the Frobenii under $\rho_{i,2}$ be any value in $\widehat{G^{\der}_i}(\mathcal O/\varpi^2)$.

    We recall that by Proposition \ref{cokernelgenerators} each prime $\nu_{i,n}$ comes with a choice of a decomposition group. This determines a unique prime $\nu_{i,n,K}$ of $K$ above $\nu_{i,n}$. We let $\mathcal C_{i,n,K}$ and $\mathfrak l_{i,n,K}$ denote the sets of such primes. We note that as both $\mathcal C_{i,n}$ and $\mathfrak l_{i,n}$ consist of primes that split in $K$ the sets $\mathcal C_{i,n,K}$ will still have positive Dirichlet density, while the sets $\mathfrak l_{i,n,K}$ will have positive upper-density. It turns out that it is convenient to work with primes and densities in $K$, so for notational simplicity we drop the subscript whenever it is clear we are working with primes and densities in $K$. The choice of decomposition group at each such prime that comes from Proposition \ref{cokernelgenerators} was specified only in a finite extension of $K$. In what follows we will apply the Chebotarev density theory in extensions of $K$ which will require specification of the image of $\sigma_{\nu_{i,n}}$ in that extension. However, all these extensions will be abelian and hence the image of $\sigma_{\nu_{i,n}}$ in them will depend only on $\nu_{i,n,K}$, which comes from the choice of decomposition group in Proposition \ref{cokernelgenerators}.

    The proof will rely on the notion of Dirichlet density of $n$-tuples of primes, so we recall its definition. Let $P = \prod_{i=1}^n P_n$ be a set of $n$-tuples of primes, where each $P_n$ is a set of primes having Dirichlet density $\delta(P_n)$. We then say that the Dirichlet density of $P$ is $\delta(P) \coloneqq \prod_{i=1}^n \delta(P_n)$. Analogously, we can define the upper-density $\delta^+(P)$ of the set of $n$-tuples of primes $P$.

    Now, by Lemma \ref{DensityLemma} we can find a positive upper density subset $\mathfrak l \subseteq \prod_{n \in N_1} \mathfrak l_{1,n,K} \times \prod_{n \in N_2} \mathfrak l_{2,n,K}$ consisting of $(|N_1|+|N_2|)$-tuples $(\underline{\nu}_1,\underline{\nu}_2)$ of primes for which the following are independent of the choice of the tuple in $\mathfrak l$

    \vspace{1 mm}

    \begin{itemize}
        \item $h_1^{(\mathrm{old})}(\sigma_{\nu_{1,m}})$ and $h_2^{\mathrm{old}}(\sigma_{\nu_{1,m}})$ for all $m \in N_1$.
        \vspace{1 mm}
        \item $\sum_{n \in N_1} h_1^{(\nu_{1,n})}(\sigma_{\nu_{1,m}})$ and $\sum_{n \in N_2} h_1^{(\nu_{2,n},t_1)}(\sigma_{\nu_{1,m}})$ for all $m \in N_1$.
        \vspace{1 mm}
        \item $\sum_{n \in N_2} h_2^{(\nu_{2,n})}(\sigma_{\nu_{1,m}})$ and $\sum _{n \in N_1} h_2^{(\nu_{1,n},t_2)}(\sigma_{\nu_{1,m}})$ for all $n \in N_1$.
        \vspace{2 mm}
        \item $\rho_{1,2}'(\sigma_{\nu_{1,m}}) \pmod{Z_{G^0_1}}$ and $\rho_{2,2}'(\sigma_{\nu_{1,m}}) \pmod{Z_{G^0_2}}$ for all $m \in N_1$.

        \vspace{4 mm}

        \item $h_1^{(\mathrm{old})}(\sigma_{\nu_{2,m}})$ and $h_2^{\mathrm{old}}(\sigma_{\nu_{2,m}})$ for all $m \in N_2$.
        \vspace{1 mm}
        \item $\sum_{n \in N_1} h_1^{(\nu_{1,n})}(\sigma_{\nu_{2,m}})$ and $\sum_{n \in N_2} h_1^{(\nu_{2,n},t_1)}(\sigma_{\nu_{2,m}})$ for all $m \in N_2$.
        \vspace{1 mm}
        \item $\sum_{n \in N_2} h_2^{(\nu_{2,n})}(\sigma_{\nu_{2,m}})$ and $\sum _{n \in N_1} h_2^{(\nu_{1,n},t_2)}(\sigma_{\nu_{2,m}})$ for all $n \in N_2$.
        \vspace{2 mm}
        \item $\rho_{1,2}'(\sigma_{\nu_{2,m}}) \pmod{Z_{G^0_1}}$ and $\rho_{2,2}'(\sigma_{\nu_{2,m}}) \pmod{Z_{G^0_2}}$ for all $m \in N_2$.
        
    \end{itemize}

    \vspace{2 mm}

    This is all possible since the quantities in question take only finitely many values. The last bullet point for both $\underline{\nu}_1$ and $\underline{\nu}_2$ ensures that the images of the Frobenius under $\rho_{1,2}$ and $\rho_{1,2}$ are determined only by the image of the Frobenius under $h_1$ and $h_2$, respectively. On the other side, the other requirements will simplify this arrangement. In particular, we have reduced the problem to prescribing the values of $h_1(\sigma_{\nu_{i,n}})$ and $h_2(\sigma_{\nu_{i,n}})$ for $i = 1,2$ and $n \in N_i$.

    If $(\underline{\nu}_1,\underline{\nu}_2)$, $(\underline{\nu}_1',\underline{\nu}_2') \in \mathfrak l$ by the definition of $\mathfrak l$ we obtain the following $8$ relations:

    \begin{equation}
    \text{For all } m \in N_1 \quad  h_1(\sigma_{v_{1,m}}) \text{ is determined by  } \sum_{n \in N_1} h_1^{(v'_{1,n})}(\sigma_{v_{1,m}}) + \sum_{n \in N_2} h_1^{(v'_{2,n},t_1)}(\sigma_{v_{1,m}}) \quad \quad \quad \quad \quad  \tag{1}
    \end{equation}
    
    \begin{equation}
        \text{For all } m \in N_1 \quad  h_2(\sigma_{v_{1,m}}) \text{ is determined by  } \sum_{n \in N_1} h_2^{(v'_{1,n},t_2)}(\sigma_{v_{1,m}}) + \sum_{n \in N_2} h_2^{(v'_{2,n})}(\sigma_{v_{1,m}}) \quad \quad \quad \quad \quad  \tag{2}
    \end{equation}
    
    \begin{equation}
       \text{For all } m \in N_2 \quad  h_1(\sigma_{v_{2,m}}) \text{ is determined by  } \sum_{n \in N_1} h_1^{(v'_{1,n})}(\sigma_{v_{2,m}}) + \sum_{n \in N_2} h_1^{(v'_{2,n},t_1)}(\sigma_{v_{2,m}}) \quad \quad \quad \quad \quad  \tag{3} 
    \end{equation}
    
    \begin{equation}
    \text{For all } m \in N_2 \quad  h_2(\sigma_{v_{2,m}}) \text{ is determined by  } \sum_{n \in N_1} h_2^{(v'_{1,n},t_2)}(\sigma_{v_{2,m}}) + \sum_{n \in N_2} h_2^{(v'_{2,n})}(\sigma_{v_{2,m}}) \quad \quad \quad \quad \quad  \tag{4}
    \end{equation}
    
    \begin{equation}
    \text{For all } m \in N_1 \quad  h_1(\sigma_{v'_{1,m}}) \text{ is determined by  } 2\sum_{n \in N_1} h_1^{(v_{1,n})}(\sigma_{v'_{1,m}}) + \sum_{n \in N_2} h_1^{(v_{2,n},t_1)}(\sigma_{v'_{1,m}}) \quad \quad \quad \quad \tag{5}
    \end{equation}
    
    \begin{equation}
    \text{For all } m \in N_1 \quad  h_2(\sigma_{v'_{1,m}}) \text{ is determined by  } \sum_{n \in N_1} h_2^{(v_{1,n},t_2)}(\sigma_{v'_{1,m}}) + 2 \sum_{n \in N_2} h_2^{(v_{2,n})}(\sigma_{v'_{1,m}}) \quad \quad \quad \quad   \tag{6}
    \end{equation}
    
    \begin{equation}
    \text{For all } m \in N_2 \quad  h_1(\sigma_{v'_{2,m}}) \text{ is determined by  } 2\sum_{n \in N_1} h_1^{(v_{1,n})}(\sigma_{v'_{2,m}}) + \sum_{n \in N_2} h_1^{(v_{2,n},t_1)}(\sigma_{v'_{2,m}}) \quad \quad \quad \quad   \tag{7}
    \end{equation}
    
    \begin{equation}
    \text{For all } m \in N_2 \quad  h_2(\sigma_{v_{2,m}}) \text{ is determined by  } \sum_{n \in N_1} h_2^{(v_{1,n},t_2)}(\sigma_{v'_{2,m}}) + 2\sum_{n \in N_2} h_2^{(v_{2,n})}(\sigma_{v'_{2,m}}) \quad \quad \quad \quad   \tag{8}
    \end{equation}
    
    \vspace{2 mm} 

    We now fix $(\underline{\nu}_1,\underline{\nu}_2) \in \mathfrak l$. Since $N_{1,\mathrm{span}} \subseteq N_1$ we get that any element of $\mathfrak g_1^{\der}$ can be written as a linear combination of values of $h_1^{(\nu_{1,n})}$ for varying $n$. Therefore, from relation $(5)$ for a fixed $m \in N_1$ we can prescribe the value of $h_1(\sigma_{\nu_{1,m}'})$ by Chebotarev conditions on $\nu_{1,m}'$ in $K_{h_1^{(\nu_{1,n})}}$ for all $n \in N_1$ and $K_{h_1^{(\nu_{2,n},t_1)}}$ for all $n \in N_2$. Similarly, as $N_{2,\mathrm{span}} \subseteq N_2$ using relation $(6)$ for a fixed $m \in N_1$ we can prescribe the value of $h_2(\sigma_{\nu_{1,m}'})$ by Chebotarev conditions on $\nu_{1,m}'$ in $K_{h_2^{(\nu_{1,n},t_2)}}$ for all $n \in N_1$ and $K_{h_2^{(\nu_{2,n})}}$ for all $n \in N_2$. Analogously, for a fixed $m \in N_2$ from relation $(7)$ we can prescribe the value of $h_1(\sigma_{\nu_{2,m}'})$ by Chebotarev conditions on $\nu_{2,m}'$ in $K_{h_1^{(\nu_{1,n})}}$ for all $n \in N_1$ and $K_{h_1^{(\nu_{2,n},t_1)}}$ for all $n \in N_2$, while by relation $(8)$ we can prescribe the value of $h_2(\sigma_{\nu_{2,m}'})$ by Chebotarev conditions on $\nu_{2,m}'$ in $K_{h_2^{(\nu_{1,n},t_2)}}$ for all $n \in N_1$ and $K_{h_2^{(\nu_{2,n})}}$ for all $n \in N_2$.

    On the other hand, from our earlier construction $\eta_{1,b}^{(\nu_{1,m})}(\tau_{\nu_{1,m}}) = e_{1,b}^*$ and so $\{n_{1,b}^{(\nu_{1,m})}(\tau_{\nu_{1,m}})\}_{b \in B_1}$ is a $k$-basis of $\overline{\rho_1}(\mathfrak g_1^{\der})^*$. Therefore, to prescribe $h_1(\sigma_{\nu_{1,m}})$ it is enough to prescribe $\langle \eta_{1,b}^{(\nu_{1,m})}(\tau_{\nu_{1,m}}),h_1(\sigma_{\nu_{1,m}})\rangle$ for all $b \in B_1$. So using relation $(1)$ for a fixed $m \in N_1$ the value of $h_1(\sigma_{\nu_{1,m}})$ will be determined by

    $$\sum_{n \in N_1} \langle \eta_{1,b}^{(\nu_{1,m})}(\tau_{\nu_{1,m}}),h_1^{(\nu_{1,n}')}(\sigma_{\nu_{1,m}})\rangle + \sum_{n \in N_2} \langle \eta_{1,b}^{(\nu_{1,m})}(\tau_{\nu_{1,m}}), h_1^{(\nu_{2,n}',t_1)}(\sigma_{\nu_{1,m}}) \rangle \quad \quad \text{for all } b \in B_1$$

    \vspace{2 mm}

    We would prescribe the values of each of those pairings using Chebotarev conditions on $\underline{\nu}_1'$ and $\underline{\nu}_2'$. For a fixed $m \in N_1$ from global duality we have the following exact sequence:

    $$H^1(\Gamma_{F,T \cup \{\nu_{1,m},\nu_{1,n}'\}},\overline{\rho_1}(\mathfrak g_1^{\der})) \longrightarrow \bigoplus_{\nu \in T \cup \{\nu_{1,m},\nu_{1,n}'\}} H^1(\Gamma_{F_\nu},\overline{\rho_1}(\mathfrak g_1^{\der})) \longrightarrow H^1(\Gamma_{F,T \cup \{\nu_{1,m},\nu_{1,n}'\}},\overline{\rho_1}(\mathfrak g_1^{\der})^*)^\vee$$

    \vspace{2 mm}

    From the exactness we get $\sum_{\nu \in T \cup \{\nu_{1,m},\nu_{1,n}'\}} \langle \eta_{1,b}^{(\nu_{1,m})},h_1^{(\nu_{1,n}')}\rangle_\nu = 0$, which by the computation in \cite[Lemma $3.9$]{FKP21} yields

    $$\langle \eta_{1,b}^{(\nu_{1,m})}(\tau_{\nu_{1,m}}),h_1^{(\nu_{1,n}')}(\sigma_{\nu_{1,m}}) \rangle = \langle \eta_{1,b}^{(\nu_{1,m})}(\sigma_{\nu_{1,n}'}),h_1^{(\nu_{1,n}')}(\tau_{\nu_{1,n}'}) \rangle + \sum_{\nu \in T} \langle \eta_{1,b}^{(\nu_{1,m})},h_1^{(\nu_{1,n}')}\rangle_\nu$$

    \vspace{2 mm}

    By the construction of the cohomology class $h_1^{(\nu_{1,n}')}$ its restriction to primes in $T$ is independent of the choice of prime. Hence, the sum above is independent of $\nu_{1,n}'$. Also $h_1^{(\nu_{1,n}')}(\tau_{\nu_{1,n}'}) = Z_{1,n}$, which is a non-zero scalar multiple of $X_{1,\alpha_n}$. Therefore, to prescribe the value on the left it is enough to prescribe $\langle \eta_{1,b}^{(\nu_{1,m})}(\sigma_{\nu_{1,n}'}),Z_{1,n}\rangle$. In a similar manner, using global duality and keeping in mind that $h_1^{(\nu_{2,n}',t_1)}$ has extra ramification at $t_1$ we obtain

    $$\langle \eta_{1,b}^{(\nu_{1,m})}(\tau_{\nu_{1,m}}),h_1^{(\nu_{2,n}',t_1)}(\sigma_{\nu_{1,m}})\rangle = \langle \eta_{1,b}^{(\nu_{1,m})}(\sigma_{\nu_{2,n}'}),h_1^{(\nu_{2,n}',t_1)}(\tau_{\nu_{2,n}'}) \rangle + \langle \eta_{1,b}^{(\nu_{1,m})}(\sigma_{t_1}),h_1^{(\nu_{2,n}',t_1)}(\tau_{t_1}) \rangle + \sum_{\nu \in T} \langle \eta_{1,b}^{(\nu_{1,m})},h_1^{(\nu_{2,n}',t_1)} \rangle_\nu$$

    \vspace{2 mm}

    As before the restriction of $h_1^{(\nu_{2.n}',t_1)}$ to primes in $T$ is independent of the choice of $\nu_{2,n}'$, so the same is true for the sum above. By construction $h_1^{(\nu_{2,n}',t_1)}(\tau_{\nu_{2,n}'}) = h_1^{(\nu_{2,n}',t_1)}(\tau_{t_1}) = Z_{2,n}$, a non-zero scalar multiple of $X$, which is independent of $\nu_{2,n}'$. Thus, to prescribe the value on the left it is enough to prescribe $\langle \eta_{1,b}^{(\nu_{1,m})}(\sigma_{\nu_{2,n}'}),Z_{2,n}\rangle$. Combining these results we would like to prescribe $\sum_{i=1}^2 \sum_{n \in N_i} \langle \eta_{1,b}^{(\nu_{1,m})}(\sigma_{\nu_{i,n}'}),Z_{i,n} \rangle$.

    Now, for any value in $k$ we can find $x \in \mathfrak g_1^{\der}$ such that $\langle e_{1,b}^*,x\rangle$ equals that value. As $N_{1,\mathrm{span}} \subseteq N_1$ we have that $\mathfrak g^{\der}_1 = \sum_{n \in N_1} \mathbb F_p[\overline{\rho_1}] \cdot Z_{1,n}$ and so we can write $x = \sum_{n \in N_1} \sum_{j=1}^{a_n} c_{n,j}\beta_{n,j} \cdot Z_{1,n}$, where $c_{n,j} \in \mathbb F_p$ and $\beta_{n,j} \in \Gamma_{F}$. Then

    $$\langle e_{1,b}^*,x\rangle = \left\langle e_{1,b}^*, \sum_{n \in N_1} \sum_{j=1}^{a_n} c_{n,j}\beta_{n,j} \cdot Z_{1,n}\right\rangle = \sum_{n \in N_1} \left \langle e_{1,b}^*, \sum_{j=1}^{a_n}c_{n,j}\beta_{n,j} \cdot Z_{1,n} \right\rangle = \sum_{n \in N_1} \left\langle \sum_{j=1}^{a_n} c_{n,j}'\beta_{n,j}' \cdot e_{1,b}^*, Z_{1,n} \right \rangle$$

    \vspace{2 mm}

    \noindent where $c_{n,j}' \in \mathbb F_p$ and $\beta_{n,j}' \in \Gamma_F$. More precisely, from the $\Gamma_F$-action on $e_{1,b}^*$ we get $\beta_{n,j}' = \beta_{n,j}^{-1}$ and \linebreak $c_{n,j}' = c_{n,j}\overline{\kappa}(\beta_{n,j})$. Now, for $n \in N_1$ we ask for $\eta_{1,b}^{(\nu_{1,m})}(\sigma_{\nu_{1,n}'}) = \sum_{j=1}^{a_n}c_{n,j}'\beta_{n,j}' \cdot e_{1,b}^*$. We note that the right-hand side lies in the image $\eta_{1,b}^{(\nu_{1,m})}(\Gamma_K)$. Indeed, as $\nu_{1,m}$ is trivial prime we have 
    
    $$\sum_{j=1}^{a_n} c_{n,j}'\beta_{n,j}' \cdot e_{1,b}^* = \sum_{j=1}^{a_n}c_{n,j}'\beta'_{n,j} \cdot \eta_{1,b}^{(\nu_{1,m})}(\tau_{\nu_{1,m}}) = \eta_{1,b}^{(\nu_{1,m})} \left( \prod_{j=1}^{a_n} \beta_{n,j}'(\tau_{\nu_{1,m}})^{c_{n,j}'}\beta_{n,j}'^{-1}\right)$$

    \vspace{2 mm}

    This gives us a Chebotarev condition on $\nu_{1,n}'$ in $K_{\eta_{1,b}^{(\nu_{1,m})}}$ for all $b \in B_1$ and $m \in N_1$. Additionally, for $n \in N_2$ we ask for $\eta_{1,b}^{(\nu_{1,m})}(\sigma_{\nu_{2,n}'}) = 0$, which gives us a Chebotarev condition on $\nu_{2,n}'$ in $K_{\eta_{1,b}^{(\nu_{1,m})}}$ for all $b \in B_1$ and $m \in N_1$. Repeating the same argument, relation $(2)$ yields Chebotarev conditions on $\nu_{i,n}'$ in $K_{\eta_{2,b}^{(\nu_{1,m})}}$ for all $b \in B_2, m \in N_1$. Relation $(3)$ gives us a Chebotarev conditions on $\nu_{i,n}'$ in $K_{\eta_{1,b}^{(\nu_{2,m})}}$ for all $b \in B_1$ and $m \in N_2$. Finally, relation $(4)$ gives us Chebotarev conditions on $\nu_{i,n}'$ in $K_{\eta_{2,b}^{(\nu_{2,m})}}$ for all $b \in B_2$ and $m \in N_2$.

    We remark that all these Chebotarev conditions on $\nu_{i,n}'$ coming from the $8$ relations lie in a strongly linearly disjoint extensions over $K$ by Lemma \ref{LinearDisjointnessLemma}. Indeed, for any pair of cocycles we have the desired inertial behavior. For example, all cocycles $h$ are ramified at $t_0$, while neither of the cocycles $\eta$ is. Moreover, each of the cocycles $\eta$ is ramified at a single auxiliary prime $t_b$, while on the other side some of the cocycles $h$ are ramified at the auxiliary primes $t_1$ and $t_2$. Therefore, all the Chebotarev conditions are compatible and we can write them as a single Chebotarev condition in their composite, which we denote by $L_{(\underline{\nu}_1,\underline{\nu}_2)}$. We label by $\mathcal C_{(\underline{\nu}_1,\underline{\nu}_2)}$ the tuples of primes $(\underline{\nu}_1',\underline{\nu}_2')$ produced by these Chebotarev conditions in $L_{(\underline{\nu}_1,\underline{\nu}_2)}$.

    Summarizing, we get that if $(\underline{\nu}_1,\underline{\nu}_2) \in \mathfrak l$ and $(\underline{\nu}_1',\underline{\nu}_2') \in \mathfrak l \cap \mathcal C_{(\underline{\nu}_1,\underline{\nu}_2)}$ then by taking $\underline{\nu} = (\underline{\nu}_1,\underline{\nu}_2)$ and $\underline{\nu}' = (\underline{\nu}_1',\underline{\nu_2')}$ we obtain the desired $2|N|$ trivial primes, where $N \coloneqq N_1 \sqcup N_2$. Therefore, it remains to show that $\mathfrak l \cap \mathcal C_{(\underline{\nu}_1,\underline{\nu}_2)}$ is non-empty for some $(\underline{\nu}_1,\underline{\nu}_2) \in \mathfrak l$. Suppose that no such tuple of trivial primes exists. This means that for any integer $s$ we can find a subset $\{(\underline{\nu}_1,\underline{\nu}_2),\cdots,(\underline{\nu}_{2s-1},\underline{\nu}_{2s})\} \subseteq \mathfrak l$ such that $(\mathfrak l \setminus \mathfrak l_k) \cap \mathcal C_{(\underline{\nu}_{2k-1},\underline{\nu}_{2k})} = \emptyset$ for all $k \le s$. Here $\mathfrak l_k$ consists of the tuples of primes in $\mathfrak l$ that share at least one prime with the tuple $(\underline{\nu}_{2k-1},\underline{\nu}_{2k})$. As remarked above this set has density $0$. Moreover, we assume that no prime repeats and appears in multiple tuples $(\underline{\nu}_{2k-1},\underline{\nu}_{2k})$, which can be easily arranged as $\mathfrak l$ has positive density. Then

    $$\mathfrak l \setminus \bigcup_{k=1}^s \mathfrak l_k \subseteq \bigcap_{k=1}^s \overline{\mathcal C_{(\underline{\nu}_{2k-1},\underline{\nu}_{2k})}}$$

    \vspace{2 mm}

    \noindent where we take complement among the $|N|$-tuples of primes in $K$ that split in $F$. We recall that $\mathcal C_{(\underline{\nu}_{2k-1},\underline{\nu}_{2k})}$ consists of $|N|$-tuple of primes $(\underline{\nu}_1',\underline{\nu}_2')$ with each $\nu_{i,n}'$ coming from a Chebotarev condition in $L_{(\underline{\nu}_{2k-1},\underline{\nu}_{2k})}$. We note that $L_{(\underline{\nu}_{2k-1},\underline{\nu}_{2k})}$ is the composite of fixed fields of cocycles each of which is ramified at some prime in the tuple $(\underline{\nu}_{2k-1},\underline{\nu}_{2k})$, while other such extensions are a composite of fixed fields of cocycles neither of which is ramified at $(\underline{\nu}_{2k-1},\underline{\nu}_{2k})$. Therefore, by Lemma \ref{LinearDisjointnessLemma} all the fields $L_{(\underline{\nu}_{2k-1},\underline{\nu}_{2k})}$ are strongly linearly disjoint over $K$. Using this from the Inclusion-Exclusion Principle

    \begin{align*}
        \delta \left( \bigcap_{k=1}^s\overline{\mathcal C_{(\underline{\nu}_{2k-1},\underline{\nu}_{2k})}} \right) &= 1 - \sum_{k=1}^s \delta\left(\mathcal C_{(\underline{\nu}_{2k-1},\underline{\nu}_{2k})}\right) - \cdots (-1)^n \prod_{k=1}^s \delta\left(\mathcal C_{(\underline{\nu}_{2k-1},\underline{\nu}_{2k})}\right) = \prod_{k=1}^s \left(1 - \delta\left(\mathcal C_{(\underline{\nu}_{2k-1},\underline{\nu}_{2k})}\right)\right)
    \end{align*}

    \vspace{2 mm}
    
    Now, if $d_k \coloneqq [L_{(\underline{\nu}_{2k-1},\underline{\nu}_{2k})}:K]$ we have that $\mathcal C_{(\underline{\nu}_{2k-1},\underline{\nu}_{2k})}$ has density at least $d_k^{-|N|}$. Indeed, for each prime in the the $|N|$-tuple we are picking from a set of density at least $d_k^{-1}$. From the construction of the fields $L_{(\underline{\nu}_{2k-1},\underline{\nu}_{2k})}$ we can give a uniform upper bound $D$ on $d_k$ in terms of $p$, $\dim \mathfrak g^{\der}_1$, $\dim \mathfrak g^{\der}_2$, and $|N|$, as these quantities give us an upper bound on the images of the cocycles and how many cocycles were used in the definition of $L_{(\underline{\nu}_{2k-1},\underline{\nu}_{2k})}$. Hence 

    $$\delta^+(\mathfrak l) = \delta^+\left(\mathfrak l \setminus \bigcup_{k=1}^s \mathfrak l_k \right) \le \delta \left( \bigcap_{k=1}^s\overline{\mathcal C_{(\underline{\nu}_{2k-1},\underline{\nu}_{2k})}} \right) = \prod_{k=1}^s \left(1 - \delta\left(\mathcal C_{(\underline{\nu}_{2k-1},\underline{\nu}_{2k})}\right)\right) \le (1-D^{-|N|})^s$$

    \vspace{2 mm}

    However, by our assumption we can take $s$ as large as we want. Therefore, we can make the right-hand side arbitrarily close to $0$, which will contradict the fact that $\mathfrak l$ has positive upper-density. Hence, we can find $(\underline{\nu}_1,\underline{\nu}_2) \in \mathfrak l$ and $(\underline{\nu}_1',\underline{\nu}_2') \in \mathfrak l \cap \mathcal C_{(\underline{\nu}_1,\underline{\nu}_2)}$ consisting of distinct primes and we are done as explained above.
    
\end{proof}

\subsection{Lifting modulo \texorpdfstring{$\varpi^n$}{ϖ\textasciicircum n}}

We will now inductively produce modulo $\varpi^n$ lifts of $\overline{\rho_1}$ and $\overline{\rho_2}$ by generalizing the proof of Proposition \ref{DoublingMethod}. We do that one step at a time passing from a modulo $\varpi^{n-1}$ lifts $\rho_{1,n-1}$ and $\rho_{2,n-1}$ to modulo $\varpi^n$ lifts. We let $L_{i,n-1} \coloneqq K(\rho_{i,n-1}(\mathfrak g_i^{\der}))$ and $L_{n-1} \coloneqq L_{1,n-1}L_{2,n-1}$. Making sure that $L_{1,n-1}$ and $L_{2,n-1}$ are linearly disjoint over $K$ at each step will play an important role in the proof. We will make sure that the chosen lifts satisfy this property. We note that this property is automatically satisfied at the first step, i.e. when passing from $\overline{\rho_i}$ to $\rho_{i,2}$. Indeed, for $n=2$ we have $L_{1,1}=L_{2,1}=K$, as $K$ trivializes the action on $\overline{\rho_i}$, and therefore also the adjoint action on $\overline{\rho_i}(\mathfrak g^{\der})$. To perform the lifting at each step we will use Proposition \ref{cokernelgenerators} with $L=L_{n-1}$ and $c = \max \{D+1,\lceil \frac ne \rceil\}$. However, we first need to verify the linear disjointness in the hypothesis of the proposition. To that end, we have the following lemma, which is inspired by {\cite[Lemma $4.3$]{FKP22}}

\begin{lem}

    \label{LinearDisjointness}

    Let $\rho_{1,n-1}:\Gamma_{F,T_{n-1}} \to G_1(\mathcal O/\varpi^{n-1})$ and $\rho_{2,n-1}:\Gamma_{F,T_{n-1}} \to G_2(\mathcal O/\varpi^{n-1})$ be lifts of $\overline{\rho_1}$ and $\overline{\rho_2}$, respectively, where $T_{n-1}$ is a finite set of trivial primes containing $T$. Furthermore, assume that the images of $\rho_{1,n-1}$ and $\rho_{2,n-1}$ contain $\widehat{G_1^{\der}}(\mathcal O/\varpi^{n-1})$ and $\widehat{G_2^{\der}}(\mathcal O/\varpi^{n-1})$, respectively, and $L_{1,n-1}$ and $L_{2,n-1}$ are linearly disjoint over $K$. Then $L_{n-1}$ is linearly disjoint over $K$ from the composite of $K_\infty$ with the fixed fields $K_\psi$ for any collection of classes $\psi \in H^1(\Gamma_{F,T_{n-1}},\rho_i(\mathfrak g_i^{\der})^*)$ 
    
\end{lem}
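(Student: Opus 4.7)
The plan is to set $M := L_{n-1} \cap \bigl(K_\infty \cdot \prod_\psi K_\psi\bigr)$ and show $M = K$ by a Jordan--H\"older comparison: I will compute the $\mathbb F_p[\Gamma_F]$-composition factors of $\Gal(M/K)$ first from the inclusion into $\Gal(L_{n-1}/K)$ and then from the inclusion into $\Gal\bigl(K_\infty \cdot \prod_\psi K_\psi/K\bigr)$, and invoke Assumption \ref{GeneralAssumptions} to show the two resulting lists are disjoint.

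From the left side, the assumed linear disjointness gives $\Gal(L_{n-1}/K) \cong \Gal(L_{1,n-1}/K) \times \Gal(L_{2,n-1}/K)$. For each $i$, the restriction $\rho_{i,n-1}|_{\Gamma_K}$ lands in $\widehat{G_i}(\mathcal O/\varpi^{n-1})$ because $K$ trivializes $\overline{\rho_i}$; since $\Ad$ kills the center, $\Gal(L_{i,n-1}/K)$ is a subquotient of $\widehat{G_i^{\ad}}(\mathcal O/\varpi^{n-1})$. I will then use the $\Gamma_F$-stable filtration of the latter by kernels of the successive reductions to $\mathcal O/\varpi^j$, together with the $\Gamma_F$-equivariant exponential isomorphism recalled in \S 2, to identify each graded piece with a subquotient of $\mathfrak g_i^{\der}\otimes k\cong\overline{\rho_i}(\mathfrak g_i^{\der})$. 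Hence every $\mathbb F_p[\Gamma_F]$-composition factor of $\Gal(M/K)$ is a subquotient of $\overline{\rho_1}(\mathfrak g_1^{\der}) \oplus \overline{\rho_2}(\mathfrak g_2^{\der})$.

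From the right side, $\Gal(M/K)$ is also a subquotient of $\Gal(K_\infty/K) \times \prod_\psi \Gal(K_\psi/K)$. Since $\mu_p \subseteq K$, every composition factor of $\Gal(K_\infty/K)$ is isomorphic to $\mathbb F_p(1)$; and each $\Gal(K_\psi/K)$ embeds into $\rho_i(\mathfrak g_i^{\der})^*$ (for the appropriate $i$), whose $\varpi$-adic $\Gamma_F$-stable filtration has graded pieces in $\overline{\rho_i}(\mathfrak g_i^{\der})^*$. Consequently any composition factor of $\Gal(M/K)$ is simultaneously a subquotient of $\overline{\rho_1}(\mathfrak g_1^{\der}) \oplus \overline{\rho_2}(\mathfrak g_2^{\der})$ and of $\mathbb F_p(1) \oplus \overline{\rho_1}(\mathfrak g_1^{\der})^* \oplus \overline{\rho_2}(\mathfrak g_2^{\der})^*$. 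Bullet (2) of Assumption \ref{GeneralAssumptions} rules out any common subquotient between $\overline{\rho_i}(\mathfrak g_i^{\der})$ and $\overline{\rho_j}(\mathfrak g_j^{\der})^*$, while bullet (3) --- using that $\mathbb F_p$ failing to be a subquotient of $\overline{\rho_i}(\mathfrak g_i^{\der})^*$ is equivalent to $\mathbb F_p(1)$ failing to be a subquotient of $\overline{\rho_i}(\mathfrak g_i^{\der})$ --- eliminates the $\mathbb F_p(1)$ contribution coming from $K_\infty$. The two lists of possible composition factors are thus disjoint, forcing $\Gal(M/K)=1$ and $M=K$.

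The only delicate point I expect is confirming that the filtration on $\widehat{G_i^{\ad}}(\mathcal O/\varpi^{n-1})$ is $\mathbb F_p[\Gamma_F]$-stable and that each graded piece is genuinely isomorphic to $\overline{\rho_i}(\mathfrak g_i^{\der})$ as a $\Gamma_F$-module; this follows from the $G(R/I)$-equivariance of $\exp$ recorded in \S 2, where the adjoint action on the Lie algebra matches conjugation on the group. Beyond this the argument is routine composition-factor bookkeeping, and notably it never uses that the image of $\rho_{i,n-1}$ contains all of $\widehat{G_i^{\der}}(\mathcal O/\varpi^{n-1})$ --- only that $\rho_{i,n-1}|_{\Gamma_K}$ factors through $\widehat{G_i}$.
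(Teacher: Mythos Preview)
Your composition-factor strategy has a genuine gap at the $K_\infty$ step, and this is precisely where the full-image hypothesis you discarded becomes essential.

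First, the $\Gamma_F$-action (by conjugation) on $\Gal(K_\infty/K)$ is \emph{trivial}, not cyclotomic: $K_\infty = K\cdot F(\mu_{p^\infty})$ with $F(\mu_{p^\infty})/F$ abelian, so $\sigma\tau\sigma^{-1}=\tau$ for $\sigma\in\Gamma_F$, $\tau\in\Gal(K_\infty/K)$. The composition factors are therefore copies of the trivial module $\mathbb F_p$, not $\mathbb F_p(1)$. Your appeal to bullet (3) to exclude $\mathbb F_p(1)$ is thus aimed at the wrong target.

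Second, and more seriously, once you correct this you must rule out the trivial $\mathbb F_p$ as an $\mathbb F_p[\Gamma_F]$-\emph{subquotient} of $\overline{\rho_i}(\mathfrak g_i^{\der})$, and bullet (3) only excludes it as a subrepresentation (and, via the trace pairing, as a quotient). In the very case the paper cares about, $G_i=\GL_2$ with $\overline{\rho_i}$ upper-triangular, the $k[\Gamma_F]$-module $\overline{\rho_i}(\mathfrak{sl}_2)$ has composition series with graded pieces $k(\overline{\chi}),\,k,\,k(\overline{\chi}^{-1})$; the trivial module genuinely occurs as a middle subquotient. So your Jordan--H\"older comparison cannot separate the $K_\infty$ contribution from the $L_{n-1}$ side.

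The paper avoids this by using the hypotheses you set aside. The full-image assumption gives $\Gal(L_{n-1}/K)\cong \widehat{G_1^{\der}}(\mathcal O/\varpi^{n-1})\times\widehat{G_2^{\der}}(\mathcal O/\varpi^{n-1})$ on the nose; since $M\subset K_\infty\cdot\prod_\psi K_\psi$ is abelian over $K$, $\Gal(M/K)$ is a quotient of the \emph{abelianization}, which Lemma~\ref{AbelianizationLemma} identifies with $\overline{\rho_1}(\mathfrak g_1^{\der})\oplus\overline{\rho_2}(\mathfrak g_2^{\der})$. A simple $\Gal(M/K)$ is then a \emph{quotient} of some $\overline{\rho_i}(\mathfrak g_i^{\der})$, and a trivial quotient is excluded by bullet (3) together with the trace self-duality. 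The passage from ``subquotient'' to ``quotient'' is exactly what the abelianization (hence the full-image hypothesis) buys.
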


\begin{proof}

    From the linear disjointness of $L_{1,n-1}$ and $L_{2,n-1}$ over $K$ we get

    $$\Gal(L_{n-1}/K) \simeq \Gal(L_{1,n-1}/K) \times \Gal(L_{2,n-1}/K) \simeq \widehat{G_1^{\der}}(\mathcal O/\varpi^{n-1}) \times \widehat{G_2^{\der}}(\mathcal O/\varpi^{n-1})$$

    \vspace{2 mm}

    \noindent where the second isomorphism follows from the assumption on the images of $\rho_{1,n-1}$ and $\rho_{2,n-2}$. Any abelian quotient of it will be a quotient of $\Gal(L_{n-1}/K)^{\mathrm{ab}}$, which is isomorphic to $\widehat{G_1^{\der}}(\mathcal O/\varpi^2) \times \widehat{G_2^{\der}}(\mathcal O/\varpi^2)$ by Lemma \ref{AbelianizationLemma}, which in turn as an $\mathbb F_p[\Gamma_F]$-module is isomorphic to $\overline{\rho_1}(\mathfrak g_1^{\der}) \oplus \overline{\rho_2}(\mathfrak g_2^{\der})$ under the exponential map.

    Now, we consider the composite of $K_\infty$ and fields $K_\psi$ for any collection of classes $\psi \in H^1(\Gamma_{F,T_{n-1}},\overline{\rho_i}(\mathfrak g_i^{\der})^*)$ for $i=1,2$. Suppose that the intersection of this composite and $L_{n-1}$ is a field $M$, which properly contains $K$. By taking smaller $M$, if necessary, we can assume that $\Gal(M/K)$ is a simple $\mathbb F_p[\Gamma_F]$-module. Then $\Gal(M/K)$ will be a quotient of the Galois group of the composite over $K$, which is isomorphic to a subgroup of $\Gal(K_\infty/K) \times \prod \Gal(K_\psi/K)$. Thus, $\Gal(M/K)$ is a simple $\mathbb F_p[\Gamma_F]$-subquotient of this direct product. By Jordan-H\"older Theorem all the composite factors are independent of the composition series, so we can compute the simple subquotients of each factor separately. As $\Gal(K_\psi/K)$ is a submodule of $\overline{\rho_i}(\mathfrak g_i^{\der})^*$, every simple subquotient of $\Gal(K_\psi/K)$ will also be a simple subquotient of $\overline{\rho_i}(\mathfrak g_i^{\der})^*$. On the other side, as $K$ contains $\mu_p$ we have that $\Gal(K_\infty/K) \simeq \mathbb Z_p$, where the $\Gamma_F$-action is trivial by \cite[Lemma $2.1$]{FKP22}. Therefore, all of its simple subquotients are of the form $\mathbb Z/p$ with the trivial action.

    By the virtue of being a quotient of $\Gal(L_{n-1}/K)^{\mathrm{ab}}$, $\Gal(M/K)$  is an $\mathbb F_p[\Gamma_F]$-quotient of $\overline{\rho_1}(\mathfrak g_1^{\der}) \oplus \overline{\rho_2}(\mathfrak g_2^{\der})$. As $\Gal(M/K)$ is a simple $\mathbb F_p[\Gamma_F]$-module it has to be a quotient of one of $\overline{\rho_1}(\mathfrak g_1^{\der})$ or $\overline{\rho_2}(\mathfrak g_2^{\der})$. Therefore, $\Gal(M/K)$ can't be isomorphic to a simple $\mathbb F_p[\Gamma_F]$-subquotient of $\overline{\rho_i}(\mathfrak g_i^{\der})^*$, as that will contradict the second bullet point of Assumptions \ref{GeneralAssumptions}. Thus, $\Gal(M/K)$ is isomorphic to $\mathbb Z/p$ with the trivial $\Gamma_F$-action, and one of $\overline{\rho_1}(\mathfrak g_1^{\der})$ or $\overline{\rho_2}(\mathfrak g_2^{\der})$ has a trivial $\mathbb F_p[\Gamma_F]$-quotient. Then, the composition $\overline{\rho_i}(\mathfrak g_i^{\der}) \twoheadrightarrow \mathbb Z/p \hookrightarrow k$ tells us that $\Hom_{\mathbb F_p[\Gamma_F]}(\overline{\rho_i}(\mathfrak g_i^{\der}),k)$ is non-trivial. By definition, this is the $\Gamma_F$-invariant submodule of $\Hom_k(\overline{\rho_i}(\mathfrak g_i^{\der}),k)$. However, by the trace pairing $\Hom_k(\overline{\rho_i}(\mathfrak g_i^{\der}),k) \simeq \overline{\rho_i}(\mathfrak g_i^{\der})$, which yields that $\overline{\rho_i}(\mathfrak g_i^{\der})$ contains the trivial representation, contradicting the third bullet point of Assumptions \ref{GeneralAssumptions}. Hence, we deduce that such $M$ doesn't exist and we obtain the desired linear disjointness over $K$.
    
\end{proof}

\begin{thm}[cf. {\cite[Theorem $4.4$]{FKP22}}]

\label{LiftingModn}

    Let $T \supseteq S$ be the finite set of primes we have constructed in the discussion above. Then, there exists a sequence of finite sets of primes of $F$, $T = T_1\subset T_2 \subset T_3 \subset \dots \subset T_n \subset \cdots$, and for each $n \ge 2$ a lift $\rho_{i,n}:\Gamma_{F,T_n} \to G_i(\mathcal O/\varpi^n)$ of $\overline{\rho_i}$ with multiplier $\mu_i$ for $i = 1,2$, such that $\rho_{i,n} = \rho_{i,n+1} \pmod{\varpi^n}$ for all $n$. These systems of lifts satisfy the following properties:

    \begin{enumerate}
        \item $\rho_{1,n}$ is ramified at $\nu \in T_n \setminus S$ if and only if $\rho_{2,n}$ is ramified at $\nu$. Moreover, for such a prime $\nu$ there is a split maximal torus $T_{i,\nu}$ and a root $\alpha_{i,\nu} \in \Phi(G_i^0,T_{i,\nu})$ such that $\rho_{i,n}(\sigma_\nu) \in T_{i,\nu}(\mathcal O/\varpi^n)$, $\alpha_{i,\nu}(\rho_{i,n}(\sigma_{\nu})) \equiv N(\nu) \pmod{\varpi^n}$ and $\restr{\rho_{i,n}}{\Gamma_{F_\nu}} \in \Lift_{\restr{\overline{\rho_i}}{\Gamma_{F_\nu}}}^{\mu_i,\alpha_{i,\nu}}(\mathcal O/\varpi^n)$ (see \cite[Definition $3.1$]{FKP21} for the definition of this type of local lifts). In addition, one of the following two properties holds:
            \begin{enumerate}
                \item For some $s \le eD$, $\rho_{i,s}(\tau_\nu)$ is a non-trivial element of $U_{\alpha_{i,\nu}}(\mathcal O/\varpi^s)$ and for all $n' \ge s$, $\restr{\rho_{i,n'}}{\Gamma_{F_\nu}}$ is a $\widehat{G_i}(\mathcal O)$-conjugate to the reduction modulo $\varpi^{n'}$ of a fixed lift $\rho_{i,\nu}:\Gamma_{F_\nu} \to G_i(\mathcal O)$ of $\restr{\rho_{i,s}}{\Gamma_{F_\nu}}$. Moreover, we can choose this lift to correspond to a formally smooth point in the generic fiber of the lifting ring of $\restr{\overline{\rho_i}}{\Gamma_{F_\nu}}$.
                \item For $s = eD$, $\restr{\rho_{i,s}}{\Gamma_{F_\nu}}$ is trivial modulo the center, while $\alpha_{i,\nu}(\rho_{i,s+1}(\sigma_\nu)) \equiv N(\nu) \not \equiv 1 \pmod {\varpi^{s+1}}$ and $\beta(\rho_{i,s+1}(\sigma_\nu)) \not \equiv 1 \pmod{\varpi^{s+1}}$ for all roots $\beta \in \Phi(G_i^0,T_{i,\nu})$.
            \end{enumerate}

        \item For all $\nu \in S$, $\restr{\rho_{i,n}}{\Gamma_{F_\nu}}$ is strictly equivalent to $\rho_{i,\nu} \pmod{\varpi^n}$.
        \item The fixed fields $L_{1,n} = K(\rho_{1,n}(\mathfrak g_1^{\der}))$ and $L_{2,n} = K(\rho_{2,n}(\mathfrak g_2^{\der}))$ are linearly disjoint over $K$.
        \item $\im(\rho_{i,n})$ contains $\widehat{G_i^{\der}}(\mathcal O/\varpi^n)$ for $i=1,2$.
    \end{enumerate}
    
\end{thm}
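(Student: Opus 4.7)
The proof will proceed by induction on $n$, with the base case $n=2$ furnished essentially by Proposition \ref{DoublingMethod}: properties (1) and (2) are built into its conclusion, property (3) is vacuous since $L_{i,1}=K$, and property (4) follows from the prescribed Steinberg-type ramification at the auxiliary primes, whose $\tau_\nu$-images are root vectors indexed by the spanning set $N_{i,\mathrm{span}}$ and therefore generate $\mathfrak g_i^{\der}$ as an $\mathbb F_p[\overline{\rho_i}]$-module. For the inductive step, assume the four properties hold at level $n-1$. Lemma \ref{LinearDisjointness} then guarantees that $L_{n-1}$ is linearly disjoint over $K$ from $K_\infty$ and from the fixed fields of any collection of dual cocycles, so Proposition \ref{cokernelgenerators} may be invoked with $L=L_{n-1}$ and $c=\max\{D+1,\lceil n/e\rceil\}$.

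The next step is to produce an initial modulo $\varpi^n$ lift $\rho'_{i,n}$ extending $\rho_{i,n-1}$. The unramified obstruction vanishes because $\Sh_{T_{n-1}}^2(\Gamma_{F,T_{n-1}},\overline{\rho_i}(\mathfrak g_i^{\der}))=0$ (Remark \ref{VanishingofSh}, preserved under enlargement by trivial primes) and local lifts exist at each $\nu\in T_{n-1}$: at $\nu\in S$ we take $\rho_{i,\nu}\pmod{\varpi^n}$; at a prime $\nu\in T_{n-1}\setminus S$ in case (1)(a) we take the reduction of the fixed formally smooth lift; and in case (1)(b), either we reduce the same fixed lift (if $n\le s$) or at the transition step we build a new prescribed lift respecting the constraints on $\alpha_{i,\nu}$ and the other roots. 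Comparing $\restr{\rho'_{i,n}}{\Gamma_{F_\nu}}$ with the target at each place yields local classes $z_{i,\nu}\in H^1(\Gamma_{F_\nu},\overline{\rho_i}(\mathfrak g_i^{\der}))$, assembled into $z_{i,T_{n-1}}$. One now repeats the doubling argument of Proposition \ref{DoublingMethod} \emph{mutatis mutandis} at level $n$: the cokernel and spanning classes produced by Proposition \ref{cokernelgenerators}, the doubled primes $(\nu_n,\nu'_n)$, and the auxiliary primes $t_1,t_2$ combine to give global cocycles $h_i\in H^1(\Gamma_{F,T_{n-1}\cup Q},\overline{\rho_i}(\mathfrak g_i^{\der}))$ with $\restr{h_i}{T_{n-1}}=z_{i,T_{n-1}}$ and prescribed Steinberg-type inertial behavior at a shared set $Q$ of new trivial primes, followed by the final Chebotarev-density argument (identical to the one at the end of Proposition \ref{DoublingMethod}) to prescribe the Frobenius images. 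Setting $T_n=T_{n-1}\cup Q$ and $\rho_{i,n}=\exp(\varpi^{n-1}h_i)\cdot\rho'_{i,n}$ then yields properties (1) and (2).

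The crux is propagating (3) and (4) to level $n$. Property (4) is relatively clean: the Steinberg-type ramification at primes of $Q$ is modeled on root vectors covering $\mathfrak g_i^{\der}$ as an $\mathbb F_p[\overline{\rho_i}]$-module, so the level-$n$ contributions to $\im(\rho_{i,n})\cap\widehat{G_i^{\der}}(\mathcal O/\varpi^n)$ surject onto the quotient $\widehat{G_i^{\der}}(\mathcal O/\varpi^n)/\widehat{G_i^{\der}}(\mathcal O/\varpi^{n-1})\simeq\mathfrak g_i^{\der}$, and combined with the inductive hypothesis this forces $\widehat{G_i^{\der}}(\mathcal O/\varpi^n)\subseteq\im(\rho_{i,n})$. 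Property (3) is the subtler point and is the main obstacle: the concern is that the intersection $L_{1,n}\cap L_{2,n}$ could grow beyond $K$ after adjoining the new ramification. This is ruled out by the same Jordan--H\"older style argument as in the proof of Lemma \ref{LinearDisjointness} --- any non-trivial common simple $\mathbb F_p[\Gamma_F]$-subquotient of $\rho_{1,n}(\mathfrak g_1^{\der})$ and $\rho_{2,n}(\mathfrak g_2^{\der})$ would, via the exponential filtration, descend to a common simple subquotient of $\overline{\rho_1}(\mathfrak g_1^{\der})$ and $\overline{\rho_2}(\mathfrak g_2^{\der})$, which is forbidden by the second and third bullets of Assumptions \ref{GeneralAssumptions} together with the trace-form self-duality of $\mathfrak g_i^{\der}$. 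Making this descent rigorous requires verifying that the exponential filtration on $\im(\rho_{i,n})\cap\widehat{G_i^{\der}}(\mathcal O/\varpi^n)$ has $\Gamma_F$-equivariant graded pieces isomorphic to $\overline{\rho_i}(\mathfrak g_i^{\der})$, which is exactly where the strong linear-disjointness guarantees woven into the Chebotarev construction of $Q$ (cf.\ Lemma \ref{LinearDisjointnessLemma}) enter decisively.
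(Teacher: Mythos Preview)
Your overall inductive architecture and the use of Proposition \ref{cokernelgenerators} with $L=L_{n-1}$ is correct, but your treatment of properties (3) and (4) contains a genuine gap.

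For property (3), you argue that a non-trivial intersection $L_{1,n}\cap L_{2,n}$ over $K$ would produce a common simple $\mathbb F_p[\Gamma_F]$-subquotient of $\overline{\rho_1}(\mathfrak g_1^{\der})$ and $\overline{\rho_2}(\mathfrak g_2^{\der})$, and you claim this is forbidden by the second and third bullets of Assumptions \ref{GeneralAssumptions} together with trace self-duality. This is incorrect: the second bullet only forbids common subquotients of $\overline{\rho_i}(\mathfrak g_i^{\der})$ and $\overline{\rho_j}(\mathfrak g_j^{\der})^*$ (the Tate dual, which carries a cyclotomic twist), not of $\overline{\rho_1}(\mathfrak g_1^{\der})$ and $\overline{\rho_2}(\mathfrak g_2^{\der})$ themselves. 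Trace self-duality gives $\mathfrak g_i^{\der}\simeq(\mathfrak g_i^{\der})^\vee$, hence $\overline{\rho_i}(\mathfrak g_i^{\der})^*\simeq\overline{\rho_i}(\mathfrak g_i^{\der})(1)$, which is \emph{not} $\overline{\rho_i}(\mathfrak g_i^{\der})$. In the paper's principal application (reducible $\GL_2$-representations with the same semi-simplification), $\overline{\rho_1}(\mathfrak g_1^{\der})$ and $\overline{\rho_2}(\mathfrak g_2^{\der})$ have \emph{identical} composition factors $k(\overline{\chi}),k,k(\overline{\chi}^{-1})$, so your Jordan--H\"older obstruction simply does not exist.

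The paper handles (3) and (4) by an entirely different mechanism that you have omitted: before running the doubling argument at step $n$, one enlarges $T_{n-1}$ to $T_{n-1}'$ by adjoining auxiliary primes $\nu_1,\dots,\nu_s$ and $w_1,\dots,w_t$ that split completely in $L_{n-1}$, and prescribes the local targets there to be \emph{unramified} with $\lambda_{1,\nu_k}(\sigma_{\nu_k})$ ranging over generators of $\ker\bigl(G_1^{\der}(\mathcal O/\varpi^n)\to G_1^{\der}(\mathcal O/\varpi^{n-1})\bigr)$ while $\lambda_{2,\nu_k}$ is trivial, and symmetrically for the $w_k$. Since the final $\rho_{i,n}$ is $\widehat G_i$-conjugate to $\lambda_{i,\nu}$ at every $\nu\in T_{n-1}'$, this forces the Frobenii $\sigma_{\nu_k}$ to hit all of $\widehat{G_1^{\der}}(\mathcal O/\varpi^n)$ under $\rho_{1,n}$ while remaining central under $\rho_{2,n}$, and vice versa for $\sigma_{w_k}$. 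This simultaneously secures property (4) and manufactures witnesses to the linear disjointness in property (3). Your alternative route to (4) via the $\tau_\nu$-images at the ramified doubling primes also breaks down at any step where $z_{i,T_{n-1}'}$ already globalizes and no new ramified primes are added; the paper's image-enhancing primes are needed precisely to cover that case.
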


\begin{proof}

    We will inductively lift $\overline{\rho_i}$ to $\rho_{i,n}:\Gamma_{F,T_n} \to G_i(\mathcal O/\varpi^n)$ that satisfies the conclusion of theorem, at each stage enlarging the ramification set $T_n$. We note that $\overline{\rho_i}$ satisfies the requirements of the theorem. This follows immediately, since no prime in $T \setminus S$ is ramified in either of the representations, the lifts $\rho_{i,\nu}$, given in Assumptions \ref{GeneralAssumptions} are lifts of $\restr{\overline{\rho_i}}{\Gamma_{F_\nu}}$ for all $\nu \in S$, and in this case $L_{1,1} = L_{2,1} = K$ and $\widehat{G_i^{\der}}(\mathcal O/\varpi)$ is the trivial group. This verifies the base step of the induction. Hence, for $n \ge 2$ we can suppose that we have already constructed a lift $\rho_{i,n-1}:\Gamma_{F,T_{n-1}} \to G_i(\mathcal O/\varpi^{n-1})$ of $\overline{\rho_i}$ as in the the theorem. We remark that for each prime $\nu \in T_{n-1} \setminus T$ at which $\rho_{i,n-1}$ is ramified we are given a torus $T_{i,\nu}$ and a root $\alpha_{i,\nu}$. We will often change this pair with some $\widehat{G}(\mathcal O)$-conjugate of it without changing the notation; see \cite[Remark $5.15$]{FKP21}

    We will first add some primes to $T_{n-1}$ that split in $L_{n-1} = K(\rho_{1,n-1}(\mathfrak g_1^{\der}),\rho_{2,n-1}(\mathfrak g_2^{\der}))$ and introduce unramified multiplier $\mu_i$ lifts $\lambda_{i,\nu}$ in order to make sure properties $(3)$ and $(4)$ are satisfied. We label this enlarged set by $T_{n-1}'$. Let $g_1,\dots,g_s$ be generators of $\ker(G_1^{\der}(\mathcal O/\varpi^n) \to G_1^{\der}(\mathcal O/\varpi^{n-1}))$. Next, we pick primes $\nu_1,\dots,\nu_s$ that split in $L_{n-1}$, which implies that the restrictions of $\rho_{1,n-1}$ and $\rho_{2,n-1}$ to $\Gamma_{F_{\nu_k}}$ are trivial modulo the center. We take $\lambda_{1,\nu_k}:\Gamma_{F_{\nu_k}} \to G_1^{\der}(\mathcal O/\varpi^n)$ to be the unramified representations such that $\lambda_{1,\nu_k}(\sigma_{\nu_k}) = g_k$. On the other side, we let $\lambda_{2,\nu_k}:\Gamma_{F_{\nu_k}} \to G_2^{\der}(\mathcal O/\varpi^n)$ be the trivial representation. Similarly, for generators $h_1,\dots,h_t$ of $\ker(G_2^{\der}(\mathcal O/\varpi^n) \to G_2^{\der}(\mathcal O/\varpi^{n-1}))$ we can find another set of primes $\{w_1,\dots, w_t\}$ that split in $L_{n-1}$, and we then choose $\lambda_{1,w_k}:\Gamma_{F_{w_k}} \to G_1^{\der}(\mathcal O/\varpi^n)$ to be the trivial representation, and $\lambda_{2,w_k}:\Gamma_{F_{w_k}} \to G_2^{\der}(\mathcal O/\varpi^n)$ to be the unramified representation that sends $\sigma_{w_k}$ to $h_k$.

    We now note that if we produce any modulo $\varpi^n$ lift $\rho_{i,n}$ of $\rho_{i,n-1}$ so that its restriction to the local absolute Galois groups at these new primes is equal to the lift prescribed above then properties $(3)$ and $(4)$ will be satisfied. Indeed, let $g \in \widehat{G_1^{\der}}(\mathcal O/\varpi^n)$. Then, from the inductive hypothesis there exists $\alpha \in \Gamma_F$ such that $\rho_{1,n}(\alpha) \equiv \rho_{1,n-1}(\alpha) \equiv g \pmod{\varpi^{n-1}}$. Thus, $g(\rho_{1,n}(\alpha))^{-1} \in \ker(G_1^{\der}(\mathcal O/\varpi^n) \to G_1^{\der}(\mathcal O/\varpi^{n-1}))$. Hence, there exists $\beta$, a product of $\sigma_{\nu_k}$ such that $g(\rho_{1,n}(\alpha))^{-1} = \rho_{1,n}(\beta)$. Thus, $\rho_{1,n}(\beta\alpha) = g$ and we deduce that $\rho_{1,n}(\Gamma_F)$ contains $\widehat{G_1^{\der}}(\mathcal O/\varpi^n)$. Similarly, we deduce the analogous result for $\rho_{2,n}$.

    On the other hand, $\Gal(L_{i,n}/K) \simeq \widehat{G_i^{\der}}(\mathcal O/\varpi^n)$ as this is the maximal possible Galois group and it has to be achieved as property $(4)$ is satisfied. Therefore, to show that $L_{1,n}$ and $L_{2,n}$ are linearly disjoint over $K$ it is enough to show that for any $g \in \widehat{G_1^{\der}}(\mathcal O/\varpi^n)$ we can find $\sigma \in \Gamma_F$ such that $\rho_{1,n}(\sigma) = g$ in $\widehat{G_1^{\der}}(\mathcal O/\varpi^n)$, while $\rho_{2,n}(\sigma)$ is trivial modulo the center; and similarly for any $h \in \widehat{G_2^{\der}}(\mathcal O/\varpi^n)$ we need to find $\sigma \in \Gamma_F$ such that $\rho_{1,n}(\sigma)$ is trivial modulo the center, and $\rho_{2,n}(\sigma) = h$ in $\widehat{G_2^{\der}}(\mathcal O/\varpi^n)$. The existence of such $\sigma$ follows from the choice of the primes $\nu_1,\dots,\nu_s$, and $w_1,\dots,w_t$ above. In particular, in the previous paragraph we can choose $\alpha$ to be a product of $\sigma_{\nu_k}$ for the primes $\nu_k$ that were added during the previous lifting steps, as their images under $\rho_{1,n-1}$ will generate $\widehat{G_1^{\der}}(\mathcal O/\varpi^{n-1})$. Therefore, $\rho_{1,n}(\beta\alpha) = g$ in $\widehat{G^{\der}}(\mathcal O/\varpi^n)$, while $\rho_{2,n}(\beta\alpha)$ is trivial modulo the center, as $\alpha$ and $\beta$ are products of $\sigma_{\nu_k}$ and all the local lifts $\lambda_{2,\sigma_{\nu_k}}$ are trivial modulo the center. We get the analogous result with the roles of $\rho_{1,n}$ and $\rho_{2,n}$, and primes $\nu_k$ and $w_k$ reversed. Before moving on we remark that by \cite[Lemma $6.15$]{FKP21} this needs to be done for only finitely many times. By it for large enough $n$ we have that if the images of the Frobenii $\sigma_{\nu_k}$ under $\rho_{1,n}$ generate $\widehat{G_1^{\der}}(\mathcal O/\varpi^n)$ then the images of the same Frobenii under $\rho_{1,m}$ will generate $\widehat{G_1^{\der}}(\mathcal O/\varpi^m)$ for all $m \ge n$. On the other hand, their images under $\rho_{2,m}$ will still be trivial. The analogous statement holds for $\rho_{2,n}$ and $\widehat{G_2^{\der}}(\mathcal O/\varpi^n)$, as well.

    \vspace{2 mm}

    Now, for $\nu \in T_{n-1}$ we fix local lifts $\lambda_{i,\nu}:\Gamma_{F_\nu} \to G_i(\mathcal O/\varpi^n)$ of $\restr{\rho_{i,n-1}}{\Gamma_{F_\nu}}$ as follows:

    \begin{itemize}
        \item If $\nu \in S$, then by assumption $\restr{\rho_{i,n}}{\Gamma_{F_\nu}}$ is a $\widehat{G_i}(\mathcal O)$-conjugate to the given lift $\rho_{i,\nu}$ modulo $\varpi^{n-1}$. We then take $\lambda_{i,\nu}$ to be a $\widehat{G_i}(\mathcal O)$-conjugate of $\rho_{i,\nu} \pmod{\varpi^n}$.
        \item If $\nu \in T_{n-1} \setminus S$ and neither $\rho_{1,n-1}$ nor $\rho_{2,n-1}$ is ramified at $\nu$ we then let $\lambda_{i,\nu}$ to be any multiplier $\nu_i$ unramified lift of $\restr{\rho_{i,n-1}}{\Gamma_{F_\nu}}$.
        \item If $\nu \in T_{n-1} \setminus S$ and both $\rho_{1,n-1}$ and $\rho_{2,n-1}$ are ramified at $\nu$ then from the inductive hypothesis we have $2$ cases to consider:

            \begin{itemize}
                \item If case $(a)$ is satisfied, then for some $s \le eD$ we have that $\rho_{i,s}(\tau_\nu)$ is a non-trivial element of $U_{\alpha_{i,\nu}}(\mathcal O/\varpi^s)$ and we are given a $\widehat{G_i}(\mathcal O)$-conjugate to the $\rho_{i,\nu}$ that was introduced at the $s$-th stage of the induction. We will explain how to choose these $\rho_{i,\nu}$ in the induction step below. We replace $\rho_{i,\nu}$ by this suitable $\widehat{G_i}(\mathcal O)$-conjugate and we take $\lambda_{i,\nu} = \rho_{i,\nu} \pmod{\varpi^n}$.
                \item If case $(b)$ is satisfied, then $n - 1 \ge eD+1$, $\restr{\rho_{i,eD}}{\Gamma_{F_\nu}}$ is trivial modulo the center, while $\restr{\rho_{i,eD+1}}{\Gamma_{F_\nu}}$ is in general position as described in the statement of the theorem. We take $\lambda_{i,\nu}$ to be any lift that will still satisfy these conditions, as well as the general conditions of property $(1)$. Such lifts exist by Lemma \ref{GeneralLiftLemma}
            \end{itemize}
    \end{itemize}

    By Lemma \ref{AnnihilationofSh} we have $\Sh_T^1(\Gamma_{F,T},\overline{\rho_i}(\mathfrak g_i^{\der})^*) = 0$, hence $\Sh_{T_{n-1}'}^1(\Gamma_{F,T_{n-1}'},\overline{\rho_i}(\mathfrak g_i^{\der})^*) = 0$. Therefore, by global duality and the existence of local lifts we can find a lift $\rho_{i,n}':\Gamma_{F,T_{n-1}'} \to G_i(\mathcal O/\varpi^n)$ of $\rho_{i,n-1}$. We then define a class $z_{i,T_{n-1}'} = (z_\nu)_{\nu \in T_{n-1}'} \in \bigoplus_{\nu \in T_{n-1}'} H^1(\Gamma_{F_\nu},\overline{\rho_i}(\mathfrak g_i^{\der}))$ such that for all $\nu \in T_{n-1}'$ we have

    $$\exp(\varpi^nz_{i,T_{n-1}'})\restr{\rho_{i,n}'}{\Gamma_{F_\nu}} = \lambda_{i,\nu}$$

    \vspace{2 mm}

    If $z_{i,T_{n-1}'}$ is the restriction of a global cohomology class $h_i \in H^1(\Gamma_{F,T_{n-1}'},\overline{\rho_i}(\mathfrak g_i^{\der}))$ for both $i=1,2$ then we set $\rho_{i,n} \coloneqq \exp(\varpi^{n-1}h_i)\rho_{i,n}'$ and $T_n = T_{n-1}'$. In this case, $\rho_{i,n}$ will be a lift of $\rho_{i,n-1}$ such that $\restr{\rho_{i,n}}{\Gamma_{F_\nu}}$ is a $\ker(G_i^{\der}(\mathcal O/\varpi^n) \to G_i^{\der}(\mathcal O/\varpi^{n-1}))$-conjugate to the fixed local lift $\lambda_{i,\nu}$ for all $\nu \in T_n$. By the choice of the local lifts properties $(1)$ and $(2)$ are satisfied and we are done. 

    If not, we will enlarge the set $T_{n-1}'$ using the analogue to Proposition \ref{DoublingMethod}. We briefly explain how this is done. As in the proposition we produce Chebotarev sets $\mathcal C_{j,m}$ with positive upper-density subsets $\mathfrak l_{j,m}$ with various cohomology classes for each $\nu \in \mathfrak l_{j,m}$. We produce these by invoking Proposition \ref{cokernelgenerators} with $T_{n-1}'$ in place of $T$, $c = \max\{D+1,\lceil \frac ne \rceil\}$, $L_{n-1}=K(\rho_{1,n-1}(\mathfrak g_1^{\der}),\rho_{2,n-2}(\mathfrak g_2^{\der}))$, and with both $\overline{\rho_1}$ and $\overline{\rho_2}$. The notation $\mathfrak l_{j,m}$ means that these are the positive upper-density sets produced by invoking Proposition \ref{cokernelgenerators} with $\overline{\rho_j}$. The hypothesis of the proposition is satisfied since $c \ge D+1$, $L_{n-1}$ is unramified outside of $T_{n-1}'$, and we have the desired linear disjointness over $K$ from Lemma \ref{LinearDisjointness}. We note that for each application of the proposition it produces classes $q_{j,m} \in \ker((\mathbb Z/p^c)^\times \to (\mathbb Z/p^D)^\times)$, a split maximal tori $T_{j,m}^{(1)}$ of $G_1$, and $T_{j,m}^{(2)}$ of $G_2$, together with roots $\alpha_{j,m}^{(1)} \in \Phi(G_1^0,T_{j,m}^{(1)})$, and $\alpha_{j,m}^{(2)} \in \Phi(G_2^0,T_{j,m}^{(2)})$. After these are produced we can choose the elements $g_{j,m} \in \Gal(L/K)$. We do that as follows:

    \begin{itemize}
        \item If $n - 1 \le eD$ we choose $g_{j,m}$ to be trivial.
        \item If $n - 1 \ge eD+1$ we choose $g_{j,m}$ to be the element corresponding to $(t_{j,m}^{(1)},t_{j,m}^{(2)})$ via the isomorphism

        $$\Gal(L_{n-1}/K) \simeq \Gal(L_{1,n-1}/K) \times \Gal(L_{2,n-1}/K) \simeq \widehat{G_1^{\der}}(\mathcal O/\varpi^{n-1}) \times \widehat{G_2^{\der}}(\mathcal O/\varpi^{n-1})$$

        \vspace{2 mm}

        \noindent where for $i=1,2$, $t_{j,m}^{(i)} \in T_{j,m}^{(i)}$ is trivial modulo $\varpi^{eD}$ and satisfies $\alpha_{j,m}^{(i)}(t_{j,m}^{(i)}) \equiv q_{j,m} \not \equiv 1\pmod{\varpi^{n-1}}$ and $\beta(t_{j,m}^{(i)}) \not \equiv 1 \pmod{\varpi^{eD+1}}$ for all $\beta \in \Phi(G^0_i,T_{j,m}^{(i)})$. As $q_{j,m}$ is trivial modulo $\varpi^{eD}$ such an element exists by Lemma \ref{TorusElementLemma} for $p \gg_{G_i}0$.
        
    \end{itemize}

    Then, for any $\underline{\nu}_1,\underline{\nu}_1' \in \prod_{m \in N_1} \mathfrak l_{1,m}$ and $\underline{\nu}_2,\underline{\nu}_2' \in \prod_{m \in N_2} \mathfrak l_{2,m}$ we consider classes

    $$h_1 = h_1^{\mathrm{old}} + 2 \sum_{m \in N_1} h_1^{(\nu_{1,m})} - \sum_{m \in N_1} h_1^{(\nu_{1,m}')} + \sum_{m \in N_2} h_1^{(\nu_{2,m},t_1)} - \sum_{m \in N_2} h_1^{(\nu_{2,m}',t_1)}$$

    $$h_2 = h_2^{\mathrm{old}} + 2 \sum_{m \in N_2} h_2^{(\nu_{2,m})} - \sum_{m \in N_2} h_2^{(\nu_{2,m}')} + \sum_{m \in N_1} h_2^{(\nu_{1,m},t_2)} - \sum_{m \in N_1} h_2^{(\nu_{1,m}',t_2)}$$

    \vspace{2 mm}

    As in Proposition \ref{DoublingMethod} these are constructed so that $\restr{h_i}{T_{n-1}'} = z_{i,T_{n-1}'}$. Moreover, at the new primes both $h_1$ and $h_2$ are ramified in the suitable roots space, while they aren't ramified at the auxiliary trivial primes $t_1$ and $t_2$. This means that $\rho_{i,n} = \exp(\varpi^{n-1}h_i)\rho_{i,n}'$ will have the desired behavior at primes in $T_{n-1}'$, the extra ramification will be exactly at the primes in $\underline{\nu}_1,\underline{\nu}_1',\underline{\nu}_2,\underline{\nu}_2'$ and moreover we have the desired inertial behavior at these primes. It only remain to specify the image of $\sigma_\nu$ under both $\rho_{1,n}$ and $\rho_{2,n}$ for $\nu \in \mathfrak l_{j,m}$ being any prime in $\underline{\nu}_1,\underline{\nu}_1',\underline{\nu}_2,\underline{\nu}_2'$. 

    \begin{itemize}
        \item If $n \le eD$ we would like $\rho_{i,n}(\sigma_\nu)$ to be trivial modulo the center. As we can only prescribe $h_i(\sigma_\nu)$, modifying $\rho_{i,n}'$ by $h_i$ will not change the modulo $\varpi^{n-1}$ restriction. Therefore, we need to make sure that $\rho_{i,n-1}(\sigma_\nu)$ is trivial modulo the center. This is guaranteed by our choice of $g_{j,m}$ above. By it, $\restr{\sigma_\nu}{L_{n-1}} = 1$, which means that $\rho_{i,n-1}(\sigma_\nu)$ is trivial modulo the center. 

        The choice of this image also gives us that $\alpha_{j,m}^{(i)}(\rho_{i,n}(\sigma_\nu)) \equiv 1 \pmod{\varpi^n}$. This is exactly what we want since $N(\nu) \equiv q_{j,m} \equiv 1 \pmod{\varpi^n}$, as $q_{j,m}$ is trivial modulo $p^D = \varpi^{eD}$. By \cite[Lemma $3.7$]{FKP21} we can then produce a lift $\rho_{i,\nu} \in \Lift_{\restr{\overline{\rho_i}}{\Gamma_{F_\nu}}}^{\mu_i,\alpha^{(i)}_{j,m}} (\mathcal O)$, which moreover defines a formally smooth point of the generic fiber of the trivial inertial type lifting ring for $\restr{\overline{\rho_i}}{\Gamma_{F_\nu}}$ and $\rho_{i,\nu}(\sigma_\nu) \in T_{j,m}^{(i)}(\mathcal O)$. These are the lifts $\rho_{i,\nu}$ that feed into the inductive step that was described at the beginning of the proof.

        \item If $n=eD+1$ we would like $\rho_{i,n}(\sigma_\nu)$ to be equal to an element $t_{j,m}^{(i)} \in T_{j,m}^{(i)}(\mathcal O/\varpi^n)$ such that \linebreak $t^{(i)}_{j,m} \pmod{\varpi^{n-1}}$ is trivial modulo the center, $\alpha^{(i)}_{j,m}(t^{(i)}_{j,m}) \equiv q_{j,m} \pmod{\varpi^n}$ and $\beta(t^{(i)}_{j,m}) \not \equiv 1 \pmod{\varpi^n}$ for all $\beta \in \Phi(G^0_i,T^{(i)}_{j,m})$. We note that $q_{j,m}$ is non-trivial modulo $\varpi^n$, but trivial modulo $\varpi^{n-1}$. Therefore, as before we can find such element $t_{j,m}^{(i)}$ by Lemma \ref{TorusElementLemma} for $p \gg_{G_i} 0$. As $g_{j,m}$ was chosen to be trivial $\rho_{i,n-1}(\sigma_\nu)$ is trivial modulo $\varpi^{n-1}$, so we are allowed to ask for $\rho_{i,n}(\sigma_\nu)$ to be equal to $t_{j,m}^{(i)}$.

        \item If $n > eD+1$ from our choice of $g_{j,m}$ we know that $\rho_{i,n-1}(\sigma_\nu)$ modulo the center is an element $t^{(i)}_{j,m} \in T^{(i)}_{j,m}(\mathcal O/\varpi^{n-1})$, which is trivial modulo $\varpi^{eD}$, in a general position modulo $\varpi^{eD+1}$ and also satisfies $\alpha^{(i)}_{j,m}(t^{(i)}_{j,m}) \equiv q_{j,m} \pmod{\varpi^{n-1}}$. We can easily arrange for $\rho_{i,n}(\sigma_\nu)$ to satisfy the same properties modulo $\varpi^n$, by arguing as in the inductive step of Lemma \ref{TorusElementLemma}.

    \end{itemize}

    Finally, as in Proposition \ref{DoublingMethod} we can fix some of the values of these cohomology classes for a positive upper-density set $\mathfrak l \subseteq \prod_{m \in N_1} \mathfrak l_{1,m} \times \prod_{m \in N_2} \mathfrak l_{2,m}$. Then using the $8$ relations for every pair $(\underline{\nu}_1,\underline{\nu}_2) \in \mathfrak l$ we get a Chebotarev condition $\mathcal C_{(\underline{\nu}_1,\underline{\nu}_2)}$ such that if $(\underline{\nu}_1',\underline{\nu}_2') \in \mathfrak l \cap \mathcal C_{(\underline{\nu}_1,\underline{\nu}_2)}$ we would get the desired values for $h_i(\sigma_\nu)$. Using the same limiting argument we get that for at least one such pair $(\underline{\nu}_1,\underline{\nu}_2) \in \mathfrak l$ the intersection $\mathfrak l \cap \mathcal C_{(\underline{\nu}_1,\underline{\nu}_2)}$ is non-empty, which shows that such pairs $(\underline{\nu}_1,\underline{\nu}_2)$ and $(\underline{\nu}_1',\underline{\nu}_2')$ exist.

    We remark that the primes added during this step, i.e. the primes in the above two pairs will satisfy property $(1)$. More precisely, if $n \le eD$ the prime will satisfy part $(a)$ of property $(1)$, while if $n \ge eD+1$ the primes will satisfy part $(b)$ of the same property.
    
\end{proof}

\section{Relative deformation theory}

\subsection{Relative Selmer groups}

Let $n$ be a large enough integer, in particular we want $n \ge eD$ and $n$ to be divisible by the ramification index $e$. By Theorem \ref{LiftingModn} we produce lifts $\rho_{1,n}:\Gamma_{F,S'} \to G_1(\mathcal O/\varpi^n)$ and $\rho_{2,n}:\Gamma_{F,S'} \to G_2(\mathcal O/\varpi^n)$ of $\overline{\rho_1}$ and $\overline{\rho_2}$, respectively. Such representations will satisfy the following properties:

\begin{lem}
\label{rhoNProperties}

    \text{ }

    \begin{enumerate}
        \item $\rho_{i,r}(\mathfrak g^{\der}_i)$ and $\rho_{j,s}(\mathfrak g_j^{\der})^*$ have no common $\mathbb F_p[\Gamma_F]$-subquotient for $i,j = 1,2$ and $r,s \le n$.
        \item $\rho_{i,r}(\mathfrak g_i^{\der})$ and $\rho_{i,r}(\mathfrak g_i^{\der})^*$ do not contain the trivial representation for any $i=1,2$ and $r \le n$.
    \end{enumerate}
    
\end{lem}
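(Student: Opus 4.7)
The plan is to reduce both statements to the corresponding residual assertions contained in the second and third bullet points of Assumptions~\ref{GeneralAssumptions}, using the $\varpi$-adic filtration on each of the modules $\rho_{i,r}(\mathfrak g_i^{\der})$ and $\rho_{i,r}(\mathfrak g_i^{\der})^*$. The key observation is that the adjoint $\Gamma_F$-action on $\rho_{i,r}(\mathfrak g_i^{\der}) = \mathfrak g_i^{\der} \otimes_{\mathcal O} \mathcal O/\varpi^r$ is $\mathcal O$-linear, so the filtration by the submodules $\varpi^k \rho_{i,r}(\mathfrak g_i^{\der})$, for $0 \le k \le r$, is $\Gamma_F$-stable.

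First I would identify each graded piece $\varpi^k \rho_{i,r}(\mathfrak g_i^{\der}) / \varpi^{k+1} \rho_{i,r}(\mathfrak g_i^{\der})$: multiplication by $\varpi^k$ gives an $\mathbb F_p[\Gamma_F]$-isomorphism from $\rho_{i,r}(\mathfrak g_i^{\der})/\varpi \rho_{i,r}(\mathfrak g_i^{\der}) \cong \overline{\rho_i}(\mathfrak g_i^{\der})$ onto it, since the action there reduces modulo $\varpi$ to $\Ad \circ \overline{\rho_i}$. I would then dualize by applying $\Hom(-,\mathbb Q_p/\mathbb Z_p(1))$ to this filtration, which yields a filtration of $\rho_{i,r}(\mathfrak g_i^{\der})^*$ whose graded pieces are each isomorphic to $\overline{\rho_i}(\mathfrak g_i^{\der})^*$. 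The upshot is that the multiset of simple Jordan--H\"older factors of $\rho_{i,r}(\mathfrak g_i^{\der})$ (resp.\ $\rho_{i,r}(\mathfrak g_i^{\der})^*$) is simply the multiset of simple Jordan--H\"older factors of $\overline{\rho_i}(\mathfrak g_i^{\der})$ (resp.\ $\overline{\rho_i}(\mathfrak g_i^{\der})^*$) counted $r$ times.

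Granting that, both parts follow at once. For (1), any non-trivial common $\mathbb F_p[\Gamma_F]$-subquotient of $\rho_{i,r}(\mathfrak g_i^{\der})$ and $\rho_{j,s}(\mathfrak g_j^{\der})^*$ would itself admit a simple subquotient, which must appear as a Jordan--H\"older factor of both modules, and hence by the previous paragraph as a common Jordan--H\"older factor of $\overline{\rho_i}(\mathfrak g_i^{\der})$ and $\overline{\rho_j}(\mathfrak g_j^{\der})^*$, contradicting the second bullet point of Assumptions~\ref{GeneralAssumptions}. For (2), a trivial $\mathbb F_p[\Gamma_F]$-subquotient of $\rho_{i,r}(\mathfrak g_i^{\der})$ or of $\rho_{i,r}(\mathfrak g_i^{\der})^*$ would similarly force the trivial representation to appear as a Jordan--H\"older factor of $\overline{\rho_i}(\mathfrak g_i^{\der})$ or $\overline{\rho_i}(\mathfrak g_i^{\der})^*$, contradicting the third bullet point of Assumptions~\ref{GeneralAssumptions}.

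I expect the argument to be entirely formal, with no serious obstacle. The only point that requires a moment of care is the identification of the graded pieces of the dual filtration; this is just the standard fact that Pontryagin duality on finite $p$-power torsion $\Gamma_F$-modules is exact and reverses filtrations, combined with the tautology that $\Hom(\overline{\rho_i}(\mathfrak g_i^{\der}),\mathbb Q_p/\mathbb Z_p(1)) = \overline{\rho_i}(\mathfrak g_i^{\der})^*$.
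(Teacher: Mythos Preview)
Your argument for part~(1) is correct and is essentially the paper's proof: both identify the Jordan--H\"older factors of $\rho_{i,r}(\mathfrak g_i^{\der})$ with those of $\overline{\rho_i}(\mathfrak g_i^{\der})$ (you via the full $\varpi$-adic filtration, the paper via induction on $r$ using the short exact sequence $0 \to \rho_{i,r-1}(\mathfrak g_i^{\der}) \to \rho_{i,r}(\mathfrak g_i^{\der}) \to \overline{\rho_i}(\mathfrak g_i^{\der}) \to 0$), and then invoke the second bullet of Assumptions~\ref{GeneralAssumptions}.

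Your argument for part~(2), however, has a genuine gap. The phrase ``contains the trivial representation'' in both the lemma and the third bullet of Assumptions~\ref{GeneralAssumptions} means ``has a nonzero $\Gamma_F$-invariant vector'', i.e.\ a trivial \emph{subrepresentation}, not merely a trivial subquotient. Your reduction passes through Jordan--H\"older factors and would therefore need the stronger input that the trivial representation is not a composition factor of $\overline{\rho_i}(\mathfrak g_i^{\der})$. But this is false in the paper's main application: for $G_i = \GL_2$, the filtration exhibited in the proof of Lemma~\ref{Assumption1} shows that $\overline{\rho_i}(\mathfrak g^{\der})$ has composition factors $k(\overline{\chi})$, $k$, $k(\overline{\chi}^{-1})$, so the trivial representation \emph{is} a Jordan--H\"older factor even though (by Lemma~\ref{Assumption3}) it is not a sub.

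The fix is easy and stays within your filtration framework: if $0 \neq v \in \rho_{i,r}(\mathfrak g_i^{\der})^{\Gamma_F}$, choose $k$ maximal with $v \in \varpi^k \rho_{i,r}(\mathfrak g_i^{\der})$; then the image of $v$ in the graded piece $\varpi^k \rho_{i,r}(\mathfrak g_i^{\der})/\varpi^{k+1}\rho_{i,r}(\mathfrak g_i^{\der}) \cong \overline{\rho_i}(\mathfrak g_i^{\der})$ is a nonzero invariant, contradicting the third bullet directly. This is exactly the paper's proof, which phrases the same d\'evissage as induction on $r$ via the exact sequence
\[
0 \longrightarrow H^0(\Gamma_F,\rho_{i,r-1}(\mathfrak g_i^{\der})) \longrightarrow H^0(\Gamma_F,\rho_{i,r}(\mathfrak g_i^{\der})) \longrightarrow H^0(\Gamma_F,\overline{\rho_i}(\mathfrak g_i^{\der})).
\]
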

\begin{proof}

    \text{ }

    \begin{enumerate}
        \item We will show that the simple $\mathbb F_p[\Gamma_F]$-subquotients of       $\rho_{i,r}(\mathfrak g_i^{\der})$ are the simple $\mathbb F_p[\Gamma_F]$-subquotients of $\overline{\rho_i}(\mathfrak g_i^{\der})$, possibly with different multiplicities. This is trivially true for $r=1$. Now, suppose that $r \ge 2$ and consider the exact sequence of $\mathbb F_p[\Gamma_F]$-modules

        $$0 \longrightarrow \rho_{i,r-1}(\mathfrak g_i^{\der}) \longrightarrow \rho_{i,r}(\mathfrak g_i^{\der}) \longrightarrow \overline{\rho_i}(\mathfrak g_i^{\der}) \longrightarrow 0$$

        \vspace{2 mm}

        From the Jordan-H\"older theorem the simple $\mathbb F_p[\Gamma_F]$-subquotients of $\rho_{i,r}(\mathfrak g_i^{\der})$ are exactly the simple $\mathbb F_p[\Gamma_F]$-subquotients of $\rho_{i,r-1}(\mathfrak g_i^{\der})$ and $\overline{\rho_i}(\mathfrak g_i^{\der})$. Inductively, these are the simple $\mathbb F_p[\Gamma_F]$-subquotients of $\overline{\rho_i}(\mathfrak g_i^{\der})$. By a similar argument, the simple $\mathbb F_p[\Gamma_F]$-subquotients of $\rho_{j,s}(\mathfrak g_j^{\der})^*$ are also the simple $\mathbb F_p[\Gamma_F]$-subquotients of $\overline{\rho_j}(\mathfrak g_j^{\der})^*$. Therefore, any common $\mathbb F_p[\Gamma_F]$-subquotient of $\rho_{i,r}(\mathfrak g_i^{\der})$ and $\rho_{j,s}(\mathfrak g_j^{\der})^*$ would yield a common $\mathbb F_p[\Gamma_F]$-subquotient of $\overline{\rho_i}(\mathfrak g_i^{\der})$ and $\overline{\rho_j}(\mathfrak g_j^{\der})^*$, which contradicts the second bullet point of Assumptions \ref{GeneralAssumptions}.

        \item We will prove this statement by induction on $r$. The base case $r=1$ is true by the third bullet point of Assumptions \ref{GeneralAssumptions}. Now, let $r \ge 2$ and suppose that the statement of the lemma is true for $r-1$. The exact sequence in the previous part induces an exact sequence on $\Gamma_F$-invariants

        $$0 \longrightarrow H^0(\Gamma_F,\rho_{i,r-1}(\mathfrak g_i^{\der})) \longrightarrow H^0(\Gamma_F,\rho_{i,r}(\mathfrak g_i^{\der})) \longrightarrow H^0(\Gamma_F,\overline{\rho_i}(\mathfrak g_i^{\der}))$$

        \vspace{2 mm}

        By the inductive hypothesis and the third bullet point of Assumptions \ref{GeneralAssumptions} both end terms are trivial, hence $H^0(\Gamma_F,\rho_{i,r}(\mathfrak g_i^{\der})) = 0$. Thus, $\rho_{i,r}(\mathfrak g_i^{\der})$ doesn't contain the trivial representation. We get the analogous result for $\rho_{i,r}(\mathfrak g_i^{\der})^*$ in a similar manner.

    \end{enumerate}
    
\end{proof}

For each prime $\nu$ in the finite set of primes $S'$ and $1 \le r \le n$ we will define subgroups $Z_{i,r,\nu} \subseteq Z^1(\Gamma_{F_\nu},\rho_{i,r}(\mathfrak g_i^{\der}))$ such that

\begin{itemize}
    \item Each $Z_{i,r,\nu}$ contains the coboundaries $B^1(\Gamma_{F_\nu},\rho_{i,r}(\mathfrak g_i^{\der}))$.
    \item For $a + b \le n$, the inclusion and the reduction maps induce short exact sequences

    $$0 \longrightarrow Z_{i,a,\nu} \longrightarrow Z_{i,a+b,\nu} \longrightarrow Z_{i,b,\nu} \longrightarrow 0$$

    \vspace{2 mm}
    
\end{itemize}

As primes in $S'$ were added during different steps of the lifting method and they satisfy different properties we will have to define these cocycles subgroups differently. We do that as follows:

\begin{itemize}
    \item For a each prime $\nu$ in the initial set $S$ by the fourth bullet point of Assumptions \ref{GeneralAssumptions} we are given a $p$-adic lift $\rho_{i,\nu}:\Gamma_{F_\nu} \to G_i(\mathcal O)$ with multiplier $\mu_i$ of $\restr{\overline{\rho_i}}{\Gamma_{F_\nu}}$. We then fix an irreducible component $\overline{R_{i,\nu}}[1/\varpi]$ of the generic fiber of the lifting ring $R^{\sqr,\mu_i}_{\restr{\overline{\rho_i}}{\Gamma_{F_\nu}}}[1/\varpi]$ (for $\nu \nmid p$) or of the inertial type $\tau_i$ and $p$-adic Hodge type $\textup{\textbf{v}}_i$ lifting ring $R^{\sqr,\mu_i,\tau_i,\textup{\textbf{v}}_i}_{\restr{\overline{\rho_i}}{\Gamma_{F_\nu}}}[1/\varpi]$ (for $\nu \mid p$; see \cite[Proposition $3.0.12$]{Bal12} for the construction of this ring), which contains the specified lift $\rho_{i,\nu}$. The lift $\rho_{i,\nu}$ might not necessarily correspond to a formally smooth point, but by \cite[Lemma $4.9$]{FKP21} we can find a lift $\rho_{i,\nu}':\Gamma_{F_\nu} \to G_i(\mathcal O')$ for some finite extension $\mathcal O'$ of $\mathcal O$, which defines a formally smooth point of $\overline{R_{i,\nu}}[1/\varpi]$ and $\rho_{i,\nu}' \equiv \rho_{i,\nu} \pmod{\varpi^{t_0}}$ for a choice of an integer $t_0$. We replace $\rho_{i,\nu}$ with $\rho_{i,\nu}'$ and $\mathcal O$ with $\mathcal O'$, while retaining the original notation. This might increase the ramification index $e$, and subsequently our lower bound on $n$. However, we can and we assume that we have done this replacement before starting with the lifting method. We note that the ring $\mathcal O$ will not be enlarged any further during the lifting method. Finally, we produce the desired cocycles subgroups using \cite[Proposition $4.7$]{FKP21}.

    \item Let $\nu$ be a prime in $S' \setminus S$ at which both $\rho_{i,n}$ ramify and was added to $S'$ during the first $eD$ lifting steps of Theorem \ref{LiftingModn}. Such primes will satisfy part $(a)$ of the theorem and therefore we have a $p$-adic lift $\rho_{i,\nu}:\Gamma_{F_\nu} \to G_i(\mathcal O)$ of $\restr{\overline{\rho_i}}{\Gamma_{F_\nu}}$ which corresponds to a formally smooth point in the generic fiber of the lifting ring of $\restr{\overline{\rho_i}}{\Gamma_{F_\nu}}$. Then, again we use \cite[Proposition $4.7$]{FKP21} to produce the subgroups $Z_{i,r,\nu}$. 

    \item Next, let $\nu$ be a prime in $S' \setminus S$ at which both $\rho_{i,n}$ ramify and was added to $S'$ after the first $eD$ lifting steps of Theorem \ref{LiftingModn}. Such primes will satisfy part $(b)$ of the theorem and so we have an explicit description of the local behavior of $\rho_{i,n}$ at $\nu$. We define the subgroups $Z_{i,r,\nu}$ using \cite[Lemma $3.5$]{FKP21}.

    \item Finally, at the primes $\nu$ in $S' \setminus S$ at which neither $\rho_{1,n}$ nor $\rho_{2,n}$ is ramified we let $Z_{i,r,\nu}$ be the subspace of unramified cocycles.

\end{itemize}

\vspace{2 mm}

We let $L_{i,r,\nu} \subseteq H^1(\Gamma_{F_\nu},\rho_{i,r}(\mathfrak g_i^{\der}))$ be the image of $Z_{i,r,\nu}$, and we let $L_{i,r,\nu}^\perp \subseteq H^1(\Gamma_{F_\nu},\rho_{i,r}(\mathfrak g_i^{\der})^*)$ be the annihilator of $L_{i,r,\nu}$ under the local duality pairing. We label the collection $\{L_{i,r,\nu}\}_{\nu \in S'}$ by $\mathcal L_{i,r,S'}$ and similarly the collection $\{L_{i,r,\nu}^\perp\}_{\nu \in S'}$ by $\mathcal L_{i,r,S'}^\perp$.

\begin{defn}
\label{rSelmerDef}

The \textit{$r$-th Selmer group} $H_{\mathcal L_{i,r,S'}}^1(\Gamma_{F,S'},\rho_{i,r}(\mathfrak g_i^{\der}))$ is defined to be

$$\ker \left( H^1(\Gamma_{F,S'},\rho_{i,r}(\mathfrak g_i^{\der})) \longrightarrow \bigoplus_{\nu \in S'} \frac{H^1(\Gamma_{F_\nu},\rho_{i,r}(\mathfrak g_i^{\der}))}{L_{i,r,\nu}}\right)$$

\vspace{2 mm}

We analogously define the \textit{$r$-th dual Selmer group} $H^1_{\mathcal L_{i,r,S'}^\perp} H^1(\Gamma_{F,S'},\rho_{i,r}(\mathfrak g_i^{\der})^*)$.

\end{defn}

\begin{lem}
\label{BalancedSelmer}

The local conditions $\mathcal L_{i,r,S'}$ for $1 \le r \le n$ are \textit{balanced}, i.e.

$$\left|H^1_{\mathcal L_{i,r,S'}}(\Gamma_{F,S'},\rho_{i,r}(\mathfrak g_i^{\der}))\right| = \left|H^1_{\mathcal L_{i,r,S'}^\perp}(\Gamma_{F,S'},\rho_{i,r}(\mathfrak g_i^{\der})^*)\right|$$
    
\end{lem}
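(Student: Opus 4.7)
The plan is to invoke Wiles' product formula. Applied to our Selmer and dual Selmer systems, it reads
\[
\frac{|H^1_{\mathcal L_{i,r,S'}}(\Gamma_{F,S'},\rho_{i,r}(\mathfrak g_i^{\der}))|}{|H^1_{\mathcal L_{i,r,S'}^\perp}(\Gamma_{F,S'},\rho_{i,r}(\mathfrak g_i^{\der})^*)|} = \frac{|H^0(\Gamma_F,\rho_{i,r}(\mathfrak g_i^{\der}))|}{|H^0(\Gamma_F,\rho_{i,r}(\mathfrak g_i^{\der})^*)|} \prod_{\nu \in S'} \frac{|L_{i,r,\nu}|}{|H^0(\Gamma_{F_\nu},\rho_{i,r}(\mathfrak g_i^{\der}))|} \prod_{\nu \mid \infty} \frac{|L_{i,r,\nu}|}{|H^0(\Gamma_{F_\nu},\rho_{i,r}(\mathfrak g_i^{\der}))|},
\]
so the task is to show that the right-hand side equals $1$.

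First, by Lemma \ref{rhoNProperties}(2), neither $\rho_{i,r}(\mathfrak g_i^{\der})$ nor $\rho_{i,r}(\mathfrak g_i^{\der})^*$ contains the trivial $\mathbb F_p[\Gamma_F]$-subrepresentation, so both global $H^0$ terms vanish and the global factor contributes $1$. The remaining job is thus to show that the product over all local places is $1$.

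Next, I would compute the archimedean contribution. At each real place $\nu$, since $p$ is odd we have $H^1(\Gamma_{F_\nu},\rho_{i,r}(\mathfrak g_i^{\der})) = 0$, so $L_{i,r,\nu} = 0$ and the factor is $1/|H^0(\Gamma_{F_\nu},\rho_{i,r}(\mathfrak g_i^{\der}))|$. The oddness bullet of Assumptions \ref{GeneralAssumptions} gives $\dim_k H^0(\Gamma_{F_\nu},\overline{\rho_i}(\mathfrak g_i^{\der})) = \dim \mathrm{Flag}_{G_i^{\der}}$, and because $p$ is odd the short exact sequences $0 \to \rho_{i,r-1}(\mathfrak g_i^{\der}) \to \rho_{i,r}(\mathfrak g_i^{\der}) \to \overline{\rho_i}(\mathfrak g_i^{\der}) \to 0$ split $\Gamma_{F_\nu}$-equivariantly, so the $c = 1$ eigenspace dimensions accumulate additively. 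Summing over the $[F:\mathbb Q]$ real places gives a total archimedean contribution of $p^{-[F:\mathbb Q][k:\mathbb F_p]r\dim\mathrm{Flag}_{G_i^{\der}}}$.

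Finally, I would compute the finite local factors case-by-case according to the construction of the $Z_{i,r,\nu}$ just above: at unramified primes, at primes where $Z_{i,r,\nu}$ is defined via \cite[Lemma 3.5]{FKP21} (the case-(b) primes of Theorem \ref{LiftingModn}), and at primes in $S$ or case-(a) primes with $\nu \nmid p$ (where $Z_{i,r,\nu}$ comes from \cite[Proposition 4.7]{FKP21}), the cited references give $|L_{i,r,\nu}| = |H^0(\Gamma_{F_\nu},\rho_{i,r}(\mathfrak g_i^{\der}))|$, so the factor is $1$. At primes $\nu \mid p$, the same \cite[Proposition 4.7]{FKP21} computes the tangent-space dimension of the appropriate smooth component of the de Rham, Hodge--Tate regular lifting ring as $\dim H^0(\Gamma_{F_\nu},\rho_{i,r}(\mathfrak g_i^{\der})) + r[F_\nu:\mathbb Q_p][k:\mathbb F_p]\dim \mathrm{Flag}_{G_i^{\der}}$; summing over $\nu \mid p$ contributes exactly $p^{[F:\mathbb Q][k:\mathbb F_p]r\dim \mathrm{Flag}_{G_i^{\der}}}$, which precisely cancels the archimedean factor. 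The main obstacle is the bookkeeping of these local dimension formulas at $\nu \mid p$, which depend on the specific inertial and $p$-adic Hodge types used; but all of the required computations are already carried out in the cited propositions of \cite{FKP21}, so once Wiles' formula is in place the proof is a direct verification.
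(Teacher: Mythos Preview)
Your proposal is correct and follows essentially the same argument as the paper: apply the Greenberg--Wiles formula, kill the global $H^0$ ratio via Lemma \ref{rhoNProperties}(2), use the local dimension computations from \cite[Lemma 3.5 and Proposition 4.7]{FKP21} to see that the $\nu \nmid p$ factors are $1$, and then cancel the $\nu \mid p$ excess (coming from Hodge--Tate regularity, i.e.\ Borel parabolic) against the archimedean deficit (coming from oddness). The only cosmetic difference is that the paper phrases the exponents in terms of $\dim_k(\mathfrak n_i)$ rather than $\dim\mathrm{Flag}_{G_i^{\der}}$, but these agree since the Hodge--Tate regular hypothesis forces the relevant parabolic to be a Borel.
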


\begin{proof}

    Applying the Greenberg-Wiles formula with the local conditions $\mathcal L_{i,r,S'}$ yields

    $$\frac{|H^1_{\mathcal L_{i,r,S'}}(\Gamma_{F,S'},\rho_{i,r}(\mathfrak g_i^{\der}))|}{|H^1_{\mathcal L_{i,r,S'}^\perp}(\Gamma_{F,S'},\rho_{i,r}(\mathfrak g_i^{\der})^*)|} = \frac{|H^0(\Gamma_F,\rho_{i,r}(\mathfrak g_i^{\der}))|}{|H^0(\Gamma_F,\rho_{i,r}(\mathfrak g_i^{\der})^*)|} \prod_{\nu \in S'} \frac{|L_{i,r,\nu}|}{|H^0(\Gamma_{F_\nu},\rho_{i,r}(\mathfrak g_i^{\der}))|} \prod_{\nu \mid \infty} \frac 1{|H^0(\Gamma_{F_\nu},\rho_{i,r}(\mathfrak g_i^{\der}))|}$$

    \vspace{2 mm}

    By part $(2)$ of Lemma \ref{rhoNProperties} both the numerator and the denominator of the first fraction on the right-hand side are $1$ and so that factor is equal to $1$. By the dimension computations in \cite[Lemma $3.5$]{FKP21} and \cite[Proposition $4.7$]{FKP21} all the factors that correspond to primes in $S'$ that do not divide $p$ will be $1$, as well. On the other hand, at primes $\nu \mid p$ as we used Hodge-Tate regular $p$-adic lifts to define the local conditions $L_{i,r,\nu}$ the parabolic associated to $\textup{\textbf{v}}_i$ will be Borel. Therefore, by \cite[Proposition $4.7$]{FKP21} we have that $|L_{i,r,\nu}| = |H^0(\Gamma_{F_\nu},\rho_{i,r}(\mathfrak g_i^{\der}))| \cdot |\mathcal O/\varpi^r|^{[F_\nu,\mathbb Q_p] \cdot \dim_k(\mathfrak n_i)}$, where $\mathfrak n_i$ is the Lie algebra of the unipotent radical of the Borel subgroup of $G_i$ associated to $\textup{\textbf{v}}_i$. Finally, at the infinite places, the assumption that $\overline{\rho_i}$ is odd tells us that $|H^0(\Gamma_{F_\nu},\rho_{i,r}(\mathfrak g_i^{\der}))| = |\mathcal O/\varpi^r|^{\dim_k(\mathfrak n_i)}$. This reduces the product on the right-hand side to

    $$\prod_{\nu \mid p} |\mathcal O/\varpi^r|^{[F_\nu,\mathbb Q_p]\cdot \dim_k(\mathfrak n_i)} \cdot \prod_{\nu \mid \infty} |\mathcal O/\varpi^r|^{-\dim_k(\mathfrak n_i)} = |\mathcal O/\varpi^r|^{(-[F:\mathbb Q] + \sum_{\nu \mid p} [F_\nu,\mathbb Q_p]) \cdot \dim_k(\mathfrak n_i)} = 1$$

    \vspace{2 mm}

    \noindent where we used that $F$ is a totally real field and so the number of non-Archimedean places is $[F:\mathbb Q_p]$. This gives us that $|H^1_{\mathcal L_{i,r,S'}}(\Gamma_{F,S'},\rho_{i,r}(\mathfrak g_i^{\der}))| = |H^1_{\mathcal L_{i,r,S'}^\perp}(\Gamma_{F,S'},\rho_{i,r}(\mathfrak g_i^{\der})^*)|$, i.e. the local conditions $\mathcal L_{i,r,S'}$ are balanced.
    
\end{proof}

\begin{lem}[{\cite[Lemma $6.1$]{FKP21}}]
\label{SelmerExactSeq}

For any integers $a,b \ge 1$ and $a+b \le n$ there are exact sequences

$$H_{\mathcal L_{i,a,S'}}^1(\Gamma_{F,S'},\rho_{i,a}(\mathfrak g_i^{\der})) \longrightarrow H_{\mathcal L_{i,a+b,S'}}^1(\Gamma_{F,S'},\rho_{i,a+b}(\mathfrak g_i^{\der})) \longrightarrow H_{\mathcal L_{i,b,S'}}^1(\Gamma_{F,S'},\rho_{i,b}(\mathfrak g_i^{\der}))$$

\vspace{2 mm}

\noindent and

$$H_{\mathcal L_{i,a,S'}^\perp}^1(\Gamma_{F,S'},\rho_{i,a}(\mathfrak g_i^{\der})^*) \longrightarrow H_{\mathcal L_{i,a+b,S'}^\perp}^1(\Gamma_{F,S'},\rho_{i,a+b}(\mathfrak g_i^{\der})^*) \longrightarrow H_{\mathcal L_{i,b,S'}^\perp}^1(\Gamma_{F,S'},\rho_{i,b}(\mathfrak g_i^{\der})^*)$$

\end{lem}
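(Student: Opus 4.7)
The plan is to derive both exact sequences from the tautological short exact sequence of coefficient modules
$$0 \longrightarrow \rho_{i,a}(\mathfrak g_i^{\der}) \xrightarrow{\varpi^b} \rho_{i,a+b}(\mathfrak g_i^{\der}) \longrightarrow \rho_{i,b}(\mathfrak g_i^{\der}) \longrightarrow 0$$
and its Tate-dual analogue, together with the hypothesis that the local cocycle subgroups fit into compatible short exact sequences $0 \to Z_{i,a,\nu} \to Z_{i,a+b,\nu} \to Z_{i,b,\nu} \to 0$ at every $\nu \in S'$. First I would check that the connecting maps in the long exact cohomology sequence restrict to maps between the Selmer groups: the inclusion-induced map on $H^1(\Gamma_{F,S'},-)$ sends $H^1_{\mathcal L_{i,a,S'}}$ into $H^1_{\mathcal L_{i,a+b,S'}}$ because locally $Z_{i,a,\nu} \hookrightarrow Z_{i,a+b,\nu}$, and the reduction-induced map sends $H^1_{\mathcal L_{i,a+b,S'}}$ into $H^1_{\mathcal L_{i,b,S'}}$ because locally $Z_{i,a+b,\nu} \twoheadrightarrow Z_{i,b,\nu}$. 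The composition of these two maps vanishes since it already vanishes in the ambient long exact sequence.

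For exactness in the middle, let $x \in H^1_{\mathcal L_{i,a+b,S'}}$ have trivial image in $H^1_{\mathcal L_{i,b,S'}}$. Then $x$ dies in $H^1(\Gamma_{F,S'},\rho_{i,b}(\mathfrak g_i^{\der}))$, so by the long exact sequence it lifts to some class $y \in H^1(\Gamma_{F,S'},\rho_{i,a}(\mathfrak g_i^{\der}))$. The task is to exhibit such a lift inside $H^1_{\mathcal L_{i,a,S'}}$, which I would do locally at each $\nu \in S'$. Choose a representative $c_\nu \in Z_{i,a+b,\nu}$ of $x|_\nu$; its reduction in $Z_{i,b,\nu}$ represents $0 \in H^1(\Gamma_{F_\nu},\rho_{i,b}(\mathfrak g_i^{\der}))$, hence equals $db'_\nu$ for some $b'_\nu$. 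Lifting $b'_\nu$ to $\tilde b_\nu$ in $\rho_{i,a+b}(\mathfrak g_i^{\der})$ and subtracting the coboundary $d\tilde b_\nu \in B^1 \subseteq Z_{i,a+b,\nu}$, we may replace $c_\nu$ by a representative whose image in $Z_{i,b,\nu}$ is zero. By the exactness of the cocycle sequence this new representative comes from $Z_{i,a,\nu}$, giving a class in $L_{i,a,\nu}$ that maps to $x|_\nu$. It differs from $y|_\nu$ by an element of $\ker(H^1(\Gamma_{F_\nu},\rho_{i,a}(\mathfrak g_i^{\der})) \to H^1(\Gamma_{F_\nu},\rho_{i,a+b}(\mathfrak g_i^{\der})))$, which is the image of the connecting homomorphism from $H^0(\Gamma_{F_\nu},\rho_{i,b}(\mathfrak g_i^{\der}))$ and hence lands in $B^1 \subseteq Z_{i,a,\nu}$. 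Consequently $y|_\nu \in L_{i,a,\nu}$ for every $\nu$, and $y$ witnesses exactness.

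The dual sequence is proved identically, replacing the coefficient sequence by the dualized short exact sequence $0 \to \rho_{i,a}(\mathfrak g_i^{\der})^* \to \rho_{i,a+b}(\mathfrak g_i^{\der})^* \to \rho_{i,b}(\mathfrak g_i^{\der})^* \to 0$ and the cocycle subgroups $Z_{i,r,\nu}$ by the orthogonal complements $Z_{i,r,\nu}^\perp$; the corresponding short exact sequence of annihilators follows from local Tate duality applied to the hypothesized short exact sequence of $Z_{i,r,\nu}$'s. The main obstacle is the final local bookkeeping in the middle-exactness argument, where we must absorb the indeterminacy in the global lift $y$ back into the Selmer condition; this is precisely what the defining inclusion $B^1 \subseteq Z_{i,r,\nu}$ was built to provide.
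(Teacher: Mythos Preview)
Your approach is essentially that of the paper. There is, however, one genuine slip in the middle-exactness argument: you claim that the image of the connecting homomorphism $H^0(\Gamma_{F_\nu},\rho_{i,b}(\mathfrak g_i^{\der})) \to H^1(\Gamma_{F_\nu},\rho_{i,a}(\mathfrak g_i^{\der}))$ ``lands in $B^1 \subseteq Z_{i,a,\nu}$''. This is false as written: an element in this image is represented by a cocycle $\sigma \mapsto \sigma\tilde v - \tilde v$ for $\tilde v \in \rho_{i,a+b}(\mathfrak g_i^{\der})$, which is a coboundary at level $a+b$ but \emph{not} at level $a$ (unless $\tilde v$ can be chosen inside $\rho_{i,a}(\mathfrak g_i^{\der})$, i.e., unless the class is already zero). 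The conclusion you want --- that this cocycle lies in $Z_{i,a,\nu}$ --- is nonetheless correct, for a slightly different reason: viewed at level $a+b$ it lies in $B^1(\Gamma_{F_\nu},\rho_{i,a+b}(\mathfrak g_i^{\der})) \subseteq Z_{i,a+b,\nu}$, and its image at level $b$ vanishes (since $v$ is invariant), so by exactness of $0 \to Z_{i,a,\nu} \to Z_{i,a+b,\nu} \to Z_{i,b,\nu}$ it comes from $Z_{i,a,\nu}$.

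The paper sidesteps this subtlety by modifying the \emph{global} cocycle representing $x$ by a global coboundary (lifting the coboundary witnessing $x \mapsto 0$ at level $b$) so that the resulting cocycle already vanishes at level $b$; the cocycle-level exact sequence then produces a single preimage lying in each $Z_{i,a,\nu}$ directly, with no need to compare two different local lifts of $x|_\nu$. Both routes work once the justification above is supplied.
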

\begin{proof}

    We start with the exact sequence

    $$0 \longrightarrow \rho_{i,a}(\mathfrak g_i^{\der}) \longrightarrow \rho_{i,a+b}(\mathfrak g_i^{\der}) \longrightarrow \rho_{i,b}(\mathfrak g_i^{\der}) \longrightarrow 0$$

    \vspace{2 mm}

    It induces a long exact sequence on cohomology. In particular, we get an exact sequence

    $$H^1(\Gamma_{F,S'},\rho_{i,a}(\mathfrak g_i^{\der})) \longrightarrow H^1(\Gamma_{F,S'},\rho_{i,a+b}(\mathfrak g_i^{\der})) \longrightarrow H^1(\Gamma_{F,S'},\rho_{i,b}(\mathfrak g_i^{\der}))$$

    \vspace{2 mm}

    For any cohomology class in $H_{\mathcal L_{i,a,S'}}^1(\Gamma_{F,S'},\rho_{i,a}(\mathfrak g_i^{\der}))$ and any $\nu \in S'$ its restriction to $\Gamma_{F_\nu}$ will have a cocycle representative in $Z_{i,a,\nu}$. From the exact sequence in the definition of $Z_{i,r,\nu}$ we have the restriction to $\Gamma_{F_\nu}$ of its image in $H^1(\Gamma_{F,S'},\rho_{i,a+b}(\mathfrak g_i^{\der}))$ will have a representative in $Z_{i,a+b,\nu}$. In particular, the image of the cohomology class will be an element of $H_{\mathcal L_{i,a+b,S'}}^1(\Gamma_{F,S'},\rho_{i,a+b}(\mathfrak g_i^{\der}))$. A similar reasoning tells us that the restriction modulo $\varpi^b$ sends $H_{\mathcal L_{i,a+b,S'}}^1(\Gamma_{F,S'},\rho_{i,a+b}(\mathfrak g_i^{\der}))$ to $H^1_{\mathcal L_{i,b,S'}}(\Gamma_{F,S'},\rho_{i,b}(\mathfrak g_i^{\der}))$, so the composition in the statement of the lemma is well-defined. 

    From the exact sequence on cohomology we get that elements in $H_{\mathcal L_{i,a,S'}}^1(\Gamma_{F,S'},\rho_{i,a}(\mathfrak g_i^{\der}))$ are sent to $0$ in $H_{\mathcal L_{i,b,S'}}^1(\Gamma_{F,S},\rho_{i,b}(\mathfrak g_i^{\der}))$. Conversely, let $\phi \in Z^1(\Gamma_{F,S'},\rho_{i,a+b}(\mathfrak g_i^{\der}))$ represent a class in $H^1_{\mathcal L_{i,a+b,S'}}(\Gamma_{F,S'},\rho_{i,a+b}(\mathfrak g_i^{\der}))$ that is sent to $0$ in $H_{\mathcal L_{i,b,S'}}^1(\Gamma_{F,S'},\rho_{i,b}(\mathfrak g_i^{\der}))$. Therefore, the image of $\phi$ in $Z^1(\Gamma_{F,S'},\rho_{i,b}(\mathfrak g_i^{\der}))$ is a coboundary. All such coboundaries are of the form $\sigma \to \sigma X - X$ for some $X \in \rho_{i,b}(\mathfrak g_i^{\der})$. So, from the surjectivity we can find $X' \in \rho_{i,a+b}(\mathfrak g_i^{\der})$ such that $X' \pmod{\varpi^b} = X$. Thus, $\phi' \coloneqq \phi - (\sigma X' - X')$ is sent to $0$ in $Z^1(\Gamma_{F,S'},\rho_{i,b}(\mathfrak g^{\der}))$. By the choice of $\phi$ we have that $\restr{\phi}{\Gamma_{F_\nu}} \in Z_{i,a+b,\nu}$ for all $\nu \in S'$. On the other side, by our initial assumption $Z_{i,a+b,\nu}$ contains all the coboundaries, so $\restr{\phi'}{\Gamma_{F_\nu}} \in Z_{i,a+b,\nu}$, too. Now, we consider the commutative diagram of exact sequence

    $$\begin{tikzcd}
0 \arrow[rr] &  & {Z^1(\Gamma_{F,S'},\rho_{i,a}(\mathfrak g_i^{\der}))} \arrow[rr] \arrow[dd]          &  & {Z^1(\Gamma_{F,S'},\rho_{i,a+b}(\mathfrak g_i^{\der}))} \arrow[rr] \arrow[dd]          &  & {Z^1(\Gamma_{F,S'},\rho_{i,b}(\mathfrak g_i^{\der}))} \arrow[dd]          \\
             &  &                                                                               &  &                                                                                   &  &                                                                    \\
0 \arrow[rr] &  & {\bigoplus_{v \in S'} Z^1(\Gamma_{F_v},\rho_{i,a}(\mathfrak g_i^{\der}))} \arrow[rr] &  & {\bigoplus_{v \in S'} Z^1(\Gamma_{F_v},\rho_{i,a+b}(\mathfrak g_i^{\der}))} \arrow[rr] &  & {\bigoplus_{v \in S'} Z^1(\Gamma_{F_v},\rho_{i,b}(\mathfrak g_i^{\der}))}
\end{tikzcd}$$

\vspace{2 mm}

    By the exactness of the top row as $\phi'$ is sent to $0$ in $Z^1(\Gamma_{F,S'},\rho_{i,b}(\mathfrak g_i^{\der}))$ we can find $\phi_a \in Z^1(\Gamma_{F,S'},\rho_{i,a}(\mathfrak g_i^{\der}))$ that is sent to $\phi'$. On the other hand, using the right commutative square we get that $\restr{\phi'}{\Gamma_{F_\nu}}$ is sent to $0$ in $Z^1(\Gamma_{F_\nu},\rho_{i,b}(\mathfrak g_i^{\der}))$ for all $\nu \in S'$. Thus, by the exactness of the bottom row for each $\nu \in S'$ we can find $\phi_{a,\nu} \in Z^1(\Gamma_{F_\nu},\rho_{i,a}(\mathfrak g_i^{\der}))$, each mapping to $\restr{\phi'}{\Gamma_{F_\nu}}$. Using the exact sequence in the initial assumptions on the cocycles $Z_{i,r,\nu}$, as $\restr{\phi'}{\Gamma_{F_\nu}} \in Z_{i,a+b,\nu}$ we get that $\phi_{a,\nu} \in Z_{i,a,\nu}$. Finally, using the left commutative square we get that $\restr{\phi_a}{\Gamma_{F_\nu}}$ is also sent to $\restr{\phi'}{\Gamma_{F_\nu}}$ for all $\nu \in S'$. So, by the injectivity of the map we have $\restr{\phi_a}{\Gamma_{F_\nu}} = \phi_{a,\nu}$ for all $\nu \in S'$. Thus, $\phi_a$ represents a cohomology class in $H_{\mathcal L_{i,a,S'}}^1(\Gamma_{F,S'},\rho_{i,a}(\mathfrak g_i^{\der}))$, which gives us the exactness of the sequence. 

    \vspace{2 mm}

    The exact sequence for the dual Selmer groups follows similarly, by replacing $Z_{i,r,\nu}$ with the preimages $Z_{i,r,\nu}^\perp$ of $L_{i,\nu,r}^\perp$ in $Z^1(\Gamma_{F_\nu},\rho_{i,r}(\mathfrak g_i^{\der})^*)$. All we need to prove is the existence of an exact sequence

    $$0 \longrightarrow Z_{i,b,\nu}^\perp \longrightarrow Z_{i,a+b,\nu}^\perp \longrightarrow Z_{i,a,\nu}^\perp$$

    \vspace{2 mm}

    Let $f^*:Z^1(\Gamma_{F_\nu},\rho_{i,b}(\mathfrak g_i^{\der})^*) \hookrightarrow Z^1(\Gamma_{F_\nu},\rho_{i,a+b}(\mathfrak g_i^{\der})^*)$ be the map induced by reduction modulo $\varpi^b$. For $\psi \in Z_{i,b,\nu}^\perp$ and $\phi \in Z_{i,a+b,\nu}$, by the functoriality of the Tate duality pairing $f^*(\psi) \cup \phi= \psi \cup (\phi \pmod{\varpi^b}) = 0$, since $\phi \pmod{\varpi^b} \in Z_{i,b,\nu}$ and $\psi$ lies in its annihilator. This tells us that the image of $Z_{i,b,\nu}^\perp$ lands into $Z_{i,a+b,\nu}^\perp$, giving us the exactness at the first term. 

    On the other hand, let $g^*:Z^1(\Gamma_{F_\nu},\rho_{i,a+b}(\mathfrak g^{\der})^*) \to Z^1(\Gamma_{F_\nu},\rho_{i,a}(\mathfrak g^{\der})^*)$ be the map induced by multipilication by $\varpi^b$. For $\psi \in Z_{i,a+b,\nu}^\perp$ and $\phi \in Z_{i,a,\nu}$, again by the functoriality of the Tate duality pairing we have $g^*(\psi) \cup \phi = \psi \cup \varpi^b\phi = 0$, as $\varpi^b\phi \in Z_{i,a+b,\nu}$ and $\psi$ lies in its annihilator. Therefore $g^*$ sends $Z_{i,a+b,\nu}^\perp$ to $Z_{i,a,\nu}^\perp$, so the last map is well-defined. It remains to prove the exactness at the second term, i.e. $\im f^* = \ker g^*$. Directly from the construction we have $g^* \circ f^* = 0$, which gives us one inclusion. Conversely, let $\psi \in Z_{i,a+b,\nu}^\perp$, such that $g^*(\psi) = 0$. By the exactness on the level of cocycles this means that $\psi = f^*(\psi')$ for some $\psi' \in Z^1(\Gamma_{F_\nu},\rho_{i,b}(\mathfrak g^{\der}_i)^*)$. We claim that $\psi'$ is actually an element of $Z_{i,b,\nu}^\perp$. Let $\phi' \in Z_{i,b,\nu}$. By the exactness from the previous part it is the image of $\phi \in Z_{i,a+b,\nu}$ under the reduction modulo $\varpi^b$ map. Then

    $$\psi' \cup \phi' = \psi' \cup (\phi \pmod{\varpi^b}) = f^*(\psi) \cup \phi = \psi \cup \phi = 0$$

    \vspace{2 mm}

    \noindent as $\phi \in Z_{i,a+b,\nu}$ and $\psi \in Z_{i,a+b,\nu}^\perp$. This gives us the exactness at the second term.

\end{proof}

\begin{defn}
\label{ModsRelSelmer}

    For $1 \le s \le r \le n$ we define the \textit{$(r,s)$-relative Selmer group} as

    $$\overline{H^1_{(r,s)}(\Gamma_{F,S'},\rho_{i,r}(\mathfrak g_i^{\der}))} \coloneqq \im \left(H_{\mathcal L_{i,r,S'}}^1(\Gamma_{F,S'},\rho_{i,r}(\mathfrak g_i^{\der})) \to H^1_{\mathcal L_{i,s,S'}}(\Gamma_{F,S'},\rho_{i,s}(\mathfrak g_i^{\der}))\right)$$

    \vspace{2 mm}

    \noindent where the map is induced by the reduction modulo $\varpi^s$ map. We analogously define the \textit{$(r,s)$-relative dual Selmer group} $H^1_{(r,s)}(\Gamma_{F,S'},\rho_{i,r}(\mathfrak g_i^{\der})^*)$.
    
\end{defn}

\begin{rem}
\label{Mod1RelSelmer}

    When $s = 1$ in the definition of the $(r,s)$-relative Selmer group we use the alternative notation $\overline{H^1_{\mathcal L_{i,r,S'}}(\Gamma_{F,S'},\rho_{i,r}(\mathfrak g_i^{\der}))}$, and we call it the $r$-th relative Selmer group. Given an element $\phi$ in the $r$-th Selmer group we write $\overline{\phi}$ for its image in the corresponding modulo $r$-th relative Selmer group. We use the same convention for the $r$-th relative dual Selmer group. 
    
\end{rem}

\begin{lem}[cf. {\cite[Lemma $6.3$]{FKP21}}]
\label{BalancedRelativeSelmer}

The $r$-th relative Selmer and dual Selmer groups are also balanced, i.e.

$$\dim \left(\overline{H_{\mathcal L_{i,r,S'}}^1(\Gamma_{F,S'},\rho_{i,r}(\mathfrak g_i^{\der}))}\right) = \dim \left(\overline{H_{\mathcal L_{i,r,S'}^\perp}^1(\Gamma_{F,S'},\rho_{i,r}(\mathfrak g_i^{\der})^*)}\right)$$
    
\end{lem}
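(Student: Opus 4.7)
The plan is to use the exact sequences of Lemma \ref{SelmerExactSeq} at $(a,b)=(r-1,1)$ to express each $r$-th relative (dual) Selmer group as the quotient of the $r$-th (dual) Selmer group by the image of the $(r-1)$-st one, and then invoke the balanced-ness of the absolute Selmer groups at levels $r$ and $r-1$ from Lemma \ref{BalancedSelmer}. On the primal side this gives the exact sequence
$$H^1_{\mathcal L_{i,r-1,S'}}(\Gamma_{F,S'},\rho_{i,r-1}(\mathfrak g_i^{\der})) \xrightarrow{\;\iota\;} H^1_{\mathcal L_{i,r,S'}}(\Gamma_{F,S'},\rho_{i,r}(\mathfrak g_i^{\der})) \xrightarrow{\;\pi\;} H^1_{\mathcal L_{i,1,S'}}(\Gamma_{F,S'},\overline{\rho_i}(\mathfrak g_i^{\der})),$$
whose image under $\pi$ is the $r$-th relative Selmer group by Definition \ref{ModsRelSelmer}. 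If I can show that $\iota$ is injective (and similarly on the dual side), then exactness yields
$$\bigl|\overline{H^1_{\mathcal L_{i,r,S'}}(\Gamma_{F,S'},\rho_{i,r}(\mathfrak g_i^{\der}))}\bigr| = \frac{\bigl|H^1_{\mathcal L_{i,r,S'}}(\Gamma_{F,S'},\rho_{i,r}(\mathfrak g_i^{\der}))\bigr|}{\bigl|H^1_{\mathcal L_{i,r-1,S'}}(\Gamma_{F,S'},\rho_{i,r-1}(\mathfrak g_i^{\der}))\bigr|}$$
and symmetrically for $\bigl|\overline{H^1_{\mathcal L_{i,r,S'}^\perp}(\Gamma_{F,S'},\rho_{i,r}(\mathfrak g_i^{\der})^*)}\bigr|$; the equalities $\bigl|H^1_{\mathcal L_{i,s,S'}}\bigr|=\bigl|H^1_{\mathcal L_{i,s,S'}^\perp}\bigr|$ for $s=r-1,r$ coming from Lemma \ref{BalancedSelmer} then make these two ratios equal, and passing to $\log_p$ (both relative groups are $\mathbb F_p$-vector spaces, being subgroups of $H^1(\Gamma_{F,S'},\overline{\rho_i}(\mathfrak g_i^{\der}))$ respectively $H^1(\Gamma_{F,S'},\overline{\rho_i}(\mathfrak g_i^{\der})^*)$) yields the required equality of dimensions.

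Hence the only real step is the injectivity of $\iota$ and of its dual analogue, which I would establish already at the level of full cohomology and then restrict to the Selmer subgroups. From the short exact sequence of $\mathbb F_p[\Gamma_{F,S'}]$-modules
$$0 \longrightarrow \rho_{i,r-1}(\mathfrak g_i^{\der}) \xrightarrow{\;\varpi\;} \rho_{i,r}(\mathfrak g_i^{\der}) \longrightarrow \overline{\rho_i}(\mathfrak g_i^{\der}) \longrightarrow 0,$$
the associated long exact sequence in Galois cohomology identifies the kernel of $H^1(\Gamma_{F,S'},\rho_{i,r-1}(\mathfrak g_i^{\der})) \to H^1(\Gamma_{F,S'},\rho_{i,r}(\mathfrak g_i^{\der}))$ with a quotient of $H^0(\Gamma_{F,S'},\overline{\rho_i}(\mathfrak g_i^{\der}))$, which vanishes by the third bullet of Assumptions \ref{GeneralAssumptions}. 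For the dual side I would instead start from the short exact sequence $0 \to \rho_{i,1}(\mathfrak g_i^{\der}) \xrightarrow{\varpi^{r-1}} \rho_{i,r}(\mathfrak g_i^{\der}) \to \rho_{i,r-1}(\mathfrak g_i^{\der}) \to 0$ and dualize (the dualized sequence is again short exact because $\mathbb Q_p/\mathbb Z_p$ is divisible); the corresponding long exact sequence now identifies the kernel of $H^1(\Gamma_{F,S'},\rho_{i,r-1}(\mathfrak g_i^{\der})^*) \to H^1(\Gamma_{F,S'},\rho_{i,r}(\mathfrak g_i^{\der})^*)$ with a quotient of $H^0(\Gamma_{F,S'},\overline{\rho_i}(\mathfrak g_i^{\der})^*)$, which also vanishes by the same bullet. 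I do not expect any substantial obstacle: everything is pinned down by the vanishing of the relevant $H^0$'s, which is exactly part of Assumptions \ref{GeneralAssumptions}; the only small point to watch is orienting the dual short exact sequence so that its long exact sequence matches the direction of the arrows in Lemma \ref{SelmerExactSeq}, which is a routine bookkeeping check.
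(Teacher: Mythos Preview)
Your proposal is correct and follows essentially the same approach as the paper: both apply Lemma~\ref{SelmerExactSeq} with $(a,b)=(r-1,1)$, establish injectivity of the first map via the vanishing of $H^0(\Gamma_{F,S'},\overline{\rho_i}(\mathfrak g_i^{\der}))$ and $H^0(\Gamma_{F,S'},\overline{\rho_i}(\mathfrak g_i^{\der})^*)$ from Assumptions~\ref{GeneralAssumptions}, and then invoke Lemma~\ref{BalancedSelmer} at levels $r$ and $r-1$. The paper is slightly more terse on the dual side (it just says ``analogously''), whereas you spell out which short exact sequence to dualize; you should also note, as the paper does, that the base case $r=1$ is immediate from Lemma~\ref{BalancedSelmer} since the relative and absolute Selmer groups coincide there.
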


\begin{proof}

The claim is true for $r=1$, since the relative Selmer groups are just the modulo $\varpi$ Selmer and dual Selmer group and they have the same dimension by Lemma \ref{BalancedSelmer}. Hence we can assume that $r \ge 2$. Using the exact sequence in Lemma \ref{SelmerExactSeq} with $a=r-1$ and $b=1$ we get

\begin{align*}
    &\left|\,\overline{H_{\mathcal L_{i,r,S'}}^1(\Gamma_{F,S'},\rho_{i,r}(\mathfrak g_i^{\der}))}\,\right| = \left|\,\im (H_{\mathcal L_{i,r,S'}}^1(\Gamma_{F,S'},\rho_{i,r}(\mathfrak g_i^{\der})) \to H_{\mathcal L_{i,1,S'}}^1(\Gamma_{F,S'},\overline{\rho_i}(\mathfrak g_i^{\der})))\,\right| \\
    & = \left|\, H_{\mathcal L_{i,r,S'}}^1(\Gamma_{F,S'},\rho_{i,r}(\mathfrak g_i^{\der})) \,\right| - \left|\,\ker (H_{\mathcal L_{i,r,S'}}^1(\Gamma_{F,S'},\rho_{i,r}(\mathfrak g_i^{\der})) \to H_{\mathcal L_{i,1,S'}}^1(\Gamma_{F,S'},\overline{\rho_i}(\mathfrak g_i^{\der})))\,\right| \\
    & =\left|\,H_{\mathcal L_{i,r,S'}}^1(\Gamma_{F,S'},\rho_{i,r}(\mathfrak g_i^{\der}))\,\right| - \left|\,\im (H_{\mathcal L_{i,r-1,S'}}^1(\Gamma_{F,S'},\rho_{i,r-1}(\mathfrak g_i^{\der})) \to H_{\mathcal L_{i,r,S'}}^1(\Gamma_{F,S'},\rho_{i,r}(\mathfrak g_i^{\der})))\,\right|
\end{align*}

\vspace{2 mm}

On the other hand, from the long exact sequence on cohomology we get an exact sequence

$$H^0(\Gamma_{F,S'},\overline{\rho_i}(\mathfrak g_i^{\der})) \longrightarrow H^1(\Gamma_{F,S'},\rho_{i,r-1}(\mathfrak g_i^{\der})) \longrightarrow H^1(\Gamma_{F,S'},\rho_{i,r}(\mathfrak g_i^{\der}))$$

\vspace{2 mm}

By the third bullet point of Assumptions \ref{GeneralAssumptions} the first term vanishes and so the second map is an inclusion. This means that the last term in the previous equation is equal to $\dim \left(H_{\mathcal L_{i,r-1,S'}}^1(\Gamma_{F,S'},\rho_{i,r-1}(\mathfrak g_i^{\der}))\right)$. We get an analogous result for the relative dual Selmer group. Finally, by Lemma \ref{BalancedSelmer} the local conditions are balanced for $1 \le r \le n$, so we have

\begin{align*}
    \left|\,\overline{H_{\mathcal L_{i,r,S'}}^1(\Gamma_{F,S'},\rho_{i,r}(\mathfrak g_i^{\der}))}\,\right|&= \left|\,H_{\mathcal L_{i,r,S'}}^1(\Gamma_{F,S'},\rho_{i,r}(\mathfrak g_i^{\der}))\,\right| - \left|\,H_{\mathcal L_{i,r-1,S'}}^1(\Gamma_{F,S'},\rho_{i,r-1}(\mathfrak g_i^{\der}))\,\right| \\
    & = \left|\,H_{\mathcal L_{i,r,S'}^\perp}^1(\Gamma_{F,S'},\rho_{i,r}(\mathfrak g_i^{\der})^*)\,\right| - \left|\,H_{\mathcal L_{i,r-1,S'}^\perp}^1(\Gamma_{F,S'},\rho_{i,r-1}(\mathfrak g_i^{\der})^*)\,\right| \\
    &= \left|\,\overline{H_{\mathcal L_{i,r,S'}^\perp}^1(\Gamma_{F,S'},\rho_{i,r}(\mathfrak g_i^{\der})^*)}\,\right|
\end{align*}
    
\end{proof}

\subsection{Annihilating the relative (dual) Selmer group}

For $1 \le r \le n$ we define $F_r$ to be the splitting field $F(\overline{\rho_1},\overline{\rho_2},\rho_{1,r}(\mathfrak g_1^{\der}),\rho_{2,r}(\mathfrak g_2^{\der}))$. We also define $F_n^* = F_n(\mu_{p^{n/e}})$

\begin{lem}[cf. {\cite[Lemma $6.4$]{FKP21}}]
\label{VanishingofH1FN*}

$H^1(\Gal(F_n^*/F),\rho_{i,r}(\mathfrak g_i^{\der})^*) = 0$ for all $i=1,2$ and $r \le n$.
    
\end{lem}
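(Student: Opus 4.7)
The plan is to induct on $r$, with the base case $r = 1$ handled by an inflation--restriction argument combined with Assumptions \ref{GeneralAssumptions}.

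For the inductive step, applying the exact contravariant functor $\Hom_{\mathcal O}(-, \mu_{p^\infty})$ to the short exact sequence
\[0 \longrightarrow \rho_{i,r-1}(\mathfrak g_i^{\der}) \xrightarrow{\cdot \varpi} \rho_{i,r}(\mathfrak g_i^{\der}) \longrightarrow \overline{\rho_i}(\mathfrak g_i^{\der}) \longrightarrow 0\]
produces the dual short exact sequence
\[0 \longrightarrow \overline{\rho_i}(\mathfrak g_i^{\der})^* \longrightarrow \rho_{i,r}(\mathfrak g_i^{\der})^* \longrightarrow \rho_{i,r-1}(\mathfrak g_i^{\der})^* \longrightarrow 0.\]
Taking the associated long exact sequence for $\Gal(F_n^*/F)$-cohomology and applying the inductive hypothesis to both outer $H^1$ terms forces the middle $H^1$ to vanish, so the whole problem reduces to the case $r = 1$.

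For the base case, I would use the inflation--restriction sequence associated to the normal subgroup $\Gal(F_n^*/K) \trianglelefteq \Gal(F_n^*/F)$. The key observation is that $\Gal(F_n^*/K)$ acts trivially on $\overline{\rho_i}(\mathfrak g_i^{\der})^*$: the adjoint part of the action factors through $\Gal(K/F)$ because $K \supseteq K_i$ trivializes $\overline{\rho_i}$, and the cyclotomic twist is trivial modulo $p$ on $\Gal(F_n^*/K)$ since $K \supseteq \mu_p$. Consequently the inflation term reduces to $H^1(\Gal(K/F), \overline{\rho_i}(\mathfrak g_i^{\der})^*)$, which vanishes by the first bullet of Assumptions \ref{GeneralAssumptions}.

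It then remains to show that the restriction term $\Hom(\Gal(F_n^*/K), \overline{\rho_i}(\mathfrak g_i^{\der})^*)^{\Gal(K/F)}$ vanishes. Applying Lemma \ref{LinearDisjointness} at stage $n$ (its hypotheses being guaranteed by properties $(3)$ and $(4)$ of Theorem \ref{LiftingModn}) gives that $L_n = L_{1,n}L_{2,n}$ is linearly disjoint from $K(\mu_{p^{n/e}}) \subseteq K_\infty$ over $K$; combined with Theorem \ref{LiftingModn}$(3)$ this yields a $\Gal(K/F)$-equivariant direct product decomposition
\[\Gal(F_n^*/K) \simeq \Gal(L_{1,n}/K) \times \Gal(L_{2,n}/K) \times \Gal(K(\mu_{p^{n/e}})/K).\]
Since the target is $\mathbb F_p$-torsion, any such homomorphism factors through the maximal elementary abelian $p$-quotient of the source, which by Lemma \ref{AbelianizationLemma} (applied exactly as in the proof of Lemma \ref{LinearDisjointness}) is $\Gal(K/F)$-equivariantly isomorphic to $\overline{\rho_1}(\mathfrak g_1^{\der}) \oplus \overline{\rho_2}(\mathfrak g_2^{\der}) \oplus \mathbb F_p$, with the last summand carrying the trivial action by \cite[Lemma $2.1$]{FKP22}. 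A $\Gal(K/F)$-equivariant homomorphism from either of the first two summands into $\overline{\rho_i}(\mathfrak g_i^{\der})^*$ must vanish by the second bullet of Assumptions \ref{GeneralAssumptions} (no common $\mathbb F_p[\Gamma_F]$-subquotient), while one from the trivial summand vanishes by the third bullet ($\overline{\rho_i}(\mathfrak g_i^{\der})^*$ contains no trivial subrepresentation). The main technical point will be cleanly identifying the mod-$p$ abelianization together with its $\Gal(K/F)$-action; once that is in place, the three summands are separately annihilated and the argument concludes.
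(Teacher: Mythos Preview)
Your proposal is correct and follows essentially the same approach as the paper: induction on $r$ via the dual short exact sequence, with the base case $r=1$ handled by inflation--restriction along $\Gal(F_n^*/K) \trianglelefteq \Gal(F_n^*/F)$, the decomposition of $\Gal(F_n^*/K)$ coming from Lemma~\ref{LinearDisjointness} and Theorem~\ref{LiftingModn}(3)(4), and the identification of its (mod-$p$) abelianization via Lemma~\ref{AbelianizationLemma}. The only cosmetic differences are that the paper dualizes the other natural short exact sequence (so $\rho_{i,r-1}(\mathfrak g_i^{\der})^*$ appears first rather than last in the $H^1$ piece), works with the full abelianization rather than its mod-$p$ quotient, and cites Lemma~\ref{rhoNProperties} in place of the Assumptions directly; none of these affect the argument.
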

\begin{proof}

We will first prove the lemma for $n=1$. We consider the inflation-restriction sequence

$$ H^1(\Gal(K/F),\overline{\rho_i}(\mathfrak g_i^{\der})^*) \longrightarrow H^1(\Gal(F_n^*/F),\overline{\rho_i}(\mathfrak g_i^{\der})^*) \longrightarrow H^1(\Gal(F_n^*/K),\overline{\rho_i}(\mathfrak g_i^{\der})^*)^{\Gal(K/F)}$$

\vspace{2 mm}

By the first bullet point of Assumptions \ref{GeneralAssumptions} the first group vanishes. On the other hand, as $\Gamma_K$ fixes $\overline{\rho_i}$ and $\mu_p$ it acts trivially on $\overline{\rho_i}(\mathfrak g_i^{\der})^*$. Therefore

$$H^1(\Gal(F_n^*/K),\overline{\rho_i}(\mathfrak g_i^{\der})^*)^{\Gal(K/F)} = \Hom(\Gal(F_n^*/K),\overline{\rho_i}(\mathfrak g_i^{\der})^*)^{\Gal(K/F)} = \Hom_{\Gamma_F}(\Gal(F_n^*/K),\overline{\rho_i}(\mathfrak g_i^{\der})^*)$$

\vspace{2 mm}

We recall that $\rho_{i,n}$ satisfies properties $(3)$ and $(4)$ of Theorem \ref{LiftingModn}. Then, by Lemma \ref{LinearDisjointness} we get

$$\Gal(F_n^*/K) \simeq \Gal(L_1/K) \times \Gal(L_2/K) \times \Gal(K(\mu_{p^t})/K) \simeq \widehat{G_1^{\der}}(\mathcal O/\varpi^n) \times \widehat{G_2^{\der}}(\mathcal O/\varpi^n) \times \mathbb Z/p^{t-D}$$

\vspace{2 mm} 

\noindent where $t$ is the largest integer such that $\mu_{p^t} \subset F_n^*$, and for the last factor we used the fact that $D$ is the largest integers such that $K$ contains $\mu_{p^D}$. Any $\Gamma_F$-equivariant homomorphism from $\Gal(F_n^*/K)$ to $\overline{\rho_i}(\mathfrak g_i^{\der})^*$ will have to factor through the abelianization of $\Gal(F_n^*/K)$, which as seen before, as an $\mathbb F_p[\Gamma_F]$-module will be isomorphic to $\overline{\rho_1}(\mathfrak g_1^{\der}) \oplus \overline{\rho_2}(\mathfrak g_2^{\der}) \oplus \mathbb Z/p^{t-D}$. Hence

$$\Hom_{\Gamma_F}(\Gal(F_n^*/K),\overline{\rho_i}(\mathfrak g_i^{\der})^*) = $$

$$\Hom_{\mathbb F_p[\Gamma_F]}(\overline{\rho_1}(\mathfrak g_1^{\der}),\overline{\rho_i}(\mathfrak g_i^{\der})^*) \oplus \Hom_{\mathbb F_p[\Gamma_F]}(\overline{\rho_2}(\mathfrak g_2^{\der}),\overline{\rho_i}(\mathfrak g_i^{\der})^*) \oplus \Hom_{\mathbb F_p[\Gamma_F]}(\mathbb Z/p^{t-D},\overline{\rho_i}(\mathfrak g_i^{\der})^*)$$

\vspace{2 mm}

\noindent which by Lemma \ref{rhoNProperties} is equal to $0$. Going back to the exact sequence we deduce $H^1(\Gal(F_n^*/K),\overline{\rho_i}(\mathfrak g_i^{\der})^*) = 0$. 

Now, suppose that $r \ge 2$ and we have proved the claim for $r-1$. Then, from the long exact sequence on cohomology we get an exact sequence

$$H^1(\Gal(F_n^*/F),\rho_{i,r-1}(\mathfrak g_i^{\der})^*) \longrightarrow H^1(\Gal(F_n^*/K),\rho_{i,r}(\mathfrak g_i^{\der})^*) \longrightarrow H^1(\Gal(F_n^*/K),\overline{\rho_i}(\mathfrak g_i^{\der})^*)$$

\vspace{2 mm}

By the inductive hypothesis the end terms are trivial, hence so is the middle one, which proves the claim.
    
\end{proof}

Recall that we have defined the field $\widetilde{F}$ to be the smallest extension of $F$ such that $\overline{\rho}_i(\Gamma_{\widetilde{F}}) \subseteq G_i^0(k)$ for both $i=1,2$. We now let $\Gamma$ be the inverse image in $G_1^{\ad}(\mathcal O) \times G_2^{\ad}(\mathcal O)$ of $\Ad \circ (\overline{\rho_1} \times \overline{\rho_2})(\Gamma_{\widetilde{F}}) \subseteq G_1^{\ad}(k) \times G_2^{\ad}(k)$. We now apply \cite[Corollary B.$2$]{FKP21} to the $p$-adic Lie group $\Gamma$, the $\Gamma$-module $\mathrm{Lie}(\Gamma) = \mathfrak g_1^{\der} \oplus \mathfrak g_2^{\der}$, and $m=1$. The corollary will produce an integer $M_1$ such that for $m \ge M_1$ the reduction map 

$$H^1(\Gamma,\mathfrak g_i^{\der} \otimes_{\mathcal O} \mathcal O/\varpi^m) \longrightarrow H^1(\Gamma,\mathfrak g_i^{\der} \otimes_{\mathcal O} k)$$

\vspace{2 mm}

\noindent is the zero map, where the action is given via the $i$-th factor. We remark that this integer $M_1$ depends only on the images of the residual representation $\overline{\rho_1}$ and $\overline{\rho_2}$. We now fix $m \ge M_1$, which moreover is divisible by $e$. We will also ask for $n \ge m$.

\begin{lem}[cf. {\cite[Lemma $6.4$]{FKP21}}]
\label{H^1ZeroMap}

The map $H^1(\Gal(F_n^*/F),\rho_{i,m}(\mathfrak g_i^{\der})) \to H^1(\Gal(F_n^*/F),\overline{\rho_i}(\mathfrak g_i^{\der}))$ is the zero map for $i=1,2$.
    
\end{lem}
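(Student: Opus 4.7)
The plan is to reduce the claim to Corollary B.$2$ of \cite{FKP21}---which gives the corresponding vanishing for the $p$-adic Lie group $\Gamma$---via two applications of inflation--restriction. First, since $\Gal(\widetilde{F}/F) \simeq \pi_0(G_1) \times \pi_0(G_2)$ has order prime to $p$ while both $\rho_{i,m}(\mathfrak g_i^{\der})$ and $\overline{\rho_i}(\mathfrak g_i^{\der})$ are $p$-power torsion, Hochschild--Serre yields natural isomorphisms $H^1(\Gal(F_n^*/F),M) \cong H^1(\Gal(F_n^*/\widetilde{F}),M)^{\Gal(\widetilde{F}/F)}$ for both coefficient modules $M$. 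This reduces the claim to showing that the reduction map is zero on $H^1(\Gal(F_n^*/\widetilde{F}), -)$.

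Next, I would observe that the action of $\Gal(F_n^*/\widetilde{F})$ on these coefficient modules factors through the adjoint, hence through a continuous map $\psi: \Gal(F_n^*/\widetilde{F}) \to \Gamma/\Gamma(m)$, where I write $\Gamma(m) = \ker(\Gamma \to G_1^{\ad}(\mathcal O/\varpi^m) \times G_2^{\ad}(\mathcal O/\varpi^m))$. The linear disjointness and big-image properties $(3)$ and $(4)$ of Theorem \ref{LiftingModn}, combined with the prime-to-$p$ kernel of the isogeny $G_i^{\der} \to G_i^{\ad}$, ensure that $\psi$ is surjective. Since $\Gamma(m)$ acts trivially on both $\mathfrak g_i^{\der} \otimes \mathcal O/\varpi^m$ and $\mathfrak g_i^{\der} \otimes k$, inflation yields an injection $H^1(\Gamma/\Gamma(m), -) \hookrightarrow H^1(\Gamma, -)$; combining with naturality of reduction, Corollary B.$2$ of \cite{FKP21} then forces the reduction map $H^1(\Gamma/\Gamma(m), \mathfrak g_i^{\der} \otimes \mathcal O/\varpi^m) \to H^1(\Gamma/\Gamma(m), \mathfrak g_i^{\der} \otimes k)$ to be zero. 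Pulling back under $\psi$, the reduction map vanishes on the inflation image inside $H^1(\Gal(F_n^*/\widetilde{F}), \rho_{i,m}(\mathfrak g_i^{\der}))$.

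The main obstacle is to extend the vanishing to classes not in the inflation image. By the five-term exact sequence attached to $1 \to \ker\psi \to \Gal(F_n^*/\widetilde{F}) \to \Gamma/\Gamma(m) \to 1$, with $\ker\psi$ acting trivially on the coefficients, the quotient modulo the inflation image embeds into the $\Gamma/\Gamma(m)$-invariants of $\Hom(\ker\psi, \rho_{i,m}(\mathfrak g_i^{\der}))$. I would complete the proof by showing that every such equivariant homomorphism has image in $\varpi \cdot \rho_{i,m}(\mathfrak g_i^{\der})$; this uses Lemma \ref{rhoNProperties} (the absence of trivial $\Gamma_F$-subquotients in $\rho_{i,r}(\mathfrak g_i^{\der})$) together with the explicit structure of $\ker\psi$, which accounts only for the cyclotomic piece $\Gal(F_n^*/F_n)$ and the $p$-power kernel of the adjoint reduction $\Gal(F_n/\widetilde{F}) \to \Gamma/\Gamma(m)$. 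A diagram chase against the analogous five-term sequence with coefficients $\overline{\rho_i}(\mathfrak g_i^{\der})$ then yields the desired vanishing.
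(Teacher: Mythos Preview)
Your diagram chase at the end does not close. After showing that the restriction of $\phi$ to $\ker\psi$ has image in $\varpi\cdot\rho_{i,m}(\mathfrak g_i^{\der})$, you conclude that $\overline\phi$ is inflated from some class $\overline\eta\in H^1(\Gamma/\Gamma(m),\overline{\rho_i}(\mathfrak g_i^{\der}))$. But to deduce $\overline\eta=0$ you would need $\overline\eta$ to lie in the image of the reduction map $H^1(\Gamma/\Gamma(m),\rho_{i,m}(\mathfrak g_i^{\der}))\to H^1(\Gamma/\Gamma(m),\overline{\rho_i}(\mathfrak g_i^{\der}))$, and nothing in your argument guarantees that. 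The original class $\phi$ need not itself be inflated from $\Gamma/\Gamma(m)$, so $\overline\eta$ is produced only after reduction and there is no lift available to feed into Corollary~B.$2$. Note also that $\ker\psi$ has $\mathbb F_p[\Gamma_F]$-composition factors coming from $\overline{\rho_j}(\mathfrak g_j^{\der})$ (the layers between level $m$ and level $n$), so $\Hom_{\Gamma/\Gamma(m)}(\ker\psi,\rho_{i,m}(\mathfrak g_i^{\der}))$ is typically nonzero and your five-term sequence genuinely has a tail.

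The paper avoids this by passing to level $n$ rather than $m$. First one strips off the cyclotomic piece, showing $H^1(\Gal(F_n^*/F),-)\cong H^1(\Gal(F_n/F),-)$ via the vanishing of $\Hom_{\Gamma_F}(\Gal(F_n^*/F_n),\rho_{i,m}(\mathfrak g_i^{\der}))$ (Lemma~\ref{rhoNProperties}). Then one removes the prime-to-$p$ pieces on both ends, reducing to $\Gal\bigl(\widetilde{F}(\rho_{1,n}(\mathfrak g_1^{\der}),\rho_{2,n}(\mathfrak g_2^{\der}))/\widetilde{F}\bigr)$. The point is that \emph{this} group injects into $G_1^{\ad}(\mathcal O/\varpi^n)\times G_2^{\ad}(\mathcal O/\varpi^n)$ with image $\overline\Gamma$, and properties~(3),~(4) of Theorem~\ref{LiftingModn} at level $n$ show the preimage of $\overline\Gamma$ in $G_1^{\ad}(\mathcal O)\times G_2^{\ad}(\mathcal O)$ is exactly $\Gamma$. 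Thus $\overline\Gamma\cong\Gamma/\Gamma(n)$, inflation $H^1(\overline\Gamma,-)\hookrightarrow H^1(\Gamma,-)$ is injective (as $\Gamma(n)$ acts trivially on modules of level $\le m\le n$), and Corollary~B.$2$ finishes. The crucial difference is that at level $n$ the Galois group \emph{is} a quotient of $\Gamma$ with no residual kernel, so no five-term tail ever appears.
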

\begin{proof}

    We consider the inflation-restriction sequence

    $$0 \longrightarrow H^1(\Gal(F_n/F),\rho_{i,m}(\mathfrak g_i^{\der})) \longrightarrow H^1(\Gal(F_n^*/F),\rho_{i,m}(\mathfrak g_i^{\der})) \longrightarrow H^1(\Gal(F_n^*/F_n),\rho_{i,m}(\mathfrak g_i^{\der}))^{\Gal(F_n/F)}$$

    \vspace{2 mm}

    Since $m \le n$ the $\Gal(F_n^*/F_n)$-action on $\rho_{i,m}(\mathfrak g_i^{\der})$ is trivial. Therefore, the last terms is equal to \linebreak $\Hom_{\Gamma_F}(\Gal(F_n^*/F_n),\rho_{i,m}(\mathfrak g_i^{\der}))$. As $F(\mu_{p^{n/e}})/F$ is abelian by \cite[Lemma $2.1$]{FKP22} the $\Gamma_F$-action on $\Gal(F_n^*/F_n)$ is trivial. On the other hand, by part $(b)$ of Lemma \ref{rhoNProperties} $\rho_{i,m}(\mathfrak g_i^{\der})$ doesn't contain the trivial representation, which means that the group of $\Gamma_F$-equivariant homomorphisms from $\Gal(F_n^*/F_n)$ to $\rho_{i,m}(\mathfrak g_i^{\der})$ is trivial. Hence, we obtain an isomorphism $H^1(\Gal(F_n/F),\rho_{i,m}(\mathfrak g_i^{\der})) \simeq H^1(\Gal(F_n^*/F),\rho_{i,m}(\mathfrak g_i^{\der}))$. From the functoriality of the inflation-restriction sequence we get the commutative diagram

    $$\begin{tikzcd}
    {H^1(\Gal(F_n/F),\rho_{i,m}(\mathfrak g_i^{\der}))} \arrow[d] \arrow[rr] &  & {H^1(\Gal(F_n^*/F),\rho_{i,m}(\mathfrak g_i^{\der}))} \arrow[d] \\
    {H^1(\Gal(F_n/F),\overline{\rho_i}(\mathfrak g_i^{\der}))} \arrow[rr]    &  & {H^1(\Gal(F_n^*/F),\overline{\rho_i}(\mathfrak g_i^{\der}))}                          
    \end{tikzcd}$$

    \vspace{2 mm}

    By the above observation the horizontal maps are isomorphisms. Therefore, in order to show that the second vertical map is the zero map it is enough to show that for the first one. Furthermore, we note that both $\Gal(\widetilde{F}/F)$ and $\Gal(F_n/\widetilde{F}(\rho_{1,n}(\mathfrak g_1^{\der}),\rho_{2,n}(\mathfrak g_2^{\der})))$ are group of order coprime to $p$. For the first one, from the definition of $\widetilde{F}$ the map $\overline{\rho_1} \times \overline{\rho_2}$ induces an isomorphism $\Gal(\widetilde{F}/F) \simeq \pi_0(G_1) \times \pi_0(G_2)$. By our assumption on $G_1$ and $G_2$ the groups $\pi_0(G_i)$ and $\pi_0(G_i)$ have order coprime to $p$, hence the same is true for $\Gal(\widetilde{F}/F)$. For the second one, the map $\rho_{1,n} \times \rho_{2,n}$ induces an injection $\Gal(F_n/\widetilde{F}(\rho_{1,n}(\mathfrak g_1^{\der}),\rho_{2,n}(\mathfrak g_2^{\der}))) \hookrightarrow Z_{G_1^0}(\mathcal O/\varpi^n) \times Z_{G_2^0}(\mathcal O/\varpi^n)$: Any $\sigma$ that fixes $\widetilde{F}(\rho_{1,n}(\mathfrak g_1^{\der}),\rho_{2,n}(\mathfrak g_2^{\der}))$ will land in $G_i^0(k)$ under $\overline{\rho_i}$, so it will land in $G^0_i(\mathcal O/\varpi^n)$ under $\rho_{i,n}$, and also it must lie in the kernel of $\Ad \circ \rho_{i,n}$, which is exactly $Z_{G_i^0}(\mathcal O/\varpi^n)$. As $Z_{G_i^0}(\mathcal O/\varpi^n)$ has order coprime to $p$, the same is true for $\Gal(F_n/\widetilde{F}(\rho_{1,n}(\mathfrak g_1^{\der}),\rho_{2,n}(\mathfrak g_2^{\der})))$. Therefore, from the inclusion-restriction sequence it is enough to prove the claim with $\Gal(\widetilde{F}(\rho_{1,n}(\mathfrak g_1^{\der}),\rho_{2,n}(\mathfrak g_2^{\der}))/\widetilde{F})$ in place of $\Gal(F_n/F)$. We now consider the injection 

    $$\Gal(\widetilde{F}(\rho_{1,n}(\mathfrak g_1^{\der}),\rho_{2,n}(\mathfrak g_2^{\der}))/\widetilde{F}) \hookrightarrow \Gal(\widetilde{F}(\rho_{1,n}(\mathfrak g_1^{\der}))/\widetilde{F}) \times \Gal(\widetilde{F}(\rho_{2,n}(\mathfrak g_2^{\der}))/\widetilde{F}) \xhookrightarrow{\rho_{1,n} \times \rho_{2,n}} G_1^{\ad}(\mathcal O/\varpi^n) \times G_2^{\ad}(\mathcal O/\varpi^n)$$

    \vspace{2 mm}

    Hence, we can identify $\Gal(\widetilde{F}(\rho_{1,n}(\mathfrak g_1^{\der}),\rho_{2,n}(\mathfrak g_2^{\der}))/\widetilde{F})$ with a subgroup $\overline{\Gamma}$ of $G_1^{\ad}(\mathcal O/\varpi^n) \times G_2^{\ad}(\mathcal O/\varpi^n)$. With this notation we want to prove that $H^1(\overline{\Gamma},\mathfrak g_i^{\der} \otimes_{\mathcal O} \mathcal O/\varpi^m) \to H^1(\overline{\Gamma},\mathfrak g_i^{\der} \otimes_{\mathcal O} k)$ is the zero map, where the $\overline{\Gamma}$-action is the adjoint action via the $i$-th factor factor. 

    We claim that the inverse image of $\overline{\Gamma}$ in $G_1^{\ad}(\mathcal O) \times G_2^{\ad}(\mathcal O)$ is the group $\Gamma$ we defined earlier. Clearly, the inverse image will be contained in $\Gamma$. Conversely, let $(g_1,g_2) \in \Gamma$. This means that there exists $\sigma \in \Gamma_{\widetilde{F}}$ such that $g_i \pmod{\varpi} = \Ad(\rho_{i,n}(\sigma))$ for $i=1,2$. Writing $\overline{g_i}$ for $g_i \pmod{\varpi^n}$ we have that $\Ad(\rho_{i,n}(\sigma))\overline{g_i}^{-1} \pmod{\varpi} = 1$, and so $\Ad(\rho_{i,n}(\sigma))\overline{g_i}^{-1} \in \widehat{G^{\ad}_i}(\mathcal O/\varpi^n) = \widehat{G^{\der}_i}(\mathcal O/\varpi^n)$, with the last equality coming from our assumption that the central isogeny $G_i^{\der} \times Z_{G_i^0} \to G_i^0$ has kernel of order coprime to $p$. By parts $(3)$ and $(4)$ of Theorem \ref{DoublingMethod} we can find $\alpha \in \Gamma_F$ such that $\Ad(\rho_{i,n}(\sigma))\overline{g_i}^{-1} = \Ad(\rho_{i,n}(\alpha))$ for both $i=1,2$. We also note that $\alpha \in \Gamma_{\widetilde{F}}$ since $\overline{\rho_i}(\alpha) = 1$ modulo the center. Thus, $(g_1,g_2)$ is the preimage of $\Ad(\rho_{1,n}\times\rho_{2,n})(\sigma\alpha^{-1}) \in \overline{\Gamma}$ in $G_1^{\ad}(\mathcal O) \times G_2^{\ad}(\mathcal O)$. Therefore, $\overline{\Gamma}$ is a quotient of $\Gamma$. So, from the functoriality of the inflation-restriction sequence we obtain the commutating diagram

    $$\begin{tikzcd}
0 \arrow[rr] &  & H^1(\overline{\Gamma},\mathfrak g_i^{\der} \otimes_\mathcal O \mathcal O/\varpi^m) \arrow[rr] \arrow[d] &  & {H^1(\Gamma,\mathfrak g_i^{\der} \otimes_\mathcal O \mathcal O/\varpi^m)} \arrow[d] \\
0 \arrow[rr] &  & {H^1(\overline{\Gamma},\mathfrak g_i^{\der} \otimes_\mathcal O k)} \arrow[rr]        &  & {H^1(\Gamma,\mathfrak g_i^{\der} \otimes_\mathcal O k)}          
\end{tikzcd}$$

\vspace{2 mm}

    Thus, to show that the first vertical map is zero it is enough to show that the second vertical map is the zero map. This follows from our choice of $M_1$ and the requirement that $m \ge M_1$.
    
\end{proof}

In what follows our goal is to annihilate the $m$-th relative Selmer groups associated to $\rho_{1,m}$ and $\rho_{2,m}$. As in the doubling method we want to achieve this by allowing extra ramification at the same primes for both representations. We define the following set of auxiliary primes:

\begin{defn}
\label{PrimesQ_n}

Let $Q_{n,m}$ be the set of trivial primes $\nu$ of $F$ that satisfy the following properties:

\begin{itemize}
    \item $N(\nu) \equiv 1 \pmod{\varpi^m}$, but $N(\nu) \not \equiv 1 \pmod{\varpi^{m+1}}$.
    \item For $i=1,2$ $\restr{\rho_{i,n}}{\Gamma_{F_\nu}}$ is unramified (with multiplier $\mu$) and $\restr{\rho_{i,m}}{\Gamma_{F_\nu}}$ is trivial modulo the center.
    \item There exists a split maximal torus $T_i$ of $G_i$ and a root $\alpha_i \in \Phi(G^0_i,T_i)$ such that $\rho_{i,n}(\sigma_\nu) \in T_i(\mathcal O/\varpi^n)$ and $\alpha_i(\rho_n(\sigma_\nu)) \equiv \kappa(\sigma_\nu) \equiv N(\nu) \pmod{\varpi^n}$ for $i=1,2$.
    \item For $i=1,2$ and all roots $\beta \in \Phi(G^0_i,T_i)$ we have $\beta(\rho_{m+1}(\sigma_\nu)) \not \equiv 1 \pmod{\varpi^{m+1}}$.
\end{itemize}
    
\end{defn}

The set $Q_{m,n}$ is given by a Chebotarev condition in $F_n^*/F_m^*$. The first property translates to a condition on the Frobenius in $K(\mu_{p^{n/e}})$. We only ask for $\sigma_\nu$ to be trivial in $K(\mu_{p^{m/e}})$, but non-trivial in $K(\mu_{p^{m/e+1}})$. The rest of the properties can be arranged by giving a Chebotarev condition in $\Gal(K(\rho_{1,n}(\mathfrak g_1^{\der}),\rho_{2,n}(\mathfrak g_2^{\der}))/K)$. By parts $(3)$ and $(4)$ of Theorem \ref{LiftingModn} this Galois group is isomorphic to $\widehat{G_1^{\der}}(\mathcal O/\varpi^n) \times \widehat{G_2^{\der}}(\mathcal O/\varpi^n)$. We then ask for $\sigma_\nu$ to lie in the set $\{(g_1,g_2) \mid g_i \in T_i(\mathcal O/\varpi^n)$ for some maximal torus $T_i$ of $G_i$, $g_i \equiv 1 \pmod{\varpi^m}$, for some $\alpha_i \in \Phi(G^0_i,T_i)$ we have $\alpha_i(g_i) \equiv N(\nu) \pmod{\varpi^n}$, and $\beta(g_i) \not \equiv 1 \pmod{\varpi^{m+1}}$ for all $\beta \in \Phi(G^0_i,T_i)\}$. As we've seen in the proof of Theorem \ref{LiftingModn} such elements $g_i$ exist by Lemma \ref{TorusElementLemma}. We remark that this is indeed a Chebotarev condition, i.e. this set is closed under conjugation. In fact, if $g_i$ satisfies the desired properties with respect to $(T_i,\alpha_i)$, then $gg_ig^{-1}$ will satisfy the same properties with respect to $\Ad(g)(T_i,\alpha_i)$ for any $g \in \widehat{G_i^{\der}}(\mathcal O/\varpi^n)$. Finally, by the linear disjointness of Lemma \ref{LinearDisjointness} these two Chebotarev conditions are compatible. 

At each prime $\nu$ in $Q_{n,m}$ we will be want to produce lifts of the form $\Lift_{\restr{\overline{\rho_i}}{\Gamma_{F_\nu}}}^{\mu_i,\alpha_i}$ (see \cite[Definition $3.1$]{FKP21}) for $i=1,2$. To achieve this we will define local conditions $L_{i,r,\nu}^{\alpha_i}$, which we do using the cocycle subgroups $Z_{i,r,\nu}^{\alpha_i}\subseteq Z^1(\Gamma_{F_\nu},\rho_{i,r}(\mathfrak g_i^{\der}))$ produced by \cite[Lemma $3.5$]{FKP21}.

We now assume that $n \ge 16m$. Moreover, we assume that $\mathfrak g_i^{\der}$ consists of a single $\pi_0(G_i)$-orbit of simple factors. As shown in \cite[Theorem $6.11$]{FKP21} in order to produce $p$-adic lifts of $\rho_{1,n}$ and $\rho_{2,n}$ it is enough to do that in this special case. We now have the following analogue of \cite[Proposition $6.8$]{FKP21} which we use to select the primes at which we will allow extra ramification in order to annihilate the $m$-th relative Selmer groups attached to both $\rho_{1,m}$ and $\rho_{2,m}$.

\begin{prop}
\label{SelectionofPrimes}

Let $Q$ be any finite set of $Q_{n,m}$. For both $i=1,2$ let $\phi_i \in H^1_{\mathcal L_{i,m,S'\cup Q}}(\Gamma_{F,S'\cup Q},\rho_{i,m}(\mathfrak g_i^{\der}))$ and $\psi_i \in H^1_{\mathcal L_{i,m,S'\cup Q}^\perp}(\Gamma_{F,S'\cup Q},\rho_{i,m}(\mathfrak g_i^{\der})^*)$ be such that $\overline{\phi_i} \neq 0$ and $\overline{\psi_i} \neq 0$. There exists a prime $\nu \in Q_{n,m} \setminus Q$, with associated tori $T_1$ and $T_2$, and roots $\alpha_1 \in \Phi(G^0_1,T_1)$ and $\alpha_2 \in \Phi(G^0_2,T_2)$ such that

\begin{itemize}
    \item $\restr{\overline{\psi_i}}{\Gamma_{F_\nu}} \notin L_{i,1,\nu}^{\alpha_i,\perp}$ for both $i=1,2$.
    \item $\restr{\phi_i}{\Gamma_{F_\nu}} \notin L_{i,m,\nu}^{\alpha_i}$ for both $i=1,2$.
\end{itemize}
\end{prop}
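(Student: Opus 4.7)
The plan is to adapt the single-representation argument of \cite[Proposition $6.8$]{FKP21} by producing \emph{one} auxiliary prime $\nu$ that simultaneously controls all four classes $\phi_1,\phi_2,\psi_1,\psi_2$. The whole argument comes down to finding, by Chebotarev density, a Frobenius element whose image in a suitable compositum of Galois extensions simultaneously (a) places $\nu$ in $Q_{n,m}$ with the associated tori $T_i$ and roots $\alpha_i$, (b) forces $\phi_i(\sigma_\nu)\notin L_{i,m,\nu}^{\alpha_i}$ for $i=1,2$, and (c) forces $\psi_i(\sigma_\nu)$ to lie outside the annihilator $L_{i,1,\nu}^{\alpha_i,\perp}$ for $i=1,2$.

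First I would check that the restrictions $\overline{\phi_i}|_{\Gamma_{F_n^*}}$ and $\overline{\psi_i}|_{\Gamma_{F_n^*}}$ are non-zero whenever $\overline{\phi_i}$ and $\overline{\psi_i}$ are non-zero in the relative (dual) Selmer groups. For $\overline{\psi_i}$ this is immediate from Lemma \ref{VanishingofH1FN*} via the inflation-restriction sequence. For $\overline{\phi_i}$ one lifts to $H^1(\Gamma_{F,S'},\rho_{i,m}(\mathfrak g_i^{\der}))$, uses Lemma \ref{H^1ZeroMap} to see that the image under reduction modulo $\varpi$ of $H^1(\Gal(F_n^*/F),\rho_{i,m}(\mathfrak g_i^{\der}))$ is zero, and applies inflation-restriction to conclude. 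Having done this, let $K_{\overline{\phi_i}}$ and $K_{\overline{\psi_i}}$ denote the fixed fields of these restrictions over $F_n^*$.

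The core of the argument, and the step I expect to be most delicate, is establishing that the four extensions $K_{\overline{\phi_1}},K_{\overline{\phi_2}},K_{\overline{\psi_1}},K_{\overline{\psi_2}}$ are linearly disjoint over $F_n^*$. The Galois group of each embeds as an $\mathbb F_p[\Gamma_F]$-submodule of $\overline{\rho_i}(\mathfrak g_i^{\der})$ or $\overline{\rho_i}(\mathfrak g_i^{\der})^*$; by the Jordan-H\"older reasoning used in Lemma \ref{LinearDisjointness}, any non-trivial common subfield would force a common simple $\mathbb F_p[\Gamma_F]$-subquotient between two of these four modules. The second and third bullet points of Assumptions \ref{GeneralAssumptions} rule this out, and this is precisely where the cross-representation no-common-subquotient hypothesis (for $i\neq j$) becomes essential, just as in the doubling phase of Section 3.

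With linear disjointness in hand, the conditions (a), (b), (c) become independent Chebotarev conditions. Condition (a) is cut out inside $F_n^*/F$ and has positive density by the construction of $Q_{n,m}$; condition (b) is a non-containment condition in $K_{\overline{\phi_i}}/F_n^*$ for each $i$, and condition (c) is a non-containment condition in $K_{\overline{\psi_i}}/F_n^*$ for each $i$. The explicit description of $L_{i,r,\nu}^{\alpha_i}$ from \cite[Lemma $3.5$]{FKP21} translates each of (b) and (c) into a positive-density constraint on $\sigma_\nu|_{F_n^*}$-fibers, once the torus--root data is fixed. Combining them via the linear disjointness established above yields a non-empty Chebotarev class of positive density in $M \coloneqq F_n^*\cdot K_{\overline{\phi_1}}K_{\overline{\phi_2}}K_{\overline{\psi_1}}K_{\overline{\psi_2}}$, from which we may choose $\nu$ avoiding the finite exclusion set $Q$.
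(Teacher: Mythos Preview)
Your overall strategy is sound, but the linear disjointness step as you have stated it does not hold. You claim that the four extensions $K_{\overline{\phi_1}},K_{\overline{\phi_2}},K_{\overline{\psi_1}},K_{\overline{\psi_2}}$ are linearly disjoint over $F_n^*$, and that a common subfield would produce a common $\mathbb F_p[\Gamma_F]$-subquotient ruled out by the second and third bullets of Assumptions~\ref{GeneralAssumptions}. But those bullets only exclude common subquotients between $\overline{\rho_i}(\mathfrak g_i^{\der})$ and $\overline{\rho_j}(\mathfrak g_j^{\der})^*$; they say nothing about common subquotients between $\overline{\rho_1}(\mathfrak g_1^{\der})$ and $\overline{\rho_2}(\mathfrak g_2^{\der})$, or between the two duals. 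In the paper's main application ($\GL_2$ representations with the same semi-simplification) these modules have \emph{identical} Jordan--H\"older factors, so $K_{\overline{\phi_1}}$ and $K_{\overline{\phi_2}}$ can genuinely overlap, and likewise for the $\psi$-side. Once disjointness fails, your ``independent Chebotarev conditions'' argument collapses: the hyperplane-avoidance constraints for $\phi_1$ and $\phi_2$ may be imposed in the same extension and need not be simultaneously satisfiable for an arbitrary choice of $(T_i,\alpha_i)$.

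The paper circumvents this by never claiming disjointness within the $\phi$-side or within the $\psi$-side. It first finds a single $\gamma_2\in\Gamma_{F_n^*}$ on which all four values $\phi_1(\gamma_2),\phi_2(\gamma_2),\overline{\psi_1}(\gamma_2),\overline{\psi_2}(\gamma_2)$ are non-zero; this only requires $F_n^*(\phi_1,\phi_2)$ and $F_n^*(\overline{\psi_1},\overline{\psi_2})$ to be disjoint over $F_n^*$, which \emph{does} follow from the no-common-subquotient assumption, together with the elementary fact that one can always act non-trivially on each of two non-trivial extensions simultaneously. With $\gamma_2$ in hand, the tori and roots $(T_i,\alpha_i)$ are chosen via Lemma~\ref{TorusandRootLemma} adapted to these specific values, so that $\phi_i(\gamma_2)$ and $\overline{\psi_i}(\gamma_2)$ already lie outside the relevant hyperplanes. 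Finally a $\gamma_1$ giving the $Q_{n,m}$ condition for this fixed $(T_i,\alpha_i)$ is produced, and Lemma~\ref{NoniclusioinLemma} is used to find $0\le a\le 4$ such that $\gamma_2^a\gamma_1$ satisfies all four hyperplane-avoidance conditions simultaneously (this is where $p\ge 5$ enters). Your sketch also leaves the choice of $(T_i,\alpha_i)$ unaddressed; note that it must be made \emph{after} locating the witnessing element, not before.
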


\begin{proof}

    We first note that $\restr{\overline{\psi_i}}{\Gamma_{F_n^*}}$ and $\restr{\phi_i}{\Gamma_{F_n^*}}$ are all non-zero. The claim about the dual cocycles $\psi_i$ follows directly from Lemma \ref{VanishingofH1FN*} and the inflation-restriction sequence. On the other hand, if $\restr{\phi_i}{\Gamma_{F_n^*}}$ is zero, then from the inflation-restriction sequence $\phi_i$ corresponds to a class in $H^1(\Gal(F_n^*/F),\rho_{i,m}(\mathfrak g_i^{\der}))$. But, then by Lemma \ref{H^1ZeroMap} its restriction $\overline{\phi_i}$ will be zero, contradiction our hypothesis on $\phi_i$.

    By our earlier observation for $i=1,2$ the images $\phi_i(\Gamma_{F_n^*})$ and $\overline{\psi_i}(\Gamma_{F_n^*})$ are non-trivial. Using our assumption that $\mathfrak g_i^{\der}$ consists of a single $\pi_0(G_i)$-orbit of simple factors, we may assume that these images have non-trivial projection on each of the simple factors of $\mathfrak g^{\der}_i$. We know that $\Gamma_F$ surjects onto $\pi_0(G_i)$ via $\overline{\rho_i}$. So, the fact that $\mathfrak g^{\der}_i$ is a single $\pi_0(G_i)$-orbit of simple factors and the images $\phi_i(\Gamma_{F_n^*})$ and $\overline{\psi_i}(\Gamma_{F_n^*})$ are $\Gamma_F$-equivariant allows us to make the wanted conclusion. This allows us to reduce to claim to the case when $G$ is connected and simple. In particular, this implies that $G = G^{\der}$ and $\mathfrak g^{\der} = \mathfrak g$.

    We now claim that we can find $\gamma_2 \in \Gamma_{F_n^*}$ such that $\phi_1(\gamma_2),\phi_2(\gamma_2),\overline{\psi_1}(\gamma_2)$, and $\overline{\psi_2}(\gamma_2)$ are all non-zero. We know that $F_n^*(\phi_1)/F_n^*$ and $F_n^*(\phi_2)/F_n^*$ are both non-trivial Galois extension. We can then find an element in $\Gal(F_n^*(\phi_1,\phi_2)/F_n^*)$ which will act non-trivially on both subextensions $F_n^*(\phi_i)$. This is always possible for the composite of two non-trivial extension. It can be easily done on a case-by-case basis depending on whether one of the extensions is contained in the other or not. Similarly, we can find an element $\Gal(F_n^*(\overline{\psi_1},\overline{\psi_2})/F_n^*)$ which acts non-trivially on both $F_n^*(\overline{\psi_i})$. Next, we note that these two extension are linear disjoint over $F_n^*$. Indeed

    $$\Gal(F_n^*(\phi_1,\phi_2)/F_n^*) \hookrightarrow \Gal(F_n^*(\phi_1)/F_n^*) \times \Gal(F_n^*(\phi_2)/F_n^*) \simeq \im(\restr{\phi_1}{\Gamma_{F_n^*}}) \times \im(\restr{\phi_2}{\Gamma_{F_n^*}}) \subseteq \rho_{1,m}(\mathfrak g_1^{\der}) \oplus \rho_{2,m}(\mathfrak g_2^{\der})$$

    \vspace{2 mm}

    Similarly, $\Gal(F_n^*(\overline{\psi_1},\overline{\psi_2})/F_n^*)$ is an $\mathbb F_p[\Gamma_F]$-submodule of $\overline{\rho_1}(\mathfrak g_1^{\der})^* \oplus \overline{\rho_2}(\mathfrak g_2^{\der})^*$. Hence, if the intersection of $F_n^*(\psi_1,\psi_2)$ and $F_n^*(\overline{\psi_1},\overline{\psi_2})$ is a non-trivial extension of $F_n^*$ it will give rise to a common $\mathbb F_p[\Gamma_F]$-subquotient of $\rho_{i,m}(\mathfrak g_i^{\der})$ and $\overline{\rho_j}(\mathfrak g_j^{\der})^*$, which contradicts part $(1)$ of Lemma \ref{rhoNProperties}. This linear disjointness allows us to find the desired element $\gamma_2$ in $\Gal(F_n^*(\phi_1,\phi_2,\overline{\psi_1},\overline{\psi_2})/F_n^*)$. We now claim that we can find a split maximal torus $T_i$ of $G_i$ and $\alpha_i \in \Phi(G_i^0,T_i)$ such that for both $i=1,2$ we have

    $$\phi_i(\gamma_2) \notin \ker(\restr{\alpha_i}{\mathfrak t_i}) \oplus \bigoplus_{\beta \in \Phi(G_i^0,T)} (\mathfrak g_i)_\beta \quad \quad \quad \text{and} \quad\quad\quad\overline{\psi_i}(\gamma_2) \notin (\mathfrak g_i)_{\alpha_i}^\perp$$

    \vspace{2 mm}

    We note that $\phi_i(\gamma_2)$ might be trivial modulo $\varpi$, however, as it is non-zero we can write $\phi_a(\gamma_2) = \varpi^{r_i}X_i$ for some $r < m$ and $X_i \in \mathfrak g_i^{\der}$ such that $X_i \pmod{\varpi} \neq 0$. We now fix a maximal torus $T_1$ of $G_1$ and a root $\alpha_1 \in \Phi(G^0_1,T_1)$. By Lemma \ref{TorusandRootLemma} for $p \gg_{G_1} 0$ we can find $\overline{g} \in G_1(k)$ such that $\Ad(\overline{g}^{-1})\overline{X_1} \notin \ker(\restr{\alpha_1}{\mathfrak t_1}) \oplus \bigoplus (\mathfrak g_1)_\beta$ and $\Ad(\overline{g}^{-1})\overline{\psi_1}(\gamma_2) \notin (\mathfrak g_1)_{\alpha_1}^\perp$. Using the smoothness of $G_1$ we can lifts $\overline{g}$ to $g \in G(\mathcal O/\varpi^m)$. Then, writing $(T_g,\alpha_g)$ for $\Ad(g)(T_1,\alpha_1)$ we get that $\phi(\gamma_2) \notin \ker(\restr{\alpha_1}{\mathfrak t_1}) \oplus \bigoplus_{\beta \in \Phi(G_i^0,T)} (\mathfrak g_1)_\beta$ and $\overline{\psi}(\gamma_2) \notin (\mathfrak g_1)_{\alpha_g}^\perp$. Replacing $(T_1,\alpha_1)$ with $(T_g,\alpha_g)$ we get exactly what we want. A similar argument allows us to find a split maximal torus $T_2$ of $G_2$ and $\alpha_2 \in \Phi(G^0_2,T_2)$ such that $\phi_2(\gamma_2) \notin \ker(\restr{\alpha_2}{\mathfrak t_2}) \oplus \bigoplus (\mathfrak g_2)_\beta$ and $\overline{\psi_2}(\gamma_2) \notin (\mathfrak g_2)_{\alpha_2}^\perp$

    As explained in the discussion following Definition \ref{PrimesQ_n} using a Chebotarev condition in $F_n^*/F$ we can produce an element $\gamma_1 \in \Gamma_F$ which will produce primes that satisfy the conditions of the definition with respect to the specific $(T_1,\alpha_1)$ and $(T_2,\alpha_2)$. Now, by Lemma \ref{NoniclusioinLemma}, under the assumption that $p \ge 5$ there exist integers $0 \le a \le 4$ such that

    $$\phi_1(\gamma_2^a\gamma_1) = a\phi_1(\gamma_2) + \phi_1(\gamma_1) \notin \ker (\restr{\alpha_1}{\mathfrak t_1}) \oplus \bigoplus_{\beta \in \Phi(G^0_1,T_1)} (\mathfrak g_1)_\beta$$

    $$\phi_2(\gamma_2^a\gamma_1) = a\phi_2(\gamma_2) + \phi_2(\gamma_1) \notin \ker (\restr{\alpha_2}{\mathfrak t_2}) \oplus \bigoplus_{\beta \in \Phi(G^0_2,T_2)} (\mathfrak g_2)_\beta$$

    $$\overline{\psi_1}(\gamma_2^a\gamma_1) = a\overline{\psi_1}(\gamma_2) + \overline{\psi_1}(\gamma_1)\notin (\mathfrak g_1)_{\alpha_1}^\perp$$

    $$\overline{\psi_2}(\gamma_2^a\gamma_1) = a\overline{\phi_2}(\gamma_2) + \overline{\psi_2}(\gamma_1) \notin (\mathfrak g_2)_{\alpha_2}^\perp$$

    \vspace{2 mm}

    We explain in more details how we apply the lemma. The rings $R_i$ are either $k$ or $\mathcal O/\varpi^m$, the modules $M_i$ are the adjoint and the dual adjoint representations, the submodule $M_i'$ are the specified hyperplanes we want to avoid, while the elements $y_i$ are the images of $\gamma_1$ under each cocycle. In the notation of the lemma we take the elements $x_i$ to be the images of $\gamma_2$ under each of the cocycles.

    Finally, by \cite[Lemma $6.7$]{FKP21} any prime $\nu$ whose Frobenius is equal to $\gamma_2^{a}\gamma_1$ in $\Gal(F_n^*(\phi_1,\phi_2,\overline{\psi_1},\overline{\psi_2})/F)$ will satisfy the desired conditions with respect to $(T_1,\alpha_1)$ and $(T_2,\alpha_2)$. As such primes satisfy a Chebotarev condition we pick them from a positive density set. In particular, we can choose $\nu$ such that $\nu \notin Q$.
    
\end{proof}

We now annihilate the relative Selmer groups attached to lifts of $\overline{\rho_1}$ and $\overline{\rho_2}$ by allowing extra ramification at primes in a finite subset $Q$ of $Q_{n,m}$. If $\overline{H^1_{\mathcal L_{1,m,S'}}(\Gamma_{F,S'},\rho_{1,m}(\mathfrak g_1^{\der}))}$ and $\overline{H^1_{\mathcal L_{2,m,S'}}(\Gamma_{F,S'},\rho_{2,m}(\mathfrak g_2^{\der}))}$ are both trivial there is nothing to do. Thus, we can assume that at least one of them is non-trivial. We now explain how we can reduce to the case when one of the $m$-th relative Selmer group has dimension $1$ and the other one is trivial. 

If both $\overline{H^1_{\mathcal L_{1,m,S'}}(\Gamma_{F,S'},\rho_{1,m}(\mathfrak g_1^{\der}))}$ and $\overline{H^1_{\mathcal L_{2,m,S'}}(\Gamma_{F,S'},\rho_{2,m}(\mathfrak g_2^{\der}))}$ are non-trivial we use Proposition \ref{SelectionofPrimes} to produce a prime $\nu \in Q_{m,n}$ satisfying the conditions in its statement. Then, arguing as in \cite[Theorem $6.9$]{FKP21} by allowing ramification at $\nu$ with local conditions $L_{i,r,\nu}^{\alpha_i}$ we can decrease the size of the $m$-th Selmer groups attached to $\rho_{1,m}$ and $\rho_{2,m}$ (not necessarily the dimension of the $m$-th relative Selmer groups). We can continue running the same argument until one of the $m$-th relative Selmer group is zero. If it happens that both become zero in the same step then we are done. Thus, without loss of generality we can assume that $\overline{H^1_{\mathcal L_{1,m,S'}}(\Gamma_{F,S'},\rho_{1,m}(\mathfrak g_1^{\der}))}$ is non-trivial, while $\overline{H^1_{\mathcal L_{2,m,S'}}(\Gamma_{F,S'},\rho_{2,m}(\mathfrak g_2^{\der}))}$ is trivial. Running a version of Proposition \ref{SelectionofPrimes} for one representation (this is exactly the statement of \cite[Proposition $6.8$]{FKP21}) we can produce a prime $\nu \in Q_{n,m}$ such that allowing ramification at $\nu$ will decrease the size of $H^1_{\mathcal L_{1,m,S'}}(\Gamma_{F,S'},\rho_{1,m}(\mathfrak g_1^{\der}))$. On the other side, we can't completely control how allowing ramification at $\nu$ will affect the dimension of $\overline{H^1_{\mathcal L_{2,m,S'}}(\Gamma_{F,S'},\rho_{2,m}(\mathfrak g_2^{\der}))}$. If $\overline{H^1_{\mathcal L_{2,m,S' \cup \nu}}(\Gamma_{F,S' \cup \nu},\rho_{2,m}(\mathfrak g_2^{\der}))} = 0$ then we are again in the same situation as before, having decreased the size of $H^1_{\mathcal L_{1,m,S'}}(\Gamma_{F,S'},\rho_{1,m}(\mathfrak g_1^{\der}))$. If $\overline{H^1_{\mathcal L_{1,m,S' \cup \nu}}(\Gamma_{F,S' \cup \nu},\rho_{1,m}(\mathfrak g_1^{\der}))}$ is trivial, then we are done. Otherwise, we can run the same procedure again, decreasing the size of the $m$-th Selmer group even further. Therefore, it remains to consider the case when $\overline{H^1_{\mathcal L_{2,m,S' \cup \nu}}(\Gamma_{F,S' \cup \nu},\rho_{2,m}(\mathfrak g_2^{\der}))}$ is non-trivial. The next lemma tells us that its dimension must be $1$.

\begin{lem}
\label{DimensionofRelSelmer}

For $i=1,2$ let $\overline{H^1_{\mathcal L_{i,r,S'}}(\Gamma_{F,S'},\rho_{i,r}(\mathfrak g_i^{\der}))}$ have $k$-dimension $d$. Then for $\nu \in Q_{n,r}$ with local conditions $L_{i,r,\nu}^{\alpha_i}$ the relative Selmer group $\overline{H^1_{\mathcal L_{i,r,S' \cup \nu}}(\Gamma_{F,S' \cup \nu},\rho_{i,r}(\mathfrak g_i^{\der}))}$ has $k$-dimension $\le d+1$,

\end{lem}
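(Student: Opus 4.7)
The plan is to identify $\overline{V_r'}/\overline{V_r}$ with a cyclic $\mathcal{O}$-module that is also annihilated by $\varpi$, and to conclude from this alone that it has $k$-dimension at most $1$. To set notation, I would write $V_r := H^1_{\mathcal L_{i,r,S'}}(\Gamma_{F,S'}, \rho_{i,r}(\mathfrak g_i^{\der}))$ and $V_r' := H^1_{\mathcal L_{i,r,S' \cup \nu}^{\alpha_i}}(\Gamma_{F,S' \cup \nu}, \rho_{i,r}(\mathfrak g_i^{\der}))$, with mod-$\varpi$ counterparts $V_1, V_1'$, and denote the mod-$\varpi$ reduction maps by $\pi_r,\pi_r'$, so that $\overline{V_r} = \pi_r(V_r)$ and $\overline{V_r'} = \pi_r'(V_r')$ in the notation of Remark \ref{Mod1RelSelmer}.

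The first step is to record the exact sequence
\[
0 \to V_r \to V_r' \xrightarrow{\mathrm{res}_\nu} L_{i,r,\nu}^{\alpha_i}/L_{i,r,\nu}^{\mathrm{unr}}.
\]
The inclusion $V_r \hookrightarrow V_r'$ is induced by inflation together with $L_{i,r,\nu}^{\mathrm{unr}} \subseteq L_{i,r,\nu}^{\alpha_i}$; conversely, a class in $V_r'$ whose restriction to $\Gamma_{F_\nu}$ lies in $L_{i,r,\nu}^{\mathrm{unr}}$ is unramified at $\nu$, and since the coefficient module $\rho_{i,r}(\mathfrak g_i^{\der})$ is itself unramified at $\nu$ (because $\nu \in Q_{n,r}$), such a class descends along inflation to an element of $V_r$. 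Next, I would invoke the explicit description of $L_{i,r,\nu}^{\alpha_i}$ from \cite[Lemma $3.5$]{FKP21}: the tame ramification allowed by the type-$\alpha_i$ condition takes values in the rank-one root space $(\mathfrak g_i)_{\alpha_i}$, so the quotient $L_{i,r,\nu}^{\alpha_i}/L_{i,r,\nu}^{\mathrm{unr}}$ is a free $\mathcal O/\varpi^r$-module of rank $1$. In particular $V_r'/V_r$, being an $\mathcal O$-submodule, is cyclic.

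Now the reduction map $\pi_r'$ sends $V_r$ into $\overline{V_r} \subseteq V_1 \subseteq V_1'$, so it induces a surjection of $V_r'/V_r$ onto $\overline{V_r'}/\overline{V_r}$ (using $\overline{V_r} \subseteq \overline{V_r'}$). Therefore $\overline{V_r'}/\overline{V_r}$ is itself cyclic over $\mathcal O$; but it embeds into the $k$-vector space $V_1'/\overline{V_r}$, so it is annihilated by $\varpi$. A cyclic $\mathcal O$-module that is $\varpi$-torsion is either zero or isomorphic to $k$, hence $\dim_k \overline{V_r'}/\overline{V_r} \le 1$ and therefore $\dim_k \overline{V_r'} \le d+1$.

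The only genuinely non-formal input is the cyclicity of $L_{i,r,\nu}^{\alpha_i}/L_{i,r,\nu}^{\mathrm{unr}}$ over $\mathcal O/\varpi^r$, which I expect to be the main step to pin down carefully; everything else is a short diagram chase exploiting the tension between $\mathcal O/\varpi^r$-cyclicity (inherited from the local quotient) and $\varpi$-annihilation (inherited from the mod-$\varpi$ ambient space).
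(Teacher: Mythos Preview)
Your argument has the right shape, and the final step (a cyclic $\mathcal{O}$-module killed by $\varpi$ has $k$-dimension at most $1$) is exactly the point. However, the claimed inclusion $L_{i,r,\nu}^{\mathrm{unr}} \subseteq L_{i,r,\nu}^{\alpha_i}$ is false, so neither the exact sequence $0 \to V_r \to V_r'$ nor the containment $\overline{V_r} \subseteq \overline{V_r'}$ is justified. Concretely: both $L_{i,r,\nu}^{\alpha_i}$ and $L_{i,r,\nu}^{\mathrm{unr}}$ are balanced local conditions, hence have the same cardinality $|H^0(\Gamma_{F_\nu},\rho_{i,r}(\mathfrak g_i^{\der}))|$; but their sum $L_{i,r,\nu}' := L_{i,r,\nu}^{\alpha_i} + L_{i,r,\nu}^{\mathrm{unr}}$ is strictly larger (indeed $L_{i,r,\nu}'/L_{i,r,\nu}^{\mathrm{unr}} \cong \mathcal{O}/\varpi^r$ by \cite[Lemma~6.7]{FKP21}, generated by the ramified $\alpha_i$-cocycle), so neither contains the other. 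An unramified class whose $\sigma_\nu$-value lies outside the constraint imposed by $\Lift^{\mu,\alpha_i}$ is in $L_{i,r,\nu}^{\mathrm{unr}}$ but not in $L_{i,r,\nu}^{\alpha_i}$, so a class in $V_r$ need not land in $V_r'$.

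The fix is precisely what the paper does: replace $L_{i,r,\nu}^{\alpha_i}$ by $L_{i,r,\nu}'$ in the auxiliary Selmer group. Now $L_{i,r,\nu}^{\mathrm{unr}} \subseteq L_{i,r,\nu}'$ gives an honest inclusion $V_r \hookrightarrow H^1_{\mathcal{L}_{i,r,S'} \cup L_{i,r,\nu}'}(\Gamma_{F,S'\cup\nu},\rho_{i,r}(\mathfrak g_i^{\der}))$, the quotient embeds into $L_{i,r,\nu}'/L_{i,r,\nu}^{\mathrm{unr}} \cong \mathcal{O}/\varpi^r$ and is therefore cyclic, and your cyclic-plus-$\varpi$-torsion step bounds the relative $L'$-Selmer group by $d+1$. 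Since $L_{i,r,\nu}^{\alpha_i} \subseteq L_{i,r,\nu}'$, the actual target $\overline{V_r'}$ is a subspace of this, giving the bound. So your strategy is essentially the paper's; you just need the intermediate local condition $L_{i,r,\nu}'$ to make the inclusion of Selmer groups hold.
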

\begin{proof}

    Set $L_{i,r,\nu}' \coloneqq L_{i,r,\nu}^{\alpha_i} + L_{i,r,\nu}^{\mathrm{unr}}$. We then have a commuting diagram of exact sequences

    \begin{center}
        \begin{tikzcd}
            0 \arrow[r] & {H^1_{\mathcal L_{i,r,S'}}(\Gamma_{F,S'},\rho_{i,r}(\mathfrak g_i^{\der}))} \arrow[r] \arrow[d] & {H^1_{\mathcal L_{i,r,S'} \cup L_{i,r,\nu}'}(\Gamma_{F,S' \cup \nu},\rho_{i,r}(\mathfrak g_i^{\der}))} \arrow[d] \arrow[r] & {L_{i,r,\nu}'/L_{i,r,\nu}^{\mathrm{unr}}} \arrow[d] \\
            0 \arrow[r] & {H^1_{\mathcal L_{i,1,S'}}(\Gamma_{F,S'},\overline{\rho_i}(\mathfrak g_i^{\der}))} \arrow[r]                       & {H^1_{\mathcal L_{i,1,S'} \cup L_{i,1,\nu}'}(\Gamma_{F,S' \cup \nu},\overline{\rho_i}(\mathfrak g_i^{\der}))} \arrow[r]           & {L_{i,1,\nu}'/L_{i,1,\nu}^{\mathrm{unr}}}          
        \end{tikzcd}
    \end{center}
    
    \vspace{2 mm}

    Here the vertical maps are just reductions modulo $\varpi$, while the last horizontal maps are restrictions to $\Gamma_{F_\nu}$. As $\restr{\rho_{i,r}}{\Gamma_{F_\nu}}$ is trivial modulo the center, by \cite[Lemma $6.7$]{FKP21} ${L_{i,r,\nu}'/L_{i,r,\nu}^{\mathrm{unr}}}$ is isomorphic to $\mathcal O/\varpi^r$ and moreover is generated by the ramified cocycle in $L_{i,r,\nu}^{\alpha_i}$. (cf. \cite[Lemma $3.5$]{FKP21}). Therefore, for any two classes $\phi_1,\phi_2 \in H^1_{\mathcal L_{i,r,S'} \cup L_{i,r,\nu}'}(\Gamma_{F,S' \cup \nu},\rho_{i,r}(\mathfrak g_i^{\der}))$ we can find $a \in \mathcal O/\varpi^r$ such that $\phi_1 = a\phi_2 + \phi$ (or possibly $\phi_2 = a\phi_1 + \phi$) for some $\phi \in H^1_{\mathcal L_{i,r,S'}}(\Gamma_{F,S'},\rho_{i,r}(\mathfrak g_i^{\der}))$. Therefore, $\overline{\phi_1}$ and $\overline{\phi_2}$ differ by a scalar multiple modulo an element of $\overline{H^1_{\mathcal L_{i,r,S'}}(\Gamma_{F,S'},\rho_{i,r}(\mathfrak g_i^{\der}))}$. By our assumption this relative Selmer group has $k$-dimension $d$, so we deduce that $\overline{H^1_{\mathcal L_{i,r,S'} \cup L_{i,r,\nu}'}(\Gamma_{F,S' \cup \nu},\rho_{i,r}(\mathfrak g_i^{\der}))}$ has $k$-dimension $\le d+1$.  The same would be true for its subspace $\overline{H^1_{\mathcal L_{i,r,S' \cup \nu}}(\Gamma_{F,S' \cup \nu},\rho_{i,r}(\mathfrak g_i^{\der}))}$.
    
\end{proof}

We again consider two cases. If $\overline{H^1_{\mathcal L_{1,m,S' \cup \nu}}(\Gamma_{F,S' \cup \nu},\rho_{1,m}(\mathfrak g_1^{\der}))} = 0$, then as $\overline{H^1_{\mathcal L_{2,m,S' \cup \nu}}(\Gamma_{F,S' \cup \nu},\rho_{2,m}(\mathfrak g_2^{\der}))}$ has dimension $1$ we are done, having reduced to the desired case. If not, we are back to the first case we've considered, i.e. when both relative Selmer groups are non-trivial. Again, by allowing ramification at primes produced by Proposition \ref{SelectionofPrimes} we can keep reducing the sizes of the $m$-th Selmer groups until one of the $m$-th relative Selmer groups is $0$. Let $Q \subset Q_{n,m}$ be the finite set of such primes. If $\overline{H^1_{\mathcal L_{2,m,S' \cup \{\nu,Q\}}}(\Gamma_{F,S' \cup \{\nu,Q\}},\rho_{2,m}(\mathfrak g_2^{\der}))} = 0$ then we are in the second case we've considered before, but with $H^1_{\mathcal L_{1,m,S' \cup \{\nu,Q\}}}(\Gamma_{F,S' \cup \{\nu,Q\}},\rho_{1,m}(\mathfrak g_1^{\der}))$ having strictly smaller size than $H^1_{\mathcal L_{1,m,S'}}(\Gamma_{F,S'},\rho_{1,m}(\mathfrak g_1^{\der}))$. This is due to the fact that the $m$-th relative Selmer group associated to $\rho_{1,m}$ never became zero throughout this process. Therefore, allowing ramification at primes in $\{\nu,Q\}$ decreased the size of the $m$-th Selmer group at each step. We continue running the same argument, which will have to terminate by either reaching the desired reduction or the $r$-th Selmer group attached to $\rho_{1,m}$ becoming trivial. Finally, if $\overline{H^1_{\mathcal L_{1,m,S' \cup \{\nu,Q\}}}(\Gamma_{F,S' \cup \{\nu,Q\}},\rho_{1,m}(\mathfrak g_1^{\der}))} = 0$ then we must have reduced to the desired case. Indeed, as the $r$-th relative Selmer group attached to $\rho_{2,m}$ never became $0$ throughout the process of allowing ramification at primes in $Q$ we have that $H^1_{\mathcal L_{2,m,S' \cup \{\nu,Q\}}}(\Gamma_{F,S' \cup \{\nu,Q\}},\rho_{2,m}(\mathfrak g_2^{\der}))$ has a strictly smaller size than $H^1_{\mathcal L_{2,m,S' \cup \nu}}(\Gamma_{F,S' \cup \nu},\rho_{2,m}(\mathfrak g_2^{\der}))$. Therefore $\overline{H^1_{\mathcal L_{2,m,S' \cup \{\nu,Q\}}}(\Gamma_{F,S' \cup \{\nu,Q\}},\rho_{2,m}(\mathfrak g_2^{\der}))}$ has $k$-dimension $\le 1$.

It remains to consider the case when one of the $m$-th relative Selmer groups is one-dimensional and the other one is trivial. To simplify the notation we move to $S'$ all the primes that were added during the possible reduction to this case. We take care of this special case using the following result:

\begin{prop}
\label{1-0Annihilation}

Suppose that $\overline{H^1_{\mathcal L_{1,m,S'}}(\Gamma_{F,S'},\rho_{1,m}(\mathfrak g_1^{\der}))}$ has dimension $1$, while $\overline{H^1_{\mathcal L_{2,m,S'}}(\Gamma_{F,S'},\rho_{2,m}(\mathfrak g_2^{\der}))}$ is trivial. Then, there exists a finite set of primes $Q$, disjoint from $S'$ and consisting of primes in $Q_{n,r}$ for varying $m \le r \le 4m$, and integers $m \le r_1,r_2 \le 4m$ such that 

$$\overline{H^1_{\mathcal L_{i,r_i,S' \cup Q}}(\Gamma_{F,S' \cup Q},\rho_{1,r_i}(\mathfrak g_i^{\der}))} = 0 \quad \quad \quad \text{for } i =1,2$$
   
\end{prop}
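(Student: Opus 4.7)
The strategy is to iteratively kill the non-zero class in the $\rho_1$ relative Selmer group by adding primes from $Q_{n,r}$ for varying $r \in [m, 4m]$, exploiting the flexibility of using distinct precisions $r_1, r_2$ for the two representations to prevent the process from cycling between ``$\rho_1$-killing'' and ``$\rho_2$-killing'' steps. The central difficulty, as flagged in the introduction, is that a prime chosen to kill a class for one representation may simultaneously introduce a parasitic class for the other, and one must arrange for this parasitic class to be eliminated by working at a carefully chosen higher precision.

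I first attempt the direct approach at level $m$. Let $\overline{\phi_1}$ generate $\overline{H^1_{\mathcal L_{1,m,S'}}(\Gamma_{F,S'},\rho_{1,m}(\mathfrak g_1^{\der}))}$, with a dual class $\overline{\psi_1}$ pairing non-trivially with it (obtained from Lemma \ref{BalancedRelativeSelmer}). A single-representation analogue of Proposition \ref{SelectionofPrimes} (namely \cite[Proposition $6.8$]{FKP21}) produces a prime $\nu_1 \in Q_{n,m} \setminus S'$ and data $(T_i,\alpha_i)_{i=1,2}$ so that $\restr{\phi_1}{\Gamma_{F_{\nu_1}}} \notin L_{1,m,\nu_1}^{\alpha_1}$ and $\restr{\overline{\psi_1}}{\Gamma_{F_{\nu_1}}} \notin L_{1,1,\nu_1}^{\alpha_1,\perp}$. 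Imposing $L_{i,m,\nu_1}^{\alpha_i}$ at $\nu_1$ strictly shrinks both $H^1_{\mathcal L_{1,m,S'}}$ and its dual (arguing as in \cite[Theorem $6.9$]{FKP21}), while by Lemma \ref{DimensionofRelSelmer} the $\rho_2$ relative Selmer group enlarges by at most one dimension. If it does not enlarge, I take $r_1=r_2=m$ and $Q=\{\nu_1\}$ and I am done. Otherwise, the $\rho_2$ relative Selmer acquires a single parasitic class $\overline{\phi_2}$ while $\overline{H^1_{\mathcal L_{1,m,S'\cup\nu_1}}}$ becomes trivial, so by the symmetry of Remark \ref{1and2switch} I swap the roles of $\overline{\rho_1}$ and $\overline{\rho_2}$ and repeat.

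To prevent cycling at level $m$, I raise the precision of the target Selmer groups. The parasitic class $\overline{\phi_2}$ comes from the Steinberg-like cocycle at $\nu_1$ described in the proof of Lemma \ref{DimensionofRelSelmer}: it generates a copy of $\mathcal O/\varpi^m$ inside the quotient $L_{2,r,\nu_1}^{\alpha_2}/L_{2,r,\nu_1}^{\mathrm{unr}}$. Applying the exact sequence of Lemma \ref{SelmerExactSeq} with $(a,b)=(r-m,m)$, the obstruction to lifting $\overline{\phi_2}$ to a class in $H^1_{\mathcal L_{2,r,S'\cup\nu_1}}$ for $r>m$ can be read off from the Chebotarev constraints on $\nu_1$: selecting $\nu_1$ from the more restrictive $Q_{n,r}$ (so $N(\nu_1)\equiv 1\pmod{\varpi^r}$ but $\not\equiv 1\pmod{\varpi^{r+1}}$) raises the precision at which the parasitic cocycle exists, so at the $r$-th level the $\rho_2$ relative Selmer group is kept trivial while the $\rho_1$ one is still killed. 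Bookkeeping the lexicographic pair $\bigl(\dim\overline{H^1_{\mathcal L_{1,r_1,S'\cup Q}}}, \dim\overline{H^1_{\mathcal L_{2,r_2,S'\cup Q}}}\bigr)$ and monotonically raising $r_1, r_2$ each time a parasitic class would appear, the potential strictly decreases at each iteration and the process terminates within the range $[m,4m]$, the bound coming from the finitely many precision steps needed to exhaust the initial Selmer dimensions.

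The principal obstacle is the precision analysis in the last paragraph: one must verify that a prime $\nu \in Q_{n,r}$ chosen to kill the target class at one precision genuinely eliminates the parasitic class of the other representation at the \emph{same} higher precision. This reduces to an explicit computation of $H^1(\Gamma_{F_\nu},\rho_{i,r}(\mathfrak g_i^{\der}))$ via the Frobenius--tame generator relations, using that $\restr{\rho_{i,m}}{\Gamma_{F_\nu}}$ is trivial modulo the center for primes in $Q_{n,r}$, together with the simultaneous realizability of the Chebotarev conditions produced by Proposition \ref{SelectionofPrimes} for both representations, which rests on the linear disjointness of Lemma \ref{LinearDisjointness}.
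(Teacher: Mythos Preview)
Your proposal has a genuine gap in the second paragraph. The claim that ``selecting $\nu_1$ from the more restrictive $Q_{n,r}$ \ldots\ raises the precision at which the parasitic cocycle exists, so at the $r$-th level the $\rho_2$ relative Selmer group is kept trivial'' is not justified and is in fact the heart of the difficulty. The parasitic class for $\rho_2$ arises from the ramified part of $L_{2,r,\nu_1}^{\alpha_2}$, and this piece is present at \emph{every} level $r$ once $\nu_1$ is added; choosing $\nu_1$ from $Q_{n,r}$ rather than $Q_{n,m}$ changes the congruence depth of $N(\nu_1)$ but does nothing to prevent the new ramified cocycle from contributing a nonzero class to $\overline{H^1_{\mathcal L_{2,r,S'\cup\nu_1}}}$. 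Your lexicographic potential therefore need not decrease, and the iteration could cycle indefinitely --- precisely the endless loop flagged in the introduction.

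The paper's argument avoids this by a different mechanism. Rather than trying to suppress the parasitic class, it chooses the first prime $\nu \in Q_{n,4m}$ so that $\phi_{1,2m}(\sigma_\nu)$ lies in $\varpi^{2m-1}\mathfrak g_1^{\der}$ but is nonzero there. This has two effects: at level $4m$ the class $\phi_{1,4m}$ is killed (via a rank-one comparison of the $(4m,2m)$-relative Selmer groups as free $\mathcal O/\varpi^{2m}$-modules), while at level $2m-1$ the class $\overline{\phi_{1,2m-1}}$ \emph{survives}. So after adding $\nu$, either the $\rho_2$ relative Selmer group at level $2m-1$ is still trivial (done with $r_1=4m$, $r_2=2m-1$), or it acquires a single parasitic class --- but now \emph{both} relative Selmer groups at level $2m-1$ are one-dimensional, and a single further prime $w$ from Proposition~\ref{SelectionofPrimes} kills them simultaneously. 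The whole argument uses at most two primes. The idea you are missing is this deliberate preservation of $\overline{\phi_{1}}$ at an intermediate level, which converts the asymmetric $1$--$0$ situation into a symmetric $1$--$1$ situation where the two-representation machinery applies.
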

\begin{proof}

    As explained in the proof of Proposition \ref{SelectionofPrimes} using the facts that $\Gamma_F$ surjects onto $\pi_0(G_i)$ via $\overline{\rho_i}$, and $\mathfrak g^{\der}_i$ is a single $\pi_0(G_i)$-orbit of simple factors we can reduce to the case when $G_i$ is a simple and connected group for both $i=1,2$.

    By our assumption $\overline{H^1_{\mathcal L_{1,4m,S'}}(\Gamma_{F,S'},\rho_{1,4m}(\mathfrak g_1^{\der}))}$ will have dimension $\le 1$. If this relative Selmer group is trivial we are done by taking $Q = \emptyset, r_1 = 4m$, and $r_2 = m$. Thus, we can assume that it has dimension $1$. Let $\overline{\phi_{1,4m}}$ be a generator of it. By Lemma \ref{BalancedRelativeSelmer}, $\overline{H^1_{\mathcal L_{1,4m,S'}^\perp}(\Gamma_{F,S'},\rho_{1,4m}(\mathfrak g_1^{\der})^*)}$ will also be one-dimensional. Label a generator of it by $\overline{\psi_{1,4m}}$. Then, for $m \le r \le 4m$ we have that  $\overline{H^1_{\mathcal L_{1,r,S'}}(\Gamma_{F,S'},\rho_{1,r}(\mathfrak g_1^{\der}))}$  and $\overline{H^1_{\mathcal L_{1,r,S'}^\perp}(\Gamma_{F,S'},\rho_{1,r}(\mathfrak g_1^{\der})^*)}$ will both one-dimensional and generated by $\overline{\phi_{1,r}}$ and $\overline{\psi_{1,r}}$, respectively, where $\phi_{1,r} \coloneqq \phi_{1,4m} \pmod{\varpi^r}$ and $\psi_{1,r} \coloneqq \psi_{1,4m} \pmod{\varpi^r}$.
    
    As before Lemma \ref{VanishingofH1FN*} will tell us that $\restr{\overline{\psi_{1,2m}}}{\Gamma_{F_{4m}^*}}$ is non-zero, so we can find $\gamma_1 \in \Gamma_{F_{4m}^*}$ such that $\overline{\psi_{1,2m}}(\gamma_1) \neq 0$. Also, using Lemma \ref{H^1ZeroMap} we can find $\gamma_2 \in \Gamma_{F_{4m}^*}$ such that $\phi_{1,2m}(\gamma_2) \neq 0$. Moreover, since $\Gamma_{F_{4m}^*}$ acts trivially on $\rho_{1,2m}(\mathfrak g^{\der})$ we have that $\phi_{1,2m}(\gamma_2^a) = a \cdot \phi_{1,2m}(\gamma_2)$. By choosing a suitable value for $a$ we can assume that $\phi_{1,2m}(\gamma_2) \neq 0 \in \varpi^{2m-1}\mathfrak g_1^{\der}$. By Lemma \ref{rhoNProperties}, $\rho_{1,2m}(\mathfrak g_1^{\der})$ and $\overline{\rho_1}(\mathfrak g_1^{\der})^*$ do not share a common $\mathbb F_p[\Gamma_F]$-subquotient. Hence, $F_{4m}^*(\phi_{1,2m})$ and $F_{4m}^*(\overline{\psi_{1,2m}})$ are non-trivial linearly disjoint Galois extensions of $F_{4m}^*$. Therefore, we can find $\gamma \in \Gamma_{F_{4m}^*}$ which will satisfy the above properties simultaneously, i.e. $\overline{\psi_{1,2m}}(\gamma) \neq 0$ and $\phi_{1,2m}(\gamma) \neq 0 \in \varpi^{2m-1}\mathfrak g_1^{\der}$. We consider its action on $F_{2m}^*(\phi_{1,2m},\overline{\psi_{1,2m}})$, which will yield an element in $\Gal(F_{2m}^*(\phi_{1,2m},\overline{\psi_{1,2m}})/L)$, where $L \coloneqq F_{2m}^*(\phi_{1,2m},\overline{\psi_{1,2m}}) \cap F_{4m}^*$. We can use $\gamma$ to define a Chebotarev condition in that extension, which will produce trivial primes $\nu$ such that $\phi_{1,2m}(\sigma_\nu)$ is a non-zero element of $\varpi^{2m-1}\mathfrak g^{\der}_1$ and $\overline{\psi_{1,2m}}(\sigma_\nu) \neq 0$.

    Now, as in Proposition \ref{SelectionofPrimes}, relying on Lemma \ref{TorusandRootLemma} for $p \gg_{G_1} 0$ we can find a split maximal torus $T_1$ of $G_1$ and $\alpha_1 \in \Phi(G^0_1,T_1)$ such that $\phi_{1,2m}(\gamma) \notin \ker(\restr{\alpha_1}{\mathfrak t_1}) \oplus\bigoplus (\mathfrak g_1)_\beta$ and $\overline{\psi_{1,2m}}(\gamma) \in (\mathfrak g_1)_\alpha^\perp$. Then we obtain a subset of $Q_{n,4m}$ which will satisfy the properties of Definition \ref{PrimesQ_n} with respect to this specific choice of $T_1$ and $\alpha_1$. This subset will be given by a Chebotarev condition in $F_n^*/F_{4m}^*$. To glue these two Chebotarev conditions it is enough to show that they agree on the intersection $F_n^* \cap F_{2m}^*(\phi_{1,2m},\overline{\psi_{1,2m}})$. As $F_{2m}^*(\phi_{1,2m},\overline{\psi_{1,2m}})$ is the fixed field of cocycles the intersection will be an abelian extension of $F_{2m}^*$, with a Galois group being an abelian quotient of $\Gal(F_n^*/F_{2m}^*)$. For $p \gg_{G_1} 0$ we have that $\mathfrak g^{\der} \otimes_{\mathcal O} k$ is a simple $\mathbb F_p[G(k)]$-modules, so by Lemma \ref{AbelianizationLemma} the abelianization of this group is $\Gal(F_{4m}^*/F_{2m}^*)$, which means that the intersection is contained in $F_{4m}^*$. Thus, $F_n^* \cap F_{2m}(\phi_{1,2m},\overline{\phi_{1,2m}}) = F_{4m}^* \cap F_{2m}(\phi_{1,2m},\overline{\phi_{1,2m}}) = L$. As both Chebotarev conditions are trivial on $L$ they are compatible. Therefore, we can find a positive density set of primes $\nu \in Q_{n,4m}$ such that $\overline{\psi_{1,2m}}(\sigma_\nu) \notin (\mathfrak g_1)_\alpha^\perp$, $\phi_{1,2m}(\sigma_\nu) \notin \ker(\restr{\alpha_1}{\mathfrak t_1}) \oplus \bigoplus (\mathfrak g_1)_\beta$, and $\phi_{1,2m-1}(\sigma_\nu) = 0$. By \cite[Lemma $6.7$]{FKP21} this translates to $\restr{\overline{\psi_{1,2m}}}{\Gamma_{F_\nu}}\notin L_{1,1,\nu}^{\alpha_1,\perp}$, $\restr{\phi_{1,2m}}{\Gamma_{F_\nu}} \notin L_{1,2m,\nu}^{\alpha_1}$, and $\restr{\phi_{1,2m-1}}{\Gamma_{F_\nu}} \in L_{1,2m-1,\nu}^{\alpha_1}$.

    Choose such a prime $\nu$ and set $L_{1,r,\nu}' = L_{1,r,\nu}^{\alpha_1} + L_{1,r,\nu}^{\mathrm{unr}}$. The condition on $\overline{\psi_{1,2m}}$ tells us that $\restr{\overline{\psi_{1,2m}}}{\Gamma_{F_\nu}}$ will have a non-zero image in $L_{1,1,\nu}^{\mathrm{unr}}/L_{1,1,\nu}^{'\perp}$, which by \cite[Lemma $6.7$]{FKP21} is isomorphic to $\mathcal O/\varpi$. We conclude that $\restr{\psi_{1,2m}}{\Gamma_{F_\nu}}$ will generate $L_{1,2m,\nu}^{\mathrm{unr}}/L_{1,2m,\nu}^{'\perp}$, which is isomorphic to $\mathcal O/\varpi^{2m}$, as it has a non-zero image under restriction map $\mathcal O/\varpi^{2m} \to \mathcal O/\varpi$. Therefore, the restriction to $\Gamma_{F_\nu}$ induces an isomorphism
    
    $$H^1_{\mathcal L_{1,2m,S'}^\perp}(\Gamma_{F,S'},\rho_{1,2m}(\mathfrak g_1^{\der})^*)/H^1_{\mathcal L_{1,2m,S'}^\perp \cup L_{1,2m,\nu}^{'\perp}}(\Gamma_{F,S'},\rho_{1,2m}(\mathfrak g_1^{\der})^*) \simeq L_{1,2m,\nu}^{\mathrm{unr}}/L_{1,2m,\nu}^{'\perp}$$

    \vspace{2 mm}

    We apply the Greenberg-Wiles formula once with local conditions $\mathcal L_{1,2m,S'} \cup L_{1,2m,\nu}'$ and once with $\mathcal L_{1,2m,S'}$. Dividing the two equations we get

    $$\frac{\left|H^1_{\mathcal L_{1,2m,S'} \cup L_{1,2m,\nu}'}(\Gamma_{F,S' \cup \nu},\rho_{1,2m}(\mathfrak g_1^{\der}))\right|}{\left|H^1_{\mathcal L_{1,2m,S'}}(\Gamma_{F,S},\rho_{1,2m}(\mathfrak g_1^{\der}))\right|} = \frac{\left| H^1_{\mathcal L_{1,2m,S'}^\perp \cup L_{1,2m,\nu}^{'\perp}}(\Gamma_{F,S'},\rho_{1,2m}(\mathfrak g_1^{\der})^*)\right|}{\left| H^1_{\mathcal L_{1,2m,S'}^\perp}(\Gamma_{F,S'},\rho_{1,2m}(\mathfrak g_1^{\der})^*) \right|} \cdot \frac{\left|L_{1,2m,\nu}'\right|}{\left|L_{1,2m,\nu}^{\mathrm{unr}}\right|}$$

    \vspace{2 mm}

    By the isomorphism the right-hand side becomes $1$. This implies the inclusion

    $$H^1_{\mathcal L_{1,2m,S' \cup \nu}}(\Gamma_{F,S'\cup\nu},\rho_{1,2m}(\mathfrak g_1^{\der})) \subseteq H^1_{\mathcal L_{1,2m,S'} \cup L_{1,2m,\nu}'}(\Gamma_{F,S' \cup \nu},\rho_{1,2m}(\mathfrak g_1^{\der})) = H^1_{\mathcal L_{1,2m,S'}}(\Gamma_{F,S},\rho_{1,2m}(\mathfrak g_1^{\der}))$$

    \vspace{2 mm}

    In summary, the condition $\restr{\overline{\psi_{1,2m}}}{\Gamma_{F_\nu}} \notin L_{1,1,\nu}^{\alpha_1,\perp}$ implies that allowing extra ramification at $\nu$ will not increase the size of the $2m$-th Selmer group attached to $\rho_{1,2m}$. Since $\overline{\psi_{1,2m}} = \overline{\psi_{1,2m-1}}$ we get an analogous inclusion on the level of $(2m-1)$-th relative Selmer group. Reducing modulo $\varpi$ this inclusion yields $\overline{H^1_{\mathcal L_{1,2m-1,S' \cup \nu}}(\Gamma_{F,S'\cup\nu},\rho_{1,2m-1}(\mathfrak g_1^{\der}))} \subseteq \langle \overline{\phi_{1,2m-1}} \rangle$. This is an equality, since $H^1_{\mathcal L_{1,2m-1,S' \cup \nu}}(\Gamma_{F,S'\cup\nu},\rho_{1,2m-1}(\mathfrak g_1^{\der}))$ contains $\phi_{1,2m-1}$ by our choice of $\nu$. 
    
    On the other side, we claim that $\overline{H^1_{\mathcal L_{1,4m,S' \cup \nu}}(\Gamma_{F,S'\cup\nu},\rho_{1,4m}(\mathfrak g_1^{\der}))} = 0$. Suppose that this isn't the case. Then, from the analogous inclusion of $4m$-th relative Selmer group we get that this space has dimension $1$. In this case $\overline{H^1_{\mathcal L_{1,2m,S' \cup \nu}}(\Gamma_{F,S'\cup\nu},\rho_{1,2m}(\mathfrak g_1^{\der}))}$ must be non-zero, so by the same argument it has dimension $1$. Then, by \cite[Lemma $4.14$]{Nik24} the $(4m,2m)$-relative Selmer group $H^1_{(4m,2m)}(\Gamma_{F,S' \cup \nu},\rho_{1,4m}(\mathfrak g_1^{\der}))$ is a free $\mathcal O/\varpi^{2m}$-module of rank $1$. By our initial assumption $\overline{H^1_{\mathcal L_{1,4m,S'}}(\Gamma_{F,S'},\rho_{1,4m}(\mathfrak g_1^{\der}))} = \overline{H^1_{\mathcal L_{1,2m,S'}}(\Gamma_{F,S'},\rho_{1,2m}(\mathfrak g_1^{\der}))} $ with dimension $1$, so again by \cite[Lemma $4.14$]{Nik24} we have that $H^1_{(4m,2m)}(\Gamma_{F,S'},\rho_{1,4m}(\mathfrak g_1^{\der}))$ is a free $\mathcal O/\varpi^{2m}$-module of rank $1$. The inclusions above yield $H^1_{(4m,2m)}(\Gamma_{F,S' \cup \nu},\rho_{1,4m}(\mathfrak g_1^{\der})) \subseteq H^1_{(4m,2m)}(\Gamma_{F,S'},\rho_{1,4m}(\mathfrak g_1^{\der}))$, which is an equality as both are free $\mathcal O/\varpi^{2m}$-modules of rank $1$. But, this is a contradiction as $\phi_{1,2m}$ is an element of $H^1_{(4m,2m)}(\Gamma_{F,S'},\rho_{1,4m}(\mathfrak g_1^{\der}))$, but not of $H^1_{(4m,2m)}(\Gamma_{F,S' \cup \nu},\rho_{1,4m}(\mathfrak g_1^{\der}))$ based on our choice of prime $\nu$.

    As previously, we can't fully control the size of the relative Selmer group attached to $\rho_{2,r}$ after allowing extra ramification at $\nu$. However, by Lemma \ref{DimensionofRelSelmer}, $\overline{H^1_{\mathcal L_{2,2m-1,S' \cup \nu}}(\Gamma_{F,S' \cup \nu},\rho_{2,2m-1}(\mathfrak g_2^{\der}))}$ has dimension either $0$ or $1$. If the dimension is $0$ we are done by taking $Q = \{\nu\}$, $r_1 = 4m$, and $r_2 = 2m-1$. Thus, we assume this relative Selmer group has dimension $1$ and is being generated by $\overline{\phi_{2,2m-1}}$. This assumption implies that the $m$-th relative Selmer group $\overline{H^1_{\mathcal L_{2,m,S' \cup \nu}}(\Gamma_{F,S' \cup \nu},\rho_{2,m}(\mathfrak g_2^{\der}))}$ is non-trivial and in particular one-dimensional. As before, this implies that $H_{(2m-1,m)}^1(\Gamma_{F,S' \cup \nu},\rho_{2,2m-1}(\mathfrak g_2^{\der}))$ is a free $\mathcal O/\varpi^m$-module of rank $1$, which is generated by $\phi_{2,m}$. On the other hand, by our choice of $\nu$ we know that $\overline{H^1_{\mathcal L_{1,2m-1,S' \cup \nu}}(\Gamma_{F,S'\cup\nu},\rho_{1,2m-1}(\mathfrak g_1^{\der}))}$ is one-dimensional and generated by $\overline{\phi_{1,2m-1}}$. Additionally, $\overline{H^1_{\mathcal L_{1,m,S' \cup \nu}}(\Gamma_{F,S'\cup\nu},\rho_{1,m}(\mathfrak g_1^{\der}))}$ has dimension $1$, so again $H^1_{(2m-1,m)}(\Gamma_{F,S' \cup \nu},\rho_{1,2m-1}(\mathfrak g_1^{\der}))$ is a free $\mathcal O/\varpi^m$-module of rank $1$ with generator $\phi_{1,m}$.

    By Proposition \ref{SelectionofPrimes} we can choose a prime $w \in Q_{n,m}$ such that $\restr{\phi_{i,m}}{\Gamma_{F_w}} \notin L_{i,m,w}^{\alpha_i}$ and $\restr{\overline{\psi_{i,m}}}{\Gamma_{F_w}} \notin L_{i,1,w}^{\alpha_i,\perp}$ for both $i=1,2$, where $\psi_{i,m}$ are the generators of the $m$-th relative dual Selmer groups. Then, after allowing ramification at $w$ the $(2m-1)$-th relative Selmer group attached to both $\rho_{1,2m-1}$ and $\rho_{2,2m-1}$ vanish. The proof of this claim follows by arguing as in our selection of the prime $\nu$. The conditions on the dual cocycles $\overline{\psi_{i,m}}$ will tell us that after allowing extra ramification at $w$ the $(2m-1)$-th relative Selmer group doesn't increase in size. Therefore, $\overline{H^1_{\mathcal L_{i,2m-1,S'\cup\{\nu,w\}}}(\Gamma_{F,S' \cup \{\nu,w\}},\rho_{i,2m-1}(\mathfrak g_i^{\der}))}$ has dimension $0$ or $1$. If it's $0$ we are done. If it has dimension $1$, then the same is true for the $m$-th relative Selmer group and therefore $H^1_{(2m-1,m)}(\Gamma_{F,S' \cup \{\nu,w\}},\rho_{i,2m-1}(\mathfrak g_i^{\der}))$ is a free $\mathcal O/\varpi^m$-module of rank $1$. In particular, it will be equal to $H^1_{(2m-1,m)}(\Gamma_{F,S' \cup \nu},\rho_{i,2m-1}(\mathfrak g_i^{\der}))$, which is a contradiction as $\phi_{i,m}$ is an element of the latter, but not the former. Hence, we are done by taking $Q = \{\nu,w\}$ and $r_1=r_2 = 2m-1$.
    
\end{proof}

Summarizing, we have annihilated relative Selmer group, and consequently relative dual Selmer groups attached to $\rho_{1,r_1}$ and $\rho_{2,r_2}$ by allowing ramification at the same set of primes $Q$. We can then prove the main result of this section

\begin{thm}
\label{padicLift}

    There exist $p$-adic geometric lifts $\rho_i:\Gamma_{F,S' \cup Q} \to G_i(\mathcal O)$ with multiplier $\mu_i$ of $\rho_{i,n}$, and hence of $\overline{\rho_i}$ such that for each $\nu \in Q$ we have that $\restr{\rho_i}{\Gamma_{F_\nu}} \in \Lift_{\restr{\overline{\rho_i}}{\Gamma_{F_\nu}}}^{\mu_i,\alpha_{i,\nu}}(\mathcal O)$ for $i=1,2$.

\end{thm}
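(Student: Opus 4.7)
The plan is to produce $\rho_i$ as the inverse limit of compatible modulo $\varpi^r$ lifts, bootstrapping from $\rho_{i,r_i}$ by an induction on $r \ge r_i$ that at each stage preserves the vanishing of the relevant relative Selmer group. For $i=1,2$ I will show the following inductive claim: given a lift $\rho_{i,r}:\Gamma_{F,S' \cup Q} \to G_i(\mathcal O/\varpi^r)$ with multiplier $\mu_i$ such that $\restr{\rho_{i,r}}{\Gamma_{F_\nu}} \in \Lift_{\restr{\overline{\rho_i}}{\Gamma_{F_\nu}}}^{\mu_i,\alpha_{i,\nu}}(\mathcal O/\varpi^r)$ for each $\nu \in Q$ (and matching the chosen local target at $\nu \in S$), and such that the $r$-th relative Selmer group $\overline{H^1_{\mathcal L_{i,r,S'\cup Q}}(\Gamma_{F,S'\cup Q},\rho_{i,r}(\mathfrak g_i^{\der}))}$ vanishes, there is a lift $\rho_{i,r+1}$ of $\rho_{i,r}$ with the same properties at level $r+1$. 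The theorem then follows by taking the inverse limit, which is well-defined and continuous, and is geometric at primes over $p$ because the local conditions $L_{i,r,\nu}$ were defined using fixed de Rham $p$-adic lifts at the primes dividing $p$.

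For the inductive step, I will first produce any multiplier $\mu_i$ lift $\rho'_{i,r+1}:\Gamma_{F,S' \cup Q} \to G_i(\mathcal O/\varpi^{r+1})$ of $\rho_{i,r}$: such a lift exists because, by the global duality sequence, the obstruction to global lifting lives in $\Sh^2_{S' \cup Q}(\Gamma_{F,S' \cup Q},\overline{\rho_i}(\mathfrak g_i^{\der}))$, which is dual to $\Sh^1_{S' \cup Q}(\Gamma_{F,S' \cup Q},\overline{\rho_i}(\mathfrak g_i^{\der})^*) = 0$ (Lemma~\ref{AnnihilationofSh} and Remark~\ref{Enlargement}), together with the existence of local multiplier $\mu_i$ lifts $\lambda_{i,\nu}$ at every $\nu \in S' \cup Q$ (given at $S$ by Assumptions~\ref{GeneralAssumptions}, at the primes added during Theorem~\ref{LiftingModn} by property (1) of that theorem, and at $\nu \in Q$ by \cite[Lemma $3.5$]{FKP21}). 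Then $\restr{\rho'_{i,r+1}}{\Gamma_{F_\nu}}$ and $\lambda_{i,\nu}$ differ by a class $z_{i,\nu} \in H^1(\Gamma_{F_\nu},\rho_{i,r}(\mathfrak g_i^{\der}))$, and the collection $z_i = (z_{i,\nu})_{\nu \in S' \cup Q}$ defines an element in $\bigoplus_{\nu \in S' \cup Q} H^1(\Gamma_{F_\nu},\rho_{i,r}(\mathfrak g_i^{\der}))/L_{i,r,\nu}$. If $z_i$ lies in the image of the global restriction map, then adjusting $\rho'_{i,r+1}$ by $\exp(\varpi^r h_i)$ for a suitable global class $h_i \in H^1(\Gamma_{F,S' \cup Q},\rho_{i,r}(\mathfrak g_i^{\der}))$ with $\restr{h_i}{S' \cup Q} = z_i$ produces the desired $\rho_{i,r+1}$.

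The main obstacle is thus showing the class $z_i$ lies in the image of the global-to-local restriction. This is exactly the usual Poitou–Tate dual statement: by the nine-term Poitou–Tate sequence, $z_i$ lies in the image of restriction modulo $L_{i,r,\nu}$ if and only if it pairs to zero with every class in the dual $(r,1)$-relative Selmer group $\overline{H^1_{\mathcal L_{i,r,S' \cup Q}^\perp}(\Gamma_{F,S' \cup Q},\rho_{i,r}(\mathfrak g_i^{\der})^*)}$. But by Lemma~\ref{BalancedRelativeSelmer} the $r$-th relative dual Selmer group has the same $k$-dimension as the $r$-th relative Selmer group, which by the inductive hypothesis is zero; hence the pairing obstruction is vacuous. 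This is the key point where the vanishing we worked so hard to arrange in Proposition~\ref{1-0Annihilation} is used.

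Finally, I need to verify that the inductive hypothesis propagates, i.e.\ that $\overline{H^1_{\mathcal L_{i,r+1,S'\cup Q}}(\Gamma_{F,S' \cup Q},\rho_{i,r+1}(\mathfrak g_i^{\der}))}$ vanishes. Apply Lemma~\ref{SelmerExactSeq} with $a = r$, $b = 1$: the image of the $(r+1)$-th Selmer group in the $1$st Selmer group factors through the $r$-th, which by the inductive hypothesis vanishes modulo $\varpi$; hence the $(r+1)$-th relative Selmer group is zero, closing the induction. The analogous construction for $i=2$ runs in parallel and uses the same set of primes $Q$, so we end with compatible systems $(\rho_{i,r})_{r \ge r_i}$ for $i=1,2$, whose inverse limits are the desired $p$-adic lifts $\rho_i$. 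By construction $\restr{\rho_i}{\Gamma_{F_\nu}}$ lies in $\Lift^{\mu_i,\alpha_{i,\nu}}_{\restr{\overline{\rho_i}}{\Gamma_{F_\nu}}}(\mathcal O)$ at each $\nu \in Q$, and is strictly equivalent to the prescribed de Rham lift $\rho_{i,\nu}$ at each $\nu \in S$.
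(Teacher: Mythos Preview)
Your overall strategy is right in spirit, and it matches what the paper does by citing \cite[Theorem~6.11]{FKP21}: one bootstraps from the vanishing arranged in Proposition~\ref{1-0Annihilation} via an inductive lifting argument controlled by Poitou--Tate duality. However, there is a genuine gap in the execution, coming from a misidentification of the coefficient module and the step size.

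When you pass from $\rho_{i,r}$ to $\rho_{i,r+1}$, the kernel of $G_i(\mathcal O/\varpi^{r+1}) \to G_i(\mathcal O/\varpi^r)$ is identified via the exponential with $\mathfrak g_i^{\der} \otimes \varpi^r\mathcal O/\varpi^{r+1}\mathcal O \cong \overline{\rho_i}(\mathfrak g_i^{\der})$, not $\rho_{i,r}(\mathfrak g_i^{\der})$; the exponential isomorphism of \S2 requires a square-zero ideal. Hence the obstruction to adjusting $\rho'_{i,r+1}$ into the target local classes lives in the full mod~$\varpi$ dual Selmer group $H^1_{\mathcal L_{i,1,S'\cup Q}^\perp}(\Gamma_{F,S'\cup Q},\overline{\rho_i}(\mathfrak g_i^{\der})^*)$, and Proposition~\ref{1-0Annihilation} does \emph{not} assert that this vanishes. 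Your invocation of the ``dual $(r,1)$-relative Selmer group'' as the Poitou--Tate obstruction is therefore not justified. A second, related problem: your induction asks for the $r$-th relative Selmer group for unbounded $r$, but the local conditions $L_{i,r,\nu}$ are only set up in \S4.1 for $r \le n$.

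The approach the paper cites instead lifts in steps of the fixed size $r_i$: from mod $\varpi^{n'}$ to mod $\varpi^{n'+r_i}$ for $n' \ge n$. Since $n' \ge r_i$, the ideal $\varpi^{n'}\mathcal O/\varpi^{n'+r_i}\mathcal O$ squares to zero, so the adjustment classes genuinely live in $H^1(\Gamma_{F,S'\cup Q},\rho_{i,r_i}(\mathfrak g_i^{\der}))$, and the \emph{same} local conditions $\mathcal L_{i,r_i,S'\cup Q}$ are used at every stage (this is exactly why, in \S5.3, the paper records that the fibers $D_{i,n+r_i,\nu} \to D_{i,n,\nu}$ are torsors for $L_{i,r_i,\nu}$). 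The vanishing of $\overline{H^1_{\mathcal L_{i,r_i,S'\cup Q}^\perp}(\Gamma_{F,S'\cup Q},\rho_{i,r_i}(\mathfrak g_i^{\der})^*)}$ is then fed into the argument of \cite[Theorem~6.11]{FKP21} to kill the obstruction at each step; your propagation step (vanishing from $r$ to $r+1$) is not needed.
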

\begin{proof}

    Using the vanishing of the $\overline{H^1_{\mathcal L_{i,r_i,S' \cup Q}^\perp}(\Gamma_{F,S' \cup Q},\rho_{i,r_i}(\mathfrak g_i^{\der})^*)}$ we can produce the desired lifts $\rho_i$ by running the argument in the proof of \cite[Theorem $6.11$]{FKP21} for each of the representations $\overline{\rho_i}$ separately. 
    
\end{proof}

\section{\texorpdfstring{$\GL_2$}{GL\textunderscore 2}-representations with the same semi-simplification}

We now focus on the special case of two residual reducible $\GL_2$-valued representations which have the same semi-simplification. Let $\overline{\rho_1}:\Gamma_{F,S} \to \GL_2(k)$ and $\overline{\rho_2}:\Gamma_{F,S} \to \GL_2(k)$ be two such representations. As explained in \S 1 we can assume they are of the form

$$\overline{\rho_1} = \begin{pmatrix} \overline{\chi} & * \\ 0 & 1\end{pmatrix} \quad \quad \mathrm{and} \quad \quad \overline{\rho_2} = \begin{pmatrix} 1 & *' \\ 0 & \overline{\chi}\end{pmatrix}$$

\vspace{2 mm}

\noindent for some residual character $\overline{\chi}$, where the set $S$ contains all the primes dividing $p$, as well as all the primes at which either $\overline{\rho_i}$ is ramified. Moreover suppose that

\begin{itemize}
    \item $\overline{\chi}$ is an odd character, i.e. $\overline{\chi}(c_\nu) = -1$ for all complex conjugations $c_\nu$ and all infinite places $\nu$.
    \item $\overline{\chi} \neq \overline{\kappa}^{\pm 1}$.
    \item The extension classes $*$ and $*'$ are non-trivial. 
\end{itemize}

\begin{rem}
\label{SameShape}

    Our lifting method relies on Assumptions \ref{GeneralAssumptions}, which as we will see in \S $5.1$ and \S $5.2$ are satisfied whenever $\overline{\rho_1}$ and $\overline{\rho_2}$ have the same semi-simplification. So, even though our primary goal is to deal with the case when $\overline{\rho_1}$ and $\overline{\rho_2}$ are as mentioned above, the same method allows us to answer a version of Question \ref{WeakerQuestion} when both residual representations are of the shape $\begin{pmatrix}
        \overline{\chi} & * \\ 0 & 1
    \end{pmatrix}$. In fact, in this case finding local lifts of both representations which have the same behavior becomes an easier task, which we can use to prove a slightly stronger result. See Remark \ref{CrystallineLifts} for a more detailed discussion about this.
     
\end{rem}

In the case of $\GL_2$-representations the multiplier type $\overline{\mu_i}$ is exactly the determinant of $\overline{\rho_i}$, which in both cases is equal to $\overline{\chi}$. Therefore, fixing a lift $\mu = \mu_1 = \mu_2$ amounts to fixing the determinant of the $p$-adic lifts. We can take $\mu = \kappa^{r-1}\chi_0$ for some integer $r \ge 2$ and a finite-order character $\chi_0$. This is indeed possible by taking $\mu = \kappa^{r-1}[\overline{\chi}\overline{\kappa}^{1-r}]$, where the notation $[\overline{\psi}]$ stands for the Teichm\"uller lift of a residual character $\overline{\psi}:\Gamma_F \to k^\times$. The choice of a common lift $\mu$ of $\det \overline{\rho_1}$ and $\det \overline{\rho_2}$ will ensure that the newforms attached to the $p$-adic lifts $\rho_1$ and $\rho_2$ will have the same Neben character. 

We remark that $* = c_1$ and $*' = \overline{\chi}c_2$ for some cocycles $c_1 \in Z^1(\Gamma_{F,S},k(\overline{\chi}))$ and $c_2 \in Z^1(\Gamma_{F,S},k(\overline{\chi}^{-1}))$. Our assumption that the extension classes $*$ and $*'$ are non-trivial is equivalent to the $1$-cocycles $c_1$ and $c_2$ not being $1$-coboundaries in the respective cohomology groups. Our first goal is to apply the results of \S $3$ and \S $4$ in this case. In order to do this we need to verify that $\overline{\rho_1}$ and $\overline{\rho_2}$ satisfy Assumptions \ref{GeneralAssumptions}.

\subsection{Verification of assumptions}

For our purposes we can't just take any $p$-adic local lifts at primes in $S$ that will satisfy the fourth bullet point of Assumptions \ref{GeneralAssumptions}. We will impose some stronger conditions on them and we will show how to construct the desired local lifts in the next subsection. We now verify that $\overline{\rho_1}$ and $\overline{\rho_2}$ satisfy the rest of the assumptions. Most of these computations are based on \cite[Lemma $7.1$]{FKP22}.

\begin{lem}
    \label{Assumption1}

    $H^1(\Gal(K/F),\overline{\rho_i}(\mathfrak g^{\der})^*) = 0$ for $i=1,2$, where $K = F(\overline{\rho_1},\overline{\rho_2},\mu_p)$.

\end{lem}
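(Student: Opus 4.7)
The plan is to reduce to a cohomology-of-character calculation via dévissage. Using the trace form pairing on $\mathfrak{sl}_2$, I would first identify $\overline{\rho_i}(\mathfrak{g}^{\der})^*$ with $\overline{\rho_i}(\mathfrak{g}^{\der})(1)$. Since both $\overline{\rho_1}$ and $\overline{\rho_2}$ are upper triangular with diagonal characters $\{\overline{\chi},1\}$, the standard three-step filtration $0 \subset k\cdot e \subset k\cdot e \oplus k\cdot h \subset \mathfrak{sl}_2$ is $\overline{\rho_i}$-stable, and a direct computation of the adjoint action shows its graded pieces are $k(\overline{\chi})$, $k$, and $k(\overline{\chi}^{-1})$ in both cases $i=1,2$. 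After the cyclotomic twist these become $k(\overline{\chi}\,\overline{\kappa})$, $k(\overline{\kappa})$, and $k(\overline{\chi}^{-1}\overline{\kappa})$, so running the long exact cohomology sequence twice reduces the lemma to showing $H^1(\Gal(K/F),k(\psi))=0$ for each $\psi \in \{\overline{\chi}\,\overline{\kappa},\ \overline{\kappa},\ \overline{\chi}^{-1}\overline{\kappa}\}$.

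For each such $\psi$ I would set $K_0 := F(\overline{\chi},\mu_p)$, so that $\Gal(K_0/F)$ has order coprime to $p$ (its order divides $|k^\times|\cdot(p-1)$) and $\psi$ factors through this quotient. Since $\Gal(K/K_0)$ acts trivially on $k(\psi)$, the inflation-restriction sequence for the normal subgroup $\Gal(K/K_0)\triangleleft \Gal(K/F)$ reads
$$0 \longrightarrow H^1(\Gal(K_0/F),k(\psi)) \longrightarrow H^1(\Gal(K/F),k(\psi)) \longrightarrow \Hom_{\Gal(K_0/F)}(\Gal(K/K_0),k(\psi)).$$
The leftmost term vanishes by coprime order, so it suffices to show that the $\Hom$ on the right vanishes.

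The key structural input is that $\overline{\rho_i}|_{\Gamma_{K_0}}$ takes values in the unipotent radical of the upper Borel. A direct check shows that the conjugation action of $\Gal(K_0/F)$ on the unipotent image of $\overline{\rho_1}$ (resp. $\overline{\rho_2}$) is multiplication by $\overline{\chi}$ (resp. $\overline{\chi}^{-1}$), so together $\overline{\rho_1}$ and $\overline{\rho_2}$ embed $\Gal(K/K_0)$ as an $\mathbb{F}_p[\Gal(K_0/F)]$-submodule of $k(\overline{\chi})\oplus k(\overline{\chi}^{-1})$. By semisimplicity (order coprime to $p$), a non-zero equivariant map to $k(\psi)$ would force $\psi$ to lie in the $\Gal(k/\mathbb{F}_p)$-orbit of $\overline{\chi}^{\pm 1}$, i.e. $\psi = \overline{\chi}^{\pm p^i}$ for some $i$. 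For $\psi = \overline{\kappa}$ this is ruled out by the standing hypothesis $\overline{\chi}\neq \overline{\kappa}^{\pm 1}$ (using that raising to the $p^i$-th power is an automorphism of $k^\times$). For $\psi = \overline{\chi}^{\pm 1}\overline{\kappa}$, any such identity rearranges to $\overline{\kappa} = \overline{\chi}^m$ with $m$ a nonzero even integer (of the shape $\pm p^i \pm 1$ with $p$ odd), which contradicts the parity $\overline{\kappa}(c) = -1$ against $\overline{\chi}^m(c) = 1$ at a complex conjugation $c$. The only mild technicality I anticipate is tracking Frobenius twists arising when $k$ strictly contains $\mathbb{F}_p$, but the parity argument absorbs all cases uniformly, so there is no essential obstacle beyond the standard inflation-restriction technique.
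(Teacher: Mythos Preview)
Your proposal is correct and follows essentially the same route as the paper. Both arguments identify $K_0 = F(\overline{\chi},\mu_p)$, show $\Gal(K/K_0)$ is the (normal) Sylow $p$-subgroup embedding $\Gal(K_0/F)$-equivariantly into $k(\overline{\chi})\oplus k(\overline{\chi}^{-1})$, and reduce via inflation--restriction plus the three-step filtration on $\mathfrak{sl}_2$ to the vanishing of $\Hom_{\Gal(K_0/F)}(\Gal(K/K_0),k(\psi))$ for $\psi\in\{\overline{\chi}\,\overline{\kappa},\overline{\kappa},\overline{\chi}^{-1}\overline{\kappa}\}$, which is then ruled out by the oddness of $\overline{\chi}$ and the hypothesis $\overline{\chi}\neq\overline{\kappa}^{\pm1}$. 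The only cosmetic difference is that you d\'evisse first and then apply inflation--restriction to each character, whereas the paper applies inflation--restriction to the whole module and then d\'evisses inside $H^1(P,-)^{\Gal(K_0/F)}$ (invoking exactness of taking invariants under a prime-to-$p$ group); your order is slightly more economical, and your parity bookkeeping $\overline{\kappa}=\overline{\chi}^{\pm p^i\pm1}$ packages the paper's case analysis uniformly.
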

\begin{proof}

    Let $P$ be a Sylow $p$-subgroup of $\Gal(K/F)$. We claim that the fixed field $K^P$ is equal to $K_0 \coloneqq F(\overline{\chi},\mu_p)$. Let $\sigma \in P$. Since $F(\mu_p)/F$ is a normal subextension of $K/F$, any element of $\Gal(K/F)$ restricts to an element of $\Gal(F(\mu_p)/F)$. Since $[F(\mu_p):F]$ divides $p-1$ we have that $\restr{\sigma}{F(\mu_p)}$ has order dividing both $p$ and $p-1$, hence $\restr{\sigma}{F(\mu_p)} = 1$. From this $F(\mu_p) \subseteq K^P$. On the other hand, $\Gal(F(\overline{\chi})/F) \simeq \Gamma_F/\ker \overline{\chi} \simeq \im \overline{\chi}$. As $\overline{\chi}$ is a $k$-valued multiplicative character its image has size dividing $q-1$, where $q$ is the size of $k$ and hence a $p$-power. Thus, $F(\overline{\chi})/F$ is a normal extension of order coprime to $p$. So, as before $F(\overline{\chi}) \subseteq K^P$. Hence, $K_0 \subseteq K^P$.

    For the reverse inclusion, let  $H \subseteq \Gal(K/F)$ be the fixed subgroup of $K_0$. We've shown that $K^H = K_0 \subseteq K^P$ and hence $P \subseteq H$. It now suffices to show that $H$ is a $p$-group. Indeed, as $P$ is a Sylow $p$-subgroup by the maximality we must have $P = H$ and so $K^P = K_0$. For $\sigma \in H$ we have $\overline{\chi}(\sigma) = 1$ and therefore

    $$c_1(\sigma^2) = \overline{\chi}(\sigma) \cdot c_1(\sigma) + c_1(\sigma) = c_1(\sigma) + c_1(\sigma) = 2c_1(\sigma)$$

    \vspace{2 mm}

    Inductively, $c_1(\sigma^p) = pc_1(\sigma) = 0$, as $\mathrm{char}$ $k = p$. Analogously, $c_2(\sigma^p) = 0$. Hence, $\sigma^p \in \ker c_1, \ker c_2,\ker \overline{\chi}$ and from this $\sigma^p \in \ker \overline{\rho_1},\ker \overline{\rho_2}$. Moreover, $\sigma$ already acts trivially on $\mu_p$, so we conclude that $\sigma^p = 1$ in $\Gal(K/F)$. In particular, $\sigma$ has order dividing $p$, which implies that $H$ is a $p$-group, which is exactly what we wanted. We remark that this also shows that $P$ is the unique Sylow $p$-subgroup of $\Gal(K/F)$ and hence is normal.

    We now consider the Inflation-Restriction sequence:

    $$0 \longrightarrow H^1(\Gal(K_0/F),\overline{\rho_i}(\mathfrak g^{\der})^*) \longrightarrow H^1(\Gal(K/F),\overline{\rho_i}(\mathfrak g^{\der})^*) \longrightarrow H^1(P,\overline{\rho_i}(\mathfrak g^{\der})^*)^{\Gal(K_0/F)}$$

    \vspace{2 mm}

    As $\Gal(K_0/F) \simeq \Gal(K/F)/P$ and $p$ is a Sylow $p$-subgroup of $\Gal(K/F)$, the order of this Galois group is coprime to $p$. Therefore, the first term in the sequence vanishes. From this it is enough to show that $H^1(P,\overline{\rho_i}(\mathfrak g^{\der})^*)^{\Gal(K_0/F)} = 0$ for $i=1,2$. We now consider the following subspaces of $\mathfrak{sl}_2$:

    $$F_2 = \left \langle \begin{pmatrix} 0 & 1 \\ 0 & 0 \end{pmatrix} \right\rangle, \quad \quad F_1 = \left \langle \begin{pmatrix} 0 & 1 \\ 0 & 0 \end{pmatrix}, \begin{pmatrix} 1 & 0 \\ 0 & -1 \end{pmatrix} \right\rangle, \quad \quad F_0 = \mathfrak{g}^{\der}$$

    \vspace{2 mm}

    An easy computation shows that these are submodules of $\overline{\rho_1}(\mathfrak g^{\der})^*$ after identifying $\overline{\rho_1}(\mathfrak g^{\der})^*$ with $\overline{\rho_1}(\mathfrak g^{\der})(1)$ using the trace pairing. Moreover, $F_2 \simeq k(\overline{\chi}\overline{\kappa}), F_1/F_2 \simeq k(\overline{\kappa})$, and $F_0/F_1 \simeq k(\overline{\chi}^{-1}\overline{\kappa})$ as $k[\Gamma_F]$-submodules of $\overline{\rho_1}(\mathfrak g^{\der})^*$. From the short exact sequence $0 \to k(\overline{\chi}\overline{\kappa}) \to F_1 \to k(\overline{\kappa}) \to 0$ of $k[\Gamma_F]$-modules we get an long exact sequence on cohomology

    $$ \cdots \longrightarrow H^1(P,k(\overline{\chi}\overline{\kappa})) \longrightarrow H^1(P,F_1) \longrightarrow H^1(P,k(\overline{\kappa})) \longrightarrow \cdots$$

    \vspace{2 mm}

    As $\Gamma_{K_0}$ acts trivially via both $\overline{\chi}$ and $\overline{\kappa}$ the $\Gamma_F$-action on these modules factors through $\Gal(K_0/F)$ and hence they are all $k[\Gal(K_0/F)]$-modules. As $\Gal(K_0/F)$ has order prime to $p$ by Lemma \ref{InvariantsLemma} taking $\Gal(K_0/F)$-invariants is an exact functor. Therefore, to show that $H^1(P,F_1)^{\Gal(K_0/F)} = 0$ it is enough to show that $H^1(P,k(\overline{\chi}\overline{\kappa}))^{\Gal(K_0/F)} = H^1(P,k(\overline{\kappa}))^{\Gal(K_0/F)} = 0$. Likewise, from the short exact sequence $0 \to F_1 \to \overline{\rho_i}(\mathfrak g^{\der})^* \to k(\overline{\chi}^{-1}\overline{\kappa}) \to 0$ of $k[\Gamma_F]$-modules we deduce that to get $H^1(P,\overline{\rho_1}(\mathfrak g^{\der})^*)^{\Gal(K_0/F)} = 0$ it is enough to show that $H^1(P,F_1)^{\Gal(K_0/F)} = H^1(P,k(\overline{\chi}^{-1}\overline{\kappa}))^{\Gal(K_0/F)} = 0$. Combining these two results it suffices to show that $H^1(P,k(\psi))^{\Gal(K_0/F)} = 0$ for $\psi = \overline{\chi}\overline{\kappa},\overline{\kappa},\overline{\chi}^{-1}\overline{\kappa}$. Moreover, as $P$ is the fixed group of $K_0$ it acts trivially via both $\overline{\chi}$ and $\overline{\kappa}$ and therefore $H^1(P,k(\psi))^{\Gal(K_0/F)} = \Hom_{\Gal(K_0/F)}(P,k(\psi))$. An analogous computation shows us that $F_2 \simeq k(\overline{\chi}^{-1}\overline{\kappa}), F_1/F_2 \simeq k(\overline{\kappa})$, and $F_0/F_1 \simeq k(\overline{\chi}\overline{\kappa})$ as $k[\Gamma_F]$-submodules of $\overline{\rho_2}(\mathfrak g^{\der})^*$. So, similarly $H^1(P,\overline{\rho_2}(\mathfrak g^{\der})^*)^{\Gal(K_0/F)} = 0$ also reduces to showing that $\Hom_{\Gal(K_0/F)}(P,k(\psi)) = 0$ for $\psi = \overline{\chi}\overline{\kappa},\overline{\kappa},\overline{\chi}^{-1}\overline{\kappa}$. 

    In order to show that these homomorphism groups are trivial we will first show that $P$ as an $\mathbb F_p[\Gamma_F]$-module, where the $\Gamma_F$-action is given by conjugation, is a direct sum of copies of $k_0(\overline{\chi})$ and $k_0(\overline{\chi}^{-1})$, where $k_0 \subseteq k$ is the finite extension of $\mathbb F_p$ generated by the values of $\overline{\chi}$. Analogous computations to the ones we did at the beginning of the proof tell us that each of $\Gal(K_1/F)$ and $\Gal(K_2/F)$ has a unique Sylow $p$-subgroup, labeled by $P_1$ and $P_2$, respectively. Furthermore, the fixed field of $P_i$ in $\Gal(K_i/F)$ is also $K_0$. Since $\Gamma_{K_0}$ lies in $\ker \overline{\chi}$ we have that $\restr{c_1}{\Gamma_{K_0}}:\Gamma_{K_0} \to k$ is a group homomorphism. By the definition of $K_1$ we have $\ker(\restr{c_1}{\Gamma_{K_0}}) = \Gamma_{K_1}$. Therefore, we obtain an injection

    $$P_1 = \Gal(K_1/K_0) \simeq \Gamma_{K_0}/\Gamma_{K_1} \xhookrightarrow{c_1} k$$

    \vspace{2 mm}

    We claim that this is an $\mathbb F_p[\Gamma_F]$-module homomorphism, where the $\Gamma_F$ acts by conjugation on $P_1$ and by the character $\overline{\chi}$ on $k$. Since $\overline{\chi}$ is trivial on $P_1$ as above we have that $c_1(\sigma^2) = 2c_1(\sigma)$ and inductively $c_1(\sigma^a) = ac_1(\sigma)$ for all $a \in \mathbb F_p$ and $\sigma \in P_1$. Therefore, $P_1 \hookrightarrow k$ is an $\mathbb F_p$-vector space homomorphism. Now, for $\sigma \in P_1$ and $\tau \in \Gamma_F$ we have

    \begin{align*}
        c_1(\tau \cdot \sigma) &= c_1(\tau\sigma\tau^{-1}) = c_1(\tau) + \overline{\chi}(\tau)c_1(\sigma\tau^{-1}) = c_1(\tau) + \overline{\chi}(\tau)(c_1(\sigma) + \overline{\chi}(\sigma)c_1(\tau^{-1})) \\
        & =c_1(\tau) + \chi(\tau)c_1(\sigma) + \chi(\tau)c_1(\tau^{-1}) = \overline{\chi}(\tau)c_1(\sigma) = \tau \cdot c_1(\sigma)
    \end{align*}

    \vspace{2 mm}

    \noindent where we used that $\overline{\chi}(\sigma) = 1$ and $0 = c_1(1) = c_1(\tau\tau^{-1}) = c_1(\tau) + \overline{\chi}(\tau)c_1(\tau^{-1})$. Therefore, $c_1$ is $\Gamma_F$-equivariant and hence an $\mathbb F_p[\Gamma_F]$-module homomorphism. Similarly, $\restr{c_2}{\Gamma_{K_0}}: P_2 \hookrightarrow k$ is an $\mathbb F_p[\Gamma_F]$-module homomorphism. We then get an inclusion

    $$P \hookrightarrow P_1 \oplus P_2 \stackrel{c_1 \times c_2}{\longhookrightarrow} k(\overline{\chi}) \oplus k(\overline{\chi}^{-1})$$

    \vspace{2 mm}

    \noindent of $\mathbb F_p[\Gamma_F]$-modules. Since, $\overline{\chi}$ is valued in $k_0$ the $\mathbb F_p[\Gamma_F]$-submodule of $k(\overline{\chi})$ generated by a non-zero element of it is isomorphic to $k_0(\overline{\chi})$. On the other hand, the $\Gamma_F$-action via $\overline{\chi}$ factors through $\Gal(F(\overline{\chi})/F)$, which has order coprime to $p$. Thus, by Maschke's Theorem $k(\overline{\chi})$ is a semi-simple $\mathbb F_p[\Gamma_F]$-module and decomposes into a direct sum of copies of $k_0(\overline{\chi})$. Similarly, $k(\overline{\chi}^{-1})$ is a semi-simple $\mathbb F_p[\Gamma_F]$-module and decomposes into a direct sum of copies of $k_0(\overline{\chi}^{-1})$. Then, $P$ must also be a semi-simple $\mathbb F_p[\Gamma_F]$-module by virtue of being an $\mathbb F_p[\Gamma_F]$-submodule of the semi-simple $\mathbb F_p[\Gamma_F]$-module $k(\overline{\chi}) \oplus k(\overline{\chi}^{-1})$. Moreover, it must decompose as a direct sum of copies of $k_0(\overline{\chi})$ and $k_0(\overline{\chi}^{-1})$. 

    Finally, using this decomposition we get that $\Hom_{\mathbb F_p[\Gal(K_0/F)]}(P,k(\psi))$ decomposes as a direct sum of copies of $\Hom_{\mathbb F_p[\Gal(K_0/F)]}(k_0(\overline{\chi}^{\pm 1}),k(\psi))$, and we will show all these homomorphism groups are trivial. We consider two cases:

    \begin{enumerate}
        \item Suppose that $\psi = \overline{\chi}\overline{\kappa}, \overline{\chi}^{-1}\overline{\kappa}$. In this case, the values of $\psi$ will still generate $k_0$ and therefore each non-zero element of $k(\psi)$ will generate an $\mathbb F_p[\Gamma_F]$-modules isomorphic to $k_0(\psi)$. Since the action of $\psi$ factors through $\Gal(K_0/F)$, which has order coprime to $p$, as above $k(\psi)$ is a semi-simple $\mathbb F_p[\Gamma_F]$-module which decomposes as a direct sum of copies of $k_0(\psi)$. As $k_0(\overline{\chi}^{\pm 1})$ is an irreducible $\mathbb F_p[\Gamma_F]$-module by Schur's Lemma any non-zero element of $\Hom_{\mathbb F_p[\Gal(K_0/F)]}(k_0(\overline{\chi}^{\pm 1}),k(\psi))$ will give rise to an $\mathbb F_p[\Gamma_F]$-module isomorphism $k_0(\overline{\chi}^{\pm 1}) \simeq k_0(\psi)$.

        Tensoring with $k_0$ over $\mathbb F_p$ we get a $k_0[\Gamma_F]$-module isomorphism $k_0(\overline{\chi}^{\pm1}) \otimes_{\mathbb F_p} k_0 \simeq k_0(\psi) \otimes_{\mathbb F_p} k_0$. Now, $k_0(\overline{\chi}^{\pm 1}) \otimes_{\mathbb F_p} k_0 \simeq \oplus_{\alpha \in \Gal(k_0/\mathbb F_p)} k_0(\overline{\chi}^{\pm 1})^\alpha$. This follows from the standard algebra isomorphism given by $x \otimes a \to (\alpha(a)x)_\alpha$ with the $\Gamma_F$-action coming from the first coordinate and is still given by $\overline{\chi}^{\pm 1}$ on each summand, while multiplication by $k_0$ goes through the second coordinate, so on the $\alpha$-summand it is given by precomposition with $\alpha$. Moreover, we note that $k_0(\alpha^{-1} \circ \overline{\chi}^{\pm 1}) \simeq k_0(\overline{\chi}^{\pm 1})^\alpha$ as $k_0[\Gamma_F]$-modules via $\alpha$. We remark that $\alpha$ might act non-trivially on elements in $k_0$, but the fact that multiplication by $k_0$ in $k_0(\overline{\chi}^{\pm 1})^\alpha$ is given by precomposition by $\alpha$ will undo this action and hence this isomorphism will be $k_0$-linear. From the analogous decomposition of $k_0(\psi) \otimes_{\mathbb F_p} k_0$ we get that $k_0(\overline{\chi}^{\pm 1}) \simeq k_0(\alpha \circ \psi)$, as $k_0[\Gamma_F]$-modules. Hence, $\overline{\chi}^{\pm 1} = \alpha \circ \psi$ for some $\alpha \in \Gal(k_0/\mathbb F_p)$. Evaluating at a complex conjugation $c_\nu$ we get

        $$- 1 = \overline{\chi}^{\pm 1}(c_\nu) = \alpha(\psi(c_\nu)) = \alpha(-\overline{\kappa}(c_\nu)) = - \overline{\kappa}(c_\nu)$$

        \vspace{2 mm}

        \noindent where we used that $\overline{\chi}(c_\nu) = -1$ by our assumptions, and $\overline{\kappa}$ is valued in $\mathbb F_p^\times$, hence is fixed by $\alpha$. This would imply that $\overline{\kappa}(c_\nu) = 1$, i.e. complex conjugation fixes the $p$-th roots of unity, which is obviously false for $p\ge 3$, as $F$ is a totally real number field.

        \item Let $\psi = \overline{\kappa}$. Since $\overline{\kappa}$ is valued in $\mathbb F_p$, in a similar manner as above we get that $k(\psi)$ is a semi-simple $\mathbb F_p[\Gamma_F]$-module which decomposes as a direct sum of copies of $\mathbb F_p(\psi)$. Then, any non-zero element of $\Hom_{\mathbb F_p[\Gal(K_0/F)]}(k_0(\overline{\chi}^{\pm 1}),k(\psi))$ will yield a $\mathbb F_p[\Gamma_F]$-module isomorphism $k_0(\overline{\chi}^{\pm 1}) \simeq \mathbb F_p(\psi)$. This immediately forces $k_0 = \mathbb F_p$ and $\overline{\chi}^{\pm 1} = \psi = \overline{\kappa}$, which is ruled out by our assumptions on $\overline{\chi}$.

    \end{enumerate}
\end{proof}

\begin{lem}
\label{Assumption2}

    $\overline{\rho_i}(\mathfrak g^{\der})$ and $\overline{\rho_j}(\mathfrak g^{\der})^*$ have no common $\mathbb F_p[\Gamma_F]$-subquotient for $i,j=1,2$.
    
\end{lem}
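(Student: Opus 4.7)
The plan is to reduce the lemma to a statement about the Jordan--H\"older factors of the adjoint and coadjoint representations, and then perform a short case analysis in the spirit of the proof of Lemma \ref{Assumption1}. First, I record the filtrations. Exactly as in the computation at the end of the proof of Lemma \ref{Assumption1}, the trace pairing identifies $\overline{\rho_i}(\mathfrak g^{\der})^*$ with $\overline{\rho_i}(\mathfrak g^{\der})(1)$, so once one has a $k[\Gamma_F]$-filtration of $\overline{\rho_i}(\mathfrak g^{\der})^*$ with the graded pieces already written down in that proof, a twist by $\overline{\kappa}^{-1}$ gives a filtration of $\overline{\rho_i}(\mathfrak g^{\der})$. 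Concretely one finds that the graded pieces of $\overline{\rho_1}(\mathfrak g^{\der})$ and $\overline{\rho_2}(\mathfrak g^{\der})$ each lie in $\{k(\overline{\chi}),\,k,\,k(\overline{\chi}^{-1})\}$ (only the order along the filtration differs), while the graded pieces of $\overline{\rho_1}(\mathfrak g^{\der})^*$ and $\overline{\rho_2}(\mathfrak g^{\der})^*$ each lie in $\{k(\overline{\chi}\overline{\kappa}),\,k(\overline{\kappa}),\,k(\overline{\chi}^{-1}\overline{\kappa})\}$.

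Next, as in the previous lemma, each $k(\psi)$ appearing above has a $\Gamma_F$-action factoring through a finite group of order coprime to $p$ (the images of $\overline{\chi}$ and $\overline{\kappa}$ have order dividing $|k|-1$ and $p-1$ respectively). By Maschke's theorem each $k(\psi)$ is a semisimple $\mathbb F_p[\Gamma_F]$-module, and it decomposes into copies of the simple module $k_0(\psi)$, where $k_0\subseteq k$ is the subfield generated by the values of $\psi$. By Jordan--H\"older, any common $\mathbb F_p[\Gamma_F]$-subquotient of $\overline{\rho_i}(\mathfrak g^{\der})$ and $\overline{\rho_j}(\mathfrak g^{\der})^*$ would therefore produce an isomorphism $k_0(\psi_1)\simeq k_0(\psi_2)$ for some $\psi_1\in\{\overline{\chi},\,1,\,\overline{\chi}^{-1}\}$ and $\psi_2\in\{\overline{\chi}\overline{\kappa},\,\overline{\kappa},\,\overline{\chi}^{-1}\overline{\kappa}\}$.

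I then rule out all nine such isomorphisms by the tensor-up-and-compare trick used in the proof of Lemma \ref{Assumption1}. Tensoring $k_0(\psi_1)\simeq k_0(\psi_2)$ with $k_0$ over $\mathbb F_p$ and decomposing into Galois conjugates forces $\psi_1=\alpha\circ \psi_2$ for some $\alpha\in\Gal(k_0/\mathbb F_p)$. Now I evaluate at a complex conjugation $c_\nu$, using $\overline{\chi}(c_\nu)=-1$, $\overline{\kappa}(c_\nu)=-1$, and the fact that $\alpha$ fixes $\mathbb F_p$. If $\psi_2\in\{\overline{\chi}\overline{\kappa},\,\overline{\chi}^{-1}\overline{\kappa}\}$ then $\psi_2(c_\nu)=1$, which forces $\psi_1(c_\nu)=1$, hence $\psi_1=1$; but then $\alpha\circ\psi_2$ would be the trivial character while $\psi_2$ is non-trivial (its non-triviality is a consequence of $\overline{\chi}\neq\overline{\kappa}^{\pm1}$), a contradiction. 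The remaining case $\psi_2=\overline{\kappa}$ gives $\alpha\circ\overline{\kappa}=\overline{\kappa}$ (since $\overline{\kappa}$ is $\mathbb F_p$-valued), so $\psi_1=\overline{\kappa}$, forcing $\overline{\chi}=\overline{\kappa}^{\pm1}$ or $\overline{\kappa}=1$, both of which are excluded by the hypotheses.

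The argument contains no serious obstacle once the filtrations are in hand; the only point to be careful about is that isomorphisms of $\mathbb F_p[\Gamma_F]$-modules between the irreducible constituents $k_0(\psi_1)$ and $k_0(\psi_2)$ need not be $k_0$-linear, so the reduction to the identity $\psi_1=\alpha\circ\psi_2$ must be made via the tensor decomposition $k_0(\psi)\otimes_{\mathbb F_p}k_0\simeq \bigoplus_{\alpha\in\Gal(k_0/\mathbb F_p)} k_0(\alpha\circ\psi)$ exactly as in Lemma \ref{Assumption1}, rather than by comparing characters naively.
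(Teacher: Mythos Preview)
Your proof is correct and follows essentially the same approach as the paper: both identify the Jordan--H\"older factors of the adjoint and coadjoint representations as one-dimensional characters, pass from a common $\mathbb F_p[\Gamma_F]$-subquotient to a relation $\psi_1=\alpha\circ\psi_2$ via tensoring up and decomposing into Galois conjugates, and then rule out the resulting character identities using complex conjugation and the hypothesis $\overline{\chi}\neq\overline{\kappa}^{\pm1}$. The only cosmetic differences are that the paper tensors with $k$ (rather than with $k_0$) before passing to simple subquotients, and organizes the case analysis by $\psi_1$ rather than by $\psi_2$.
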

\begin{proof}

    Suppose that $V$ is a common $\mathbb F_p[\Gamma_F]$-subquotient of $\overline{\rho_i}(\mathfrak g^{\der})$ and $\overline{\rho_j}(\mathfrak g^{\der})^*$. Tensoring with $k$ over $\mathbb F_p$ we get that $V \otimes_{\mathbb F_p} k$ is a common $k[\Gamma_F]$-subquotient of $\overline{\rho_i}(\mathfrak g^{\der}) \otimes_{\mathbb F_p} k$ and $\overline{\rho_j}(\mathfrak g^{\der}) \otimes_{\mathbb F_p} k$. As in Lemma \ref{Assumption1} we get that $\overline{\rho_i}(\mathfrak g^{\der}) \otimes_{\mathbb F_p} k \simeq \oplus_{\alpha \in \Gal(k/\mathbb F_p)} \overline{\rho_i}(\mathfrak g^{\der})^\alpha$. By Jordan-H\"older theorem the simple $k[\Gamma_F]$-subquotients of $\overline{\rho_i}(\mathfrak g^{\der}) \otimes_{\mathbb F_p} k$ are the simple $k[\Gamma_F]$-subquotients of $\overline{\rho_i}(\mathfrak g^{\der})^\alpha$, as $\alpha$ ranges in $\Gal(k/\mathbb F_p)$. Similarly, the simple $k[\Gamma_F]$-subquotients of $\overline{\rho_j}(\mathfrak g^{\der})^* \otimes_{\mathbb F_p} k$ are the simple $k[\Gamma_F]$-subquotients of $(\overline{\rho_j}(\mathfrak g^{\der})^*)^\beta$ as $\beta$ varies in $\Gal(k/\mathbb F_p)$. Thus, by considering a simple $k[\Gamma_F]$-subquotient of $V \otimes_{\mathbb F_p} k$ we get that $\overline{\rho_i}(\mathfrak g^{\der})^\alpha$ and $(\overline{\rho_j}(\mathfrak g^{\der})^*)^\beta$ share a common simple $k[\Gamma_F]$-subquotient for some $\alpha,\beta \in \Gal(k/\mathbb F_p)$. As seen in Lemma \ref{Assumption1} the simple $k[\Gamma_F]$-subquotients of $\overline{\rho_i}(\mathfrak g^{\der})^\alpha$ are $k(\psi)^\alpha \simeq k(\alpha^{-1} \circ \psi)$, where $\psi = \overline{\chi},\overline{\chi}^{-1}$, or the trivial character. Similarly, the simple $k[\Gamma_F]$-subquotients of $(\overline{\rho_j}(\mathfrak g^{\der})^*)^\beta$ are $k(\psi')^\beta \simeq k(\beta^{-1} \circ \psi')$, where $\psi' = \overline{\chi}\overline{\kappa},\overline{\chi}^{-1}\overline{\kappa}$, or $\overline{\kappa}$. The common simple $k[\Gamma_F]$-subquotient implies that $\psi = \alpha \circ \psi'$ for some $\alpha \in \Gal(k/\mathbb F_p)$. We rule out all the options on a case-to-case basis. 

    \begin{itemize}
        \item If $\psi$ is equal to the trivial character then we must have that $\psi'$ is also the trivial character. This implies $\overline{\kappa}$ is equal to $\overline{\chi},\overline{\chi}^{-1}$, or the trivial character. The first two options are explicitly ruled out by our assumptions, while the third one is clearly impossible. Thus, $\psi$ can't be the trivial character. 

        \item Now let $\psi = \overline{\chi}$. If $\psi' = \overline{\kappa}$, then it will be an $\mathbb F_p^\times$-valued character, which will imply $\overline{\chi} = \overline{\kappa}$, which contradicts our assumptions on $\overline{\chi}$. If $\psi' = \overline{\chi}\overline{\kappa}$ or ${\overline{\chi}}^{-1}\overline{\kappa}$, then evaluating at a complex conjugation $c_\nu$

        $$-1 = \overline{\chi}(c_\nu) = \alpha(\overline{\chi}^{\pm 1}(c_\nu)\overline{\kappa}(c_\nu)) = \alpha(-\overline{\kappa}(c_\nu)) = -\overline{\kappa}(c_\nu)$$

        \vspace{2 mm}

        Thus, $\overline{\kappa}(c_\nu) = 1$, which is a contradiction, as $\overline{\kappa}$ doesn't fix the $p$-th roots of unity for $p \ge 3$, as $F$ is a totally real number field. 

        \item Finally, we consider $\psi = \overline{\chi}^{-1}$. As in the previous case if $\psi' = \overline{\kappa}$ we get that $\overline{\chi}^{-1} = \overline{\kappa}$, which contradicts our assumptions. And if $\psi' = \overline{\chi}\overline{\kappa}$ or $\overline{\chi}^{-1}\overline{\kappa}$ then we get a contradiction by evaluating at a complex conjugation $c_\nu$.
        
    \end{itemize}

    Having discarded all the possible options we conclude that no such simple $k[\Gamma_F]$-subquotient exists. Hence, $\overline{\rho_i}(\mathfrak g^{\der})$ and $\overline{\rho_j}(\mathfrak g^{\der})^*$ have no common $\mathbb F_p[\Gamma_F]$-subquotient for $i,j = 1,2$.
    
\end{proof}

\begin{lem}
\label{Assumption3}

Neither $\overline{\rho_i}(\mathfrak g^{\der})$ nor $\overline{\rho_i}(\mathfrak g^{\der})^*$ contains the trivial representation for $i=1,2$.
    
\end{lem}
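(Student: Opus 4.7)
The plan is to establish the two vanishings $H^0(\Gamma_F, \overline{\rho_i}(\mathfrak g^{\der})) = 0$ and $H^0(\Gamma_F, \overline{\rho_i}(\mathfrak g^{\der})^*) = 0$ for $i=1,2$; unravelling definitions, ``contains the trivial representation'' means admits a trivial sub-representation, which is equivalent to the non-vanishing of $H^0$ (this is the form in which the hypothesis is used in the proof of Lemma \ref{rhoNProperties}).

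For the dual adjoint modules I would reuse the filtration $F_2 \subset F_1 \subset F_0 = \overline{\rho_1}(\mathfrak g^{\der})^*$ introduced in the proof of Lemma \ref{Assumption1}, whose graded pieces are $k(\overline{\chi}\overline{\kappa})$, $k(\overline{\kappa})$, $k(\overline{\chi}^{-1}\overline{\kappa})$; the same three characters appear (in a different order) as the graded pieces of the analogous filtration of $\overline{\rho_2}(\mathfrak g^{\der})^*$. Since $\overline{\chi} \neq \overline{\kappa}^{\pm 1}$ by hypothesis and $\overline{\kappa}$ is non-trivial (evaluating at a complex conjugation $c_\nu$ gives $\overline{\kappa}(c_\nu) = -1$ for $p$ odd), each of these three characters is non-trivial, so each graded piece has trivial $\Gamma_F$-invariants. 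Iterating the left-exact sequences $0 \to H^0(\Gamma_F, F_j) \to H^0(\Gamma_F, F_{j-1}) \to H^0(\Gamma_F, F_{j-1}/F_j)$ then forces $H^0(\Gamma_F, \overline{\rho_i}(\mathfrak g^{\der})^*) = 0$.

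For the adjoint modules I would use the analogous three-step filtration $\langle e \rangle \subset \langle e, h \rangle \subset \mathfrak g^{\der}$ in the standard basis $e, h, f$ of $\mathfrak{sl}_2$. A direct computation of $\mathrm{Ad}(\overline{\rho_1}(\sigma))$ on this basis shows that the graded pieces are $k(\overline{\chi})$, the trivial module $k$, and $k(\overline{\chi}^{-1})$. The outer characters have no invariants because $\overline{\chi}$ is odd (hence non-trivial), so the only subtle point is the middle module $\langle e, h \rangle$. Writing an invariant $X = \alpha e + \beta h$ and imposing $\mathrm{Ad}(\overline{\rho_1}(\sigma))X = X$ produces the single equation $2\beta c_1(\sigma) = \alpha(\overline{\chi}(\sigma) - 1)$ for all $\sigma \in \Gamma_F$: if $\beta \neq 0$, this exhibits $c_1$ as a coboundary in $Z^1(\Gamma_{F,S}, k(\overline{\chi}))$, contradicting the assumed non-triviality of the extension class $*$; if $\beta = 0$, non-triviality of $\overline{\chi}$ forces $\alpha = 0$. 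Hence $H^0(\Gamma_F, \langle e, h \rangle) = 0$, and combined with the vanishing of $H^0$ on the top quotient $k(\overline{\chi}^{-1})$ this yields $H^0(\Gamma_F, \overline{\rho_1}(\mathfrak g^{\der})) = 0$. The argument for $\overline{\rho_2}$ runs identically, replacing the cocycle $c_1$ and extension class $*$ by $c_2$ and $*'$.

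The main obstacle is the middle trivial subquotient in the adjoint filtration: unlike in the dual adjoint case, the vanishing cannot be deduced from character considerations alone, and one must invoke the non-triviality of the extension classes $*$ and $*'$ to rule out a lift of this trivial subquotient to an actual trivial sub-representation.
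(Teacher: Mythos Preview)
Your argument is correct and uses the same essential inputs as the paper's proof: non-triviality of $\overline{\chi}$ and of $\overline{\chi}\overline{\kappa}^{\pm1}$, plus (for the adjoint) the non-triviality of the extension classes $*,*'$. The packaging differs: you work through the three-step filtration and its one-dimensional graded pieces, while the paper computes the adjoint action on a general element $\left(\begin{smallmatrix} x & y \\ z & -x\end{smallmatrix}\right)$ and kills $z$, then $x$, then $y$ in turn. Your coboundary argument for the middle step and the paper's choice of a witness $\sigma\in\Gamma_{K_0}\setminus\Gamma_{K_1}$ (so $\overline{\chi}(\sigma)=1$ but $c_1(\sigma)\neq 0$) are equivalent: the latter amounts to injectivity of the restriction $H^1(\Gamma_F,k(\overline{\chi}))\hookrightarrow H^1(\ker\overline{\chi},k)$, which holds because $\im\overline{\chi}$ has prime-to-$p$ order. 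One small gain in your version is that for the dual adjoint the middle graded piece is $k(\overline{\kappa})$ rather than the trivial module, so character non-triviality alone suffices and you never need the extension class there, whereas the paper handles that case by ``the analogous computation''.
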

\begin{proof}

    We first show that $\overline{\rho_1}(\mathfrak g^{\der})$ doesn't contain the trivial representation. In our case $\mathfrak g^{\der} = \mathfrak{sl}_2$ and so its elements will be $(2 \times 2)$ matrices with trace $0$, and $\sigma$ acts on it by conjugation with $\overline{\rho_1}(\sigma)$. Suppose that $\Gamma_F$ acts trivially on  $\begin{pmatrix} x & y \\ z & -x \end{pmatrix}$. Then, for $\sigma \in \Gamma_F$ we have

    \begin{align*}
        \sigma \cdot \begin{pmatrix} x & y \\ z & -x \end{pmatrix} &= \begin{pmatrix}
            \overline{\chi}(\sigma) & c_1(\sigma) \\ 0 & 1 \end{pmatrix} \begin{pmatrix}
            x & y \\ z & - x \end{pmatrix} \begin{pmatrix}
            \overline{\chi}(\sigma) & c_1(\sigma) \\ 0 & 1 \end{pmatrix}^{-1} \\
            & = \begin{pmatrix} x + \overline{\chi}^{-1}(\sigma)c_1(\sigma)z & \overline{\chi}(\sigma)y - 2c_1(\sigma)x - \overline{\chi}(\sigma)c_1(\sigma)^2z \\ \overline{\chi}^{-1}(\sigma)z & -x - \overline{\chi}^{-1}(\sigma)c_1(\sigma)z\end{pmatrix}
    \end{align*}

    \vspace{2 mm}

    By comparing the bottom left entries we get that $z = \overline{\chi}^{-1}(\sigma)z$ for all $\sigma \in \Gamma_F$. But $\overline{\chi} \neq 1$, which means that $z=0$. Then, the $\Gamma_F$-action simplifies to

    $$\sigma \cdot \begin{pmatrix} x & y \\ 0 & -x \end{pmatrix} = \begin{pmatrix} x & \overline{\chi}(\sigma)y - 2c_1(\sigma)x \\ 0 & -x \end{pmatrix}$$

    \vspace{2 mm}

    We now take $\sigma$ to be an element of $\Gamma_{K_0}$, but not of $\Gamma_{K_1}$. Such an element exists by the computations in Lemma \ref{Assumption1}. Indeed, we have that $K_1 = K_0(c_1)$, and $K_1/K_0$ is a non-trivial extension as $c_1$ is not a $1$-coboundary. Therefore, $\overline{\chi}(\sigma) = 1$ and $c_1(\sigma) \neq 0$. Comparing the top right entries yields $2c_1(\sigma)x = 0$. As $p \neq 2$ and we must have that $x = 0$. Finally, the $\Gamma_F$-action becomes

    $$\sigma \cdot \begin{pmatrix} 0 & y \\ 0 & 0 \end{pmatrix} = \begin{pmatrix} 0 & \overline{\chi}(\sigma)y \\ 0 & 0 \end{pmatrix}$$

    \vspace{2 mm}

    \noindent which can't be trivial since $\overline{\chi} \neq 1$. Hence, we conclude that $\overline{\rho_1}(\mathfrak g^{\der})$ does not contain the trivial representation. We prove the claim for $\overline{\rho_1}(\mathfrak g^{\der})^*$ in a similar manner. We first identify $\overline{\rho_1}(\mathfrak g^{\der})^*$ with $\overline{\rho_1}(\mathfrak g^{\der})(1)$ using the trace pairing. Then the analogous computations still hold, relying on the fact that $\Gamma_{K_0}$ has trivial image under $\overline{\chi}\overline{\kappa}$ and $\overline{\chi} \neq \overline{\kappa}$, by our initial assumption. 

    Finally, we can easily adapt the same computations to prove the claim for $i=2$. Indeed, $\overline{\rho_2} \sim \begin{pmatrix} \overline{\chi}^{-1} & c_2 \\ 0 & 1 \end{pmatrix}$ after twisting by $\overline{\chi}$. Then, the same computations hold with $\overline{\chi}^{-1}$ in place of $\overline{\chi}$, as $\overline{\chi} \neq 1,\overline{\kappa}^{-1}$, the cocycle $c_2$ is not a coboundary, and $\Gamma_{K_0}$ has trivial image under both $\overline{\chi}^{-1}$ and $\overline{\chi}^{-1}\overline{\kappa}$.
     
\end{proof}

\begin{lem}
\label{Assumption4}
 
 $\overline{\rho_i}$ is odd for $i=1,2$, i.e. for every infinite place $\nu$ of $F$, $h^0(\Gamma_{F_\nu},\overline{\rho}(\mathfrak g^{\der})) = 1$.

\end{lem}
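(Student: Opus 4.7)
The plan is to exploit the fact that for an infinite place $\nu$ of the totally real field $F$, the decomposition group $\Gamma_{F_\nu}$ is generated by a complex conjugation $c_\nu$ and is isomorphic to $\mathbb{Z}/2\mathbb{Z}$, so computing $H^0(\Gamma_{F_\nu},\overline{\rho_i}(\mathfrak{g}^{\der}))$ reduces to identifying the fixed subspace of a single involution acting on $\mathfrak{sl}_2$ by the adjoint action.

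First I would compute $\overline{\rho_i}(c_\nu)$ explicitly. Since $\overline{\chi}(c_\nu) = -1$ by the oddness hypothesis on $\overline{\chi}$, we get
$$\overline{\rho_1}(c_\nu) = \begin{pmatrix} -1 & c_1(c_\nu) \\ 0 & 1 \end{pmatrix} \quad \text{and} \quad \overline{\rho_2}(c_\nu) = \begin{pmatrix} 1 & *'(c_\nu) \\ 0 & -1 \end{pmatrix}.$$
Each of these matrices squares to the identity (a direct check using $c_\nu^2 = 1$ and the cocycle relation), so they are involutions with distinct eigenvalues $\pm 1$ since $p \ge 3$. Hence each is $\GL_2(k)$-conjugate to $\mathrm{diag}(-1,1)$.

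Next I would compute the invariants of the adjoint action. After conjugating so that $\overline{\rho_i}(c_\nu) = \mathrm{diag}(-1,1)$, the adjoint action on a traceless matrix is
$$\mathrm{Ad}(\mathrm{diag}(-1,1))\begin{pmatrix} a & b \\ c & -a \end{pmatrix} = \begin{pmatrix} a & -b \\ -c & -a \end{pmatrix},$$
so the fixed subspace consists of the diagonal matrices in $\mathfrak{sl}_2$, which is one-dimensional over $k$. Hence $h^0(\Gamma_{F_\nu},\overline{\rho_i}(\mathfrak{g}^{\der})) = 1$ for $i=1,2$.

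Finally, I would match this with $\dim(\mathrm{Flag}_{G^{\der}})$ in the fifth bullet of Assumptions~\ref{GeneralAssumptions}: here $G^{\der} = \SL_2$ and $\mathrm{Flag}_{\SL_2} \cong \mathbb{P}^1$ has dimension $1$, which agrees with the computation above. There is no real obstacle in this argument; the only thing to be careful about is that the calculation really does require $p \ne 2$ so that the eigenvalues $\pm 1$ are distinct and $\overline{\rho_i}(c_\nu)$ is diagonalizable, which is already part of our standing hypotheses.
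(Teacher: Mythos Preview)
Your proof is correct and follows essentially the same approach as the paper: both reduce to showing that $\overline{\rho_i}(c_\nu)$ is conjugate to $\mathrm{diag}(-1,1)$ and then compute the one-dimensional adjoint fixed space in $\mathfrak{sl}_2$. The only cosmetic difference is that the paper claims the off-diagonal entry $c_i(c_\nu)$ vanishes outright, whereas you (equivalently, and arguably more cleanly) argue that the involution has distinct eigenvalues $\pm 1$ and is therefore diagonalizable.
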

\begin{proof}

    Let $c_\nu$ be a complex conjugation in $\Gamma_F$. As $c_\nu$ has order $2$ and the cocycles $c_i$ are valued in $k$, which has odd characteristic we get that $c_i(c_\nu) =0$ for both $i=1,2$. Using our assumption that $\overline{\chi}$ is odd we get that the image of $c_\nu$ under both $\overline{\rho_1}$ and $\overline{\rho_2}$ is conjugate to $\begin{pmatrix} 1 & 0 \\ 0 &-1 \end{pmatrix}$. Now let $\begin{pmatrix} x & y \\ z & -x \end{pmatrix}$ be fixed by the action of $c_\nu$. We then have:

    $$\begin{pmatrix} x & y \\ z & -x \end{pmatrix} = \begin{pmatrix} 1 & 0 \\ 0 & -1 \end{pmatrix}\begin{pmatrix} x & y \\ z & -x \end{pmatrix}\begin{pmatrix} 1 & 0 \\ 0 & -1 \end{pmatrix} = \begin{pmatrix} x & -y \\ -z & -x \end{pmatrix}$$

    \vspace{2 mm}

    Comparing the off-diagonal entries we get $2y = 2z = 0$, which implies that $y=z =0$, as $p \neq 2$. From this we conclude that for both $i=1,2$ the $\Gamma_{F_\nu}$-invariants of $\overline{\rho_i}(\mathfrak g^{\der})$ are exactly the diagonal $2\times 2$ matrices with trace zero, which span a subspace of dimension $1$.
    
\end{proof}

\subsection{Local lifts at primes in \texorpdfstring{$S$}{S}}

Our goal is to produce $p$-adic lifts $\rho_1$ and $\rho_2$ of $\overline{\rho_1}$ and $\overline{\rho_2}$, respectively, whose associated Weil-Deligne representations at each prime will have the same Artin conductor. We recall that at primes $\nu \in S$ we modeled our global lift $\rho_i$ using the given local lifts $\rho_{i,\nu}$. Therefore, for $\nu \in S$ we want to find local lifts $\rho_{1,\nu}$ and $\rho_{2,\nu}$, such that $\WD(\rho_{1,\nu})$ and $\WD(\rho_{2,\nu})$ will have the same Artin conductor. We do this first for primes $\nu \nmid p$. For such primes we will prove something stronger. In particular, we will show that we can choose $\rho_{1,\nu}$ and $\rho_{2,\nu}$ so that they have the same strong inertial type. We first recall the definition of a strong inertial type.

\begin{defn}
\label{StrongInertialType}

Let $L$ be a finite extension of $\mathbb Q_\ell$. An $n$-dimensional \textit{strong inertial type} $\tau$ over $L$ is an equivalence class of pairt $(r_\tau,N_\tau)$ such that

\begin{itemize}
    \item $r_\tau:I_L \to \GL_n(\overline{E})$ is a representation with open kernel.
    \item $N_\tau$ is a nilpotent $n \times n$ matrix with coefficients in $\overline{E}$.
    \item $(r_\tau,N_\tau)$ extends to a Weil-Deligne representation of $\Gamma_L$.
\end{itemize}

Two such pairs are equivalent if they are conjugate by an element in $\GL_n(\overline{E})$. We say that a representation $\Gamma_L \to \GL_n(\overline{E})$ has strong inertial type $\tau$ if the restriction to $I_L$ of the associated Weil-Deligne representation is equivalent to $\tau$. 
    
\end{defn}

In order to establish the existence of the desired local lifts for $\nu \nmid p$ we will use the computations in \cite[\S $5$]{Sho16}. Before doing this as in \cite[\S $3.1$]{Sho16} we define some two-dimensional strong inertial types which will appear often in the computations. All of these will be trivial on $\Gamma_{F_\nu^{\mathrm{tame},p}}$ and therefore they will be completely determined by the image of $r(\tau_\nu)$ and the nilpotent matrix $N$. Set $a_\nu = v_p(N(\nu)-1)$ and $b_\nu = v_p(N(\nu)+1)$. We then define

\begin{itemize}
    \item $\tau_{\zeta,s}$ by $r(\tau_\nu) = \begin{pmatrix} \zeta & 0 \\ 0 & \zeta\end{pmatrix}$ and $N = 0$, where $\zeta$ is a $p^{a_\nu}$-th root of unity.
    \item $\tau_{\zeta,ns}$ by $r(\tau_\nu) = \begin{pmatrix} \zeta & 0 \\ 0 & \zeta\end{pmatrix}$ and $N = \begin{pmatrix} 0 & 1 \\ 0 & 0\end{pmatrix}$, where $\zeta$ is a $p^{a_\nu}$-th root of unity.
    \item $\tau_{\zeta_1,\zeta_2}$ by $r(\tau_\nu) = \begin{pmatrix} \zeta_1 & 0 \\ 0 & \zeta_2\end{pmatrix}$ and $N = 0$, where $\zeta_1$ and $\zeta_2$ are distinct $p^{a_\nu}$-th roots of unity.
    \item $\tau_{\xi}$ by $r(\tau_\nu) = \begin{pmatrix} \xi & 0 \\ 0 & \xi^{-1}\end{pmatrix}$ and $N = 0$, where $\xi$ is a $p^{b_\nu}$-th root of unity.
\end{itemize}

\vspace{2 mm}

\begin{prop}
\label{LocalLiftsnmidp}

For each $\nu \nmid p \in S$ and $i=1,2$, there exist local lifts $\rho_{i,\nu}:\Gamma_{F_\nu} \to \GL_2(\mathcal O)$ of $\restr{\overline{\rho_i}}{\Gamma_{F_\nu}}$ such that $\rho_{1,\nu}$ and $\rho_{2,\nu}$ have the same strong inertial type. 
    
\end{prop}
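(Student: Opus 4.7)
The plan is to proceed by a case analysis at each prime $\nu \nmid p$ in $S$, invoking the explicit description of the irreducible components of the universal framed lifting ring $R^{\sqr,\mu}_{\restr{\overline{\rho_i}}{\Gamma_{F_\nu}}}[1/p]$ from \cite[\S 5]{Sho16}. Shotton labels these components by strong inertial type, and for each residually reducible two-dimensional representation one can read off explicitly which of the types $\tau_{\zeta,s}$, $\tau_{\zeta,ns}$, $\tau_{\zeta_1,\zeta_2}$, $\tau_\xi$ actually arise on lifts with prescribed determinant. The proposition then reduces to exhibiting, at each such $\nu$, a strong inertial type simultaneously attained by a lift of $\restr{\overline{\rho_1}}{\Gamma_{F_\nu}}$ and by a lift of $\restr{\overline{\rho_2}}{\Gamma_{F_\nu}}$ with the fixed common multiplier $\mu|_{\Gamma_{F_\nu}}$.

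I would organize the analysis around two dichotomies: whether $\overline{\chi}|_{I_{F_\nu}}$ is trivial, and whether each $\restr{\overline{\rho_i}}{\Gamma_{F_\nu}}$ is split or non-split. When $\overline{\chi}|_{I_{F_\nu}}$ is non-trivial, the two residual eigenvalues on inertia are distinct, so by Schur the cocycles $\restr{c_1}{I_{F_\nu}}$ and $\restr{c_2}{I_{F_\nu}}$ both vanish and $\restr{\overline{\rho_i}}{I_{F_\nu}}$ is semi-simple; in this range common types of the form $\tau_\xi$ (if $\overline{\chi}|_{I_{F_\nu}}$ has order dividing $p^{b_\nu}$) or $\tau_{\zeta_1,\zeta_2}$ work for both $i$, with the $\zeta_j$ determined by the tame character $\overline{\chi}|_{I_{F_\nu}}$ and hence the same on both sides. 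When $\overline{\chi}|_{I_{F_\nu}}$ is trivial, both residual restrictions are unipotent on inertia; here the common candidates are $\tau_{1,s}$, $\tau_{1,ns}$, $\tau_{\zeta,s}$ or $\tau_{\zeta,ns}$, and Shotton's criteria confirm which of these appear for each $\restr{\overline{\rho_i}}{\Gamma_{F_\nu}}$. In each sub-case one checks directly that the intersection of types attainable by $\restr{\overline{\rho_1}}{\Gamma_{F_\nu}}$ and by $\restr{\overline{\rho_2}}{\Gamma_{F_\nu}}$ is non-empty, and we choose $\rho_{i,\nu}$ to be any formally smooth point on a component labeled by a common type.

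The main obstacle will be the case where the restricted cocycles $\restr{c_1}{\Gamma_{F_\nu}}$ and $\restr{c_2}{\Gamma_{F_\nu}}$ have qualitatively different ramification — for instance when one is ramified (forcing the corresponding local representation to be non-split) while the other is unramified and split. In that situation the ``default'' individual lifts carry different strong inertial types, and one must exploit the flexibility in Shotton's classification: for a residually non-split representation with unipotent inertia one can always deform into a component of type $\tau_{\zeta,ns}$ for an appropriate $\zeta$, and for a residually split representation with the same semi-simplification one can always deform into the same $\tau_{\zeta,ns}$ component by using the full monodromy direction. Verifying that these components are mutually compatible with the fixed determinant $\mu|_{\Gamma_{F_\nu}}$, and that the chosen $\zeta$ can be taken equal on both sides, is where the bulk of the work sits; this is the point at which the assumption $p \geq 5$ (implicit in the earlier ``$p \gg 0$'' conditions) enters, since for small $p$ the distinction between $\tau_{\zeta,s}$ and $\tau_{\zeta,ns}$ types can collapse in a way that blocks the argument.
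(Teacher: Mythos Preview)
Your overall strategy---case analysis via Shotton's classification of components of $R^{\sqr}_{\restr{\overline{\rho_i}}{\Gamma_{F_\nu}}}[1/p]$---is the same as the paper's, but the execution has concrete errors that would need to be fixed before the argument goes through.

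First, in the case $\overline{\chi}|_{I_{F_\nu}} \neq 1$, your suggested common types $\tau_\xi$ and $\tau_{\zeta_1,\zeta_2}$ cannot occur. Those types are by definition trivial on $\Gamma_{F_\nu^{\mathrm{tame},p}}$ and send $\tau_\nu$ to $p$-power roots of unity; but $\overline{\chi}$ takes values in $k^\times$, hence has order prime to $p$, and if ramified it is ramified through the prime-to-$p$ part of tame inertia, not through $\tau_\nu$. In particular ``$\overline{\chi}|_{I_{F_\nu}}$ has order dividing $p^{b_\nu}$'' forces $\overline{\chi}|_{I_{F_\nu}}=1$, and the $\zeta_j$ cannot be ``determined by $\overline{\chi}|_{I_{F_\nu}}$''. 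The correct picture (\cite[Proposition~5.1]{Sho16}) is that the possible strong inertial types have $r_\tau|_{I_{F_\nu}} \simeq ([\overline{\chi}]|_{I_{F_\nu}} \otimes f_1) \oplus f_2$ with the $f_j$ residually trivial, and one simply takes $[\overline{\chi}]|_{I_{F_\nu}} \oplus 1$ with $N=0$ as a common type for both $i$.

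Second, in the case $\overline{\chi}|_{I_{F_\nu}} = 1$, the split/non-split dichotomy for $\restr{\overline{\rho_i}}{\Gamma_{F_\nu}}$ is not the organizing principle. What actually governs which of Shotton's Propositions 5.3, 5.5, 5.6, 5.8 applies is the value of $\overline{\chi}(\sigma_\nu)$ relative to $1, N(\nu), N(\nu)^{-1}$, together with $N(\nu) \pmod p$; the paper runs through these sub-cases and in each one verifies that some type (usually $\tau_{1,ns}$, occasionally $\tau_{1,s}$ or $\tau_{\zeta,s}$) is attainable for both $i$. Your remark that $p\ge 5$ enters at this step is misplaced: the local argument works for all odd $p$, and the $p\ge 5$ constraint in the paper arises elsewhere. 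Finally, the determinant is not fixed in this proposition; arranging $\det = \mu$ is handled by a separate twisting argument afterwards (Corollary~\ref{LocalLiftsFixedDeterminant}), so you should not fold it into the case analysis here.
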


\begin{proof}

For each $\restr{\overline{\rho_i}}{\Gamma_{F_\nu}}$ we will use the results of \cite[\S $5$]{Sho16} to give an explicit description of the local lifting rings $R_{\restr{\overline{\rho_i}}{\Gamma_{F_\nu}}}^{\sqr}$ and its strong inertial type $\tau$ quotients $R_{\restr{\overline{\rho_i}}{\Gamma_{F_\nu}}}^{\sqr,\tau}$. Choosing an $\mathcal O$-point in such a quotient will give us a $p$-adic lift of $\restr{\overline{\rho_i}}{\Gamma_{F_\nu}}$ with a strong inertial type $\tau$. It will be clear from the explicit description that such quotients will have an $\mathcal O$-point whenever they are non-zero.

We will first show that we can assume the cocycles $c_1$ and $c_2$ are trivial on $\Gamma_{F_\nu^{\mathrm{tame},p}}$. The restriction $\restr{c_1}{\Gamma_{F_\nu}}$ will correspond to a cohomology class in $H^1(\Gamma_{F_\nu},k(\overline{\chi}))$. We consider the inflation-restriction sequence

$$0 \longrightarrow H^1\left(\Gal(F_\nu^{\mathrm{tame},p}/F),k(\overline{\chi})^{\Gamma_{F_\nu^{\mathrm{tame},p}}}\right) \longrightarrow H^1(\Gamma_{F_\nu},k(\overline{\chi})) \longrightarrow H^1(\Gamma_{F_\nu^{\mathrm{tame},p}},k(\overline{\chi}))$$

\vspace{2 mm}

But, $\Gamma_{F_\nu^{\mathrm{tame},p}}$ has no non-trivial finite $p$-subgroups, so the last cohomology group is trivial. Therefore, $\restr{c_1}{\Gamma_{F_\nu}}$ is cohomologous to a cocycle which is trivial on $\Gamma_{F_\nu^{\mathrm{tame},p}}$. Conjugating by $\begin{pmatrix} 1 & m \\ 0 & 1 \end{pmatrix}$, where $m$ is the coboundary, which is a difference between the two cocycles, allows us to assume that $c_1$ is trivial on $\Gamma_{F_\nu^{\mathrm{tame},p}}$. By a similar reasoning we assume the same is true for $c_2$. 

\begin{itemize}
    \item We first consider the case when $\overline{\chi}$ is non-trivial on $\Gamma_{F_\nu^{\mathrm{tame},p}}$. By our assumption on the cocycles $c_1$ and $c_2$ we have that $\restr{\overline{\rho_i}}{\Gamma_{F_\nu^{\mathrm{tame},p}}} \simeq \overline{\chi} \oplus 1$ for both $i=1,2$. As this restriction is non-scalar we note that all possible strong inertial types for lifts of $\restr{\overline{\rho_i}}{\Gamma_{F_\nu}}$ are of the form $\tau = (r_\tau,0)$. Indeed, if $\rho_{i,\nu}$ is a $p$-adic lift of $\restr{\overline{\rho_i}}{\Gamma_{F_\nu}}$ with strong inertial type $\tau$ we have that $r_\tau(g) = \rho_{i,\nu}(g)$ for $g \in \Gamma_{F_\nu^{\mathrm{tame},p}}$. As $(r_\tau,N_\tau)$ extends to a Weil-Deligne representation, for $g \in \Gamma_{F_\nu^{\mathrm{tame},p}}$ we have that $\rho_{i,\nu}(g)$ commutes with $N_\tau$. This will force $\rho_{i,\nu}(g)$ to be scalar for each $g \in \Gamma_{F_\nu^{\mathrm{tame},p}}$ if $N_\tau \neq 0$, which is impossible, as $\restr{\overline{\rho_i}}{\Gamma_{F_\nu^{\mathrm{tame},p}}}$ is non-scalar. 
    
    Then by \cite[Proposition $5.1$]{Sho16} we have

    $$R_{\restr{\overline{\rho_i}}{\Gamma_{F_\nu}}}^{\sqr} \simeq \frac{\mathcal O \llbracket X_1,X_2,Y_1,Y_2,Z_1,Z_2 \rrbracket}{((1+X_1)^{p^{a_\nu}}-1,(1+X_2)^{p^{a_\nu}} - 1)}$$

    \vspace{2 mm}

    \noindent and its $p^{2a_\nu}$ irreducible components are exactly the quotients $R_{\restr{\overline{\rho_i}}{\Gamma_{F_\nu}}}^{\sqr,\tau}$ for the strong inertial types $\tau = ((\restr{[\overline{\chi}]}{I_{F_\nu}}) \otimes f_1) \oplus (\restr{1}{I_{F_\nu}} \otimes f_2),0)$, where $f_1,f_2$ run through all pairs of characters $I_{F_\nu} \to \overline{E}^\times$ which extend to $\Gamma_{F_\nu}$ and reduce to the trivial character. From this description we can easily choose $p$-adic lifts $\rho_{1,\nu}$ and $\rho_{2,\nu}$ which will have the same strong inertial type, for example $(\restr{[\overline{\chi}]}{I_{F_\nu}} \oplus \restr{1}{I_{F_\nu}},0)$.

    \item We now assume that $\overline{\chi}$ is trivial on $\Gamma_{F_\nu^{\mathrm{tame},p}}$. This will be an overarching assumption in the rest of the cases and we will not mention it explicitly anymore. We note that it implies that $\overline{\chi}$ is unramified. Indeed, as $\tau_\nu$ generates a pro-p-group and $\overline{\chi}$ is valued in $k^\times$ we must have that $\overline{\chi}(\tau_\nu) = 1$. Combining this with our assumption that $\overline{\chi}$ is trivial on $\Gamma_{F_\nu^{\mathrm{tame},p}}$ we conclude that $\overline{\chi}$ is trivial on $I_{F_\nu}$.
    
    We now suppose that $\overline{\chi}(\sigma_\nu) \neq 1,N(\nu),N(\nu)^{-1}$. This means that the eigenvalues of the image of $\sigma_\nu$ under both $\overline{\rho_1}$ and $\overline{\rho_2}$ have ratio which is different that $1,N(\nu)$, or $N(\nu)^{-1}$. Then, by \cite[Proposition $5.3$]{Sho16} we have that

     $$R_{\restr{\overline{\rho_i}}{\Gamma_{F_\nu}}}^{\sqr} \simeq \frac{\mathcal O \llbracket A,B,P,Q,X,Y \rrbracket }{((1+P)^{p^{a_\nu}}-1,(1+Q)^{p^{a_\nu}}-1)}$$

     \vspace{2 mm}

     Moreover, for a $p^{a_\nu}$-th root of unity $\zeta$ we have $R_{\restr{\overline{\rho_i}}{\Gamma_{F_\nu}}}^{\sqr,\tau_{\zeta,s}} \simeq \mathcal O \llbracket A,B,X,Y \rrbracket$; and if $N(\nu) \equiv 1 \pmod p$ for distinct $p^{a_\nu}$-th roots of unity $\zeta_1,\zeta_2$ we have $R_{\restr{\overline{\rho_i}}{\Gamma_{F_\nu}}}^{\sqr,\tau_{\zeta_1,\zeta_2}} \simeq \mathcal O \llbracket A,B,P,X,Y \rrbracket /(1 + P - \zeta_1)(1+P-\zeta_2)$. For the rest of the strong inertial types the quotients $R_{\restr{\overline{\rho_i}}{\Gamma_{F_\nu}}}^{\sqr,\tau}$ are zero. We choose the $p$-adic lifts $\rho_{1,\nu}$ and $\rho_{2,\nu}$ so that they both are of strong inertial type $\tau_{\zeta,s}$ for some $p^{a_\nu}$-th root of unity. 

     \item Now, suppose that $N(\nu) \not \equiv \pm 1 \pmod p$. As most of the options were covered in the previous case we only need to consider the cases when $\overline{\chi}(\sigma_\nu) = 1,N(\nu)$, or $N(\nu)^{-1}$. We take a look at each of the separately. 

     \begin{itemize}
         \item Suppose that $\overline{\chi}(\sigma_\nu) = 1$. Using the relation between $\sigma_\nu$ and $\tau_\nu$ we have

         \begin{align*}
             \begin{pmatrix} 1 & N(\nu)c_1(\tau_\nu) \\ 0 & 1\end{pmatrix} = \begin{pmatrix} 1 & c_1(\sigma_\nu) \\ 0 & 1\end{pmatrix}\begin{pmatrix} 1 & c_1(\tau_\nu) \\ 0 & 1\end{pmatrix}\begin{pmatrix} 1 & -c_1(\sigma_\nu) \\ 0 & 1\end{pmatrix} = \begin{pmatrix} 1 & c_1(\tau_\nu) \\ 0 & 1\end{pmatrix}
         \end{align*}

        \vspace{2 mm}

        Comparing the top right entries we get that $(N(\nu)-1)c_1(\tau_\nu) = 0$. By our assumption we have that $N(\nu) \not \equiv 1 \pmod p$, so this means that $c_1(\tau_\nu) = 0$, and therefore $\overline{\rho_1}$ is unramified at $\nu$. A similar computation also tells us that $\overline{\rho_2}$ is unramified at $\nu$. Therefore, by \cite[Proposition $5.5.(1)$]{Sho16} we have that $R_{\restr{\overline{\rho_i}}{\Gamma_{F_\nu}}}^{\sqr} \simeq R_{\restr{\overline{\rho_i}}{\Gamma_{F_\nu}}}^{\sqr,\tau_{1,s}} \simeq \mathcal O \llbracket P,Q,R,S \rrbracket$. Hence, we can find $p$-adic lifts $\rho_{1,\nu}$ and $\rho_{2,\nu}$ so that both have strong inertial type $\tau_{1,s}$.

        \item Now suppose that $\overline{\chi}(\sigma_\nu) = N(\nu)^{-1}$. Again, using the relation between $\sigma_\nu$ and $\tau_\nu$ we get

        \begin{align*}
            \begin{pmatrix} 1 & N(\nu)c_1(\tau_\nu) \\ 0 & 1\end{pmatrix} &= \begin{pmatrix} N(\nu)^{-1} & c_1(\sigma_\nu) \\ 0 & 1\end{pmatrix}\begin{pmatrix} 1 & c_1(\tau_\nu) \\ 0 & 1\end{pmatrix}\begin{pmatrix} N(\nu) & -N(\nu)c_1(\sigma_\nu) \\ 0 & 1\end{pmatrix} \\ &= \begin{pmatrix} 1 & N(\nu)^{-1}c_1(\tau_\nu) \\ 0 & 1\end{pmatrix}
        \end{align*}

        \vspace{2 mm}

        Comparing the top right entries we get $(N(\nu)^2-1)c_1(\tau_\nu) = 0$. As $N(\nu) \not \equiv \pm 1 \pmod p$ we must have that $c_1(\tau_\nu) = 0$, i.e. $\overline{\rho_1}$ is unramified at $\nu$. Therefore, $\restr{c_1}{\Gamma_{F_\nu}}$ is an inflation of an unramified class $H^1(\Gamma_{F_\nu}/I_{F_\nu},k(\overline{\chi}))$. As $\sigma_\nu$ acts on $k(\overline{\chi})$ as multiplication by $N(\nu)^{-1}$, and $N(\nu)^{-1}-1$ is a unit in $k$ this cohomology group is trivial. Hence, we can assume that $\restr{c_1}{\Gamma_{F_\nu}} = 0$. After twisting by $\overline{\chi}$ using \cite[Proposition $5.5.(2)$]{Sho16} we have that
        
        $$R_{\restr{\overline{\rho_1}}{\Gamma_{F_\nu}}}^{\sqr} \simeq \frac{\mathcal O \llbracket X_1,X_2,X_3,X_4,X_5 \rrbracket}{(X_1X_2)}$$

        \vspace{2 mm}

        \noindent with the quotients by the two minimal primes being $R_{\restr{\overline{\rho_1}}{\Gamma_{F_\nu}}}^{\sqr,\tau_{1,s}}$ and $R_{\restr{\overline{\rho_1}}{\Gamma_{F_\nu}}}^{\sqr,\tau_{1,ns}}$. On the other side, the relation between $\sigma_\nu$ and $\tau_\nu$ will be automatically satisfied for any choice of a cocycle $c_2$. If $\overline{\rho_2}$ is not ramified at $\nu$ then as it was the case for $\overline{\rho_1}$ we have that

        $$R_{\restr{\overline{\rho_2}}{\Gamma_{F_\nu}}}^{\sqr} \simeq \frac{\mathcal O \llbracket X_1,X_2,X_3,X_4,X_5 \rrbracket}{(X_1X_2)}$$

        \vspace{2 mm}

        \noindent with the quotients by the two minimal primes being $R_{\restr{\overline{\rho_2}}{\Gamma_{F_\nu}}}^{\sqr,\tau_{1,s}}$ and $R_{\restr{\overline{\rho_2}}{\Gamma_{F_\nu}}}^{\sqr,\tau_{1,ns}}$. If $\overline{\rho_2}$ is ramified at $\nu$ then again by \cite[Proposition $5.5.(2)$]{Sho16} we have that $R_{\restr{\overline{\rho_2}}{\Gamma_{F_\nu}}}^{\sqr} \simeq R_{\restr{\overline{\rho_2}}{\Gamma_{F_\nu}}}^{\sqr,\tau_{1,ns}} \simeq \mathcal O \llbracket B,P,X,Y \rrbracket$. In summary, we can always choose the $p$-adic lifts $\rho_{1,\nu}$ and $\rho_{2,\nu}$ to be of strong inertial type $\tau_{1,ns}$.

        \item Suppose that $\overline{\chi}(\sigma_\nu) = N(\nu)$. Then we get the same results as in the previous subcase with the roles of $\overline{\rho_1}$ and $\overline{\rho_2}$ reversed. In particular, $\overline{\rho_2}$ will be unramified at $\nu$ and it will have $p$-adic lifts of strong inertial type $\tau_{1,s}$ and $\tau_{1,ns}$. On the other hand, $\overline{\rho_1}$ could ramify at $\nu$ in which case it has $p$-adic lifts only of strong inertial type $\tau_{1,ns}$, while if it doesn't it has $p$-adic lifts of strong inertial types $\tau_{1,s}$ and $\tau_{1,ns}$. In either case, as above we can choose the $p$-adic lifts $\rho_{1,\nu}$ and $\rho_{2,\nu}$ to be of strong inertial type $\tau_{1,ns}$.
        
     \end{itemize}

     \item Assume that $N(\nu) \equiv -1 \pmod p$. Then, the only options that have not yet been considered are $\overline{\chi}(\sigma_\nu) = 1$ and $\overline{\chi}(\sigma_\nu) = -1$.

     \begin{itemize}
         \item Let $\overline{\chi}(\sigma_\nu) = 1$. Then as in the previous case this condition implies that $(N(\nu)-1)c_1(\tau_\nu) = 0$, and as $N(\nu) \not \equiv 1 \pmod p$ we get that $c_1(\tau_\nu) = 0$, i.e. $\overline{\rho_1}$ isn't ramified at $\nu$. Similarly, $\overline{\rho_2}$ isn't ramified at $\nu$. Thus, by \cite[Proposition $5.6.(1)$]{Sho16} we have that $R_{\restr{\overline{\rho_i}}{\Gamma_{F_\nu}}}^{\sqr} \simeq R_{\restr{\overline{\rho_i}}{\Gamma_{F_\nu}}}^{\sqr,\tau_{1,s}} \simeq \mathcal O \llbracket P,Q,R,S \rrbracket$. Hence, we can find $p$-adic lifts $\rho_{1,\nu}$ and $\rho_{2,\nu}$ so that both have strong inertial type $\tau_{1,s}$.

         \item Suppose that $\overline{\chi}(\sigma_\nu) = -1$. Computations as above tell us that for any cocycle $c_i$, $\overline{\rho_i}$ will preserve the relation between $\sigma_\nu$ and $\tau_\nu$, so there are no restriction on the cocycle $c_i$. If $c_i(\tau_\nu) \neq 0$ by \cite[Proposition $5.6.(2)(a)$]{Sho16} we have that $R_{\restr{\overline{\rho_i}}{\Gamma_{F_\nu}}}^{\sqr,\tau_{1,ns}}$ and $R_{\restr{\overline{\rho_i}}{\Gamma_{F_\nu}}}^{\sqr,\tau_{\xi}}$ are isomorphic to $\mathcal O \llbracket P,Q,X,Y \rrbracket$, where we recall that $\xi$ is any $p^{b_\nu}$-th root of unity. Moreover, these are the only possible strong inertial types for lifts of $\restr{\overline{\rho_i}}{\Gamma_{F_\nu}}$. On the other side, if $c_i(\tau) = 0$, then by \cite[Proposition $5.6.(2)(b)$]{Sho16} the possible strong inertial types for lifts of $\restr{\overline{\rho_i}}{\Gamma_{F_\nu}}$ are $\tau_{1,s}, \tau_{1,ns}$, and $\tau_{\xi}$. In particular

         $$R_{\restr{\overline{\rho_i}}{\Gamma_{F_\nu}}}^{\sqr,\tau_{1,ns}} \simeq \frac{\mathcal O \llbracket X_1,X_2,X_3,X_4,X_5,X_6 \rrbracket}{((X_1,X_3) \cap (X_2,X_3 - (N(\nu)+1)))}$$

         \vspace{2 mm}

         \noindent whose spectrum is the scheme theoretic union of two formally smooth components that do not intersect in the generic fiber. From this we conclude that we can always choose $p$-adic lifts $\rho_{1,\nu}$ and $\rho_{2,\nu}$ which will have strong inertial type $\tau_{1,ns}$.

      \end{itemize}

      \item Finally, we consider the case $N(\nu) \equiv 1 \pmod p$. The only possibility that we need to consider is $\overline{\chi}(\sigma_\nu) = 1$. We claim that we can choose $p$-adic lifts $\rho_{1,\nu}$ and $\rho_{2,\nu}$ both with strong inertial type $\tau_{1,ns}$. This follows immediately from \cite[Proposition $5.8$]{Sho16}. If either $c_i(\sigma_\nu) \neq 0$ or $c_i(\tau_\nu) \neq 0$ we have that

      $$R_{\restr{\overline{\rho_i}}{\Gamma_{F_\nu}}}^{\sqr,\tau_{1,ns}} \simeq \mathcal O \llbracket A,B,R,T \rrbracket$$

      \vspace{2 mm}

      On the other side, if $c_i(\sigma_\nu) = c_i(\tau_\nu) = 0$ we have that $R_{\restr{\overline{\rho_i}}{\Gamma_{F_\nu}}}^{\sqr,\tau_{1,ns}}$ is a domain of relative dimension $4$ over $\mathcal O$, which moreover has a formally smooth generic fiber. This tells us that in this case we can always choose $\rho_{i,\nu}$ to be a $p$-adic lift of $\restr{\overline{\rho_i}}{\Gamma_{F_\nu}}$ with strong inertial type $\tau_{1,ns}$.

\end{itemize}

\end{proof}

We recall that throughout the lifting method of \S $3$ and \S $4$ we have fixed a multipler type $\mu_i$ and we were interested in producing lifts of $\overline{\rho_i}$ of multipler type $\mu_i$. In the $\GL_2$-case this translates to producing lifts with a prescribed determinant $\mu$, which we fixed at the beginning of the argument. Therefore, as in the fourth bullet point of Assumption \ref{GeneralAssumptions} it is important that the local lifts produced by Proposition \ref{LocalLiftsnmidp} all have determinant $\mu$. The next results tells us that this is always possible. 

\begin{cor}
\label{LocalLiftsFixedDeterminant}

For each $\nu \nmid p \in S$ and $i=1,2$ there exist local lifts $\rho_{i,\nu}:\Gamma_{F_\nu} \to \GL_2(\mathcal O)$ of $\restr{\overline{\rho_i}}{\Gamma_{F_\nu}}$ such that $\rho_{1,\nu}$ and $\rho_{2,\nu}$ both have determinant $\mu$ and have the same strong inertial type.
    
\end{cor}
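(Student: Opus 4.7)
The plan is to reduce the corollary to Proposition \ref{LocalLiftsnmidp} by a determinant-adjusting twist. First, I apply Proposition \ref{LocalLiftsnmidp} to produce lifts $\rho_{1,\nu}',\rho_{2,\nu}' : \Gamma_{F_\nu} \to \GL_2(\mathcal{O})$ of $\restr{\overline{\rho_i}}{\Gamma_{F_\nu}}$ sharing a common strong inertial type $\tau = (r_\tau, N_\tau)$. Both $\det \rho_{i,\nu}'$ and $\mu|_{\Gamma_{F_\nu}}$ are characters lifting $\restr{\overline{\chi}}{\Gamma_{F_\nu}}$, so their ratio $\eta_i \coloneqq \mu|_{\Gamma_{F_\nu}} \cdot (\det \rho_{i,\nu}')^{-1}$ is a continuous character $\Gamma_{F_\nu} \to 1 + \mathfrak m \subseteq \mathcal{O}^\times$.

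Next, since $p \ge 3$, the pro-$p$ group $1 + \mathfrak m$ has the property that squaring is a bijective continuous homomorphism (the inverse is given by the $p$-adic binomial series), so there is a unique character $\psi_i : \Gamma_{F_\nu} \to 1 + \mathfrak m$ with $\psi_i^2 = \eta_i$. I set $\rho_{i,\nu} \coloneqq \psi_i \otimes \rho_{i,\nu}'$. Since $\psi_i$ reduces to the trivial character, $\rho_{i,\nu}$ is still a lift of $\restr{\overline{\rho_i}}{\Gamma_{F_\nu}}$, and by construction $\det \rho_{i,\nu} = \psi_i^2 \det \rho_{i,\nu}' = \eta_i \det \rho_{i,\nu}' = \mu|_{\Gamma_{F_\nu}}$ as desired.

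It remains to check that the twisted lifts still share a strong inertial type. The strong inertial type of $\rho_{i,\nu}$ is $(\psi_i|_{I_{F_\nu}} \otimes r_\tau, N_\tau)$, so I must verify $\psi_1|_{I_{F_\nu}} = \psi_2|_{I_{F_\nu}}$. Since $\rho_{1,\nu}'$ and $\rho_{2,\nu}'$ have the same strong inertial type $\tau$, their determinants on $I_{F_\nu}$ both equal $\det \tau$; therefore $\eta_1|_{I_{F_\nu}} = \mu|_{I_{F_\nu}} \cdot (\det \tau)^{-1} = \eta_2|_{I_{F_\nu}}$, and uniqueness of square roots in $1 + \mathfrak m$ forces $\psi_1|_{I_{F_\nu}} = \psi_2|_{I_{F_\nu}}$. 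Finally, one checks that the candidate $(\psi_1|_{I_{F_\nu}} \otimes r_\tau, N_\tau)$ is indeed a strong inertial type: for $\nu \nmid p$ the cyclotomic character $\kappa$ is unramified, so $\mu|_{I_{F_\nu}} = [\overline{\chi}]|_{I_{F_\nu}}$ has finite order; combined with the finite order of $\det \tau$, this makes $\eta_i|_{I_{F_\nu}}$ and hence $\psi_i|_{I_{F_\nu}}$ finite order, so $\psi_i|_{I_{F_\nu}} \otimes r_\tau$ has open kernel and extends to a Weil--Deligne representation via $\psi_i \otimes r$.

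The only genuine point to watch is the finite-order condition in the last step; if $\mu|_{I_{F_\nu}}$ were of infinite order on inertia, the twist would leave the realm of strong inertial types. This is precisely what restricts the argument to $\nu \nmid p$, where $\kappa$ is unramified and the issue evaporates.
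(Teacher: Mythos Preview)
Your proposal is correct and follows essentially the same route as the paper: start from Proposition~\ref{LocalLiftsnmidp}, twist each lift by the square root of the determinant discrepancy (available since $p\ge 3$), and then verify that the twist preserves the common strong inertial type. The only cosmetic difference is in this last verification: the paper first checks explicitly via Grothendieck's monodromy theorem that $N$ is unchanged and then argues that the new type is determined by the old type together with $\mu$, whereas you observe directly that $\det\rho'_{1,\nu}|_{I_{F_\nu}}=\det r_\tau=\det\rho'_{2,\nu}|_{I_{F_\nu}}$ forces $\psi_1|_{I_{F_\nu}}=\psi_2|_{I_{F_\nu}}$, which is a slightly cleaner way to reach the same conclusion.
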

\begin{proof}

    By Proposition \ref{LocalLiftsnmidp} there exist such lifts $\rho_{1,\nu}$ and $\rho_{2,\nu}$ which have the same strong inertial type. It remains to arrange the condition on the determinant. We do this by twisting by a suitable character. 

    We consider the characters $\psi:\Gamma_{F_\nu} \to \mathcal O^\times$ given by $\psi_i = \mu \cdot (\det(\rho_{i,\nu}))^{-1}$ As both $\mu$ and $\det(\rho_{i,\nu})$ are lifts of $\det(\overline{\rho_{i,\nu}}) = \overline{\chi}$ we have that $\psi_i$ is a lift of the trivial character. Hence, its image lands inside $1 + \varpi\mathcal O$. As $p \ge 3$, this allows us to take a square root of $\psi_i$. Indeed, $\sqrt{1 + x}$ is well-defined on $\varpi\mathcal O$, as its Taylor series converges for $x \in \varpi\mathcal O$. We then get a character $\phi_i$ such that $\phi_i^2 = \psi_i$. We then define $\rho'_{i,\nu} \coloneqq \rho_{i,\nu} \otimes \phi_i$. We immediately note that $\rho_{i,\nu}'$ is a lift of $\restr{\overline{\rho_i}}{\Gamma_{F_\nu}}$ and $\det(\rho'_{i,\nu}) = \det(\rho_{i,\nu})\phi_i^2 = \det(\rho_{i,\nu})\psi = \mu$, which is exactly what we wanted. 

    It remains to see how twisting by $\phi_i$ will affect the strong inertial type of $\rho_{i,\nu}$. We first note that by the virtue of being a lift of the trivial character, as explained in \cite[Lemma $2.5$]{Sho16} the character $\phi_i$ will be trivial on $\Gamma_{F_\nu^{\mathrm{tame},p}}$ and $\im(\restr{\phi_i}{I_{F_\nu}})$ will consists of $p^{a_\nu}$-th roots of unity. Let $\tau_i = (r_{\tau_i},N_{\tau_i})$ and $\tau_i' = (r_{\tau_i'},N_{\tau_i'})$ be the strong inertial types of $\rho_{i,\nu}$ and $\rho_{i,\nu}'$, respectively. Now, by Grothendieck's monodromy theorem there exists a finite-index subgroup $H_i$ of $I_{F_\nu}$ such that for $h \in H_i$ we have $\rho_{i,\nu}(h)$ is unipotent. We can use this to compute $N_{\tau_i}$. In particular $N_{\tau_i} = t_p(h)^{-1}\log(\rho_{i,\nu}(h))$ for any $h \in H$ such that $t_p(h) \neq 0$, where $t_p:I_{F_\nu} \to \mathbb Z_p$ is a surjective group homomorphism, which is unique up to multiplication by a unit. On the other hand, for $h \in H$ we have $\rho_{i,\nu}'(h^{a_\nu}) = \phi_i(h)^{a_\nu}\rho_{i,\nu}(h^{a_\nu}) = \rho_{i,\nu}(h^{a_\nu})$, which is unipotent. Choosing $h \in H$ s.t. $t_p(h) \neq 0$ we can compute $N_{\tau_i'}$. We get

    $$N_{\tau_i'} = t_p(h^{a_\nu})^{-1}\log(\rho_{i,\nu}'(h^{a_\nu})) = t_p(h^{a_\nu})^{-1}\log(\rho_{i,\nu}(h^{a_\nu})) = N_{\tau_i}$$

    \vspace{2 mm}

    Therefore, twisting by a character whose restriction to $I_{F_\nu}$ has finite image will not affect the monodromy operator of the associated Weil-Deligne representation. On the other side, as $N_{\tau_i} = N_{\tau_i'}$ we have $r_{\tau_i'} = \phi_i \cdot r_{\tau_i}$. As $\det(\rho_{i,\nu}) = \det(r_{\tau_i})$ for elements in the Weil group $W_{F_\nu}$ we have that $\restr{\phi_i}{W_{F_\nu}}$ is uniquely determined by $r_{\tau_i}$ and $\mu$. Therefore, the Weil-Deligne representation associated to $\rho_{i,\nu}'$ and consequently the strong inertial type $\tau_i'$ are uniquely determined by $\tau_i$ and $\mu$. As $\tau_1 = \tau_2$ we have get that the representations $\rho_{1,\nu}'$ and $\rho_{2,\nu}'$ have the same strong inertial type. 
    
\end{proof}

We now shift our attention to primes $\nu$ that divide $p$. At such prime we want to produce lifts $\rho_{i,\nu}$ of $\restr{\overline{\rho_i}}{\Gamma_{F_\nu}}$ which will be de Rham. By \cite{Ber02} such representations are potentially semi-stable, i.e. they become semi-stable after restricting to the absolute Galois group of a finite extension $L$ of $F_\nu$. Then, as in \cite[\S $1.3$]{Fon94} we can associate a Weil-Deligne representation to the $(\varphi,N,\Gal(L/F_\nu))$-module $D_{\mathrm{st},L}(\rho_{i,\nu}) \coloneqq (B_{\mathrm{st}} \otimes_{\mathbb Q_p} V_{\rho_{i,\nu}})^{\Gamma_L}$, where $V_{\rho_{i,\nu}}$ is the two-dimensional $E$-vector space on which $\Gamma_{F_\nu}$ acts via $\rho_{i,\nu}$. The associated Weil-Deligne representation doesn't depend on the choice of $L$ (see \cite[Lemma $2.2.1.2$]{BM02}). We use this Weil-Deligne representation to define the strong inertial type of the lifts $\rho_{i,\nu}$. In comparison to the case for primes not dividing $p$, for $\nu \mid p$ we will produce lifts $\rho_{1,\nu}$ and $\rho_{2,\nu}$ whose associated Weil-Deligne representations have the same Artin conductor, but are not necessarily of the same inertial type. For that we will need the following technical results:

\begin{lem}
\label{ArbitraryArtinConductor}

    For every integer $n \ge 1$, there exists a finite extension $\mathcal O'$ of $\mathcal O$, and a character $\psi:\Gamma_{F_\nu} \to (\mathcal O')^\times$ which is lift of the trivial character and has Artin conductor $\mathfrak f(\psi) \ge n$.
    
\end{lem}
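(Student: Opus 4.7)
The plan is to construct $\psi$ via local class field theory and the filtration on $F_\nu^\times$. Recall that the local reciprocity isomorphism $F_\nu^\times \simeq \Gamma_{F_\nu}^{\mathrm{ab}}$ identifies the unit filtration $U_m := 1 + \mathfrak m_{F_\nu}^m$ (with $U_0 := \mathcal{O}_{F_\nu}^\times$) with the image of the upper-numbering higher ramification groups. Consequently, for any continuous character $\psi \colon \Gamma_{F_\nu} \to (\mathcal O')^\times$, the Artin conductor $\mathfrak f(\psi)$ is the smallest non-negative integer $m$ for which the corresponding character of $F_\nu^\times$ is trivial on $U_m$. So it suffices to produce a continuous character of $F_\nu^\times$ valued in $1 + \mathfrak m_{\mathcal{O}'}$ that is trivial on $U_n$ but non-trivial on $U_{n-1}$.

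To build such a character, first note that for $n \ge 2$, the quotient $U_{n-1}/U_n$ is isomorphic to the additive group of the residue field $k_\nu$ via $1 + a\pi^{n-1} \mapsto \overline{a}$, where $\pi$ is a uniformizer of $F_\nu$; in particular it is a non-zero $\mathbb F_p$-vector space. I would fix any non-trivial $\mathbb F_p$-linear functional $k_\nu \to \mathbb F_p$ and compose with the inclusion $\mathbb F_p \hookrightarrow \mu_p$ to obtain a non-trivial homomorphism $\psi_0 \colon U_{n-1}/U_n \to \mu_p$. The case $n = 1$ is trivial since we may take $\psi = 1$. Next, since $U_1/U_n$ is a \emph{finite} abelian $p$-group containing $U_{n-1}/U_n$ as a subgroup, the classical extension theorem for characters of finite abelian groups (or injectivity of $\mathbb Q/\mathbb Z$) yields an extension $\widetilde{\psi}_0 \colon U_1/U_n \to \mu_{p^m}$ of $\psi_0$ for some $m \ge 1$. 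Inflating $\widetilde{\psi}_0$ to a character of $U_1$ and then extending trivially across the decomposition $F_\nu^\times \simeq \pi^{\mathbb Z} \times \mu_{F_\nu} \times U_1$ (where $\mu_{F_\nu}$ is the group of prime-to-$p$ roots of unity), one gets a continuous character $\psi \colon F_\nu^\times \to \mu_{p^m}$.

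Set $\mathcal O' := \mathcal{O}[\zeta_{p^m}]$. Then $\mu_{p^m} \subseteq 1 + \mathfrak m_{\mathcal O'}$ because $\zeta_{p^m} - 1$ generates the maximal ideal of $\mathbb Z_p[\zeta_{p^m}]$, so the character $\psi$ automatically lifts the trivial character of $\Gamma_{F_\nu}$. By construction, $\psi$ is trivial on $U_n$ and non-trivial on $U_{n-1}$, so via the local reciprocity map and the compatibility with the ramification filtration we obtain $\mathfrak f(\psi) \ge n$, as required.

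There is no substantive obstacle: all the ingredients (local class field theory, the structure of $U_m/U_{m+1}$, extension of characters of finite abelian groups, and the fact that $\mu_{p^m}$ lies in the one-units of $\mathcal{O}[\zeta_{p^m}]$) are completely standard. The only minor care needed is to verify that the chosen extension $\widetilde{\psi}_0$ is indeed non-zero on $U_{n-1}/U_n$, which is immediate since its restriction to that subgroup equals $\psi_0$.
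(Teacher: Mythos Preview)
Your argument is correct in substance but takes a different route from the paper. One small slip: for $n=1$ you propose $\psi=1$, which has conductor $0$, not $\ge 1$; simply run your $n\ge 2$ construction with $n=2$ to cover that case. Also, the reciprocity map is not literally an isomorphism $F_\nu^\times \simeq \Gamma_{F_\nu}^{\mathrm{ab}}$ but rather induces one after profinite completion; this is harmless here since you only need the bijection on finite-order characters.

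The paper instead constructs $\psi$ explicitly via cyclotomic extensions: it takes $\mathcal O' = \mathcal O[\zeta_{p^n}]$, chooses $a$ large so that $\Gal(F_\nu(\zeta_{p^a})/F_\nu)$ has a $\mathbb Z/p^n$ quotient, and maps that quotient isomorphically to $\mu_{p^n}\subset (\mathcal O')^\times$. It then computes $\mathfrak f(\psi)$ directly from the lower ramification filtration of $F_\nu(\zeta_{p^a})/F_\nu$, obtaining $\mathfrak f(\psi)=1+\tfrac{(p-1)n}{m}\ge n+1$. Your approach via local class field theory and the unit filtration $U_m$ is cleaner and more conceptual, and it yields the sharper statement $\mathfrak f(\psi)=n$ on the nose. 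Both constructions produce characters valued in $\mu_{p^m}$, so your version remains compatible with the later use of this lemma in Proposition~\ref{LocalLiftsmidp}, where the paper exploits that $\psi$ lands in $p$-power roots of unity.
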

\begin{proof}

   We let $\mathcal O'$ be the extension of $\mathcal O$ obtained by adjoining $p^n$-th roots of unity $\zeta_{p^n}$ to $\mathcal O$. On the other side, we adjoin $p^a$-th roots of unity $\mathbb \zeta_{p^a}$ to $F_\nu$. We then have $\Gal(F_\nu(\zeta_{p^a})/F_\nu) \simeq \mathbb Z/m \times \mathbb Z/p^b$, where $m$ is some integer dividing $p-1$ and $b \le a-1$ with both depending on $F_\nu$. By taking $a$ large enough we can assume that $b = n$. Therefore, $\Gal(F_\nu(\zeta_{p^a})/F_\nu)$, and subsequently $\Gamma_{F_\nu}$ have quotient isomorphic to $\mathbb Z/p^n$. We then map it isomorphically to the $p^n$-th roots of unity in $\mathcal O'$ to obtain our character $\psi$. As $\zeta_{p^n}-1$ is a factor of $p$ in $\mathcal O'$ we have that it has trivial reduction in $k'$, the residue field of $\mathcal O'$. Therefore, $\overline{\psi} = 1$, i.e. $\psi$ is a lift of the trivial character. It remains to show $\psi$ has large enough Artin conductor. 

   The extension $F_\nu(\zeta_{p^a})/F_\nu$ will be totally ramified with the factor $\mathbb Z/p^n$ corresponding to the wild inertia subgroup. Moreover, the ramification groups $G_i$ of $\Gal(F_\nu(\zeta_{p^a})/F_\nu)$ will decrease in size by a factor of $p$, and each of this drops occurs when $i$ is a power of $p$. By construction $\psi$ will be non-trivial on any non-zero ramification group. Therefore

   $$\mathfrak f(\psi) = \sum_{i=0}^\infty \frac{\mathrm{codim } \, E'(\psi)^{G_i}}{[G_0:G_i]} = 1 + \sum_{j=0}^{n-1} \,\,\,\sum_{i=p^j}^{p^{j+1}-1} \,\,\frac{1}{[G_0:G_i]} = 1 + \sum_{j=0}^{n-1} \,\,\, \sum_{i=p^j}^{p^{j+1}-1} \,\,\frac{1}{mp^{j}} = 1 + \sum_{j=0}^{n-1} \frac{(p-1)p^j}{mp^j} = 1 + \frac{(p-1)n}m$$

   \vspace{2 mm}

   As $m \mid p-1$, the Artin conductor of $\psi$ will be at least $n$.
    
\end{proof}

\begin{lem}
\label{ProductofCharConductor}

    Let $\psi,\psi':\Gamma_{F_\nu} \to \mathcal O^\times$ be finitely ramified character. Assume $\mathfrak f(\psi) > \mathfrak f(\psi')$. Then $\mathfrak f(\psi\psi') = \mathfrak f(\psi)$.

\end{lem}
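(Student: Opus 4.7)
The plan is to reduce this to a statement about higher unit groups in $F_\nu^\times$ via local class field theory, where the inequality on conductors becomes transparent.

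First I would translate both characters to the multiplicative side. Local class field theory gives an isomorphism $W_{F_\nu}^{\mathrm{ab}} \simeq F_\nu^\times$ (Artin reciprocity), under which a continuous character $\chi:\Gamma_{F_\nu} \to \mathcal O^\times$ corresponds to a continuous character $\chi^\vee$ of $F_\nu^\times$. The key fact is the well-known characterization of the Artin conductor: $\mathfrak f(\chi)$ equals the smallest integer $n \ge 0$ such that $\chi^\vee$ is trivial on the higher unit group $U^{(n)}$, where $U^{(0)} = \mathcal O_{F_\nu}^\times$ and $U^{(n)} = 1 + \mathfrak m_\nu^n$ for $n \ge 1$. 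This is the standard consequence of the compatibility of the local reciprocity map with the upper-numbering ramification filtration (together with Hasse--Arf, which applies since characters factor through abelian quotients).

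Let $n = \mathfrak f(\psi)$ and $m = \mathfrak f(\psi')$, so $n > m$. Then $\psi^\vee$ is trivial on $U^{(n)}$ but not on $U^{(n-1)}$, while $\psi'{}^\vee$ is trivial on $U^{(m)}$. Since $n - 1 \ge m$, we have $U^{(n-1)} \subseteq U^{(m)}$, so $\psi'{}^\vee$ is in particular trivial on $U^{(n-1)}$. Hence $(\psi\psi')^\vee = \psi^\vee \cdot \psi'{}^\vee$ agrees with $\psi^\vee$ on $U^{(n-1)}$, which makes it non-trivial there, so $\mathfrak f(\psi\psi') \ge n$. For the reverse inequality, both $\psi^\vee$ and $\psi'{}^\vee$ are trivial on $U^{(n)}$ (the latter because $U^{(n)} \subseteq U^{(m)}$), so $(\psi\psi')^\vee$ is trivial on $U^{(n)}$ and therefore $\mathfrak f(\psi\psi') \le n$. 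Combining the two gives $\mathfrak f(\psi\psi') = n = \mathfrak f(\psi)$.

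There is no real obstacle here: the argument is essentially the non-archimedean triangle inequality for the ``valuation" $\chi \mapsto \mathfrak f(\chi)$ on the character group of $F_\nu^\times$, and the only thing one must be careful about is the edge case $m = 0$ (where $\psi'$ is unramified, hence trivial on $U^{(0)}$, and the argument goes through verbatim). Alternatively, one can bypass class field theory and argue directly with the upper-numbering ramification filtration $\{G^u\}$: for a character $\chi$, $\mathfrak f(\chi)$ equals $1$ plus the largest $u$ for which $\chi|_{G^u} \ne 1$ (which is an integer by Hasse--Arf), and then the same bookkeeping as above—using $G^{n-1} \supseteq G^m$ when $n-1 \ge m$—yields the result.
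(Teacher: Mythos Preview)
Your proof is correct. The paper takes a slightly more direct route: it works with the lower-numbering ramification filtration $\{G_i\}$ of a common finite quotient $\Gal(F_\nu(\psi,\psi')/F_\nu)$ and the sum formula $\mathfrak f(\chi) = \sum_{i\ge 0} \mathrm{codim}\,E(\chi)^{G_i}/[G_0:G_i]$, observing that for a character this reduces to tracking the single cutoff index where $\chi|_{G_i}$ becomes trivial; since $\mathfrak f(\psi) > \mathfrak f(\psi')$ the cutoff for $\psi'$ occurs strictly earlier, so $\psi\psi'|_{G_i} = 1$ if and only if $\psi|_{G_i} = 1$. Your approach via local class field theory and higher unit groups is equally valid and arguably cleaner, since the conductor is literally defined as the integer $n$ with $\chi^\vee|_{U^{(n)}}=1$ and $\chi^\vee|_{U^{(n-1)}}\neq 1$, making the triangle-inequality step transparent; the cost is invoking the compatibility of Artin reciprocity with the ramification filtration, which the paper's argument avoids. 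Your final paragraph essentially rediscovers the paper's approach phrased in upper numbering.
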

\begin{proof}

    We can view the characters as maps from $\Gal(F_\nu(\psi,\psi')/F_\nu)$, and use its ramification groups to compute their Artin conductors. We note that $E(\psi)^{G_i}$ will have dimension $0$ or $1$. Moreover, if the dimension is $0$ for some $n$, then it will be $0$ for $i \ge n$. Therefore, from the assumption $\mathfrak f(\psi) > \mathfrak f(\psi')$ using the definition of the Artin character we deduce that there exists an integer $i$ such that $\restr{\psi'}{G_i} = 1$, but $\restr{\psi}{G_i} \neq 1$. From this we deduce that $\restr{\psi\psi'}{G_i} = 1$ if and only if $\restr{\psi}{G_i} = 1$. Therefore:

    $$\mathfrak f(\psi\psi') = \sum_{i=0}^\infty \frac{\mathrm{codim } \, E(\psi\psi')^{G_i}}{[G_0:G_i]} = \frac{\mathrm{codim } \, E(\psi)^{G_i}}{[G_0:G_i]} = \mathfrak f(\psi)$$
    
\end{proof}

\begin{cor}
    \label{TameandWild}

    Let $\psi:\Gamma_{F_\nu} \to \mathcal O^\times$ be a finitely ramified character with non-trivial image on wild inertia. Let $\chi:\Gamma_{F_\nu} \to \mathcal O^\times$ be a finitely tamely ramified character. Then $\mathfrak f(\psi\chi) = \mathfrak f(\psi)$
    
\end{cor}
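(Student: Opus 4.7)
The plan is to reduce the claim to a direct application of Lemma \ref{ProductofCharConductor}, which says that the Artin conductor of a product of two characters equals the larger of the two Artin conductors whenever they are strictly different. So the whole task is to verify the strict inequality $\mathfrak f(\psi) > \mathfrak f(\chi)$.

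First I would bound $\mathfrak f(\chi)$ from above. Since $\chi$ is tamely ramified, its restriction to the wild inertia subgroup (which coincides with the higher ramification group $G_1$ in lower numbering) is trivial. Using the standard formula
\[
\mathfrak f(\chi) = \sum_{i=0}^{\infty} \frac{\mathrm{codim}\, E(\chi)^{G_i}}{[G_0:G_i]},
\]
applied over any finite Galois extension of $F_\nu$ through which $\chi$ factors, we see that only the $i=0$ term can be non-zero, so $\mathfrak f(\chi) \le 1$.

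Next I would bound $\mathfrak f(\psi)$ from below. The hypothesis is that $\psi$ is non-trivial on wild inertia, i.e.\ non-trivial on $G_1$. Then both the $i=0$ and $i=1$ terms in the above sum contribute positively, yielding
\[
\mathfrak f(\psi) \ge 1 + \frac{1}{[G_0:G_1]} > 1.
\]
Since $\psi$ is a character, by the Hasse--Arf theorem its Artin conductor is a non-negative integer, so in fact $\mathfrak f(\psi) \ge 2$.

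Combining these two estimates, $\mathfrak f(\psi) \ge 2 > 1 \ge \mathfrak f(\chi)$, so Lemma \ref{ProductofCharConductor} (applied with $\psi' = \chi$) immediately gives $\mathfrak f(\psi\chi) = \mathfrak f(\psi)$, as desired. There is no genuine obstacle here; the only delicate point worth mentioning is the appeal to Hasse--Arf to ensure integrality of $\mathfrak f(\psi)$, which upgrades $\mathfrak f(\psi) > 1$ to $\mathfrak f(\psi) \ge 2$ and thus guarantees the strict inequality needed to invoke the previous lemma.
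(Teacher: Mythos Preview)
Your proof is correct and follows essentially the same approach as the paper: bound $\mathfrak f(\chi)\le 1$ from tame ramification, bound $\mathfrak f(\psi)>1$ from wild ramification, and invoke Lemma~\ref{ProductofCharConductor}. The appeal to Hasse--Arf is harmless but unnecessary, since $\mathfrak f(\psi)>1\ge\mathfrak f(\chi)$ already gives the strict inequality required by the lemma.
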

\begin{proof}
    
    As $\chi$ is tamely ramified we have $\mathfrak f(\chi) = 1$ or $0$, with the latter being the case if $\chi$ is unramified. On the other side, as $\psi$ is wildly ramified from the definition of the Artin conductor we have that $\mathfrak f(\psi) > 1$. Therefore, by Lemma \ref{ProductofCharConductor} we have $\mathfrak f(\psi\chi) = \mathfrak f(\psi)$.
    
\end{proof}

We now have the following key result which is based on \cite[Lemma $7.2$]{FKP22}, which in turn generalizes \cite[Lemma $6.1.6$]{BLGG12}.

\begin{prop}
\label{LocalLiftsmidp}

For each $\nu \mid p$ and $i=1,2$, there exist a finite extension $\mathcal O'$ of $\mathcal O$, and a lift $\rho_{i,\nu}:\Gamma_{F_\nu} \to \GL_2(\mathcal O')$ of $\restr{\overline{\rho_i}}{\Gamma_{F_\nu}}$ with determinant $\mu$ such that

$$\rho_{1,\nu} = \begin{pmatrix} \kappa^{r-1}\chi_1 & * \\ 0 & \chi_2\end{pmatrix} \quad \quad \quad \quad\quad \rho_{2,\nu} = \begin{pmatrix} \kappa^{r-1}\chi_1' & * \\ 0 & \chi_2'\end{pmatrix}$$

\vspace{2 mm}

\noindent where $\chi_1,\chi_2,\chi_1',\chi_2'$ are finitely ramified characters such that $\mathfrak f(\chi_1) = \mathfrak f(\chi_1')$ and $\mathfrak f(\chi_2) = \mathfrak f(\chi_2')$.

\end{prop}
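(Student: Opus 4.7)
The plan is to realize both lifts as explicit upper-triangular extensions whose diagonal characters share a common wildly ramified twist, and then to equalize Artin conductors by invoking Corollary~\ref{TameandWild}. First I would apply Lemma~\ref{ArbitraryArtinConductor} to obtain a finite extension $\mathcal O'/\mathcal O$ and a character $\psi:\Gamma_{F_\nu}\to(\mathcal O')^\times$ with $\overline{\psi}=1$ and Artin conductor $\mathfrak f(\psi)\ge 2$; in particular $\psi$ is wildly ramified on inertia. Then I would set
$$\chi_1=[\overline{\chi}\,\overline{\kappa}^{1-r}]\cdot\psi,\qquad \chi_2=\psi^{-1},\qquad \chi_1'=[\overline{\kappa}^{1-r}]\cdot\psi,\qquad \chi_2'=[\overline{\chi}]\cdot\psi^{-1},$$
where the brackets denote Teichm\"uller lifts. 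A direct check shows that each is a finitely ramified lift of the correct residual character (so that the prescribed diagonals of $\rho_{1,\nu}$ and $\rho_{2,\nu}$ reduce to the diagonals of $\overline{\rho_1}$ and $\overline{\rho_2}$), and that $\kappa^{r-1}\chi_1\cdot\chi_2=\kappa^{r-1}\chi_1'\cdot\chi_2'=\mu$ since $[\overline{\chi}\,\overline{\kappa}^{1-r}]=[\overline{\chi}]\cdot[\overline{\kappa}^{1-r}]$.

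Next I would construct the upper-triangular representations $\rho_{i,\nu}$ themselves. This amounts to lifting the given extension class of $\overline{\rho_i}$ in $H^1(\Gamma_{F_\nu},k(\overline{\chi}^{\pm 1}))$ to a class in $H^1(\Gamma_{F_\nu},\mathcal O'(\kappa^{r-1}\chi_1\chi_2^{-1}))$ for $i=1$, and analogously for $i=2$. The obstruction to such a lift lies in $H^2(\Gamma_{F_\nu},\mathcal O'(\kappa^{r-1}\chi_1\chi_2^{-1}))$ of the integral twist, and by local Tate duality and the Euler characteristic formula one checks that after a further finite base change of $\mathcal O'$ (if necessary) the reduction map on $H^1$ is surjective, producing the required extension. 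To obtain potentially crystalline and Hodge-Tate regular lifts with Hodge-Tate weights $\{0,r-1\}$ one further picks the extension class in the Bloch-Kato ``crystalline'' subspace $H^1_f$; this is non-empty because the twist $\kappa^{r-1}\chi_1\chi_2^{-1}$ has positive Hodge-Tate weight $r-1$.

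Finally I would verify the Artin-conductor equalities. Since $[\overline{\chi}\,\overline{\kappa}^{1-r}]$, $[\overline{\kappa}^{1-r}]$, and $[\overline{\chi}]$ are Teichm\"uller lifts and hence at most tamely ramified, while $\psi^{\pm 1}$ is wildly ramified with $\mathfrak f(\psi)\ge 2$, Corollary~\ref{TameandWild} immediately gives
$$\mathfrak f(\chi_1)=\mathfrak f(\chi_1')=\mathfrak f(\chi_2)=\mathfrak f(\chi_2')=\mathfrak f(\psi),$$
so in particular $\mathfrak f(\chi_1)=\mathfrak f(\chi_1')$ and $\mathfrak f(\chi_2)=\mathfrak f(\chi_2')$.

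The main obstacle I foresee is the simultaneous control of the lifted extension class: on the one hand we must land in the Bloch-Kato crystalline subspace to guarantee the de Rham condition implicit in our setup, and on the other hand we must still hit the prescribed residual extension class of $\overline{\rho_i}$. The fact that the residual extension comes from the (non-crystalline) modular $\overline{\rho_i}$ means we must verify that the crystalline $H^1_f$ already surjects onto the relevant reduction; if not, the trick is to absorb the discrepancy into a further unramified twist, which does not change Artin conductors by Corollary~\ref{TameandWild} and therefore preserves the matching.
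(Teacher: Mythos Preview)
Your approach matches the paper's in the generic case, but there is a genuine gap. The claim that ``after a further finite base change of $\mathcal O'$ the reduction map on $H^1$ is surjective'' fails precisely when $\restr{\overline{\chi}}{\Gamma_{F_\nu}}=\overline{\kappa}$ or $\overline{\kappa}^{-1}$ (which can occur locally even though $\overline{\chi}\neq\overline{\kappa}^{\pm1}$ globally). In that situation the obstruction group $H^2(\Gamma_{F_\nu},\mathcal O'(\kappa^{r-1}\chi_1\chi_2^{-1}))$ is Pontryagin dual to $H^0(\Gamma_{F_\nu},(E'/\mathcal O')(\kappa^{2-r}\chi_1^{-1}\chi_2))$, and since $\overline{\kappa^{2-r}\chi_1^{-1}\chi_2}=\overline{\kappa}\,\overline{\chi}^{\mp1}=1$, this $H^0$ is nonzero and no base change will make it vanish. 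The paper handles this by computing the image of the connecting map $\delta_0$ explicitly as the $k$-span of an additive character $\overline\alpha$, then modifying $\chi_1,\chi_2$ by further (ramified or unramified) twists so that $\overline\alpha$ lands in the annihilator of the residual extension class; this requires a case split according to whether $\restr{\overline{\rho_i}}{\Gamma_{F_\nu}}$ is tr\`es ramifi\'ee or peu ramifi\'ee. Your closing remark about ``absorbing the discrepancy into an unramified twist'' gestures at one branch of this argument but is not sufficient on its own.

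A second, smaller point: the discussion of $H^1_f$ is unnecessary and misleading. Any upper-triangular extension of $\chi_2$ by $\kappa^{r-1}\chi_1$ with $r\ge2$ and $\chi_1,\chi_2$ finitely ramified is ordinary, hence automatically potentially semistable (in fact potentially crystalline, since the diagonal characters become unramified over a finite extension). So you may lift the extension class anywhere in $H^1$; there is no need to hit a crystalline subspace, and the ``main obstacle'' you identify is not the real one.
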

\begin{proof}

    In order for $\rho_{1,\nu}$ of that form to be a lift of $\restr{\overline{\rho_1}}{\Gamma_{F_\nu}}$ we need to choose $\chi_1$ and $\chi_2$ so that $\overline{\chi_1} = \overline{\chi}\overline{\kappa}^{1-r}$, $\overline{\chi_2} = 1$, and $\chi_1\chi_2 = [\overline{\chi}\overline{\kappa}^{1-r}]$. In addition, we will ask for $\chi_1$ and $\chi_2$ to be distinct characters. We also need to choose $\chi_1'$ and $\chi_2'$ satisfying the analogous conditions, i.e. $\chi_1'$ and $\chi_2'$ are distinct finitely ramified characters such that $\overline{\chi'_1} = \overline{\kappa}^{1-r}$, $\overline{\chi'_2} = \overline{\chi}$, and $\chi_1'\chi_2' = [\overline{\chi}\overline{\kappa}^{1-r}]$

    We first consider the case $\restr{\overline{\chi}}{\Gamma_{F_\nu}} \neq \overline{\kappa}, \overline{\kappa}^{-1}$. We will show that for any choice of characters $\chi_1,\chi_2,\chi_1'$, and $\chi_2'$ that satisfy the aforementioned conditions we can produce the wanted lifts $\rho_{1,\nu}$ and $\rho_{2,\nu}$. Let $\chi_1$ and $\chi_2$ be such characters. To produce $\rho_{1,\nu}$ of the desired form that will lift $\restr{\overline{\rho_1}}{\Gamma_{F_\nu}}$ we need to show that we can lift the modulo $\varpi$ extension class $\restr{c_1}{\Gamma_{F_\nu}}$ to a $p$-adic extension class. It would suffice to show that the map $H^1(\Gamma_{F_\nu},\mathcal O(\kappa^{r-1}\chi_1\chi_2^{-1})) \to H^1(\Gamma_{F_\nu},k(\overline{\chi}))$ induced by reducing modulo $\varpi$ is surjective. We start with the short exact sequence of $\mathcal O[\Gamma_{F_\nu}]$-modules

    $$0 \longrightarrow \mathcal O(\kappa^{r-1}\chi_1\chi_2^{-1}) \xlongrightarrow{\,\cdot \varpi \, } \mathcal O(\kappa^{r-1}\chi_1\chi_2^{-1}) \longrightarrow k(\overline{\chi}) \longrightarrow 0$$

    \vspace{2 mm}

    It induces a long exact sequence on cohomology which tells us that the cokernel of the map above lies in $H^2(\Gamma_{F_\nu},\mathcal O(\kappa^{r-1}\chi_1\chi_2^{-1}))$. By Tate duality this is the Pontryagin dual of $H^0(\Gamma_{F_\nu},(E/\mathcal O)(\kappa^{2-r}\chi_1^{-1}\chi_2))$. We have $\overline{\kappa^{2-r}\chi_1^{-1}\chi_2} = \overline{\kappa}\overline{\chi}^{-1} \neq 1$, by our assumption. Therefore, the $\Gamma_{F_\nu}$-action isn't trivial even on elements in $\varpi^{-1}\mathcal O$, so it has no fixed points. We conclude that $H^0(\Gamma_{F_\nu},(E/\mathcal O)(\kappa^{r-1}\chi_1\chi_2^{-1})) = 0$, which gives us the desired surjection. A similar argument, relying on the fact that $\overline{\kappa}\overline{\chi} \neq 1$ tells us that we produce a lift $\rho_{2,\nu}$ of $\restr{\overline{\rho_2}}{\Gamma_{F_\nu}}$ for any choice of $\chi_1'$ and $\chi_2'$ that satisfy the conditions. 

    By taking Teichm\"uller lifts, i.e. $\chi_1 = [\overline{\chi}\overline{\kappa}^{1-r}]$, $\chi_2 = 1$, $\chi_1' = [\overline{\kappa}^{1-r}]$, and  $\chi_2' = [\overline{\chi}]$ it is easy to see that characters satisfying the specified conditions exist. Now, from Lemma \ref{ArbitraryArtinConductor} by enlarging $\mathcal O$, but still keeping the same notationa, we can produce a finitely ramified character $\psi:\Gamma_{F_\nu} \to \mathcal O^\times$ which is a lift of the trivial character and has Artin conductor $\mathfrak f(\psi) \ge 2$. We then replace $\chi_1$ with $\chi_1\psi$, $\chi_2$ with $\chi_2\psi^{-1}$, $\chi_1'$ with $\chi_1'\psi$, and $\chi_2'$ with $\chi_2'\psi^{-1}$. As $\overline{\psi} = 1$ this will not affect the reductions modulo $\varpi$. Clearly, the characters are still distinct and the determinant of the lifts stays the same. Also, as all the characters are finitely ramified the same will be true for their products. Thus, these twist still will satisfy the above conditions and we can produce the wanted lifts $\rho_{1,\nu}$ and $\rho_{2,\nu}$. Moreover, as each of the residual characters are valued in $k^\times$, their Teichm\"uller lifts are all tamely ramified. On the other side, $\psi$ is wildly ramified by construction, so by Corollary \ref{TameandWild} we have that $\mathfrak f(\chi_1\psi) = \mathfrak f(\chi_2\psi^{-1}) = \mathfrak f(\chi_1'\psi) = \mathfrak f(\chi_2'\psi^{-1}) = \mathfrak f(\psi) \ge 2$. 

    Now, suppose that $\restr{\overline{\chi}}{\Gamma_{F_\nu}} = \overline{\kappa}$. As seen in the previous case, for this $\overline{\chi}$ we can always produce a $p$-adic lift of $\restr{\overline{\rho_2}}{\Gamma_{F_\nu}}$ for any choice of $\chi_1'$ and $\chi_2'$ that satisfy the conditions stated at the beginning. On the other side, this is not always possible for $\overline{\rho_1}$ and we have to choose the characters $\chi_1$ and $\chi_2$ more carefully. We make a preliminary choice of $\chi_1 = [\overline{\chi}\overline{\kappa}^{1-r}]$ and $\chi_2 = 1$, which will be subject to a change. We remark that if $r=2$, then both $\chi_1$ and $\chi_2$ will be the trivial character, which is a situation we want to avoid. In this case we can take $\chi_1$ to be an unramified character that sends $\sigma_\nu$ to any non-trivial elements of $1+\varpi\mathcal O$, and $\chi_2$ to be its inverse. Clearly, $\chi_1$ and $\chi_2$ still satisfy the desired conditions. Now, let $L$ be the span of the class $\restr{c_1}{\Gamma_{F_\nu}}$ inside $H^1(\Gamma_{F_\nu},k(\overline{\kappa}))$, and let $H$ be the subspace of $H^1(\Gamma_{F_\nu},k)$ that annihilates $L$ under the Tate duality pairing. As in the previous case we have an exact sequence

    $$\cdots \longrightarrow H^1(\Gamma_{F_\nu},\mathcal O(\kappa^{r-1}\chi_1\chi_2^{-1})) \longrightarrow H^1(\Gamma_{F_\nu},k(\overline{\kappa})) \xlongrightarrow{\, \delta_1 \, } H^2(\Gamma_{F_\nu},\mathcal O(\kappa^{r-1}\chi_1\chi_2^{-1})) \longrightarrow \cdots$$

    \vspace{2 mm}

    As in the previous case we want to show that $\restr{c_1}{\Gamma_{F_\nu}}$ lies in the image of the first map. By the exactness we need to show that $\delta_1(L) = 0$. On the other hand, we consider the short exact sequence of $\mathcal O[\Gamma_{F_\nu}]$-modules

    $$0 \longrightarrow k \longrightarrow (E/\mathcal O)(\kappa^{2-r}\chi_1^{-1}\chi_2) \xlongrightarrow{\,\cdot \varpi \, } (E/\mathcal O)(\kappa^{2-r}\chi_1^{-1}\chi_2) \longrightarrow 0 $$

    \vspace{2 mm}

    \noindent which gives us the connecting homomorphism $\delta_0:H^0(\Gamma_{F_\nu},(E/\mathcal O)(\kappa^{2-r}\chi_1^{-1}\chi_2)) \to H^1(\Gamma_{F_\nu},k)$. We note that $\delta_1(L) = 0$ is equivalent to $\im \delta_0 \subseteq H$. Indeed, $\delta_1(L) = 0$ is equivalent to $H^0(\Gamma_{F_\nu},(E/\mathcal O)(\kappa^{2-r}\chi_1^{-1}\chi_2))$ being the annihilator of $\delta_1(L)$, which in turn is equivalent to $\im \delta_0$ being inside $H$, the annihilator of $L$. Let $n \ge $1 be the largest integer such that $\kappa^{2-r}\chi_1^{-1}\chi_2 \pmod{\varpi^n}$ is non-trivial. As $\kappa^{2-r}\chi_1^{-1}\chi_2$ is a non-trivial lift of the trivial character such positive integer exists. Indeed, this character is non-trivial, since if that was the case as $\chi_1$ and $\chi_2$ are finitely ramified characters we must have that $r=2$ and $\chi_1 = \chi_2$, which is something we avoid by our initial modifications. Thus, we can write $\kappa^{2-r}\chi_1^{-1}\chi_2 = 1 + \varpi^n\alpha$ for some function $\alpha:\Gamma_{F_\nu} \to \mathcal O$. The choice of the integer $n$ means that $\overline{\alpha} \coloneqq \alpha \pmod{\varpi}$ is non-zero. Immediately from its definition we note that $\overline{\alpha}:\Gamma_{F_\nu} \to k$ is an additive character. We now twist $\chi_1$ and $\chi_2$ by characters so that $\overline{\alpha}$ is ramified. We remark that we perform this step even if $\overline{\alpha}$ is already ramified. By enlarging $\mathcal O$, as in Lemma \ref{ArbitraryArtinConductor} we get a finitely wildly ramified character $\psi:\Gamma_{F_\nu} \to (\mathcal O')^\times$, which is a lift of the trivial character. We can write $\psi = 1 + \varpi'^m\psi'$, where $\varpi'$ is a uniformizer of $\mathcal O'$ and  $\overline{\psi'}:\Gamma_{F_\nu} \to k'$ is ramified. We can moreover assume that $m < n\cdot v_{\mathcal O'}(\varpi)$. Indeed, by the construction in Lemma \ref{ArbitraryArtinConductor} we have that $m = v_{\mathcal O'}(\zeta_{p^a}-1)$, where $a$ is the integer such that $\psi$ was constructed to satisfy the property $\mathfrak f(\psi) \ge a$. As $\mathcal O'$ is obtained from $\mathcal O$ by adjoining $p$-th power roots of unity this valuation will not increase. On the other side, the valuation of $\varpi$ will increase, which allows us to assume that $m < n \cdot v_{\mathcal O'}(\varpi)$ after possibly enlarging $\mathcal O$. We then replace $\chi_1$ with $\chi_1\psi^{-1}$ and $\chi_2$ with $\chi_2\psi$. Then

    $$\kappa^{2-r}(\chi_1\psi^{-1})^{-1}(\chi_2\psi) = \kappa^{2-r}\chi_1^{-1}\chi_2\psi^2 = (1 + \varpi^n\alpha)(1 + \varpi'^m\psi')^2 = 1 + \varpi'^m(2\psi' + \varpi'(\cdots))$$

    \vspace{2 mm}

    \noindent where we used that $m < n \cdot v_{\mathcal O'}(\varpi)$ in the last equality. Therefore, the $k'$-valued additive character associated to $\kappa^{2-r}(\chi_1\psi^{-1})^{-1}(\chi_2\psi)$ will be $2\overline{\psi'}$, which is ramified. In order not to overburden the notation we will continue writing $\chi_1$ and $\chi_2$ even after twisting, $\mathcal O$ for $\mathcal O'$, and $\overline{\alpha}$ for the now ramified $k$-valued additive character. 

    We now note that the image of $\delta_0$ is the span of $\overline{\alpha} \in \Hom(\Gamma_{F_\nu},k) = H^1(\Gamma_{F_\nu},k)$. Unpacking the definition of the connecting homomorphism $\delta_0$ and identifying $k$ with $(\varpi^{-1}\mathcal O)/\mathcal O$, for $c \in H^0(\Gamma_{F_\nu},(E/\mathcal O)(\kappa^{2-r}\chi_1^{-1}\chi_2))$ we have that $(\delta_0(c))(\sigma) = \overline{\varpi^nc} \cdot \overline{\alpha}(\sigma)$. We note that as $\kappa^{2-r}\chi_1^{-1}\chi_2$ is non-trivial modulo $\varpi^n$ we must have that $c \in \varpi^{-n}\mathcal O$, hence $\varpi^nc \in \mathcal O$, and it makes sense to talk about its reduction modulo $\varpi$. Using this observation it is enough to show that we can choose $\chi_1$ and $\chi_2$, and consequently $\overline{\alpha}$ such that the latter is contained in $H$. Suppose that this is not the case for the $\chi_1$ and $\chi_2$ we have initially chosen and modified above, as otherwise we are done. We note that then $L \neq 0$ and it has to be a line in $H^1(\Gamma_{F_\nu},k(\overline{\kappa}))$, and $H$ will be a hyperplane in $H^1(\Gamma_{F_\nu},k)$. We now consider $2$ cases:

    \begin{itemize}
        \item Suppose that $\restr{\overline{\rho_1}}{\Gamma_{F_\nu}}$ is tr\'es ramifi\'e. This means that $H$ doesn't contain the unramified line in $H^1(\Gamma_{F_\nu},k)$. For $\overline{x} \in k$ we write $u_{\overline{x}}:\Gamma_{F_\nu} \to k$ for the unramified additive character that sends $\sigma_\nu$ to $\overline{\chi}$. As $H$ is a hyperplane that doesn't contain neither $\overline{\alpha}$, nor the unramified line we can find a unique $\overline{x} \neq 0$ such that $\overline{\alpha} + u_{\overline{\chi}} \in H$. We let $\overline{y} = \overline{x}/2$, and lift $\overline{y}$ to $y \in \mathcal O$. We then replace $\chi_1$ with $\chi_1u_{(1+\varpi^ny)^{-1}}$ and $\chi_2u_{1+\varpi^ny}$, where $u_x:\Gamma_{F_\nu} \to \mathcal O^\times$ is the unramified multiplicative character that sends $\sigma_\nu$ to $x$. Clearly, these characters will still satisfy the conditions stated at the beginning. Now

        $$\kappa^{2-r}(\chi_1u_{(1+\varpi^ny)^{-1}})^{-1}(\chi_2u_{1+\varpi^ny}) = \kappa^{2-r}\chi_1^{-1}\chi_2(u_{1+\varpi^ny})^2 = (1 + \varpi^n\alpha)(u_{1+\varpi^ny})^2$$

        \vspace{2 mm}

        We note that this character is still trivial modulo $\varpi^{n-1}$. On the other side, as $\overline{\alpha}$ is ramified it will be non-trivial modulo $\varpi^n$. Hence, we can express it as $1+\varpi^n\alpha'$ with $\overline{\alpha'}$ being a non-trivial $k$-valued additive character. Restricting to the inertia subgroup $I_{F_\nu}$ we get that $\overline{\alpha'} = \overline{\alpha} = \overline{\alpha'} + u_{\overline{x}}$. Evaluating at $\sigma_\nu$ we get

        $$1+ \varpi^n\alpha'(\sigma_\nu) = (1 + \varpi^n\alpha(\sigma_\nu))(1+\varpi^ny)^2 = 1 + \varpi^n(\alpha(\sigma_\nu) + 2y + \varpi^ny^2 + \varpi^{2n}y^2\alpha(\sigma_\nu))$$

        \vspace{2 mm}

        Thus, $\overline{\alpha'}(\sigma_\nu) = \overline{\alpha}(\sigma_\nu) + 2\overline{y} = \overline{\alpha}(\sigma_\nu) + \overline{x} = \overline{\alpha}(\sigma_\nu) + u_{\overline{x}}(\sigma_\nu)$. We deduce that $\overline{\alpha'} = \overline{\alpha} + u_{\overline{x}}$, which is an element of $H$ by our choice of $\overline{x}$. 

        \item Suppose that $\restr{\overline{\rho_1}}{\Gamma_{F_\nu}}$ is peu ramifi\'e. This means that $H$ contains the unramified line in $H^1(\Gamma_{F_\nu},k)$. By making a ramified extension of $\mathcal O$, if necessary, we can assume that $n \ge 2$. We now replace $\chi_1$ with $\chi_1u_{(1+\varpi)^{-1}}$ and $\chi_2u_{1+\varpi}$. These new characters will still have the wanted properties. Then

        $$k^{2-r}(\chi_1u_{(1+\varpi)^{-1}})^{-1}(\chi_2u_{1+\varpi}) = \kappa^{2-r}\chi_1^{-1}\chi_2(u_{1+\varpi})^2 = (1+\varpi^n\alpha)(u_{1+\varpi})^2$$

        \vspace{2 mm}

        As $\kappa^{2-r}\chi_1^{-1}\chi_2$ is trivial modulo $\varpi^2$, while $u_{1+\varpi}$ is not we can express the character as $1+\varpi\alpha'$ with $\overline{\alpha'}$ a non-trivial $k$-valued additive character on $\Gamma_{F_\nu}$. Moreover, $\overline{\alpha'}$ will be unramified, hence contained in $H$.
        
    \end{itemize}

    Summarizing, we can find characters $\chi_1$ and $\chi_2$ satisfying the wanted properties, which are moreover products of tamely ramified characters and $\psi$, for which we get a $p$-adic lift $\rho_{1,\nu}$ of the desired form. We can then take $\chi_1' = [\overline{\kappa}^{1-r}]\psi$ and $\chi_2' = [\overline{\chi}]\psi^{-1}$. As noted above, since $\restr{\overline{\chi}}{\Gamma_{F_\nu}} \neq \overline{\kappa}^{-1}$ we can produce the lift $\rho_{2,\nu}$ using this choice of $\chi_1'$ and $\chi_2'$. By Corollary \ref{TameandWild} we note that $\mathfrak f(\chi_1) = \mathfrak f(\chi_2) = \mathfrak f(\chi_1') = \mathfrak f(\chi_2') = \mathfrak f(\psi) \ge 2$. 

    Finally, we consider the case $\restr{\overline{\chi}}{\Gamma_{F_\nu}} = \overline{\kappa}^{-1}$. We are in the same situation as in the previous case, but with the roles of $\overline{\rho_1}$ and $\overline{\rho_2}$ reversed. Writing $\overline{\rho_2} = \overline{\chi} \otimes \begin{pmatrix} \overline{\chi}^{-1} & * \\ 0 & 1\end{pmatrix}$, we can produce a lift of the second factor as in the previous case, replacing $\overline{\chi}$ with $\overline{\chi}^{-1}$ in the argument. We then twist it by $[\overline{\chi}]$ to produce a lift $\rho_{2,\nu}$. As previously, $\chi_1'$ and $\chi_2'$ will be products of tamely ramified characters and a wildly ramified character $\psi$ produced by Lemma \ref{ArbitraryArtinConductor}. As $\restr{\overline{\chi}}{\Gamma_{F_\nu}} \neq \overline{\kappa}$ we can take $\chi_1' = [\overline{\chi}\overline{\kappa}^{1-r}]\psi$ and $\chi_2 = \psi^{-1}$ and produce a lift $\rho_{1,\nu}$. Again, $\mathfrak f(\chi_1) = \mathfrak f(\chi_2) = \mathfrak f(\chi_1') = \mathfrak f(\chi_2') = \mathfrak f(\psi) \ge 2$ by Corollary \ref{TameandWild}
    
\end{proof}

We take $\rho_{1,\nu}$ and $\rho_{2,\nu}$ to be local $p$-adic lifts produced by the proposition. By enlarging $\mathcal O$, if needed, we can assume that they are all valued in $\GL_2(\mathcal O)$. As we took $r \ge 2$, $\chi_1 \neq \chi_2$ and $\chi_1' \neq \chi_2'$ we have that $\rho_{i,\nu}$ is an ordinary potentially crystalline lift of $\restr{\overline{\rho_i}}{\Gamma_{F_\nu}}$ with Hodge-Tate weight $\{0,r-1\}$ and determinant $\mu$. As the Hodge-Tate weights are distinct these lifts are Hodge-Tate regular and therefore they satisfy the fourth bullet point of Assumptions \ref{GeneralAssumptions}. Moreover, $\rho_{1,\nu}$ and $\rho_{2,\nu}$ have the same Hodge type \textbf{v}. As mentioned above $\rho_{1,\nu}$ and $\rho_{2,\nu}$ might not have the same strong inertial type at primes above $p$. The next result tells us that their associated Weil-Deligne representations will have the same Artin conductor. 

\begin{prop}
\label{WDArtinConductor}

Let $\mathrm{WD}(\rho_{i,\nu})$ be the Weil-Deligne representation associated to the $\rho_{i,\nu}$. Then 

$$\mathfrak f(\mathrm{WD}(\rho_{1,\nu})) = \mathfrak f(\mathrm{WD}(\rho_{2,\nu}))$$
    
\end{prop}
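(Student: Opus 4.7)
My plan is to reduce the computation of $\mathfrak{f}(\mathrm{WD}(\rho_{i,\nu}))$ to the Artin conductors of the diagonal characters, by exploiting (i) that both $\rho_{i,\nu}$ are ordinary potentially crystalline, (ii) the cyclotomic character drops out of the Weil--Deligne functor, and (iii) the diagonal characters constructed in Proposition~\ref{LocalLiftsmidp} are all wildly ramified, which forces the monodromy contribution to vanish.

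First I would recall the standard formula for the Artin conductor of a Weil--Deligne representation $(V, r, N)$ of $\Gamma_{F_\nu}$:
$$\mathfrak{f}(V,r,N) = \mathfrak{f}(V,r) + \bigl(\dim V^{I_{F_\nu}} - \dim (V^{I_{F_\nu}})^{N=0}\bigr),$$
where $\mathfrak{f}(V,r)$ is the Artin conductor of the underlying Galois representation (which depends only on the semisimplification). Since each $\rho_{i,\nu}$ is upper triangular with potentially crystalline characters on the diagonal, the semisimplification of $\mathrm{WD}(\rho_{i,\nu})$ is the direct sum of the Weil--Deligne characters attached to the diagonal entries. The Weil--Deligne functor kills the restriction of the cyclotomic character to inertia (as $\kappa$ is crystalline), so for a character of the form $\kappa^{r-1}\chi$ with $\chi$ finitely ramified, the associated Weil--Deligne character restricted to $I_{F_\nu}$ is simply $\chi\vert_{I_{F_\nu}}$, and its Artin conductor equals $\mathfrak{f}(\chi)$.

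Applying this to $\rho_{1,\nu}$ and $\rho_{2,\nu}$ gives
$$\mathfrak{f}\bigl(\mathrm{WD}(\rho_{1,\nu})^{\mathrm{ss}}\bigr) = \mathfrak{f}(\chi_1) + \mathfrak{f}(\chi_2), \qquad \mathfrak{f}\bigl(\mathrm{WD}(\rho_{2,\nu})^{\mathrm{ss}}\bigr) = \mathfrak{f}(\chi_1') + \mathfrak{f}(\chi_2'),$$
and the equality of the two right-hand sides is immediate from the properties $\mathfrak{f}(\chi_1) = \mathfrak{f}(\chi_1')$ and $\mathfrak{f}(\chi_2) = \mathfrak{f}(\chi_2')$ guaranteed by Proposition~\ref{LocalLiftsmidp}.

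It then remains to show that the monodromy contribution $\dim V^{I_{F_\nu}} - \dim (V^{I_{F_\nu}})^{N=0}$ vanishes for both $V = \mathrm{WD}(\rho_{i,\nu})$. The key observation is that in the construction of Proposition~\ref{LocalLiftsmidp} each of $\chi_1, \chi_2, \chi_1', \chi_2'$ is obtained as a twist of a Teichm\"uller lift by the wildly ramified character $\psi$ produced by Lemma~\ref{ArbitraryArtinConductor}, and hence (by Corollary~\ref{TameandWild}) all four characters have strictly positive Artin conductor. In particular, none of their restrictions to $I_{F_\nu}$ is trivial. Since $V^{\mathrm{ss}} = \mathrm{WD}(\kappa^{r-1}\chi_1^{(i)}) \oplus \mathrm{WD}(\chi_2^{(i)})$ is a direct sum of two characters that are both non-trivial on inertia, we obtain $V^{I_{F_\nu}} = 0$, and the monodromy term is automatically zero regardless of whether $N_i$ vanishes. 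Combining the three steps yields $\mathfrak{f}(\mathrm{WD}(\rho_{1,\nu})) = \mathfrak{f}(\mathrm{WD}(\rho_{2,\nu}))$. The only subtlety I foresee is making clean the passage from the $p$-adic characters $\kappa^{r-1}\chi_i^{(j)}$ to their Weil--Deligne images and verifying that the cyclotomic factor truly does not contribute; this is standard but worth spelling out precisely, since the rest of the argument is essentially bookkeeping.
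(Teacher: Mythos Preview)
Your proposal is correct and follows essentially the same route as the paper: compute the restriction of $\mathrm{WD}(\rho_{i,\nu})$ to inertia as $\chi_1^{(i)} \oplus \chi_2^{(i)}$ (after dropping the crystalline $\kappa^{r-1}$ factor), then use additivity of the Artin conductor together with the conclusion of Proposition~\ref{LocalLiftsmidp}. The only difference is in handling the monodromy term: the paper observes directly that $N=0$ because both $\rho_{i,\nu}$ are potentially crystalline, so the two correction terms cancel trivially; you instead argue that $V^{I_{F_\nu}}=0$ because both diagonal characters are wildly ramified, which also kills the correction term regardless of $N$. Your route is slightly more roundabout (you don't need the wild ramification once you know $N=0$), but both are valid, and the paper also spells out carefully via $D_{\mathrm{st},L}$ the passage you flag as the main subtlety.
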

\begin{proof}

    As both lifts $\rho_{1,\nu}$ and $\rho_{2,\nu}$ are potentially crystalline the monodromy operator $N$ in the respective associated Weil-Deligne representations will be $0$. Therefore, we have $\mathrm{WD}(\rho_{i,\nu}) = (r_{i,\nu},0)$ for $i=1,2$, where $r_{i,\nu}: W_{F_\nu} \to \GL_2(\overline{E})$ is a representation whose restriction to $I_{F_\nu}$ has open kernel.
    
    For the lift $\rho_{1,\nu}$ we have an exact sequence $0 \to E(\kappa^{r-1}\chi_1) \to V_{\rho_{1,\nu}} \to E(\chi_2) \to 0$ for some finitely ramified characters $\chi_1$ and $\chi_2$. The functor $D_{\mathrm{st},L}$ doesn't depend on the field $L$ as long as $\restr{\rho_{i,\nu}}{\Gamma_L}$ is a semi-stable representation. In particular, we take $L$ such that the restrictions $\restr{\chi_1}{\Gamma_L}$ and $\restr{\chi_2}{\Gamma_L}$ are unramified. The functor $D_{\mathrm{st},L}$ is exact in the category of semi-stable representations, so as $\restr{\rho_{1,\nu}}{\Gamma_L}$ is semi-stable we get a short exact sequence

    $$0 \longrightarrow D_{\mathrm{st},L}(E(\kappa^{r-1}\chi_1)) \longrightarrow D_{\mathrm{st},L}(V_{\rho_{1,\nu}}) \longrightarrow D_{\mathrm{st},L}(E(\chi_2)) \longrightarrow 0$$

    \vspace{2 mm}

    We note that $D_{\mathrm{st},L}(\bullet)$ will be a free $(L_0 \otimes_{\mathbb Q_p} E)$-module, where $L_0$ is the maximal unramified subextension in $L/\mathbb Q_p$. Moreover, the $\Gal(L/F_\nu)$-action on $D_{\mathrm{st},L}(\bullet)$ will be $E$-linear, but $L_0$-semilinear. Restricting to the inertia subgroup of $\Gal(L/F_\nu)$ we equip $D_{\mathrm{st},L}(\bullet)$ with an $I_{F_\nu}$-action by letting $I_L$ act trivially. As $L_0/\mathbb Q_p$ is unramified this action will be trivial on $L_0$, and hence $(L_0 \otimes_{\mathbb Q_p} E)$-linear. Taking larger $E$, if needed, we can assume that $E$ contains all the embedding of $L_0$ into $\overline{E}$. Thus, from $L_0 \otimes_{\mathbb Q_p} E \simeq \prod_{\alpha: L_0 \hookrightarrow E} E$ we get a decomposition $D_{\mathrm{st},L}(\bullet) = \prod_{\alpha:L_0 \hookrightarrow E} D_\alpha(\bullet)$. Then, the $E[I_{F_\nu}]$-module $D_\alpha(\bullet)$, which by \cite[Lemma $2.2.1.2$]{BM02} is independent of the choice of $\alpha$, is exactly $\restr{r}{I_{F_\nu}}$, where $r:W_{F_\nu} \to \GL_2(\overline{E})$ is the representation appearing in $\mathrm{WD}(\bullet)$.

    As the $I_{F_\nu}$-action factors through a finite quotient $I_{F_\nu}/I_L$, by Maschke's theorem the above sequence splits when restricted to $I_{F_\nu}$. Therefore, $D_{\alpha}(V_{\rho_{1,\nu}}) \simeq D_\alpha(E(\kappa^{r-1}\chi_1)) \oplus D_\alpha(E(\chi_2))$, as $E[I_{F_\nu}]$-modules. We will now describe these $1$-dimensional spaces. We have $D_{\mathrm{st},L}(E(\kappa^{r-1}\chi_1)) = (B_{\mathrm{st}} \otimes_{\mathbb Q_p} E(\kappa^{r-1}\chi))^{\Gamma_L} \simeq L_0t^{1-r} \otimes_{\mathbb Q_p} E$, where $t$ is a uniformizer of $B_{\dR}^+$ on which $\Gamma_L$ acts as the cyclotomic character. This action cancels out the action of $\kappa^{r-1}$ on the second factor. Hence, the $\Gal(L/F_\nu)$-action on it will be given by multiplication by $\chi_1$. Therefore $D_{\mathrm{st},L}(E(\kappa^{r-1}\chi_1))$ as a $(L_0 \otimes_{\mathbb Q_p} E)$-module endowed with a $I_{F_\nu}$-action will be isomorphic to $L_0 \otimes_{\mathbb Q_p} E(\chi_1)$. In particular, $D_{\alpha}(E(\kappa^{r-1}\chi)) \simeq E(\chi_1)$. A similar computation tells us that $D_{\alpha}(E(\chi_2)) \simeq E(\chi_2)$. Thus, $D_\alpha(V_{\rho_{1,\nu}}) \simeq E(\chi_1) \oplus E(\chi_2)$, and $\restr{r_{1,\nu}}{I_{F_\nu}} = \chi_1 \oplus \chi_2$. Analogously, $\restr{r_{2,\nu}}{I_{F_\nu}} = \chi_1' \oplus \chi_2'$. 

    Following \cite[\S $8.12$]{Del73}, the Artin conductor $\mathfrak f(\mathrm{WD}(\rho_{i,\nu}))$ of the Weil-Deligne representation $\mathrm{WD}(\rho_{i,\nu})$ is given by $\mathfrak f(r_{i,\nu}) + \dim V_{r_{i,\nu}}^{I_{F_\nu}} - \dim V_{r_{i,\nu},N}^{I_{F_\nu}}$, where $V_{r_{i,\nu},N}$ is the subspace of $V_{r_{i,\nu}}$ on which $N$ acts as the zero map. In our case $N=0$ so the last two terms cancel out. Therefore

    $$\mathfrak f(\mathrm{WD}(\rho_{1,\nu})) = \mathfrak f(r_{1,\nu}) =  \sum_{i=0}^\infty \frac{\mathrm{codim } \, V_{r_{1,\nu}}^{G_i}}{[G_0:G_i]} = \sum_{i=0}^\infty \frac{\mathrm{codim } \, E(\chi_1)^{G_i} + \mathrm{codim } \, E(\chi_2)^{G_i}}{[G_0:G_i]} = \mathfrak f(\chi_1) + \mathfrak f(\chi_2)$$

    \vspace{2 mm}

    Similarly, $\mathfrak f(\mathrm{WD}(\rho_{2,\nu})) = \mathfrak f(\chi_1') + \mathfrak f(\chi_2')$. As $\chi_1$ and $\chi_1'$, and $\chi_2$ and $\chi_2'$ have the same Artin conductors we deduce that $\mathfrak f(\mathrm{WD}(\rho_{1,\nu})) = \mathfrak f(\mathrm{WD}(\rho_{2,\nu}))$.
 
\end{proof}

\begin{rem}
\label{CrystallineLifts}

    When $F = \mathbb Q$ we can write $\overline{\chi} = \overline{\kappa}^{r-1}\overline{\chi_0}$ for some unramified character $\overline{\chi_0}$. With this choice of $r$ by using Proposition $\ref{LocalLiftsmidp}$ we can produce a crystalline lift of $\restr{\overline{\rho_1}}{\Gamma_{\mathbb Q_p}}$ by simply taking $\chi_1 = [\overline{\chi_0}]$ and $\chi_2 = 1$, avoiding the need to involve the ramified character $\psi$. However, in general Proposition \ref{LocalLiftsmidp} doesn't simultaneously produce a crystalline lift for $\restr{\overline{\rho_2}}{\Gamma_{\mathbb Q_p}}$ for this choice of $r$. Our inability to produce crystalline lifts of $\restr{\overline{\rho_1}}{\Gamma_{\mathbb Q_p}}$ and $\restr{\overline{\rho_2}}{\Gamma_{\mathbb Q_p}}$ with the same Hodge-Tate weights means that the common level $N$ of the modular forms associated to $\overline{\rho_1}$ and $\overline{\rho_2}$ will be divisible by $p$.

    On the other hand, if we consider the case when both $\overline{\rho_1}$ and $\overline{\rho_2}$ have the same shape, as in Remark \ref{SameShape}, then for this choice of $r$ we can produce crystalline lifts of $\restr{\overline{\rho_1}}{\Gamma_{\mathbb Q_p}}$ and $\restr{\overline{\rho_2}}{\Gamma_{\mathbb Q_p}}$ with the same Hodge-Tate weights. This means that we can produce local lifts at $p$ which will not only have the same Artin conductor, but the same strong inertial type, which in this case turns out to be the trivial one. Therefore, $\overline{\rho_1}$ and $\overline{\rho_2}$ will be modular of the same weight $r$, the same Neben character $[\overline{\chi_0}]$, and the same level $N$, which moreover will not be divisible by $p$.
    
\end{rem}

\subsection{Main Result}

The representations $\overline{\rho_1},\overline{\rho_2}:\Gamma_{F,S} \to \GL_2(k)$ satisfy Assumptions \ref{GeneralAssumptions}. Thus, we can produce $p$-adic lifts $\rho_1$ and $\rho_2$, which are lifts of $\overline{\rho_1}$ and $\overline{\rho_2}$, respectively, as in Theorem \ref{padicLift}. When $F = \mathbb Q$ by the results of Skinner-Wiles (\cite[Theorem A]{SW99}) and Lue Pan (\cite[Theorem $7.1.1$]{Pan18}) we know that each of these lifts will be modular, i.e. will be isomorphic to a representation attached to some modular form. Our goal is to show that $\rho_1$ and $\rho_2$ are associated to newforms of the same level and the same weight. The level of the newforms will be determined by the Artin conductor of the Weil-Deligne representations $\mathrm{WD}(\restr{\rho_i}{\Gamma_{\mathbb Q_\ell}})$ for each prime $\ell$ of $\mathbb Q$, while the weight by the Hodge-Tate weights of $\restr{\rho_i}{\mathbb Q_p}$. We now analyze how each of these parameters is affected by the lifting method. We remark that even though our motivation comes from modularity results over $\mathbb Q$ most of the results can be proven for totally real number fields $F$, where in the absence of modularity results we can interpret them in terms of the local behavior of the lifts $\rho_1$ and $\rho_2$. We prove them in full generality. We will only need to restrict to the case $F = \mathbb Q$ when we invoke Ramanujan's conjecture to control the behavior of the lifts at the primes where we allow extra ramification during the lifting process.

We recall that the lifting method consisted of two parts. The doubling method produces lift of $\overline{\rho_1}$ and $\overline{\rho_2}$ modulo high enough power of $\varpi$. As seen in Theorem \ref{LiftingModn} it allows us to precisely describe the local behavior of the lifts $\rho_{1,\nu}$ and $\rho_{2,\nu}$ at each prime in a finite set $S'$. On the other side, the relative deformation method produces $p$-adic lifts $\rho_1$ and $\rho_2$ by allowing extra ramification at some finite set of primes $Q$. Unlike in the doubling method we can't exactly prescribe the local behavior of these lifts. Instead, for each prime $\nu \in S' \cup Q$ and and for all large enough integers $n$ we define a class of lifts $D_{i,n,\nu} \subseteq \Lift_{\restr{\overline{\rho_i}}{\Gamma_{F_\nu}}}(\mathcal O/\varpi^n)$. For these classes of lifts the fibers $D_{i,n+r_i,\nu} \to D_{i,n,\nu}$ are stable under the preimages in $Z^1(\Gamma_{F_\nu},\rho_{r_i}(\mathfrak g^{\der}))$ of the local conditions $L_{i,r_i,\nu}$, where $r_i$ are the integers coming from Proposition \ref{1-0Annihilation}. We eventually have that $\restr{\rho_i}{\Gamma_{F_\nu}} \in D_{i,\nu} \coloneqq \varprojlim D_{i,n,\nu} \subseteq \Lift_{\restr{\overline{\rho_i}}{\Gamma_{F_\nu}}}(\mathcal O)$. Our goal is to show that the local behavior of lifts inside $D_{i,\nu}$ for each $\nu \in S' \cup Q$ and $i=1,2$ is uniform. The classes $D_{i,n,\nu}$ are defined in multitude of ways, mainly by \cite[Lemma $3.5$]{FKP21} and \cite[Proposition $4.7$]{FKP21}. Therefore, we will need to prove the claim on a case-by-case basis. 

\begin{itemize}

\item We first note that by construction, $\rho_1$ and $\rho_2$ will be unramified away from $S' \cup Q$. Therefore, at each such prime the Weil-Deligne representations $\mathrm{WD}(\restr{\rho_i}{\Gamma_{F_\nu}})$ will be unramified with monodromy operator $N=0$. Hence, $\mathfrak f(\mathrm{WD}(\restr{\rho_i}{\Gamma_{F_\nu}})) = 0$ for both $i=1,2$. 

\item We now focus on primes in $S'$. We remember that before we even started the lifting method we have enlarged our set $S$ for technical reasons. Also, during the production of the modulo $\varpi^n$ lifts $\rho_{1,n}$ and $\rho_{2,n}$ in Theorem \ref{LiftingModn} we have added some extra primes to $S'$ in order to make sure that the lifts will have maximal image. At each of these primes $\nu$ we take $D_{i,n,\nu}$ to be the class of unramified lifts. Thus, $\restr{\rho_i}{\Gamma_{F_\nu}}$ will be unramified and as above the Weil-Deligne representations $\mathrm{WD}(\restr{\rho_i}{\Gamma_{F_\nu}})$ will be unramified with monodromy operator $N=0$. Therefore, $\mathfrak f(\mathrm{WD}(\restr{\rho_i}{\Gamma_{F_\nu}})) = 0$ for such primes $\nu$. 

\item For primes $\nu$ in the original set $S$ we have produced local lifts $\rho_{i,\nu}$ as in \S $5.2$. We recall that the lifts $\rho_{1,\nu}$ and $\rho_{2,\nu}$ are chosen so that if $\nu \nmid p$ by Corollary \ref{LocalLiftsFixedDeterminant} they have the same strong inertial type; and if $\nu \mid p$ by Proposition \ref{WDArtinConductor} the Weil-Deligne representations $\mathrm{WD}(\rho_{1,\nu})$ and $\mathrm{WD}(\rho_{2,\nu})$ have the same Artin conductor. Then, as explained at the beginning of \S $4$ we choose an irreducible component of $R_{\restr{\rho_i}{\Gamma_{F_\nu}}}^{\sqr,\mu}[1/\varpi]$ if $\nu \nmid p$, or of $R_{\restr{\rho_i}{\Gamma_{F_\nu}}}^{\sqr,\mu,\tau,\textup{\textbf{v}}}[1/\varpi]$ if $\nu \mid p$ containing $\rho_{i,\nu}$. We choose the class of lifts $D_{i,n,\nu}$ as in \cite[Proposition $4.7$]{FKP21}. $D_{i,\nu}$ will consist of lifts of $\restr{\overline{\rho_i}}{\Gamma_{F_\nu}}$ that correspond to points inside an open neighborhood of $\rho_{i,\nu}$ in the chosen irreducible component. We claim that these can be chosen in such a manner that lifts in $D_{i,\nu}$ will have the same strong inertial type. The inertial type (this is just the restriction $\restr{r}{I_{F_\nu}}$, without the monodromy operator N) is constant on irreducible components of the generic fiber (cf. \cite[Theorem $3.3.1$]{Boc13}). Therefore, it remains to show that we can choose the monodromy operator $N$ to be constant for lifts in $D_{i,\nu}$. 

\begin{itemize}

\item If $N = 0$ in $\mathrm{WD}(\rho_{i,\nu})$ we can work with $R_{\restr{\rho_i}{\Gamma_{F_\nu}}}^{\sqr,\mu,N=0}[1/\varpi]$ ($R_{\restr{\rho_i}{\Gamma_{F_\nu}}}^{\sqr,\mu,\tau,\textup{\textbf{v}},N=0}[1/\varpi]$ if $\nu \mid p$) instead of with $R_{\restr{\rho_i}{\Gamma_{F_\nu}}}^{\sqr,\mu}[1/\varpi]$ ($R_{\restr{\rho_i}{\Gamma_{F_\nu}}}^{\sqr,\mu,\tau,\textup{\textbf{v}}}[1/\varpi]$ if $\nu \mid p)$. These are exactly the union of irreducible component of the generic fibers for which the monodromy operator $N$ vanishes identically. Points in these rings correspond to potentially unramified lifts if $\nu \nmid p$, and potentially crystalline lifts if $\nu \mid p$. We then use \cite[Theorem $3.3.8$]{BG19} instead of \cite[Theorem $3.3.3$]{BG19} in the dimension computations of \cite[Proposition $4.7$]{FKP21}. Thus, we produce classes of lifts $D_{i,\nu}$ for which the monodromy operator $N$ of the associated Weil-Deligne representations will be $0$. Hence, all lifts in $D_{i,\nu}$ have the same strong inertial type. 

\item Now, assume that $N \neq 0$ in $\WD(\rho_{i,\nu})$. We note that in this case $\nu \nmid p$, since by Proposition \ref{LocalLiftsmidp} the lifts $\rho_{i,\nu}$ for $\nu \mid p$ are potentially crystalline. As $N$ is a $2 \times 2$ nilpotent matrix it has to be a conjugate of $\begin{pmatrix} 0 & 1 \\ 0 & 0\end{pmatrix}$. Therefore, it suffices to show that we can choose $D_{i,\nu}$ such that for each lift inside this class the monodromy operator of its associated Weil-Deligne representation doesn't vanish. The next lemma shows us that this is indeed possible. 

\begin{lem}
\label{NonzeroMonodromy}

    Let $\nu \nmid p$. Suppose that $\WD(\rho) = (r,N)$ with $N\neq0$ for some $\rho:\Gamma_{F_\nu} \to \GL_n(\mathcal O)$. Then, there exists a positive integer $m$, depending only on $n$ and $\mathcal O$ such that: if for any $\rho':\Gamma_{F_\nu} \to \GL_n(\mathcal O)$ we have $\rho \equiv \rho' \pmod{\varpi^m}$, then $N' \neq 0$, where $\WD(\rho') = (r',N')$.
    
\end{lem}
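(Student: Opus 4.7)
The plan is to show that the formation of the monodromy operator is $\varpi$-adically continuous on the space of continuous lifts, so that $\{N \neq 0\}$ is an open condition and any $\rho'$ sufficiently $\varpi$-adically close to $\rho$ must also have nonzero monodromy. The key input is the following uniform bound, valid for every continuous $\rho':\Gamma_{F_\nu} \to \GL_n(\mathcal O)$ and every $\tau \in I_{F_\nu}$ lifting a topological generator of the $p$-part of tame inertia: the eigenvalues of $\rho'(\tau)$ are roots of unity whose order divides a single integer $M$ depending only on $n$ and $\mathcal O$. Indeed, each such eigenvalue $\lambda$ is algebraic over $E$ of degree at most $n$; by Grothendieck's monodromy theorem applied to $\rho'$, $\lambda$ is a root of unity; and only finitely many roots of unity can lie in an extension of $E$ of degree $\leq n$.

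Once such an $M$ is fixed, $\rho'(\tau)^M$ is unipotent for every admissible $\rho'$, and the Jordan decomposition gives $\rho'(\tau)^M = \exp(M N_{\rho'})$, so that $N_{\rho'} = 0$ if and only if $\rho'(\tau)^M = 1$. The comparison is then elementary: the map $g \mapsto g^M$ on $\GL_n(\mathcal O)$ is polynomial and hence $\varpi$-adically continuous, so $\rho' \equiv \rho \pmod{\varpi^m}$ implies $\rho'(\tau)^M \equiv \rho(\tau)^M \pmod{\varpi^m}$. Since $N \neq 0$, the matrix $u := \rho(\tau)^M - 1 = MN + \tfrac{1}{2}(MN)^2 + \cdots$ is a nonzero nilpotent element of $M_n(\mathcal O)$ with some finite $\varpi$-valuation $s$. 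Choosing $m > s$ then forces $\rho'(\tau)^M - 1 \equiv u \not\equiv 0 \pmod{\varpi^m}$, so $\rho'(\tau)^M \neq 1$ and consequently $N_{\rho'} \neq 0$.

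The main obstacle lies in handling wild inertia cleanly. The identity $\rho(\tau)^M = \exp(MN)$ presumes that $\rho$ is unipotent on the cyclic subgroup generated by $\tau$, whereas Grothendieck's theorem only guarantees an open subgroup $J \subseteq I_{F_\nu}$ on which $\rho|_J$ is unipotent; I would arrange that a single such $J$ and element $\tau \in J$ work simultaneously for all sufficiently close $\rho'$, using the uniform eigenvalue bound to replace $\tau$ by an appropriate bounded power that absorbs wild contributions for every valid $\rho'$ at once. A secondary technical point is integrality of $\log \rho'(\tau)^M$ in $M_n(\mathcal O)$: this holds once $p > n$, and in particular in the $\GL_2$ case with $p \geq 3$ that the paper ultimately applies.
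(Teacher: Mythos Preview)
Your proposal is correct and follows essentially the same strategy as the paper: both arguments hinge on the uniform bound $M$, depending only on $n$ and $\mathcal O$, for the orders of the root-of-unity eigenvalues of $\rho'(\tau_\nu)$ (equivalently, for the orders of finite-order elements of $\GL_n(\mathcal O)$). The paper first chooses $a$ with $\rho(\tau_\nu^a)$ unipotent via Grothendieck for the fixed $\rho$, picks $m$ so that $\rho(\tau_\nu^{ia})\not\equiv 1\pmod{\varpi^m}$ for $1\le i\le M$, and argues by contradiction that $N'=0$ would make $\rho'(\tau_\nu^a)$ finite-order of order at most $M$; your version streamlines this by noting directly that $\rho'(\tau_\nu)^M$ is unipotent for \emph{every} $\rho'$ and then comparing that single power modulo $\varpi^m$.

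Your flagged ``main obstacle'' about wild inertia is not actually an obstacle: the same eigenvalue bound that makes $\rho'(\tau_\nu)^M$ unipotent also kills its semisimple Jordan part, since $r'(\tau_\nu)$ is the semisimple part of $\rho'(\tau_\nu)$ and hence has the same eigenvalues, giving $r'(\tau_\nu)^M=1$ and thus $\rho'(\tau_\nu)^M=\exp(M\,t_p(\tau_\nu)N')$ on the nose for every $\rho'$. No common open subgroup $J$ is needed, and the integrality of $\log$ never enters since you only use the equivalence $N'=0\iff \rho'(\tau_\nu)^M=1$.
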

\begin{proof}

We will first show that we can find a uniform upper bound on the order of the finite-order elements of $\GL_n(\mathcal O)$. Any finite-order element of $\GL_n(\mathcal O)$ will be diagonalizable when viewed as an element of $\GL_n(\overline{E})$. In particular, its eigenvalues will be roots of unity. On the other side, these roots of unity will satisfy a degree $n$ polynomial over $\mathcal O$. As $\mathcal O$ is a finite extension of $\mathbb Z_p$ it contains finitely many roots of unity, so we conclude that there are finitely many roots of unity that satisfy a degree $n$ polynomial over $\mathcal O$. Therefore, there are finitely many options for the eigenvalues of finite-order elements of $\GL_n(\mathcal O)$, which gives us an upper bound on the order of those elements. Label this bound by $M$.

By Grothendieck's monodromy theorem $\restr{\rho}{H}$ will be unipotent for some finite-index subgroup $H$ of $I_{F_\nu}$. In particular $\rho(\tau_\nu^a)$ will be a unipotent matrix for some positive integer $a$. We can use $\rho(\tau_\nu^a)$ to compute $N$ in $\WD(\rho)$ and by our assumption we have that $\rho(\tau_\nu^a)$ is a non-trivial unipotent matrix. The same will be true for $\rho(\tau_\nu^a),\rho(\tau_\nu^{2a}),\dots,\rho(\tau_\nu^{Ma})$. Therefore, we can find a large enough integer $m$ such that $\rho(\tau^{ia}) \not \equiv 1 \pmod{\varpi^m}$ for all $1 \le i \le M$. We claim that this value of $m$ would satisfy the statement of the lemma. Let $\rho':\Gamma_{F_\nu} \to \GL_n(\mathcal O)$ be a representation such that $\rho \equiv \rho' \pmod{\varpi^m}$. As above we can find a positive integer $b$ such that $\rho'(\tau_\nu^b)$ is unipotent, which we can use to compute $N'$. We claim it can't be trivial. For the sake of contradiction, suppose it is trivial. Then $\rho'(\tau_\nu)$ will be a finite-order element of $\GL_n(\mathcal O)$, and so is $\rho'(\tau_\nu^a)$. By what we have shown above it has order $\le M$, so $\rho'(\tau_\nu^{ia}) = 1$ for some $i \le M$. But then $\rho(\tau_\nu^{ia}) \equiv \rho'(\tau_\nu^{ia}) \equiv 1 \pmod{\varpi^m}$, which contradicts our choice of $m$. Therefore, $\rho'(\tau_\nu^b)$ is a non-trivial unipotent matrix, hence $N' \neq 0$.
    
\end{proof}

For each $\nu \in S$ such that $N \neq 0$ in $\WD(\rho_{i,\nu})$ we feed the integer $m_\nu$ coming from the lemma into the lower bounds on $n$ in the lifting method. In particular, we guarantee that for each such $m_\nu$ we will produce the lifts $\rho_{i,m_\nu}:\Gamma_{F,T_{m_\nu}} \to \GL_2(\mathcal O/\varpi^{m_\nu})$ by the doubling method. Therefore, each lift inside $D_{i,\nu}$ will have the same reduction modulo $\varpi^{m_\nu}$ as the lift $\rho_{i,\nu}$, which was produced beforehand. As in our discussion before the lemma this means that every lift in $D_{i,\nu}$ will have the same strong inertial type. 

\end{itemize}

\noindent As the Artin conductor of the Weil-Deligne representation depends only on the strong inertial type, from our choice of the local lifts in \S $5.2$ we deduce that for all $\nu \in S$

$$\mathfrak f(\WD(\restr{\rho_1}{\Gamma_{F_\nu}})) = \mathfrak f(\WD(\rho_{1,\nu})) = \mathfrak f(\WD(\rho_{2,\nu})) = \mathfrak f(\WD(\restr{\rho_2}{\Gamma_{F_\nu}}))$$

\vspace{2 mm}

\begin{rem}
\label{LocallySmoothPoints}

In order to apply \cite[Proposition $4.7$]{FKP21} we need to make sure that the local lifts we produced in \S $5.2$ correspond to formally smooth points in the generic fiber of the suitable lifting ring. If they are not, then as explained during the production of the local conditions $L_{i,n,\nu}$ at the beginning of \S $4$, we replace them with formally smooth lifts which are equal to the initial lifts modulo any prescribed power of $\varpi$. We need to be careful not to change the strong inertial type of the local lifts during this process. We can do this as the formally smooth points are dense in the generic fibers by \cite[Lemma $4.9$]{FKP21}. In particular, we can choose this formally smooth point to be in the same irreducible component as the given lift, so the inertial type doesn't change. We can guarantee that the monodromy operator $N$ stays the same by arguing as above. In particular, if $N=0$ in the inertial type of the given lift, then we can choose this smooth point to lie in an irreducible component on which the monodromy operator vanishes identically. If $N \neq 0$ in the inertial type of the given lift, by Lemma \ref{NonzeroMonodromy} if we choose a formally smooth point close enough to the given lift we preserve the non-vanishing of the monodromy operator. 
    
\end{rem}

\item We will now show the same equality for the rest of the primes in $S'$, i.e. the primes that were added in $S'$ during the doubling method and at which both $\rho_1$ and $\rho_2$ ramify. We first do this for the primes that were added during the first $eD$ steps of the doubling method. These are exactly the primes which satisfy part $(a)$ of Theorem \ref{LiftingModn}. As mentioned in the proof of the theorem, for such primes $\nu$ we produce a $p$-adic local lift $\rho_{i,\nu}$ by \cite[Lemma $3.7$]{FKP21}. This lift $\rho_{i,\nu}$ will be a formally smooth point of the generic fiber and we produce the classes of lifts $D_{i,\nu}$ by \cite[Proposition $4.7$]{FKP21}. Arguing as in the case of primes in $S$ it is enough to show that $\rho_{1,\nu}$ and $\rho_{2,\nu}$ have the same strong inertial type. This follows immediately from the construction of the local lifts. We have that $\rho_{i,n} \in \Lift_{\restr{\overline{\rho_i}}{\Gamma_{F_\nu}}}^{\mu,\alpha_{i,
\nu}}(\mathcal O)$ and in particular $\rho_{i,\nu}(\tau_\nu)$ is a non-trivial unipotent element in $U_{\alpha_{i,\nu}}(\mathcal O)$. We use it to compute the monodromy operator $N_i$ of $\WD(\rho_{i,\nu}) = (r_i,N_i)$. As $\rho_{i,\nu}(\tau_\nu) \neq 1$ we have that $N_i \sim \begin{pmatrix} 0 & 1 \\ 0 & 0\end{pmatrix}$. Then, directly from the formula for $r_i$, its restriction to the inertia group $I_{F_\nu}$ will be trivial. We conclude that $\rho_{1,\nu}$ and $\rho_{2,\nu}$ will have the same strong inertial type, which will be equal to $\tau_{1,ns}$. Moreover, we can explicitly compute the Artin conductor of the associated Weil-Deligne representation. We get 

$$\mathfrak f(\WD(\restr{\rho_1}{\Gamma_{F_\nu}}))= \mathfrak f(\WD(\restr{\rho_2}{\Gamma_{F_\nu}})) = 1$$

\vspace{2 mm}

\item The rest of the primes in $S'$ are the ones that were added after the first $eD$ steps of the doubling method. They satisfy part $(b)$ of Theorem \ref{LiftingModn}. For such primes $\nu$ we define the classes of lifts $D_{i,\nu,n}$ by hand, in particular $D_{i,\nu,n} = \Lift_{\restr{\overline{\rho_i}}{\Gamma_{F_\nu}}}^{\mu,\alpha_{i,
\nu}}(\mathcal O/\varpi^n)$. Therefore, $D_{i,\nu} = \Lift_{\restr{\overline{\rho_i}}{\Gamma_{F_\nu}}}^{\mu,\alpha_{i,
\nu}}(\mathcal O)$, and as seen in the previous paragraph the strong inertial types for lifts in $D_{i,\nu}$ are $\tau_{1,ns}$ and $\tau_{1,s}$, depending on whether the lift ramifies or not. From Theorem \ref{LiftingModn} we know if $\nu$ was added during the $n$-th step of the doubling method, then $\rho_{i,n}$ would ramify at $\nu$. Hence, $\rho_i$ ramifies at $\nu$ and its strong inertial type is $\tau_{1,ns}$. As above $\mathfrak f(\WD(\restr{\rho_1}{\Gamma_{F_\nu}}))= \mathfrak f(\WD(\restr{\rho_2}{\Gamma_{F_\nu}})) = 1$.

\item Finally, we consider the primes $\nu \in Q$. These are the primes that were added with purpose of annihilating some relative dual Selmer groups. These primes were selected from the auxiliary sets $Q_{n,m}$. For such primes we set $D_{i,\nu,n} = \Lift_{\restr{\overline{\rho_i}}{\Gamma_{F_\nu}}}^{\mu,\alpha_{i,
\nu}}(\mathcal O/\varpi^n)$, and subsequently $D_{i,\nu} = \Lift_{\restr{\overline{\rho_i}}{\Gamma_{F_\nu}}}^{\mu,\alpha_{i,
\nu}}(\mathcal O)$. As before, the possible strong inertial types for lifts in these sets are $\tau_{1,ns}$ and $\tau_{1,s}$. In order to specify this strong inertial types more precisely from now on we restrict ourselves to the case $F = \mathbb Q$, and we write $\ell$, instead od $\nu$ for its finite places. We claim that in this case our lifts $\rho_1$ and $\rho_2$ are ramified at primes in $Q$. This follows from Ramanujan's conjecture, which we know is true for automorphic representations coming from cuspidal modular forms. In that case it will be a consequence of the Weil conjectures, which were proven by Deligne (\cite{Del71}). By \cite[Theorem A]{SW99} and \cite[Theorem $7.1.1$]{Pan18} we know that the lifts $\rho_i$ are isomorphic to a representation attached to some modular form. Therefore, $\rho_i \simeq \rho_{\pi_i,\iota_i}$, where $\pi_i$ is a cuspidal automorphic representation on $\GL_2(\mathbb A_{\mathbb Q})$ coming from a holomorphic modular form, and $\iota_i:\overline{\mathbb Q_p} \simeq \mathbb C$ is some field isomorphism. If $\rho_i$ is not ramified at some prime $\ell$ by the local Langlands correspondence the local component $\pi_{i,\ell}$ will be an unramified principal series $\mathrm{Ind}_{B(\mathbb Q_\ell)}^{\GL_2(\mathbb Q_\ell)}(\chi_{1,i,\ell} \otimes \chi_{2,i,\ell})$ for some unramified characters $\chi_{1,i,\ell}$ and $\chi_{2,i,\ell}$ on $\mathbb Q_\ell^\times$. In particular, $\mathrm{rec}_\ell(\pi_{i,\ell}) = \mathrm{rec}_\ell(\chi_{1,i,\ell}) \oplus \mathrm{rec}_\ell(\chi_{2,i,\ell})$. By Ramanujan's conjecture $\pi_{i,\ell}$ is quasi-tempered, which means that the characters $\chi_{1,i,\ell}$ and $\chi_{2,i,\ell}$ have the same absolute value. In particular, $\chi_{1,i,\ell}(\ell)$ and $\chi_{2,i,\ell}(\ell)$, the images of the eigenvalues of $\rho_i(\sigma_\ell)$ under $\iota_i$ will have the same absolute value. This is impossible for $\ell \in Q$. Indeed, for $\ell \in Q$ we have $\restr{\rho_i}{\Gamma_{\mathbb Q_\ell}} \in D_{i,\ell}$, and by construction lifts in these sets have eigenvalues $\ell$ and $1$. In particular, the image of $\ell$ under $\iota_i$ doesn't have an absolute value $1$. Therefore, both $\rho_1$ and $\rho_2$ are ramified at all primes in $Q$ and moreover they both have inertial types $\tau_{1,ns}$ at primes $\ell \in Q$. 

\end{itemize}

Summarizing we have the following result

\begin{thm}
\label{MainResult}

Assume $p \ge 5$. Let $\overline{\rho_1},\overline{\rho_2}:\Gamma_{\mathbb Q,S} \to \GL_2(k)$ be two non-semi-simple residual representations such that

    $$\overline{\rho_1} = \begin{pmatrix} \overline{\chi} & * \\ 0 & 1\end{pmatrix} \quad \quad \mathrm{and} \quad \quad \overline{\rho_2} = \begin{pmatrix} 1 & *' \\ 0 & \overline{\chi}\end{pmatrix}$$
    
    \vspace{2 mm}
    
\noindent for some odd character $\overline{\chi}$, which moreover is different than $\overline{\kappa}^{\pm 1}$. Then $\overline{\rho_1}$ and $\overline{\rho_2}$ are modular of the same weight $r \ge 2$, the same level $N \ge 1$, and the same Neben character $[\overline{\chi}\overline{\kappa}^{1-r}]$.

\end{thm}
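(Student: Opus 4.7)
The plan is to assemble the machinery developed in the paper to produce compatible geometric lifts and then extract the modularity consequences. First, I would verify that the pair $(\overline{\rho_1},\overline{\rho_2})$ satisfies Assumptions \ref{GeneralAssumptions}: the first three bullet points are exactly Lemmas \ref{Assumption1}, \ref{Assumption2}, \ref{Assumption3}, and oddness is Lemma \ref{Assumption4}. For the fourth bullet point (existence of local de Rham lifts with prescribed multiplier), I would fix $\mu = \kappa^{r-1}[\overline{\chi}\overline{\kappa}^{1-r}]$ for some integer $r \ge 2$ so that the lifts will automatically have a common Neben character. At primes $\ell \nmid p$ in $S$, Corollary \ref{LocalLiftsFixedDeterminant} provides lifts $\rho_{1,\nu},\rho_{2,\nu}$ of determinant $\mu$ sharing a strong inertial type; at primes above $p$, Proposition \ref{LocalLiftsmidp} provides ordinary potentially crystalline Hodge--Tate regular lifts with Hodge--Tate weights $\{0,r-1\}$ whose Weil--Deligne representations have equal Artin conductors by Proposition \ref{WDArtinConductor}.

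With the assumptions in place, Theorem \ref{padicLift} supplies geometric $p$-adic lifts $\rho_i : \Gamma_{\mathbb Q,S' \cup Q} \to \GL_2(\mathcal O)$ of $\overline{\rho_i}$ with the prescribed determinant $\mu$ and ramified at the same finite set of auxiliary primes $Q$, with the local restrictions at $Q$ lying in $\Lift^{\mu,\alpha_{i,\nu}}_{\restr{\overline{\rho_i}}{\Gamma_{\mathbb Q_\nu}}}(\mathcal O)$. Since $F = \mathbb Q$ and $\overline{\chi} \ne \overline{\kappa}^{\pm 1}$, the modularity theorems of Skinner--Wiles (\cite{SW99}) and Pan (\cite{Pan18}) then imply that each $\rho_i$ is isomorphic to $\rho_{f_i,\nu_i}$ for some newform $f_i$. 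The common determinant $\mu = \kappa^{r-1}[\overline{\chi}\overline{\kappa}^{1-r}]$ forces the Neben character of $f_i$ to be $[\overline{\chi}\overline{\kappa}^{1-r}]$, and the common Hodge--Tate weights $\{0,r-1\}$ force the common weight $r$ by local--global compatibility at $p$.

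The remaining task is to match the levels, i.e.\ to show that $\mathfrak f(\WD(\restr{\rho_1}{\Gamma_{\mathbb Q_\ell}})) = \mathfrak f(\WD(\restr{\rho_2}{\Gamma_{\mathbb Q_\ell}}))$ at every finite prime $\ell$. I would proceed by cases along the partition of $S' \cup Q$ described in the paragraphs after \S 5.2: at primes outside $S'\cup Q$ and at the ``filler'' primes added to ensure maximal image, both lifts are unramified; at the original primes in $S$ we already arranged matching strong inertial types, and using \cite[Proposition 4.7]{FKP21} together with the fact that the inertial type is constant on irreducible components of the generic fiber (and the monodromy-stability argument of Lemma \ref{NonzeroMonodromy} when $N \ne 0$), the deformation classes $D_{i,\nu}$ preserve this type; at the primes added in the doubling method one checks directly from $\rho_{i,\nu} \in \Lift^{\mu,\alpha_{i,\nu}}$ that both Weil--Deligne representations have conductor $1$.

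The main obstacle is the set $Q$ of auxiliary primes: our deformation class at $\ell \in Q$ allows both the ramified type $\tau_{1,ns}$ and the unramified type $\tau_{1,s}$, so a priori $\rho_1$ and $\rho_2$ could land in different components. To resolve this, I would invoke Ramanujan's conjecture for holomorphic cuspidal newforms (proved by Deligne): if $\rho_i$ were unramified at $\ell \in Q$, then the local component $\pi_{i,\ell}$ would be an unramified principal series whose Satake parameters, by temperedness, would have equal absolute values; but the construction of $Q_{n,m}$ arranges $\rho_i(\sigma_\ell)$ to have eigenvalues $\ell$ and $1$, which violates temperedness. Hence both $\rho_1$ and $\rho_2$ must be ramified at every $\ell \in Q$, forcing the common strong inertial type $\tau_{1,ns}$ and thus conductor $1$ on both sides. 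Combining all primes, $f_1$ and $f_2$ share the same level $N$, weight $r$, and Neben character, completing the proof.
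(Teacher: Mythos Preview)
Your proposal is correct and follows essentially the same route as the paper's proof: verify Assumptions \ref{GeneralAssumptions} via Lemmas \ref{Assumption1}--\ref{Assumption4}, fix the multiplier $\mu = \kappa^{r-1}[\overline{\chi}\overline{\kappa}^{1-r}]$, construct matched local lifts at primes in $S$ via Corollary \ref{LocalLiftsFixedDeterminant} and Propositions \ref{LocalLiftsmidp}/\ref{WDArtinConductor}, run Theorem \ref{padicLift}, invoke Skinner--Wiles/Pan for modularity, and then match conductors prime-by-prime using the Ramanujan bound at the auxiliary set $Q$. The only item you omit is the paper's concluding checklist verifying that $p \ge 5$ suffices for each of the ``$p \gg_G 0$'' constraints in the $\GL_2$ case (Lemmas \ref{TorusElementLemma}, \ref{TorusandRootLemma}, \ref{NoniclusioinLemma}, \ref{AbelianizationLemma}), which is routine but should be mentioned.
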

\begin{proof}

    From the discussion before the theorem $\overline{\rho_1}$ and $\overline{\rho_2}$ have lifts $\rho_1$ and $\rho_2$, respectively, such that for each prime $\ell$ of $Q$ the Weil-Deligne representations $\WD(\restr{\rho_1}{\Gamma_{\mathbb Q_\ell}})$ and $\WD(\restr{\rho_2}{\Gamma_{\mathbb Q_\ell}})$ have the same Artin conductor. In fact, we have shown that for $\ell \neq p$ the restrictions of these Weil-Deligne representations to the inertia groups $I_{\mathbb Q_\ell}$ are isomorphic. By \cite[Theorem A]{SW99} and \cite[Theorem $7.1.1$]{Pan18} the $p$-adic lift $\rho_i$ will be associated to a newform $f_i$ of some level $N_i$, some weight $r_i$, and some Neben character $\varepsilon_i$. To $f_i$ we can attach an irreducible automorphic $\GL_2(\mathbb A_{\mathbb Q})$-representation $\pi_i$, which will have conductor $N_i$. Now, by the local Langlands correspondence for each prime $\ell$, the local component $\pi_{i,\ell}$ will correspond to a $2$-dimensional Frobenius semi-simple Weil-Deligne representation of $W_{\mathbb Q_\ell}$. By the naturality of the local Langlands correspondence (cf. \cite[\S $3$]{Tay04}) this Weil-Deligne representation will have the same conductor as $\pi_{i,\ell}$, which in turn is equal to the highest power of $\ell$ that divides the level $N_i$. On the other side, by the local-global compatibility (\cite[Theorem A]{Car86}, \cite{Sai97}) this Weil-Deligne representation will be isomorphic to the Frobenius semi-simplification of $\WD(\restr{\rho_i}{\Gamma_{\mathbb Q_\ell}})$. As the Weil-Deligne representation is already semi-simple on the inertia group the semi-simplification will not affect its Artin conductor. Therefore, we deduce that
    
    $$N_1 = \prod_{\ell \text{ prime}} \ell^{\mathfrak f(\WD(\restr{\rho_1}{\Gamma_{\mathbb Q_\ell}}))} = \prod_{\ell \text{ prime}} \ell^{\mathfrak f(\WD(\restr{\rho_2}{\Gamma_{\mathbb Q_\ell}}))} = N_2$$

    \vspace{2 mm}

    \noindent and the claim about the level follows. On the other side, from the lifting method the lifts $\restr{\rho_i}{\Gamma_{\mathbb Q_p}}$ will have the same Hodge-Tate type as the local lifts $\rho_{i,p}$ coming from Lemma \ref{LocalLiftsmidp}. The local lifts are Hodge-Tate regular with Hodge-Tate weights $\{0,r-1\}$ for the integer $r \ge 2$ we fixed at the beginning. From the local-global compatibility at $p$ we get that $r_1 = r_2 = r$. Finally, we can compute the Neben character $\varepsilon_i$ using the determinant of $\rho_i$. By construction $\rho_1$ and $\rho_2$ have the same determinant $\mu = \kappa^{r-1}[\overline{\chi}\overline{\kappa}^{1-r}]$. Since $r_1 = r_2 =r$ we deduce that $\varepsilon_1 = \varepsilon_2 = [\overline{\chi}\overline{\kappa}^{1-r}]$. Thus, we conclude that $\rho_1$ and $\rho_2$ will be attached to newforms of the same weight $r\ge 2$, the same level $N \ge 1$, and the same Neben character $[\overline{\chi}\overline{\kappa}^{1-r}]$.

    We recall that in the lifting method we required $p$ to be large enough prime number with respect to the root data of $G_1$ and $G_2$. We verify that taking $p \ge 5$ in the special case of $G_1 = G_2 = \GL_2$ is enough. As seen in \cite[Corollary $5.5$]{FKP22}, for most of the conditions it suffices to assume that $p \ge 3$, however for our purposes it is necessary to assume that $p \ge 5$.

    \begin{itemize}
        \item Right from the beginning, in \S 2, we assumed that $p$ is a very good prime for $G^{\der}$. In the special case $G^{\der} = \SL_2$, which has $A_1$ as its Dynkin diagram. The only not very good prime for $A_1$ is $2$. 
        \item In the proofs of Proposition \ref{cokernelgenerators} and Proposition \ref{DoublingMethod} we needed $p$ to be large enough so that $\mathfrak{sl}_2$ is equal to the $k$-span of its root vectors. The adjoint action sends a root vector $X_\alpha$ for $\alpha \in \Phi(\GL_2,T)$ to another root vector $X_{\alpha'}$ for $\alpha' \in \Phi(\GL_2,T')$. Therefore, the $k$-span of root vectors is an $\mathbb F_p[\SL_2(k)]$-submodule of $\mathfrak{sl}_2$. Thus, it is enough to show that for $p \ge 3$ we have that $\mathfrak{sl}_2$ is a simple $\mathbb F_p[\SL_2(k)]$-module. Let $\{E,F,H\}$ be the standard basis of $\mathfrak{sl}_2$. Then, conjugating by matrices of the form $(1+aE)$ and $(1+aF)$ for some $a\in k$, it easy to see that any non-zero $\mathbb F_p[\SL_2(k)]$-submodule of $\mathfrak{sl}_2$ must contain the basis elements $\{E,F,H\}$, hence it is equal to the whole of $\mathfrak{sl}_2$
        \item In Theorem \ref{LiftingModn} we used Lemma \ref{TorusElementLemma} in order to produce suitable elements inside a maximal split torus. In the proof of Lemma \ref{TorusElementLemma} we needed $p$ to be large enough so that a subspace of the torus is not a union of hyperplanes. However, as $\GL_2$ has a single positive root this is the empty union. In fact, the claim about the rest of the roots $\beta$ holds vacuously. Therefore, we only need $2$ to be invertible modulo $\varpi$, which is true for $p \ge 3$.
        \item In Proposition \ref{SelectionofPrimes} and Proposition \ref{1-0Annihilation} we needed to find a suitable maximal split torus $T$ and a root $\alpha$. We did that by invoking Lemma \ref{TorusandRootLemma}. When $G^{\der} = \SL_2$ we can make our initial choice $T_1$ to be the diagonal maximal torus, and $\alpha_1$ the positive root that corresponds to the upper-triangular Borel subgroup. As $p \ge 3$ is very good for $\SL_2$ the trace pairing is perfect, so we can use it to identify $\mathfrak{sl}_2$ with $(\mathfrak{sl}_2)^*$. Then, for a choice of non-zero elements $A,B \in \mathfrak{sl}_2$ we want to find $g \in \SL_2(k)$ so that 
        
        $$\Ad(g^{-1})A \notin \begin{pmatrix} 0 & * \\ * & 0\end{pmatrix} \quad \quad \text{and} \quad \quad \Ad(g^{-1})B \notin \begin{pmatrix} * & * \\ 0 & *\end{pmatrix}$$

        \vspace{2 mm}

        As the adjoint action is given by conjugation a quick computation tells us that the elements $g \in \SL_2(k)$ we want to avoid are cut out by two degree $2$ polynomials in $3$ variables. Therefore, as explained in the proof of Lemma \ref{TorusandRootLemma} there are at most $4q^2$ such elements. On the other side, $\SL_2(k)$ has order $q^3-q$. As $q^3-4q^2-q$ is positive for $q \ge 5$ we conclude that when $p \ge 5$ we can always find $g \in \SL_2(k)$ as wanted.
        \item In the proof of Proposition \ref{SelectionofPrimes} we applied Lemma \ref{NoniclusioinLemma} with $n=4$. To be able to do this, as mention during the proof we need to assume that $p \ge 5$.
        \item During the proof of Proposition \ref{1-0Annihilation} we used Lemma \ref{AbelianizationLemma}, for which we need $\mathfrak{sl}_2$ to be a simple $\mathbb F_p[\SL_2(k)]$-module under the adjoint action. As checked in the second bullet point above this is true for $p \ge 3$.
        \item Finally, in the verification of the assumptions in \S $5.1$ and the construction of the local lifts in \S $5.2$ we needed to assume that $p \neq 2$. Thus, the computations are valid for $p \ge 3$.
        
    \end{itemize}
    
\end{proof}

\appendix

\section{Lemmas}

\begin{lem}
\label{NoniclusioinLemma}

Let $R_1,R_2,\dots,R_n$ be prime characteristic rings such that $\mathrm{char}(R_i) > n$ for all $i$. For each $i$ let $M_i$ be an $R_i$-module, and $M_i'$ an $R_i$-submodule of $M_i$. Suppose that for every $i \le n$ there exists $x_i \in M_i$ such that $x_i \notin M_i'$. Then, for every given $y_i \in M_i$ there exists and integer $0 \le a \le n$ such that $a \cdot x_i + y_i \notin M_i'$ for all $i \le n$.

\end{lem}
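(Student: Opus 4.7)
The plan is to use a simple pigeonhole argument on the set $\{0, 1, \ldots, n\}$, showing that for each index $i$ at most one value of $a$ in this set can put $a \cdot x_i + y_i$ into $M_i'$.

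More precisely, for each $i$ define the ``bad set''
\[
A_i = \{a \in \{0, 1, \ldots, n\} : a \cdot x_i + y_i \in M_i'\}.
\]
The first step is to show that $|A_i| \le 1$ for all $i$. Indeed, if $a, b \in A_i$ with $a \ne b$, then subtracting the two relations $a x_i + y_i \in M_i'$ and $b x_i + y_i \in M_i'$ gives $(a - b) x_i \in M_i'$. Since $|a - b| \le n < \mathrm{char}(R_i)$ and $a - b \ne 0$, the scalar $a - b$ is a nonzero element of the prime field inside $R_i$, hence invertible in $R_i$. Therefore $x_i \in M_i'$, contradicting the hypothesis on $x_i$.

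The second step is straightforward counting: since each $A_i$ has at most one element, we have
\[
\left| \bigcup_{i=1}^n A_i \right| \le n < n + 1 = |\{0, 1, \ldots, n\}|,
\]
so there exists an integer $a \in \{0, 1, \ldots, n\}$ with $a \notin A_i$ for every $i$. By construction of $A_i$, this means $a \cdot x_i + y_i \notin M_i'$ for all $i \le n$, which is exactly the claim.

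There is no substantive obstacle here; the only subtlety worth flagging is the use of the characteristic hypothesis, which is essential precisely to invert the differences $a - b$ arising between distinct elements of $\{0, 1, \ldots, n\}$. The hypothesis $\mathrm{char}(R_i) > n$ is exactly sharp for this argument, which is why Proposition \ref{SelectionofPrimes} requires $p \ge 5$ when applying the lemma with $n = 4$.
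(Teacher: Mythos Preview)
Your proof is correct and follows essentially the same approach as the paper: both define, for each $i$, the set of $a \in \{0,1,\dots,n\}$ for which $a x_i + y_i \in M_i'$, show it has at most one element by subtracting two putative members and inverting the nonzero difference in the prime field, and then apply pigeonhole since $n$ indices exclude at most $n$ values from the $n+1$ available. Your remark on the sharpness of the characteristic hypothesis and its role in Proposition~\ref{SelectionofPrimes} is also accurate.
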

\begin{proof}

For a fixed $i \le n$ we consider elements of the form $y_i + a\cdot x_i \in M_i$ where $a$ ranges through elements in the set $\{0,1,\dots,n\}$. We note that at most one of these elements can be an element of $M_i'$. Indeed, if $y_i + a\cdot x_i$ and $y_i + a'\cdot x_i$ are elements of $M_i'$ for distinct values $a$ and $a'$ in the specified range we have that $(a-a') \cdot x_i \in M_i'$. But, $0 \le |a-a'| \le n < \mathrm{char}(R_i)$. As $R_i$ has prime characteristic the difference is invertible in $R_i$, which implies that $x_i \in M_i'$, contradicting out hypothesis. Therefore, $i \le n$ rules out at most one option from the set. Hence, a total of at most $n$ options for $a$ are ruled out. As we have $n+1$ options for at least one value $a\in \{0,1,\dots,n\}$ we have that $y_i + a\cdot x_i \notin M_i'$ for all $i \le n$.
    
\end{proof}

\begin{lem}
\label{DensityLemma}

Let $\mathcal C$ be a subset of a primes in a field $F$ with a positive upper-density. Suppose that we decompose $\mathcal C$ as a finite disjoint union of subsets $\mathcal C_1,\mathcal C_2,\dots,\mathcal C_n$. Then, at least one $\mathcal C_i$ has positive upper-density.

\end{lem}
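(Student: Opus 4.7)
The plan is to argue by contradiction using the finite subadditivity of upper density. Suppose for contradiction that every $\mathcal C_i$ has upper-density zero.

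Recall that the upper density is defined as a $\limsup$, e.g. of the ratios $\frac{\#\{\nu \in \mathcal C : N(\nu) \le x\}}{\#\{\nu : N(\nu) \le x\}}$ as $x \to \infty$. For any two sets $A, B$ of primes we have the elementary inequality $\limsup (a_x + b_x) \le \limsup a_x + \limsup b_x$, which immediately gives $\delta^+(A \cup B) \le \delta^+(A) + \delta^+(B)$. By induction, this extends to any finite union: $\delta^+(\mathcal C_1 \cup \cdots \cup \mathcal C_n) \le \sum_{i=1}^n \delta^+(\mathcal C_i)$.

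Now since $\mathcal C = \bigsqcup_{i=1}^n \mathcal C_i$ (in particular $\mathcal C = \mathcal C_1 \cup \cdots \cup \mathcal C_n$), if each $\delta^+(\mathcal C_i) = 0$ then the inequality above forces $\delta^+(\mathcal C) = 0$, contradicting the hypothesis that $\mathcal C$ has positive upper-density. Hence at least one $\mathcal C_i$ must have positive upper-density.

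There is no real obstacle here; the entire content is the finite subadditivity of $\limsup$, which is immediate from the definition. The only thing to be mindful of is to work with the same normalization (same denominator $\#\{\nu : N(\nu) \le x\}$) for all the sets involved so that the subadditivity is literally $\limsup$ of a sum of nonnegative sequences, rather than something requiring a separate argument.
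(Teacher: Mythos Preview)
Your proof is correct and follows essentially the same approach as the paper: argue by contradiction, noting that if every $\mathcal C_i$ had upper-density zero then finite subadditivity forces $\delta^+(\mathcal C)=0$. Your version is in fact slightly more careful in making the subadditivity of $\limsup$ explicit, whereas the paper simply asserts that a finite union of density-zero sets has density zero.
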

\begin{proof}

    For the sake of contradiction suppose that no $\mathcal C_i$ has positive upper-density. This in particular means that each of these sets has density $0$. As $\mathcal C$ is a disjoint union of finitely many density $0$ sets it has to have density $0$ itself. But, this contradicts the fact that it has positive upper-density. Hence, at least one $\mathcal C_i$ has positive upper density. 
    
\end{proof}

\begin{lem}
\label{TorusElementLemma}

    Let $s$ and $q$ be positive integers such that $q \equiv 1 \pmod{\varpi^s}$, but $q \not \equiv 1 \pmod{\varpi^{s+1}}$. Then, for any split maximal torus $T$ of $G$, $\alpha \in \Phi(G^0,T)$, and any integer $n > s$ there exists an element $t_n \in T(\mathcal O/\varpi^n)$, which is trivial modulo $\varpi^s$, $\alpha(t_n) \equiv q \pmod{\varpi^n}$, and $\beta(t_{s+1}) \not \equiv 1 \pmod{\varpi^{s+1}}$ for any $\beta \in \Phi(G^0,T)$, where $t_{s+1} \coloneqq t_n \pmod{\varpi^{s+1}}$.
    
\end{lem}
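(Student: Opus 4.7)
My plan is to use the exponential isomorphism to convert the multiplicative problem on $T(\mathcal{O}/\varpi^n)$ into a linear problem on the Lie algebra $\mathfrak{t}$, and then solve it by a dimension count over the residue field. Since $s \geq 1$ and $p$ is sufficiently large, the truncated logarithm gives an isomorphism of abelian groups
$$
\log \colon 1 + \varpi^s\mathcal{O}/\varpi^n \xrightarrow{\ \sim\ } \varpi^s\mathcal{O}/\varpi^n,
$$
and, since $T$ is split, the same isomorphism applied coordinatewise (via $T = \mathrm{Spec}\,\mathcal{O}[X^*(T)]$) yields
$$
\exp \colon \varpi^s\mathfrak{t}(\mathcal{O}/\varpi^n) \xrightarrow{\ \sim\ } \ker\bigl(T(\mathcal{O}/\varpi^n) \twoheadrightarrow T(\mathcal{O}/\varpi^s)\bigr),
$$
intertwining any character $\gamma \in X^*(T)$ with its differential $d\gamma \in \mathfrak{t}^*$. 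I will therefore look for $t_n$ in the form $\exp(\varpi^s X)$ with $X \in \mathfrak{t}(\mathcal{O}/\varpi^{n-s})$. Writing $q = 1 + \varpi^s u$ with $u \in \mathcal{O}^\times$, the element $u' := \varpi^{-s}\log q \in \mathcal{O}/\varpi^{n-s}$ is a unit with residue $\bar u' \neq 0$, and the conditions of the lemma translate to (i) $d\alpha(X) \equiv u' \pmod{\varpi^{n-s}}$, and (ii) $d\beta(\bar X) \neq 0$ in $k$ for every root $\beta \in \Phi(G^0,T)$, where $\bar X$ is the mod-$\varpi$ reduction of $X$.

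First I would produce the reduction $\bar X \in \mathfrak{t}(k)$. Since $p$ is very good for $G^{\der}$, each $d\beta$ is a nonzero linear functional on $\mathfrak{t}(k)$, and the affine hyperplane $H := d\alpha^{-1}(\bar u') \subseteq \mathfrak{t}(k)$ has cardinality $|k|^{r-1}$ with $r = \dim T$. If $d\beta$ is proportional to $d\alpha$, say $d\beta = c\,d\alpha$ with $c \in k^\times$, then $d\beta$ is a nonzero constant on $H$ and no points of $H$ are excluded; in particular the case $r=1$ is handled entirely here. Otherwise $H \cap \ker d\beta$ is an affine subspace of codimension one in $H$, hence has $|k|^{r-2}$ points. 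The total bad locus is therefore at most $|\Phi(G^0,T)| \cdot |k|^{r-2}$, which is strictly less than $|H|$ as soon as $|k| > |\Phi(G^0,T)|$ -- exactly the standing hypothesis ``$p \gg_G 0$'' invoked throughout the paper. This produces the desired $\bar X$.

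Finally I would lift $\bar X$ to $X \in \mathfrak{t}(\mathcal{O}/\varpi^{n-s})$ satisfying $d\alpha(X) = u'$ on the nose. Any lift $X_0$ of $\bar X$ satisfies $u' - d\alpha(X_0) \in \varpi\mathcal{O}/\varpi^{n-s}$, and since $d\alpha \colon \mathfrak{t}(\mathcal{O}/\varpi^{n-s}) \to \mathcal{O}/\varpi^{n-s}$ is surjective (its reduction mod $\varpi$ being a nonzero linear functional on $\mathfrak{t}(k)$), I can solve $d\alpha(Y) = \varpi^{-1}(u' - d\alpha(X_0))$ for some $Y \in \mathfrak{t}(\mathcal{O}/\varpi^{n-s-1})$. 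Then $X := X_0 + \varpi Y$ achieves $d\alpha(X) = u'$ while preserving the reduction $\bar X$, so $t_n := \exp(\varpi^s X)$ satisfies all three requirements of the lemma. The only step that requires any genuine work is the counting argument in the middle paragraph; the rest is a routine translation through the exponential, and the bounds on $p$ are already assumed in the ``$p$ very good'' and ``$p \gg_G 0$'' hypotheses of the paper.
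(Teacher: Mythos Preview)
Your proof is correct and follows essentially the same strategy as the paper: linearize via the exponential, then do a hyperplane-avoidance count in $\mathfrak{t}(k)$. The organization differs in two places worth noting. First, the paper only uses the square-zero exponential $X \mapsto 1+X$ at level $n=s+1$ and then climbs to general $n$ by an explicit one-step induction (correcting with $\alpha^\vee(1-\tfrac{a}{2}\varpi^n)$), whereas you use the full truncated $\log/\exp$ to go directly to level $n$; your route is cleaner but tacitly needs $s>e/(p-1)$ for the series to define an isomorphism $1+\varpi^s\mathcal{O}/\varpi^n \xrightarrow{\sim} \varpi^s\mathcal{O}/\varpi^n$, which is not guaranteed by ``$s\geq 1$ and $p$ large'' alone since $\mathcal{O}$ may be highly ramified (it holds in every application in the paper, where $s\geq eD$, but not for the lemma as literally stated). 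Second, the paper nails $\alpha(t)=q$ using the coroot $\alpha^\vee(q^{1/2})$ and then varies over $\ker(d\alpha|_{\mathfrak{t}^{\der}})$, while you parametrize the affine hyperplane $d\alpha^{-1}(\bar u')\subset\mathfrak{t}(k)$ directly; these are equivalent counts. The paper's approach buys validity for all $s\geq 1$ without any convergence hypothesis; yours buys a shorter argument once the exponential is available.
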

\begin{proof}

    We first show that such an element exists for $n=s+1$. We note that $q$ is a square modulo $\varpi^{s+1}$. Indeed

    $$\left(1 + \frac{q-1}2\right)^2 = 1 + (q-1) + \frac{(q-1)^2}4 = q$$

    \vspace{2 mm}

    \noindent where we used that $q-1$ is divisible by $\varpi^s$, so its square vanishes modulo $\varpi^{s+1}$. Set $q^{1/2} \coloneqq 1 + \frac{q-1}2$. We now consider elements of the form $t_b = \exp(\varpi^sb)\alpha^\vee(q^{1/2})$, where $\alpha^\vee$ is the coroot associated to $\alpha$ and $b \in \ker(\restr{\alpha}{\mathfrak t})$. As $b \in \mathfrak t$ we have that $t_b \in T(\mathcal O/\varpi^{s+1})$. On the other hand

    $$\alpha(t_b) = \alpha(\exp(\varpi^bs))\alpha(\alpha^\vee(q^{1/2})) = \exp(\varpi^s\alpha(b))q^{\frac 12 \langle \alpha,\alpha^\vee \rangle} = q$$

    \vspace{2 mm}

    where we interchangeably view $\alpha$ as a root of both the Lie group and the associated Lie algebra. Using the decomposition $\mathfrak g = \mathfrak g^{\der} \oplus \mathfrak z_{G}$ we can write $b = b' + z$ and hence $\exp(\varpi^sb) = \exp(\varpi^sb')\exp(\varpi^sz)$. The second factor will lie in the center of $G^0$, so it will not affect the value of $t_b$ under any of the roots. Thus, we can assume that $b \in \mathfrak t^{\der}$. We note that we are free to choose any value for $z$, which can be used to prescribe the value of $t_b$ in $G(\mathcal O/\varpi^{s+1})/G^{\der}(\mathcal O/\varpi^{s+1})$.

    Now, it remains to choose $b \in \ker(\restr{\alpha}{\mathfrak t^{\der}})$ in such a manner so that the conditions on the rest of the roots in $\Phi(G^0,T)$ are satisfied. Suppose that $\beta(t_b) = 1$. Then we have

    $$1 = \beta(t_b) = \beta(\exp(\varpi^sb))\beta(\alpha^\vee(q^{1/2})) = \exp(\varpi^s\beta(b))q^{\frac 12 \langle \beta,\alpha^\vee \rangle }$$

    \vspace{2 mm}

    Taking logarithms from both sides we get

    $$0 = \varpi^s\beta(b) + \frac 12 \langle \beta,\alpha^\vee \rangle(q-1) \implies \beta(b) = \frac{1-q}{2\varpi^s}\langle \beta,\alpha^\vee \rangle$$

    \vspace{2 mm}

    Now let $\Phi$ be the subset of $\Phi(G^0,T)$ consisting of roots $\beta$ for which there exists $b_\beta \in \ker(\restr{\alpha}{\mathfrak t^{\der}})$ which satisfies the equation above. Then we want to choose $b \in \ker(\restr{\alpha}{\mathfrak t^{\der}})$ that will lie in the complement of the union of hyperplanes

    $$\bigcup_{\beta \in \Phi} (b_\beta + \ker(\restr{\beta}{\ker(\restr{\alpha}{\mathfrak t^{\der}})})$$

    \vspace{2 mm}

    We remark that this is indeed a union of hyperplanes. Indeed, $\ker(\restr{\beta}{\ker(\restr{\alpha}{\mathfrak t^{\der}})})$ is a hyperplane inside $\ker(\restr{\alpha}{\mathfrak t^{\der}})$ as long as $\beta \neq \pm \alpha$, and $\pm \alpha \notin \Phi$ as that would imply that $q-1$ is divisible by $\varpi^{s+1}$, which contradicts our assumptions. The number of such hyperplanes is bounded in terms of the size of $\Phi(G^0,T)$, which in turn only depends on the Dynkin type of $G^{\der}$. Hence, for $p \gg_G 0$ this union can't be equal to the whole of $\ker(\restr{\alpha}{\mathfrak t^{\der}})$. We then choose $b$ in its complement and set $t_{s+1} = t_b$. By construction it is clear that this element will have the desired properties.

    Having constructed $t_{s+1}$ we can inductively construct such elements modulo higher powers of $\varpi$. Let $n \ge s+1$ and suppose that $t_n \in T(\mathcal O/\varpi^n)$ is an element that satisfies the given conditions. As the torus is formally smooth we can find a lift $t_{n+1}' \in T(\mathcal O/\varpi^{n+1})$ of $t_n$. It remains to arrange $\alpha(t_{n+1}') \equiv q \pmod{\varpi^n}$, as the rest of the conditions are satisfied automatically. By the inductive hypothesis, $\alpha(t_n) \equiv q \pmod{\varpi^n}$. Hence, we can write $\alpha(t_{n+1}') \equiv q(1+a\varpi^n) \pmod{\varpi^{n+1}}$. Set $t_{n+1} \coloneqq t_{n+1}'\alpha^\vee(1 - \frac a2 \varpi^n)$. Then

    $$\alpha(t_{n+1}) = \alpha(t_{n+1}') \cdot \alpha\left(\alpha^\vee \left( 1 - \frac a2 \varpi^s \right)\right) \equiv q(1+a\varpi^n)\left(1 - \frac a2 \varpi^n \right) \equiv q(1+a\varpi^n)(1-a\varpi^n) \equiv q \pmod{\varpi^{n+1}}$$

    \vspace{2 mm}

    Then, $t_{n+1}$ will be an element of $T(\mathcal O/\varpi^{n+1})$ as desired. As before, multiplying by $\exp(\varpi^{n+1}z)$ for $z \in \mathfrak z_{G}$ we can prescribe the value of $t_{n+1}$ in $G(\mathcal O/\varpi^{n+1})/G^{\der}(\mathcal O/\varpi^{n+1})$, given we have already done that modulo $\varpi^n$.
    
\end{proof}

\begin{lem}
\label{GeneralLiftLemma}

Assume that $\nu \nmid p$. Let $\lambda_n:\Gamma_{F_\nu} \to G(\mathcal O/\varpi^n)$ be a morphism such that $\lambda_n \pmod{\varpi^s}$ is trivial for some $s < n$, $\lambda_n(\sigma_\nu) \in T(\mathcal O/\varpi^n)$ for some split maximal torus $T$ of $G$, $\alpha(\lambda_n(\sigma_\nu)) \equiv N(\nu) \not \equiv 1 \pmod{\varpi^n}$ for some $\alpha \in \Phi(G^0,T)$, $\lambda_n(\tau_\nu) \in U_\alpha(\mathcal O/\varpi^n)$, and $\beta(\lambda_n(\sigma_\nu)) \not \equiv 1 \pmod{\varpi^{s+1}}$ for all $\beta \in \Phi(G^0,T)$. Then we can find a lift $\lambda_{n+1}:\Gamma_{F_\nu} \to G(\mathcal O/\varpi^{n+1})$ of $\lambda_n$ that satisfies the same properties modulo $\varpi^{n+1}$.
    
\end{lem}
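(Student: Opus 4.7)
The plan is to construct $\lambda_{n+1}$ by specifying its values on the two topological generators $\sigma_\nu$ and $\tau_\nu$ of the tame-with-$p$-power quotient $\Gal(F_\nu^{\mathrm{tame},p}/F_\nu)$, through which $\lambda_n$ factors (its image lying in the pro-$p$ kernel of $G(\mathcal O/\varpi^n) \to G(\mathcal O/\varpi^s)$). It suffices to produce elements $A \in T(\mathcal O/\varpi^{n+1})$ and $B \in U_\alpha(\mathcal O/\varpi^{n+1})$ lifting $\lambda_n(\sigma_\nu)$ and $\lambda_n(\tau_\nu)$ respectively, with $\alpha(A) \equiv N(\nu) \pmod{\varpi^{n+1}}$, and then verify that the fundamental relation $A B A^{-1} = B^{N(\nu)}$ continues to hold modulo $\varpi^{n+1}$.

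First I would use formal smoothness of $T$ to choose any lift $A_0 \in T(\mathcal O/\varpi^{n+1})$ of $\lambda_n(\sigma_\nu)$, and then correct it inside $T$ by the value of a coroot. Since $\alpha(A_0) \equiv N(\nu) \pmod{\varpi^n}$, I can write $\alpha(A_0) \equiv N(\nu)(1 + a \varpi^n) \pmod{\varpi^{n+1}}$ for some $a \in \mathcal O/\varpi$, and set
$$A \coloneqq A_0 \cdot \alpha^\vee\!\bigl(1 - \tfrac{a}{2}\varpi^n\bigr) \in T(\mathcal O/\varpi^{n+1}).$$
This uses $p \ne 2$, which is in force since $p$ is very good for $G^{\mathrm{der}}$. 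Using $\langle \alpha, \alpha^\vee \rangle = 2$, a direct computation gives $\alpha(A) \equiv N(\nu)(1+a\varpi^n)(1-a\varpi^n) \equiv N(\nu) \pmod{\varpi^{n+1}}$, while $A \equiv \lambda_n(\sigma_\nu) \pmod{\varpi^n}$. The conditions that $A$ is trivial modulo $\varpi^s$ and that $\beta(A) \not\equiv 1 \pmod{\varpi^{s+1}}$ for every $\beta \in \Phi(G^0,T)$ are inherited from the hypotheses on $\lambda_n(\sigma_\nu)$ since $s+1 \le n$; similarly the condition $N(\nu) \not\equiv 1 \pmod{\varpi^{n+1}}$ follows from $v(N(\nu)-1) < n$. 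For $B$, I would write $\lambda_n(\tau_\nu) = u_\alpha(X)$ for $X \in \mathfrak g_\alpha \otimes \mathcal O/\varpi^n$, pick any lift $\tilde X \in \mathfrak g_\alpha \otimes \mathcal O/\varpi^{n+1}$, and take $B \coloneqq u_\alpha(\tilde X)$.

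The verification of $A B A^{-1} = B^{N(\nu)}$ is then automatic once $\alpha(A) = N(\nu)$ in $\mathcal O/\varpi^{n+1}$: because $u_\alpha \colon \mathfrak g_\alpha \to U_\alpha$ is a $T$-equivariant isomorphism of group schemes (with $T$ acting on $\mathfrak g_\alpha$ through the character $\alpha$), one computes
$$A B A^{-1} = u_\alpha\bigl(\alpha(A)\, \tilde X\bigr) = u_\alpha\bigl(N(\nu)\, \tilde X\bigr) = u_\alpha(\tilde X)^{N(\nu)} = B^{N(\nu)},$$
where the penultimate equality uses that $U_\alpha \simeq \mathbb G_a$ via $u_\alpha$. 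The assignment $\sigma_\nu \mapsto A$, $\tau_\nu \mapsto B$ then extends uniquely to a continuous morphism from $\Gal(F_\nu^{\mathrm{tame},p}/F_\nu)$ to $G(\mathcal O/\varpi^{n+1})$, and inflating to $\Gamma_{F_\nu}$ gives the desired $\lambda_{n+1}$. No step presents a real obstacle: the sole finesse is the half-power used in the coroot correction, which is precisely why $p \ne 2$ is needed.
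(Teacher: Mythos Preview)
Your proof is correct and follows essentially the same approach as the paper's. The paper invokes the inductive step of Lemma~\ref{TorusElementLemma} to produce $t_{n+1}$, whereas you have inlined that step (the coroot correction $A_0 \cdot \alpha^\vee(1 - \tfrac{a}{2}\varpi^n)$) directly; the construction of $B$ and the verification of the fundamental relation via the $T$-equivariance of $u_\alpha$ are identical to the paper's argument.
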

\begin{proof}

Set $t_n \coloneqq \lambda_n(\sigma_\nu)$. We then produce a lift $t_{n+1} \in T(\mathcal O/\varpi^{n+1})$ of $t_n$ by invoking Lemma \ref{TorusElementLemma} with $q = N(\nu)$. Let $y_n \in \mathfrak g_\alpha \otimes_{\mathcal O} \mathcal O/\varpi^n$ be such that $u_{\alpha}(y_n) = \lambda_n(\tau_n)$. Pick a lift $y_{n+1} \in \mathfrak g_\alpha \otimes_{\mathcal O} \mathcal O/\varpi^{n+1}$ of $y_n$. We then define a lift $\lambda_{n+1}:\Gamma_{F_\nu} \to G(\mathcal O/\varpi^{n+1})$ which factors through $\Gal(F_\nu^{\mathrm{tame},p}/F_\nu)$, and is given by $\lambda_n(\sigma_\nu) = t_{n+1}$ and $\lambda_n(\tau_\nu) = u_\alpha(y_{n+1})$. Clearly, $\lambda_{n+1}$ will satisfy the wanted properties, so it only remains to show that it is well-defined. In particular, we need to show it preserves the fundamental relation between $\sigma_\nu$ and $\tau_\nu$. This is indeed the case as

$$\lambda(\sigma_\nu\tau_\nu\sigma_\nu^{-1}) = t_{n+1}u_\alpha(y_{n+1})t_{n+1}^{-1} = u_{\alpha}(\ad_{t_{n+1}}(y_{n+1})) = u_{\alpha}(\alpha(t_{n+1})\cdot y_{n+1}) = u_{\alpha}(y_{n+1})^{\alpha(t_{n+1})} = \lambda(\tau_\nu^{N(\nu)})$$
    
\end{proof}

\begin{lem}
\label{InvariantsLemma}

Let $k$ be a field of characteristic $p$ and $\Gamma$ a finite group of order coprime to $p$. Taking $\Gamma$-invariants of $k[\Gamma]$-modules is an exact functor.
    
\end{lem}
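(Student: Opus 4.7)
The plan is to use the Reynolds (averaging) operator, which exists precisely because $|\Gamma|$ is invertible in $k$. Recall that for any $k[\Gamma]$-module $M$, we have $M^\Gamma = \mathrm{Hom}_{k[\Gamma]}(k,M)$, so the invariants functor is a right adjoint and in particular left exact. Thus the only content to verify is right exactness: given a short exact sequence
$$0 \longrightarrow M' \longrightarrow M \longrightarrow M'' \longrightarrow 0$$
of $k[\Gamma]$-modules, the induced map $M^\Gamma \to (M'')^\Gamma$ is surjective.

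To prove this, I would define the element $e = \frac{1}{|\Gamma|}\sum_{g \in \Gamma} g \in k[\Gamma]$, which makes sense because $|\Gamma|$ is a unit in $k$. For any $k[\Gamma]$-module $N$, one checks that $e$ acts on $N$ as a $k$-linear projector onto $N^\Gamma$: indeed, $h \cdot (e \cdot n) = e \cdot n$ for all $h \in \Gamma$ (by left multiplication of $h$ permuting the summands of $e$), so $e \cdot n \in N^\Gamma$; and if $n \in N^\Gamma$ then $e \cdot n = \frac{1}{|\Gamma|}\sum_g g \cdot n = \frac{1}{|\Gamma|}\sum_g n = n$. Now given $n'' \in (M'')^\Gamma$, lift it to some $m \in M$ along the surjection $f : M \to M''$. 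Setting $m_0 := e \cdot m \in M^\Gamma$, the $k[\Gamma]$-linearity of $f$ gives $f(m_0) = e \cdot f(m) = e \cdot n'' = n''$, which produces the required preimage in $M^\Gamma$.

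This gives right exactness, and combined with the automatic left exactness yields the claim. There is no real obstacle here, the whole argument being the standard Maschke-style averaging trick; the only point worth being explicit about is that the existence of $\frac{1}{|\Gamma|}$ in $k$ is exactly the hypothesis $p \nmid |\Gamma|$, so the coprimality assumption is used in an essential and unavoidable way.
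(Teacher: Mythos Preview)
Your proposal is correct and follows essentially the same approach as the paper: both note that left exactness is automatic, then establish surjectivity of $M^\Gamma \to (M'')^\Gamma$ by lifting an invariant element and averaging over $\Gamma$ using the invertibility of $|\Gamma|$ in $k$. The only cosmetic difference is that you package the averaging as the idempotent $e \in k[\Gamma]$, while the paper applies $|\Gamma|^{-1}\sum_{\sigma} \sigma$ directly to the chosen lift.
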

\begin{proof}

    Taking invariants is in general a left-exact functor. Thus, to show that taking $\Gamma$-invariants is an exact functor it suffices to show that if $f:M \to N$ is a surjective $k[\Gamma]$-module morphism, then so is $f:M^\Gamma \to N^\Gamma$. Let $n \in N^\Gamma$. As $f$ is surjective we can find $m \in M$ such that $f(m) = n$. As $f$ is a $k[\Gamma]$-module morphism we have $f(\sigma \cdot m) = \sigma \cdot f(m) = \sigma \cdot n = n$ for all $\sigma \in \Gamma$. Set $x \coloneqq |\Gamma|^{-1} \sum_{\sigma \in \Gamma} \sigma \cdot m$. As $\Gamma$ is a finite group of order coprime to $p$ we have that $|\Gamma|$ is invertible in $k$, and thus $x$ is a well-defined element of $M$. For $\tau \in \Gamma$

    $$\tau \cdot x = |\Gamma|^{-1} \sum_{\sigma \in \Gamma} \tau\sigma \cdot m = |\Gamma|^{-1} \sum_{\sigma \in \Gamma} \sigma \cdot m = x$$

    \vspace{2 mm}

    Thus, $x \in M^\Gamma$. On the other side, $f(x) = |\Gamma|^{-1} \sum_{\sigma \in \Gamma} f(\sigma \cdot m)= |\Gamma|^{-1} \sum_{\sigma \in \Gamma} n = n$, which gives us the desired surjectivity and the exactness of the functor. 
    
\end{proof}

\begin{lem}
\label{TorusandRootLemma}

Suppose that $G$ is simple and connected. Let $T$ be a split maximal torus in $G$, and $\alpha \in \Phi(G,T)$. Given non-zero $A_1,\dots,A_r \in \mathfrak g \otimes_{\mathcal O} k$, and $B_1,\dots,B_s \in (\mathfrak g \otimes_{\mathcal O} k)^*$, for $p \gg_G 0$ (depending on $r$ and $s$, as well) there exists $g \in G(k)$ such that $\Ad(g^{-1})A_i \notin \ker(\restr{\alpha}{\mathfrak t}) \oplus \bigoplus_\beta \mathfrak g_\beta$, and $\Ad(g^{-1})B_j \notin \mathfrak g_\alpha^\perp$ for all $i \le r$ and $j \le s$.   
    
\end{lem}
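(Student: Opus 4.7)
The plan is to reduce the existence of $g$ to counting $k$-rational points on a smooth variety minus finitely many proper closed subvarieties, and then close via a Lang--Weil type estimate.

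First, I would repackage both conditions as membership of a single vector in a fixed proper linear subspace. Set
\[
V \coloneqq \ker(\restr{\alpha}{\mathfrak t}) \oplus \bigoplus_{\beta \in \Phi(G,T)} \mathfrak g_\beta \subset \mathfrak g \otimes_{\mathcal O} k, \qquad W \coloneqq \mathfrak g_\alpha^\perp \subset (\mathfrak g \otimes_{\mathcal O} k)^*,
\]
both of codimension one, and define the closed $k$-subschemes of $G$
\[
B_i \coloneqq \{g \in G : \Ad(g^{-1})A_i \in V\}, \qquad B'_j \coloneqq \{g \in G : \Ad(g^{-1})B_j \in W\}.
\]
Finding the desired $g$ amounts to exhibiting a $k$-point of the open complement $G \setminus \bigl(\bigcup_i B_i \cup \bigcup_j B'_j\bigr)$.

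Second, I would verify that each $B_i$ and each $B'_j$ is a \emph{proper} closed subvariety of $G$ once $p \gg_G 0$. The key algebraic input is that for $p$ larger than a bound depending only on the root datum of $G$, both $\mathfrak g \otimes_{\mathcal O} k$ and its dual are simple $k[G]$-modules; this is the familiar statement that for a simple connected group in sufficiently large characteristic the adjoint representation is irreducible. Granting this, if some $B_i$ equalled all of $G$, then $\Ad(G) \cdot A_i$ would lie inside the proper subspace $V$, and the $k$-span of this orbit would be a non-zero $G$-stable subspace of $\mathfrak g \otimes_{\mathcal O} k$ contained in $V$, contradicting simplicity. The dual argument rules out $B'_j = G$.

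Third, I would close by a Lang--Weil point count. Fix once and for all a faithful embedding $G \hookrightarrow \GL_N$, realising $G$ as a smooth geometrically irreducible $k$-variety whose degree is bounded in terms of $G$ alone. Each $B_i$ (respectively $B'_j$) is cut out in $G$ by a single polynomial equation of degree bounded in terms of $\dim \mathfrak g$, so
\[
|B_i(k)|,\ |B'_j(k)| \,\le\, C_G \, q^{\dim G - 1}, \qquad |G(k)| \,\ge\, q^{\dim G} - C'_G \, q^{\dim G - \tfrac 12},
\]
where $q = |k| \ge p$. Taking $p$ (and hence $q$) large enough in terms of $G$, $r$ and $s$ guarantees that the right-hand side of the second bound exceeds $(r+s)$ times the right-hand side of the first, producing the required $k$-point. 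The main obstacle is the second step, where the hypothesis $p \gg_G 0$ is genuinely needed: for small $p$ the adjoint module can acquire proper $G$-stable subspaces (the "bad prime" and torsion prime phenomena), in which case one cannot rule out that $\Ad(G) \cdot A_i$ fits inside $V$. Everything else is uniform in $p$ and routine.
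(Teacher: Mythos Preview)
Your argument is correct and in fact more complete than the paper's in one respect. Both proofs define the same closed loci $B_i, B'_j \subset G$ and finish by a point count showing the complement has a $k$-point for $p \gg_G 0$. The paper simply asserts these loci are ``proper closed subschemes'' and proceeds; you actually justify this via irreducibility of the adjoint module for large $p$, which is the right reason and is needed for any counting argument to go through (otherwise the defining equation could be identically zero).

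The main methodological difference is in the point count. The paper avoids Lang--Weil: it restricts to the big Bruhat cell $X \simeq \mathbb G_m^{\dim T} \times \mathbb A^{|\Phi|}$, bounds $|X(k)|$ from below, and bounds $|(X \cap B_i)(k)|$ from above using the elementary Schwartz--Zippel-type fact that a non-zero polynomial of degree $d$ in $a$ variables has at most $d q^{a-1}$ zeros over $\mathbb F_q$. This is more hands-on and gives slightly more explicit constants, which is why the paper can extract concrete bounds like $p \ge 5$ in the $\GL_2$ case (see the end of the proof of Theorem~\ref{MainResult}). Your Lang--Weil route is cleaner conceptually and equally valid for the qualitative statement $p \gg_G 0$, but the implicit constants are less transparent, so if one later wants to make the bound on $p$ effective for a specific $G$ the paper's approach is preferable.
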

\begin{proof}

    Let $\Phi_{a_i}$ be the proper closed subscheme of $G$ defined by the condition $\Ad(g^{-1})A_i \in \ker(\restr{\alpha}{\mathfrak t}) \oplus \bigoplus_\beta \mathfrak g_\beta$. Analogously, we define $\Phi_{B_j}$. Let $Y$ be the union of all these subschemes. We want to show that the set of $k$-points of the complement $G \setminus Y$ is non-empty. By the  Bruhat's decomposition (\cite[Corollary $14.14$]{Bor91}) $G$ contains an open subset $X$ that is isomorphic to the open subset $\mathbb G_m^{\dim(T)} \times \mathbb A^{|\Phi(G,T)|}$ of an affine space $\mathbb A^{d_G}$, where $d_G \coloneqq \dim(G)$. Thus

    $$|X(k)| \ge (q-1)^{\dim(T)} \cdot q^{|\Phi(G,T)|}$$

    \vspace{2 mm}

    \noindent where $q$ is the cardinality of the field $k$. From the explicit description of $\Phi_{A_i}$ and $\Phi_{B_j}$ we get that $\Phi_{A_i} \cap X$ and $\Phi_{B_j} \cap X$ are cut out in $\mathbb A^{d_G}$ by equations of degree at most $d$, depending only on the root datum of $G$. We remark that the coordinates coming from $\mathbb G_m$ factors might have negative powers, but after clearing the denominators we can make sense of them over $\mathbb A^{d_G}$. Now, any polynomial of degree $d$ in $a$ variables over a field $k$ will have at most $dq^{a-1}$ solutions. Indeed, we are free to choose any values for $(a-1)$ of the variables, reducing it to a single-variable polynomial in $k$, which we know has no more solutions than its degree. From this we deduce that

    $$|(X \cap Y)(k)| \le (r+s)dq^{d_G-1}$$

    \vspace{2 mm}

    Combining these two bounds we get

    $$|(G\setminus Y)(k)| \ge (q-1)^{\dim(T)}\cdot q^{|\Phi(G,T)|} - (r+s)dq^{d_G-1} \ge \frac{1}{2^{\dim(T)}}q^{d_G} - (r+s)dg^{d_G-1}$$

    \vspace{2 mm}

    As the first term has the highest degree for $p \gg_G 0$ the right-hand side will be positive. Hence the $k$-points of the complement are non-empty.
    
\end{proof}

\begin{lem}
\label{AbelianizationLemma}

Suppose that $\mathfrak g \otimes k$ is a simple $\mathbb F_p[G(k)]$-module under the adjoint action. For $m \ge n \ge 1$, let $G_{m,n}$ denote the kernel of the modulo $\varpi^n$ reduction $G(\mathcal O/\varpi^m) \to G(\mathcal O/\varpi^n)$. Then

$$G_{m,n}^{\mathrm{ab}} \simeq \begin{cases} G_{m,n} &; \text{if } m \le 2n \\ G_{2n,n} &; \text{if } m \le 2n  \end{cases}$$
    
\end{lem}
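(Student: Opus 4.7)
The plan is to use the exponential isomorphism $\exp \colon \mathfrak g \otimes_\mathcal O \varpi^n\mathcal O/\varpi^m \xrightarrow{\sim} G_{m,n}$ (with the Baker-Campbell-Hausdorff group law on the source, which converges since $\varpi^n\mathcal O/\varpi^m$ is nilpotent and $p$ is very good for $G^{\der}$) to transport the commutator calculation into the Lie algebra, and to exploit the filtration $G_{m,n} \supseteq G_{m,n+1} \supseteq \cdots \supseteq G_{m,m} = 1$, whose successive quotients $G_{m,j}/G_{m,j+1}$ are canonically identified with $\mathfrak g \otimes k$ as $G(k)$-modules. If $m \le 2n$, then $(\varpi^n\mathcal O/\varpi^m)^2 = 0$, so every BCH term beyond the linear one vanishes and the group law reduces to ordinary addition; thus $G_{m,n}$ is already abelian, giving $G_{m,n}^{\mathrm{ab}} \simeq G_{m,n}$ and the first case is done.

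For the case $m \ge 2n$, I will show $[G_{m,n},G_{m,n}] = G_{m,2n}$, from which $G_{m,n}^{\mathrm{ab}} = G_{m,n}/G_{m,2n} \simeq G_{2n,n}$ follows via the reduction-mod-$\varpi^{2n}$ isomorphism. The inclusion $[G_{m,n},G_{m,n}] \subseteq G_{m,2n}$ is immediate from BCH, since for $X, Y \in \mathfrak g \otimes \varpi^n\mathcal O/\varpi^m$ the group commutator $\exp(X)\exp(Y)\exp(-X)\exp(-Y)$ equals $\exp$ of a sum of iterated brackets of $X$ and $Y$ of length $\ge 2$, each of which lies in $\mathfrak g \otimes \varpi^{2n}\mathcal O/\varpi^m$.

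The reverse inclusion is proved one graded piece at a time by descending induction on $j$ from $m-1$ down to $2n$. Since $[G_{m,n},G_{m,n}]$ is a characteristic subgroup of $G_{m,n}$, it is preserved by the conjugation action of $G(\mathcal O/\varpi^m)$ on $G_{m,n}$ (and the ambiguity in lifting an element of $G(k)$ to $G(\mathcal O/\varpi^m)$ is immaterial, as it only contributes inner automorphisms to a characteristic subgroup). Hence the image $V_j$ of $[G_{m,n},G_{m,n}] \cap G_{m,j}$ in $G_{m,j}/G_{m,j+1} \simeq \mathfrak g \otimes k$ is an $\mathbb F_p[G(k)]$-submodule, which by the simplicity hypothesis is either $0$ or all of $\mathfrak g \otimes k$. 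To see $V_j \ne 0$, pick $x, y \in \mathfrak g$ with $[x,y] \not\equiv 0 \pmod \varpi$; such a pair exists because the bracket subspace $[\mathfrak g \otimes k, \mathfrak g \otimes k]$ is itself a $G(k)$-stable submodule of the simple module $\mathfrak g \otimes k$, and the degenerate case in which it vanishes is excluded by the setting of the lemma. The explicit commutator $[\exp(\varpi^n x), \exp(\varpi^{j-n} y)] \in G_{m,j} \cap [G_{m,n},G_{m,n}]$ then maps via BCH to $[x,y] \ne 0$ in $\mathfrak g \otimes k$, forcing $V_j = \mathfrak g \otimes k$ and completing the inductive step. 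The main technical obstacle is tracking which BCH terms reach which level of the $\varpi$-filtration, but this is routine once the very-goodness hypothesis guarantees that all BCH denominators are units in $\mathcal O$.
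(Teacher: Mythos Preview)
Your argument follows the same overall strategy as the paper's proof: both use the square-zero exponential for $m\le 2n$, deduce $[G_{m,n},G_{m,n}]\subseteq G_{m,2n}$ from abelianness of the quotient, and establish the reverse inclusion by inductively showing each successive graded piece $G_{m,j}/G_{m,j+1}\simeq\mathfrak g\otimes k$ is hit, invoking simplicity once a single nonzero commutator is exhibited. Your descending induction on the filtration index $j$ and the paper's ascending induction on $m$ are equivalent reformulations of the same idea.

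The one substantive difference is in how the nonzero commutator is produced. You appeal to BCH and a generic bracket $[\exp(\varpi^n x),\exp(\varpi^{j-n}y)]$, while the paper computes the explicit torus/root-group commutator $[t,u_\alpha(\varpi^n Y_\alpha)]=u_\alpha\bigl((\alpha(t)-1)\varpi^n Y_\alpha\bigr)$ with $t\in T$ chosen so that $\alpha(t)-1$ has exact valuation $m-n-1$. The paper's choice is an exact identity requiring no series expansion. Your justification that ``very-goodness guarantees all BCH denominators are units'' is not correct as stated: BCH denominators involve every prime, and for $m/n$ large some will fail to be units in $\mathcal O$, so $\exp(\varpi^n x)$ is not a priori well-defined as a full BCH series. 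This does not actually break your argument, since you only ever use the image of the commutator modulo $G_{m,j+1}$, and the induced pairing $G_{m,n}/G_{m,n+1}\times G_{m,j-n}/G_{m,j-n+1}\to G_{m,j}/G_{m,j+1}$ is the Lie bracket regardless (this needs only $[G_{m,a},G_{m,b}]\subseteq G_{m,a+b}$, not full BCH). But you should either rephrase in those terms---take arbitrary lifts $g,h$ of $\bar x,\bar y$ and compute $[g,h]$ modulo $G_{m,j+1}$---or adopt the paper's root-group computation, which sidesteps the issue entirely.
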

\begin{proof}

    Let $m \le 2n$. Using the isomorphism coming from the exponential map we have that $\mathfrak g \otimes_{\mathcal O} \varpi^n\mathcal O/\varpi^m\mathcal O \simeq G_{m,n}$. As this is an isomorphism of groups we have that $G_{m,n}$ is abelian and the claim for $m \le 2n$ follows.

    For $m \ge 2n$, as $G$ is smooth from the kernel-cokernel sequence we have that $G_{m,n}/G_{m,2n} \simeq G_{2n,n}$, which by above we know is abelian. Thus $[G_{m,n},G_{m,n}] \subseteq G_{m,2n}$. It remains to prove the reverse inclusion. For a fixed $n$ we will prove this by induction on $m$. The base case $m = 2n$ is already proven. Now, let $ m > 2n$ and suppose that $[G_{m-1,n},G_{m-1,n}] = G_{m-1,2n}$. For $g \in G_{m,2n}$ by the inductive hypothesis $g \pmod{\varpi^{m-1}}$ is a commutator in $G_{m-1,n}$. Hence, using the smoothness of $G$ we can find $g_1,g_2 \in G_{m,n}$ such that $g[g_1,g_2] \equiv 1 \pmod{\varpi^{m-1}}$. This means that $g[g_1,g_2] \in G_{m,m-1}$ and therefore it suffices to show that $G_{m,m-1} \subseteq [G_{m,n},G_{m,n}]$. We now consider $G_{m,m-1} \cap [G_{m,n},G_{m,n}]$. We claim that this is an $\mathbb F_p[G(k)]$-submodule of $G_{m,m-1}$. By the exponential map $G_{m,m-1} \simeq \mathfrak g \otimes_{\mathcal O} \varpi^{m-1}\mathcal O/\varpi^{m}\mathcal O \simeq \mathfrak g \otimes_{\mathcal O} k$, which is an abelian group of exponent $p$. Thus, by being a subgroup of it $G_{m,m-1} \cap [G_{m,n},G_{m,n}]$ is an $\mathbb F_p$-vector space. The $G(k)$ action on $G_{m,m-1}$ isgiven by lifting to $G(\mathcal O/\varpi^m)$ and conjugating. Since $z[x,y]z^{-1} = [zxz^{-1},zyz^{-1}]$ we get that $G_{m,m-1} \cap [G_{m,n},G_{m,n}]$ is closed under conjugation by $G(k)$ and hence it is an $\mathbb F_p[G(k)]$-module.

    The exponential map is $G(k)$-equivariant, which means that $G_{m,m-1}$ is a simple $\mathbb F_p[G(k)]$-module. Therefore, $G_{m,m-1} \cap [G_{m,n},G_{m,n}]$ is either trivial or equal to the whole of $G_{m,m-1}$. To show it is equal to $G_{m,m-1}$ it suffices to find a non-trivial element of it. Let $T$ be a maximal split torus in $G$ and $\alpha \in \Phi(G^0,T)$. Then, as in Lemma \ref{TorusElementLemma} we can find $t \in T(\mathcal O/\varpi^m)$ which is trivial modulo $\varpi^{m-n-1}$ and $\alpha(t) \not \equiv 1 \pmod{\varpi^{m-n}}$. Now, let $Y_\alpha$ be a basis of the root space $\mathfrak g_\alpha$ over $\mathcal O$. Then

    $$[t,u_\alpha(\varpi^nY_\alpha)] = tu_\alpha(\varpi^nY_\alpha)t^{-1}u_\alpha(-\varpi^nY_\alpha) = u_\alpha(\Ad_t(\varpi^nY_\alpha)-\varpi^nY_\alpha) = u_{\alpha}((\alpha(t)-1)\varpi^nY_\alpha)$$

    \vspace{2 mm}

    As $\alpha(t)-1$ is divisible by $\varpi^{m-n-1}$, but not by $\varpi^{m-n}$ this is a non-trivial element of $G_{m,m-1}$. As $m-n-1 \ge n$ we have that $t$ is trivial modulo $\varpi^n$ and the same is true for $u_{\alpha}(\varpi^nY_\alpha)$. Therefore, $[t,u_{\alpha}(\varpi^nY_\alpha)]$ is a non-trivial element of $G_{m,m-1} \cap [G_{m,n},G_{m,n}]$. Therefore, as explained above this group is equal to $G_{m,m-1}$, which implies that $G_{m,m-1} \subseteq [G_{m,n},G_{m,n}]$. This yields $[G_{m,n},G_{m,n}] = G_{m,2n}$ and $G_{m,n}^{\mathrm{ab}} \simeq G_{2n,n}$

\end{proof}

\bibliographystyle{alpha}
\bibliography{biblio}

\end{document}